\numberwithin{equation}{section}
\newtheorem{trm}{Theorem}[section]
\newtheorem{prop}[trm]{Proposition}
\newtheorem{lemma}[trm]{Lemma}
\newtheorem{rem}[trm]{Remark}
\newcommand{\W}{\mathcal W}
\newcommand{\ra}{\longrightarrow}
\newcommand{\rw}{\rightharpoonup}
\newcommand{\dis}{\displaystyle}
\newcommand{\ov}{\overline}
\newcommand{\R}{\mathbb{R}}
\newcommand{\N}{\mathbb{N}}
\newcommand{\eps}{\varepsilon}
\newcommand{\0}{\emptyset}
\newcommand{\ph}{\varphi}
\newcommand{\bra}[1]{\left({#1}\right)}
\newcommand{\sqbra}[1]{\left[{#1}\right]}
\newcommand{\cur}[1]{\left\{#1\right\}}
\newcommand{\lm}{(-\Delta)^m}
\newlength{\dhatheight}
\DeclareMathOperator{\dv}{div}
\newenvironment{Si}[1]{\left\{\begin{array}{#1}}{\end{array} \right. }
 \author{Azahara DelaTorre$\phantom{}^*$  \\ \small Università di Padova \\ \footnotesize \texttt{azahara.delatorre@math.unipd.it} \and Gabriele Mancini\thanks{The authors are supported by Swiss National Science Foundation, projects nr. PP00P2-144669 and PP00P2-170588/1.} \\ \small Università di Padova \\ \footnotesize \texttt{gabriele.mancini@math.unipd.it}  }
\title{Improved Adams-type  inequalities and their extremals in dimension $2m$}
\begin{document}
 \maketitle
\begin{abstract}
In this paper we prove the existence of extremal functions for the Adams-Moser-Trudinger inequality on the Sobolev space $H^{m}(\Omega)$, where $\Omega$ is any bounded, smooth, open subset of $\R^{2m}$, $m\ge 1$. Moreover, we extend this result to improved versions of Adams' inequality of Adimurthi-Druet type. Our strategy is based on blow-up analysis for sequences of subcritical extremals and introduces several new techniques and constructions. The most important one is a new procedure for obtaining capacity-type estimates on annular regions. 
\end{abstract}

 \section{Introduction}
Given $m\in \N$, $m\ge 1$, let $\Omega\subseteq \R^{2m}$ be a bounded open set with smooth boundary. For any $\beta >0$, we  consider the Moser-Trudinger functional 
$$
F_{\beta}(u) := \int_{\Omega} e^{\beta u^2}dx
$$
and the set 
 $$
M_{0}:= \cur{u\in H^m_0(\Omega)\: :\; \|u\|_{H^m_0(\Omega)}\le 1},
 $$
where 
$$
\|u\|_{H^m_0(\Omega)} = \|\Delta^{\frac{m}{2}} u\|_{L^2(\Omega)}  \qquad \mbox{ and }\qquad 
\Delta^{\frac{m}{2}} u  := \begin{Si}{cl} 
\Delta^{n} u & \mbox{ if } m = 2n, \;n\in \N, \\
\nabla\Delta^n u &  \mbox{ if } m = 2n +1,\; n\in \N.
\end{Si}
$$
The Adams-Moser-Trudinger inequality (see \cite{Adams}) implies that 
\begin{equation}\label{Adams}
\sup_{M_{0}} F_\beta <+\infty \qquad \Longleftrightarrow \qquad \beta \le \beta^*,
\end{equation}
where $\beta^*:= m (2m-1)! Vol(\mathbb S^{2m})$.  This result is an extension to dimension $2m$ of the work done by  Moser \cite{mos} and Trudinger \cite{tru} in the case $m=1$, and can be considered as a critical version of the Sobolev inequality for the space $H^m_0(\Omega)$. A classical problem related to Moser-Trudinger and Sobolev-type embeddings consists in investigating the existence of extremal functions. While it is rather simple to prove that the supremum in \eqref{Adams} is attained for any $\beta <\beta^*$,  lack of compactness due to concentration phenomena makes the critical case $\beta = \beta^*$ challenging.  The first proof of existence of extremals for \eqref{Adams} was given by Carleson and Chang  \cite{CC} in the special setting $m=1$ and $\Omega = B_1(0)$. The case of arbitrary domains $\Omega\subseteq \R^2$ was treated by Flucher in \cite{flu}. These results are based on sharp estimates on the values that $F_\beta$ can attain on concentrating sequences of functions. Recently, a different approach was proposed in \cite{MarMan} and \cite{DizDru}. Concerning the higher order case, as far as we know, the existence of extremals was proved only for $m=2$ by  Lu and Yang in \cite{LuYa2} (see also \cite{Ndi}). In this work, we are able to study the problem for any arbitrary $m\ge 1$. Indeed, we prove here the following result.

\begin{trm}
Let $\Omega\subseteq \R^{2m}$ be a smooth bounded domain, then for any $m\ge 1$ and $\beta\le \beta^*$ the supremum in \eqref{Adams} is attained, i.e. there exists a function $u^*\in M_0$ such that $F_{\beta}(u^*) = \sup_{M_0} F_\beta$. 
\end{trm}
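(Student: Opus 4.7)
\medskip

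\noindent\textbf{Proof proposal.}

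For any $\beta<\beta^*$ the functional $F_\beta$ is weakly upper semicontinuous on $M_0$ thanks to the compactness of subcritical exponentials, so the supremum is attained by standard direct methods. The real content is the critical case $\beta=\beta^*$. The plan is to approximate from below: pick $\beta_k\nearrow\beta^*$ and take extremals $u_k\in M_0$ for $F_{\beta_k}$, which exist by the previous step. Each $u_k$ solves an Euler-Lagrange equation of the form $(-\Delta)^m u_k=\lambda_k u_k e^{\beta_k u_k^2}$ in $\Omega$, together with Navier/Dirichlet boundary conditions and the constraint $\|u_k\|_{H^m_0}=1$. Up to subsequence $u_k\rightharpoonup u^*$ weakly in $H^m_0(\Omega)$; if the convergence is strong, then $u^*$ is the desired extremal. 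Otherwise one enters the concentration regime, and the whole game is to rule it out.

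Assume concentration: $c_k:=\max_{\overline\Omega}|u_k|\to+\infty$, attained at points $x_k\to x_0\in\overline\Omega$. Following the higher-order version of the Carleson-Chang/Flucher/Lu-Yang strategy, I would rescale
\[
\eta_k(y)\;:=\;\frac{u_k(x_k+r_k y)}{c_k},\qquad r_k^{2m}\,c_k^2\,e^{\beta_k c_k^2}\;=\;1,
\]
and show by elliptic regularity for $(-\Delta)^m$ that $\eta_k\to\eta_\infty$ in $C^{2m-1}_{\loc}$, where $\eta_\infty$ is the standard radial bubble solving the entire polyharmonic Liouville equation on $\R^{2m}$ associated with the sharp constant $\beta^*$ (this is the object classified for $m\ge 2$ by Chang-Yang/Lin/Martinazzi). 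A standard Pohozaev/capacity argument then forces $c_k u_k\rightharpoonup G$ away from $x_0$, where $G$ is a Green-type function for $(-\Delta)^m$ with a logarithmic singularity at $x_0$. From the bubble profile and the asymptotics of $G$ one derives the sharp upper bound
\[
\limsup_{k\to\infty} F_{\beta_k}(u_k)\;\le\;|\Omega|\,+\,\frac{\omega_{2m-1}}{2m}\,e^{\,\beta^*\mathcal{A}(x_0)+C_m},
\]
where $\mathcal{A}(x_0)$ denotes the Robin/mass function of $\Omega$ at $x_0$ and $C_m$ is an explicit dimensional constant.

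The technical heart lies here: in order to transfer the information from the rescaled profile on $B_{Rr_k}(x_k)$ to the Green function regime on $\Omega\setminus B_\delta(x_k)$, one must control $F_{\beta_k}$ on the dyadic annuli $B_\delta(x_k)\setminus B_{Rr_k}(x_k)$. This is precisely the capacity-type estimate announced in the abstract. The difficulty, absent when $m=1$, is that for the polyharmonic operator there is no maximum principle to produce the sharp comparison function directly; one has to build, annulus by annulus, an almost-minimizer of the Dirichlet energy $\int|\Delta^{m/2}\cdot|^2$ among functions interpolating between prescribed polyharmonic Cauchy data on the two boundary spheres, and to quantify the loss of energy with respect to the radial profile. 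I expect this estimate — and the propagation of the resulting error across many scales — to be the main obstacle, and the reason the argument works in any dimension $2m$.

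Once the upper bound is secured, one closes the argument by constructing a Moser-type test function $\phi_\eps\in M_0$: it coincides with a suitably truncated and rescaled bubble on a ball $B_\eps(x_0)$, with the Green function $G$ (normalized to have unit $H^m_0$ norm modulo lower order terms) on $\Omega\setminus B_\eps(x_0)$, and with a polyharmonic interpolation on the annulus $B_{2\eps}\setminus B_\eps$. A direct Taylor expansion in $\eps$ then yields
\[
F_{\beta^*}(\phi_\eps)\;>\;|\Omega|\,+\,\frac{\omega_{2m-1}}{2m}\,e^{\,\beta^*\mathcal{A}(x_0)+C_m},
\]
contradicting the upper bound above and ruling out concentration. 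Hence $u_k\to u^*$ strongly and $u^*$ is an extremal for $\beta=\beta^*$.
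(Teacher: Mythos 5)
Your high-level strategy matches the paper's: pass through subcritical extremals $u_n$ for $\beta_n\nearrow\beta^*$, show blow-up forces a sharp upper bound for $\limsup F_{\beta_n}(u_n)$ in terms of a Robin-type constant, and exhibit a test function exceeding it. But two of your concrete choices are wrong in ways that matter.

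The rescaling you propose does not produce a bubble. With $\eta_k(y)=u_k(x_k+r_ky)/c_k$ and $r_k^{2m}c_k^2 e^{\beta_k c_k^2}=1$, the rescaled function converges to the constant $1$, which carries no information, and since $\lambda_k$ does not enter the definition of $r_k$, the rescaled equation degenerates rather than converging to a Liouville equation. To extract the bubble one must amplify the deviation from the maximum: the paper sets $\eta_n(y)=\mu_n\bigl(u_n(x_n+r_ny)-\mu_n\bigr)$ with $\omega_{2m}r_n^{2m}\lambda_n\mu_n^2 e^{\beta_n\mu_n^2}=1$. Then $u_n^2-\mu_n^2=2\eta_n+O(\mu_n^{-2})$, so $\lm\eta_n\to\omega_{2m}^{-1}e^{2\beta^*\eta_0}$ locally and the classification of entire polyharmonic Liouville solutions applies. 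A subtle but necessary point is the a priori control making this work for $m\ge 2$: classical elliptic estimates are not sharp enough at the scale $r_n$, and the paper replaces them by Lorentz--Zygmund regularity for $\lm u_n^2$, which gives the $L^1$ bound $\|\Delta\eta_n\|_{L^1(B_R)}\le CR^{2m-2}$ needed to single out the radial bubble among the classified solutions.

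More importantly, you describe as the ``technical heart'' precisely the thing the paper is designed to avoid. You propose a dyadic decomposition of the neck, an annulus-by-annulus almost-minimizer of $\int|\Delta^{m/2}\cdot|^2$ with prescribed Cauchy data on the two boundary spheres, and propagation of errors across scales. The authors stress that this explicit capacity minimization becomes intractable for general $m$ and that their central contribution is to bypass it. They work on the single annulus $A_n(R,\delta)=\{Rr_n\le|x-x_n|\le\delta\}$ and compare $\|\Delta^{m/2}u_n\|_{L^2(A_n(R,\delta))}^2$ directly with the energy of the explicit $m$-harmonic comparison $\W_n(x)=-\frac{2m}{\beta^*\mu_n}\log|x-x_n|$, using only the elementary inequality $\|a\|^2-\|b\|^2\ge 2\langle a-b,b\rangle$ and one integration by parts; the boundary terms at radius $Rr_n$ are handled by the bubble asymptotics (and a polyharmonic-truncation lemma ensuring $\lambda_n\mu_n^2\to$ the relevant limit), and those at radius $\delta$ by $\mu_n u_n\to G_{\alpha,x_0}$. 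So there is no multi-scale propagation and no optimal capacity problem to solve. Two further notes: boundary blow-up is excluded via a Pohozaev identity near $x_0\in\partial\Omega$, not by capacity, and your constant $\frac{\omega_{2m-1}}{2m}$ in the upper bound should be $\frac{\omega_{2m}}{2^{2m}}$ (these agree only when $m=1$).
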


More generally, we are interested in studying extremal functions for a larger family of inequalities. Let us denote $$
\lambda_1(\Omega):= \inf_{u\in H^m_0(\Omega), u\neq 0} \frac{\|u\|^2_{H^m_0(\Omega)}}{\|u\|^2_{L^2(\Omega)}}.
$$ 
For the 2-dimensional case, in \cite{AD} it was proved that if $\Omega \subseteq \R^2$ and  $0\le \alpha<\lambda_1(\Omega)$, then
\begin{equation}\label{AD}
\sup_{u\in M_0} \int_{\Omega} e^{\beta^*u^2(1+\alpha\|u\|^2_{L^2(\Omega)})} dx <+\infty.
\end{equation}
Moreover the bound on $\alpha$ is sharp, i.e. the supremum is infinite for any $\alpha \ge\lambda_1(\Omega)$.  A stronger form of this inequality can be deduced from the results in \cite{Tint}:
\begin{equation}\label{ADstr}
\sup_{u\in H^1_0(\Omega),\; \|u\|_{H^1_0(\Omega)}^2-\alpha\|u\|_{L^2(\Omega)}^2 \le 1 } F_{\beta^*} <+\infty. 
\end{equation}
Surprisingly, the study  of extremals   for the stronger inequality \eqref{ADstr} is easier than for \eqref{AD}. In fact, it was proved in \cite{Yang} that the supremum in \eqref{ADstr} is attained for any $0\le \alpha<\lambda_1(\Omega)$, while existence of extremal functions for \eqref{AD} is known only for small values of $\alpha$ (see \cite{LuYa}). Such results have been extended to dimension 4 in \cite{LuYa2} and \cite{Ngu}. In this paper, we consider the case of an arbitrary  $m\ge 1$.    For any $0\le \alpha <\lambda_1(\Omega)$ we denote 
$$
\|u\|_{\alpha}^2 := \|u\|_{H^{m}_0(\Omega) }^2 - \alpha\|u\|_{L^2(\Omega)}^2,
$$
and we consider the set 
$$
M_{\alpha}:= \cur{u\in H^m_0(\Omega)\: :\; \|u\|_{\alpha}\le 1} 
$$
and the quantity 
\begin{equation}\label{sup}
S_{\alpha,\beta}:= \sup_{M_\alpha} F_\beta. 
\end{equation}
Observe that Poincare's inequality implies that for any $0\le \alpha<\lambda_1(\Omega)$, $\|\cdot\|_\alpha$ is a norm on $H^m_0$ which is equivalent to $\|\cdot\|_{H^m_0}$. Our main result is the following:

\begin{trm}\label{main} 
Let $\Omega\subseteq \R^{2m}$ be a smooth bounded domain, then for any $m\ge 1$ the following holds:
\begin{enumerate}
\item For any $0\le \beta \le \beta^*$ and $0\le \alpha <\lambda_1(\Omega)$ we have $S_{\alpha,\beta}<+\infty$, and there exists a function $u^*\in M_\alpha$ such that $F_\beta(u^*)=S_{\alpha,\beta}$.
\item If $\alpha \ge \lambda_1(\Omega)$, or $\beta > \beta^*$, we have $S_{\alpha,\beta}=+\infty$. 
\end{enumerate}
\end{trm}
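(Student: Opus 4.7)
The plan is to treat the two halves of Theorem~\ref{main} separately.

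For part (2), both unboundedness statements follow from explicit test configurations. If $\alpha\ge\lambda_1(\Omega)$, let $\ph_1$ be a first eigenfunction of $\lm$ with zero Navier boundary conditions; then $\|t\ph_1\|_\alpha^2=t^2(\lambda_1(\Omega)-\alpha)\|\ph_1\|_{L^2}^2\le 0$, so $t\ph_1\in M_\alpha$ for every $t>0$ and $F_\beta(t\ph_1)\to+\infty$ as $t\to+\infty$. If $\beta>\beta^*$, a higher-order Moser-type sequence concentrating at an interior point of $\Omega$---precisely the sequence already witnessing $\sup_{M_0}F_\beta=+\infty$ in (\ref{Adams}) for $\beta>\beta^*$---still belongs to $M_\alpha\supseteq M_0$ and forces $F_\beta$ to diverge.

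For part (1), I would first establish $S_{\alpha,\beta}<+\infty$ and then attainment. Finiteness for $\beta<\beta^*$ is immediate from the equivalence of $\|\cdot\|_\alpha$ and $\|\cdot\|_{H^m_0(\Omega)}$ coming from Poincar\'e's inequality, combined with (\ref{Adams}); the borderline case $\beta=\beta^*$ reduces to a concentration-compactness argument which is a lighter version of the blow-up analysis needed below. To prove attainment at $\beta=\beta^*$, I would run the standard subcritical approximation scheme: pick $\beta_k\nearrow\beta^*$ and, using the direct method together with the compactness of the subcritical embedding (Vitali plus Adams with a margin), extract maximizers $u_k\in M_\alpha$ of $F_{\beta_k}$. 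These $u_k$ satisfy an Euler--Lagrange equation of the form
\[
\lm u_k -\alpha u_k = \mu_k^{-1}\,u_k\,e^{\beta_k u_k^2}\quad\mbox{in }\Omega,
\]
with zero Navier data and a positive Lagrange multiplier $\mu_k$. Extracting a weak limit $u_k\rw u^\infty$ in $H^m_0(\Omega)$, either $u_k\to u^\infty$ strongly---in which case $u^\infty$ is the sought extremal---or a blow-up alternative occurs: there exist $x_k\to x_0\in\ov\Omega$ with $c_k:=\|u_k\|_{L^\infty}\to+\infty$, the energy measure $|\Delta^{m/2}u_k|^2dx$ concentrates at $x_0$, $u^\infty\equiv 0$, and rescaling $u_k$ by the natural exponential factor yields the standard conformal Adams bubble on $\R^{2m}$ as limiting profile.

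The heart of the proof is then to derive, \emph{under the blow-up assumption}, a sharp upper bound for $S_{\alpha,\beta^*}$ of the form $|\Omega|$ plus an explicit contribution depending on the Robin function of $\lm-\alpha$ with Navier data, evaluated at the blow-up point $x_0$. Producing this bound requires matching the inner bubble expansion of $u_k$ with its outer Green's-function expansion across an intermediate annular region $B_{Rr_k}(x_k)\sm B_{r_k}(x_k)$, and it is at this point that the authors' new \emph{capacity-type estimate on annular regions} is needed: it supplies a sharp lower bound for the Dirichlet $m$-energy of $u_k$ on such annuli in terms of its boundary traces at the two scales. This step is the main obstacle: in contrast with the case $m=1$ there is no usable comparison principle for $\lm$, so the optimal annular profile cannot be obtained by envelope arguments and must instead be identified variationally, relying on polyharmonic Poisson-type kernels. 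Once the upper bound is in hand, the proof concludes in the usual Carleson--Chang fashion: one constructs a family of Moser--Adams test functions $\ph_\ve$, built on the bubble profile inside and glued to the Green's function outside, and shows that $F_{\beta^*}(\ph_\ve)$ strictly exceeds the blow-up upper bound for $\ve$ small. This contradicts the blow-up scenario, forces strong convergence $u_k\to u^\infty$ in $H^m_0(\Omega)$, and exhibits $u^\infty$ as an extremal for $S_{\alpha,\beta^*}$.
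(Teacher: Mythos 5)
Your overall roadmap matches the paper's: exclusion of $\alpha\ge\lambda_1(\Omega)$ via the first eigenfunction and of $\beta>\beta^*$ via $M_0\subseteq M_\alpha$, subcritical maximizers $u_k$ for $\beta_k\nearrow\beta^*$, blow-up alternative, sharp upper bound from the neck/Green's-function analysis, and a competing test function to contradict it. Part~(2) is essentially verbatim the paper's argument. But three points in Part~(1) deserve comment.

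First, and most importantly, you misidentify the paper's key technical move. You write that the annular lower bound ``must instead be identified variationally, relying on polyharmonic Poisson-type kernels,'' i.e.\ that one must solve the constrained minimization $i(a,b,R_1,R_2)$ to find the optimal $m$-polyharmonic profile on the annulus. The paper explicitly states that this computation ``appears to be very hard'' for general $m$ and that its main contribution is to \emph{bypass} it: in Section~4.8 the sharp lower bound is obtained by comparing $\|\Delta^{m/2}u_n\|^2_{L^2(A_n(R,\delta))}$ with the energy of the explicit $m$-harmonic function $\W_n(x)=-\frac{2m}{\beta^*\mu_n}\log|x-x_n|$ via Young's inequality, followed by an integration by parts that only sees the boundary traces of $u_n-\W_n$. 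No optimal profile is ever identified. Your proposed route would face exactly the wall the authors designed around, so this step, as you describe it, would not go through for general $m$.

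Second, you allow $x_0\in\ov\Omega$ but never rule out $x_0\in\partial\Omega$; the Green's-function comparison and the test-function construction live in the interior, so boundary blow-up must be excluded separately. For $m\ge 2$ moving planes are unavailable, and the paper handles this with a Pohozaev-type identity (Lemma~\ref{Pohozaev} / Lemma~\ref{no boundary}), together with the fact that $\mu_n u_n\rw 0$ when $x_0\in\partial\Omega$ (Lemma~\ref{conv 0 boun}). This is a genuinely missing step in your outline.

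Third, your claim that finiteness for $\beta<\beta^*$ is ``immediate from the equivalence of $\|\cdot\|_\alpha$ and $\|\cdot\|_{H^m_0}$ plus Adams'' is too quick: the Poincar\'e constant $1-\alpha/\lambda_1(\Omega)$ can be arbitrarily small, and a naive renormalization then only covers $\beta\le(1-\alpha/\lambda_1)\beta^*$. The paper instead uses the Lions-type improved inequality (Proposition~\ref{Lions}) together with Vitali to get finiteness and attainment simultaneously for all $\beta<\beta^*$.
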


The proof of the first part of Theorem \ref{main} for $\beta = \beta^*$ is the most difficult one and it is based on blow-up analysis for sequences of sub-critical extremals. We will take a sequence $\beta_n \nearrow \beta^*$ and find $u_n \in M_\alpha$, such that $F_{\beta_n}(u_n) = S_{\alpha,\beta_n}$.  If $u_n$ is bounded in $L^\infty(\Omega)$, then standard elliptic regularity proves that $u_n $ converges in $H^m(\Omega)$ to a function $u_0 \in M_\alpha$ such that $F_{\beta^*}(u_0)= S_{\alpha,\beta^*}$. Hence, one has to exclude that $u_n$ blows-up, i.e. that $\dis{\mu_n:= \max_{\ov{\Omega}}|u_n|\to +\infty}$. This is done through a contradiction argument.  On the one hand, if $\mu_n \to +\infty$, one can show that $u_n$ admits a unique blow-up point $x_0$ and give a precise description of the behavior of $u_n$ around $x_0$. Specifically, we will prove (see Proposition \ref{big}) that blow-up implies
\begin{equation*}
S_{\alpha,\beta^*}= \lim_{n\to + \infty} F_{\beta_n}(u_n) \le |\Omega|+\frac{Vol(\mathbb S^{2m})}{2^{2m}} e^{ \dis{\beta^* \bra{C_{\alpha,x_0} -I_m}}},
\end{equation*}
where $C_{\alpha,x_0}$ is the value at $x_0$ of the trace of the regular part of the Green's function for the operator $\lm - \alpha$, and $I_m$ is a dimensional constant.   On the other hand, by exhibiting a suitable test function, we will prove (see Proposition \ref{proptest}) that such upper bound cannot hold, concluding the proof. 

 While the general strategy is rather standard in the study of this kind of problems (see e.g. \cite{AD}, \cite{flu}, \cite{IulMan}, \cite{Li1}, \cite{Li2}, \cite{LuYa},  \cite{LuYa2}, \cite{Ngu} and \cite{Yang}), our proof introduces several elements of novelty. 

First, our description of the behaviour of $u_n$ near its blow-up point $x_0$ is sharper than the one given for $m=2$ in \cite{LuYa2} and \cite{Ngu}. There,  in order to compensate the lack of sufficiently sharp standard elliptic estimates on a small scale, the authors needed to modify the standard scaling  for the Euler-Lagrange equation satisfied by $u_n$.    Instead, following the approach first introduced in \cite{mar}, we are able to use the standard scaling replacing classical elliptic estimates with Lorentz-Zygmund type regularity estimates.  

Secondly, in order to describe the behaviour of $u_n$ far from $x_0$, we  extend to higher dimension the approach of Adimurthi and Druet \cite{AD}, which is based on the properties of truncations of $u_n$. To preserve the high-order regularity required in the high-dimensional setting, we introduce polyharmonic truncations. This step, requires precise pointwise estimates on the derivatives of $u_n$, which  are a generalisation of the ones in \cite{MS}, where the authors study sequences of positive critical points of $F_\beta$ constrained to spheres in  $H^m_0$. We stress that the results of \cite{MS} cannot be directly applied to our case, since here subcritical maximizers are not necessarily positive in $\Omega$ if $m\ge 2$. In addition,  the presence of the parameter $\alpha$ modifies the Euler-Lagrange equation.  While the differences in the nonlinearity do not create significant issues, the argument in \cite{MS} relies strongly on the positivity assumption. Therefore, here we propose a different proof. 

The most important feature of our proof of Theorem \ref{main} is that it does not rely on explicit capacity estimates. A crucial step in our blow-up analysis consists in finding sharp lower bounds for the integral of $|\Delta^\frac{m}{2}u_n|^2$ on annular regions.  In all the earlier works, this is achieved by comparing the energy of $u_n$ with the quantity
$$
i(a,b,R_1,R_2):= \min_{u\in E_{a,b}} \int_{ \cur{R_1 \le |x| \le R_2 }}  |\Delta^\frac{m}{2}u|^2 dy 
$$
for suitable choices of $a=(a_0,\ldots,a_{m-1})$, $b=(b_0,\ldots,b_{m-1})$, and where $E_{a,b}$ denotes the set of all the $H^m$ functions on  $\cur{R_1 \le |x|  \le R_2 } $ satisfying  $\partial_\nu^i u_n = a_i$ on $\partial B_{R_1}(0)$ and  $\partial_\nu^i u_n = b_i$ on $\partial B_{R_2}(0)$ for $i =0,\ldots, m-1$. While for $m=1$ or $m=2$, $i(a,b,R_1,R_2)$ can be explicitly computed, finding its expression for an arbitrary $m$ appears to be very hard. In our work we show that these capacity estimates are unnecessary, since equivalent lower bounds can be obtained by  directly comparing the Dirichlet energy of $u_n$ with the energy of a suitable polyharmonic function. This results in a considerable simplification of the proof, even for $m=1,2$. 

Finally, working with arbitrary values of $m$ makes much harder the construction of good test functions and the study of blow-up near $\partial \Omega$, since standard moving planes techniques are not available for $m\ge 2$. To address the last issue, we will apply the Pohozaev-type identity introduced in \cite{RobWei} and applied in \cite{MarPet} to Liouville-type equations. 

It would be interesting to extend our result to Adams' inequality in odd dimension or, more generally, to the non-local Moser-Trudinger inequality for fractional-order Sobolev spaces  proved in \cite{Marfraz}, for which the existence of extremals is still open. In this fractional setting, the behavior of  blowing-up subcritical extremals was studied in \cite{MarSch} (at least for nonnegative functions). However, obtaining capacity-type estimates becomes much more challenging, and our argument to avoid them relies strongly on the local nature of the operator $\lm$.    

This paper is organized as follows. In Section \ref{prel}, we will introduce some notation and state some preliminary results.  In Section \ref{subcrit}, we will focus on the subcritical case $\beta<\beta^*$.  In Section \ref{main sec}, we will analyze the blow up behavior of subcritical extremals. Since  this part of the paper  will discuss the most important elements of our work, it will be divided into several subsections. Finally, in Section \ref{sec test}, we will introduce new test functions and we will complete the proof of Theorem \ref{main}. For the reader convenience, we will recall in Appendix  some known results concerning elliptic estimates for the operator $\lm$.

\section*{Acknowledgments}
 
We  are grateful  to Professor Luca Martinazzi for introducing us to the problem and for supporting us in the preparation of this work with his encouragement and with many invaluable suggestions. 

A consistent part of this work was carried out while we were employed by the University of Basel. We would like to thank the Department of Mathematics and Computer Science for their hospitality and support.

\section{Preliminaries}\label{prel}
Throughout the paper we will denote by $\omega_l$ the $l-$dimensional Hausdorff measure of the unit sphere $\mathbb S^{l}\subseteq \R^{l+1}.$ We recall that, for any $m\ge 1$, 
\begin{equation}\label{omega}
\omega_{2m-1} = \frac{2\pi^m}{(m-1)!}  \qquad \mbox{ and } \qquad   \omega_{2m} = \frac{2^{m+1}\pi^m}{(2m-1)!!} .
\end{equation}

It is known that the fundamental solution of $\lm$ in $\R^{2m}$ is given by $-\frac{1}{\gamma_m} \log|x|$, where 
$$
\gamma_m:= \omega_{2m-1} 2^{2m-2} [(m-1)!]^2 
= \frac{\beta^*}{2m},
$$
with $\beta^*$ defined as in \eqref{Adams}. In other words, one has 
$$
\lm \bra{-\frac{2m}{\beta^*} \log|x| }=  \delta_0  \qquad \mbox{ in } \R^{2m}.
$$
More generally, for any $1 \le l \le m-1$, we have 
$$
\Delta^l (\log|x|)  =\tilde  K_{m,l} \frac{1}{|x|^{2l}},
$$
where 
\begin{equation}\label{Kml1}
\begin{split}
\tilde K_{m,l} & 
 = (-1)^{l+1} 2^{2l-1}  \frac{(l-1)!(m-1)!}{(m-l-1)! }.
\end{split}\end{equation}
This also yields 
$$
\Delta^{l+\frac{1}{2}} (\log|x|)  = -2l \tilde K_{m,l} \frac{x}{|x|^{2l+2}}.
$$
For any $1\le j\le 2m-1$, we define 
\begin{equation}\label{Kml2}
K_{m,\frac{j}{2}}:= \begin{Si}{cl}
\tilde K_{m,\frac{j}{2}} &   \text{ for } j \text{ even }\\ 
\rule{0cm}{0.5cm} -(j-1) \tilde K_{m,\frac{j-1}{2}} & \text{ for }  j \text{ odd}, j\ge 3, \\ 
1  & \text{ for }   j= 1.
\end{Si}
\end{equation}
Then, we obtain
\begin{equation}\label{ej}
\Delta^\frac{j}{2} (\log |x|) = \frac{K_{m,\frac{j}{2}}}{|x|^j} e_j(x),   \qquad \text{ where }\qquad e_j(y):= \begin{Si}{cc}
1 & j \text{ even,}\\
\frac{y}{|y|} & j\text{ odd}.
\end{Si}
\end{equation}

In order to use the same notation for all the values of $m$, we will use the symbol $\cdot $ to denote  both the scalar product between vectors in $\R^{2m}$ and the standard Euclidean product between reals numbers. This turns out to be very useful to have compact integration by parts formulas. For instance, we will use several times the following Proposition: 

\begin{prop}\label{parts}
Let $\Omega \subseteq \R^{2m}$ be a bounded open domain with Lipschitz boundary. Then, for any $u\in H^m(\Omega)$, $v\in H^{2m}(\Omega)$, we have 
$$
\int_{\Omega}\Delta^\frac{m}{2} u \cdot \Delta^\frac{m}{2} v\,  dx = \int_{\Omega} u \lm v \, dx -\sum_{j=0}^{m-1} \int_{\partial \Omega} (-1)^{m+j} \nu \cdot \Delta^{\frac{j}{2}} u \, \Delta^{\frac{2m-j-1}{2}} v  \, d\sigma,
$$
where $\nu$ denotes the outer normal to $\partial \Omega$. 
\end{prop}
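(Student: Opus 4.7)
I would prove this by iterated integration by parts, moving derivatives from $u$ to $v$ one ``half-Laplacian'' at a time. It is cleanest to work first with $u,v\in C^\infty(\overline\Omega)$ and then conclude by density/trace theorems.

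\textbf{Unified notation.} Introduce the ``half-step'' operator $D$ defined by $D\phi = \nabla\phi$ when $\phi$ is scalar and $D\phi=\mathrm{div}\,\phi$ when $\phi$ is vector-valued. Then by the definition of $\Delta^{j/2}$ given in the paper, one has $\Delta^{j/2}=D^{j}$, and in particular $D^{2m}=\Delta^m=(-1)^m \lm$. The basic Gauss--Green identity, which holds in both the scalar-vector and vector-scalar cases, reads
$$
\int_\Omega D\phi\cdot\psi\,dx \;=\; -\int_\Omega \phi\cdot D\psi\,dx \;+\; \int_{\partial\Omega} \nu\cdot\phi\,\psi\,d\sigma,
$$
where on the boundary one of $\phi,\psi$ is scalar so the product is unambiguous and the $\cdot$ with $\nu$ is the only relevant scalar product.

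\textbf{Iteration.} For $k=0,1,\ldots,m$ set $I_k:=\int_\Omega D^{m-k}u\cdot D^{m+k}v\,dx$, so $I_0$ is the left-hand side and $I_m=\int_\Omega u\,\Delta^m v\,dx=(-1)^m\int_\Omega u\,\lm v\,dx$. Applying the Gauss--Green identity above with $\phi=D^{m-k-1}u$ and $\psi=D^{m+k}v$ yields the recursion
$$
I_k \;=\; -I_{k+1} \;+\; B_k, \qquad B_k:=\int_{\partial\Omega}\nu\cdot D^{m-1-k}u\,D^{m+k}v\,d\sigma.
$$
Unrolling the recursion gives
$$
I_0 \;=\; (-1)^m I_m \;+\; \sum_{k=0}^{m-1}(-1)^k B_k \;=\; \int_\Omega u\,\lm v\,dx \;+\; \sum_{k=0}^{m-1}(-1)^k B_k.
$$

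\textbf{Relabelling the boundary terms.} Substituting $j=m-1-k$ one has $D^{m-1-k}u=\Delta^{j/2}u$, $D^{m+k}v=\Delta^{(2m-j-1)/2}v$, and $(-1)^k=(-1)^{m-1-j}=-(-1)^{m+j}$, so
$$
\sum_{k=0}^{m-1}(-1)^k B_k \;=\; -\sum_{j=0}^{m-1}(-1)^{m+j}\int_{\partial\Omega}\nu\cdot\Delta^{j/2}u\,\Delta^{(2m-j-1)/2}v\,d\sigma,
$$
which combined with the previous display is exactly the claimed identity. A sanity check in the case $m=1$ recovers the Green formula $\int_\Omega\nabla u\cdot\nabla v\,dx=-\int_\Omega u\,\Delta v\,dx+\int_{\partial\Omega}u\,\partial_\nu v\,d\sigma$, confirming that $\int u\,\lm v=-\int u\,\Delta v$ absorbs the sign correctly.

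\textbf{Regularity and density.} The identity is clear for smooth functions by the Gauss--Green theorem on Lipschitz domains. For $u\in H^m(\Omega)$ and $v\in H^{2m}(\Omega)$, each intermediate quantity $D^{m-k}u$ lies in $L^2(\Omega)$ for $k\ge 0$ and in $H^1(\Omega)$ for $k\ge 1$, while $D^{m+k}v$ lies in $H^1(\Omega)$ for $k\le m-1$ and in $L^2(\Omega)$ for $k=m$. Hence all interior integrals are absolutely convergent, and the boundary integrals $B_k$ make sense in the duality pairing $H^{1/2}(\partial\Omega)\times H^{-1/2}(\partial\Omega)$ (or $L^2\times L^2$) guaranteed by the trace theorem on a Lipschitz domain. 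Approximating $u,v$ by smooth functions in $H^m$ and $H^{2m}$ respectively, using continuity of the trace operators up to order $m-1$ for $u$ and $2m-1$ for $v$, the identity passes to the limit. The only mildly delicate point is verifying the continuity of these trace pairings with the appropriate regularity for the highest-order boundary terms ($j=m-1$ for $u$, giving $\Delta^{(m-1)/2}u$, paired with $\Delta^{m/2}v$), which is where the Lipschitz boundary hypothesis is used.
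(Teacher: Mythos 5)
Your proof is correct, and the paper itself states Proposition~\ref{parts} without proof (it is used as a known formula), so there is nothing in the paper to compare against route-by-route. Your argument is the natural one: introduce the half-step operator $D$ (gradient on scalars, divergence on vectors) so that $\Delta^{j/2}=D^j$, iterate the single Gauss--Green identity $\int_\Omega D\phi\cdot\psi = -\int_\Omega \phi\cdot D\psi + \int_{\partial\Omega}\nu\cdot\phi\,\psi$ to move derivatives from $u$ to $v$ one at a time, and then relabel the boundary terms. I checked the bookkeeping: the recursion $I_k=-I_{k+1}+B_k$ unrolls to $I_0=(-1)^mI_m+\sum_{k=0}^{m-1}(-1)^kB_k$; the identity $\Delta^m v=(-1)^m\lm v$ converts $(-1)^mI_m$ into $\int_\Omega u\,\lm v\,dx$; and the substitution $j=m-1-k$ gives $(-1)^k=-(-1)^{m+j}$, producing exactly the sign in the statement. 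The $m=1$ sanity check is also correct. One small remark on the regularity paragraph: on a Lipschitz domain the trace operator $H^s(\Omega)\to H^{s-1/2}(\partial\Omega)$ is only available in the restricted range $\tfrac12<s<\tfrac32$, so for the higher-order traces $\Delta^{j/2}u$ and $\Delta^{(2m-j-1)/2}v$ one should simply observe that each of these functions lies in $H^1(\Omega)$ (indeed $\Delta^{j/2}u\in H^{m-j}$ with $m-j\ge1$, and $\Delta^{(2m-j-1)/2}v\in H^{j+1}$ with $j+1\ge1$), so that their traces are in $L^2(\partial\Omega)$ and depend continuously on $u\in H^m$ and $v\in H^{2m}$; this is all that is needed to pass to the limit from smooth approximants.
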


A crucial role in our proof will be played by  Green's functions for operators of the form $\lm -\alpha$. We recall here that for any $x_0\in \Omega$, and  $0\le \alpha <\lambda_1(\Omega)$, there exists a unique distributional solution $G_{\alpha,x_0}$ of 
\begin{equation}\label{green}
\begin{Si}{cc}
\lm G_{\alpha,x_0}  = \alpha G_{\alpha,x_0} + \delta_{x_0}  & \mbox{ in }\Omega,\\
G_{\alpha,x_0} = \partial_\nu G_{\alpha,x_0} = \ldots = \partial^{m-1}_{\nu} G_{\alpha,x_0}  = 0    & \mbox{ on } \partial \Omega.
\end{Si}
\end{equation}

Some of the main properties of the function $G_{\alpha,x_0}$ are listed in the following Proposition.  We refer to \cite{Boggio} and \cite{AcSw} for the proof of the case $\alpha=0$, while the general case can be obtained with minor modifications.

\begin{prop}\label{prop green}  Let $\Omega$ be a bounded open set with smooth boundary. Then, for any $x_0\in \Omega$ and $0\le \alpha < \lambda_1(\Omega)$, we have:
\begin{enumerate}
\item There exist $C_{\alpha,x_0} \in \R$ and  $\psi_{\alpha,x_0} \in C^{2m-1}(\ov{\Omega})$ such that  $\psi_{\alpha,x_0}(x_0)=0$ and
$$G_{\alpha,x_0}(x)= -\frac{2m}{\beta^*}\log|x-x_0| + C_{\alpha,x_0} + \psi_{\alpha,x_0}(x), \qquad  \text{ for any } x\in \Omega \setminus \{x_0\}.$$  
\item There exists a constant $C = C(m,\alpha,\Omega)$ independent of $x_0$,  such that 
$$
|G_{\alpha, x_0}(x)| \le C |\log |x-x_0||,
$$ 
and 
$$
|\nabla^l G_{\alpha,x_0}(x)|  \le \frac{C}{|x-x_0|^l},$$
for any 
$ 1\le l  \le 2m-1, x \in \Omega \setminus \{x_0\}.$ 
\item $G_{\alpha,x_0}(x)= G_{\alpha,x}(x_0)$, for any $x\in \Omega \setminus \{x_0\}.$
\end{enumerate}
\end{prop}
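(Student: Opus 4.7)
The plan is to reduce everything to the case $\alpha=0$ treated in \cite{Boggio,AcSw} by regarding the term $\alpha G_{\alpha,x_0}$ as a lower order perturbation. Since $0\le\alpha<\lambda_1(\Omega)$, Poincar\'e's inequality makes $\lm-\alpha$ coercive on $H^m_0(\Omega)$, so the existence and uniqueness of the distributional solution $G_{\alpha,x_0}$ of \eqref{green} is standard.

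For part (1), I would set $H_{x_0}(x):=-\frac{2m}{\beta^*}\log|x-x_0|$ and $R_{\alpha,x_0}:=G_{\alpha,x_0}-H_{x_0}$. Since $\lm H_{x_0}=\delta_{x_0}$ in $\R^{2m}$, the remainder solves
$$
\lm R_{\alpha,x_0}-\alpha R_{\alpha,x_0}=\alpha H_{x_0}\quad\text{in }\Omega,\qquad \pa_\nu^j R_{\alpha,x_0}=-\pa_\nu^j H_{x_0}\ \text{on }\pa\Omega,\ j=0,\ldots,m-1.
$$
The right-hand side lies in $L^p(\Omega)$ for every $p<\infty$, and the boundary data is smooth since $\dist(x_0,\pa\Omega)>0$, so the $L^p$-Schauder theory for $\lm-\alpha$ recalled in the Appendix yields $R_{\alpha,x_0}\in W^{2m,p}(\Omega)$ for every finite $p$, hence $R_{\alpha,x_0}\in C^{2m-1,\gamma}(\ov\Omega)$ by Sobolev embedding. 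Setting $C_{\alpha,x_0}:=R_{\alpha,x_0}(x_0)$ and $\psi_{\alpha,x_0}:=R_{\alpha,x_0}-C_{\alpha,x_0}$ yields the desired decomposition.

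For part (2), both estimates then follow from the decomposition of part (1): we have $|H_{x_0}(x)|\le \frac{2m}{\beta^*}|\log|x-x_0||$ and $|\nabla^l H_{x_0}(x)|\le C|x-x_0|^{-l}$ by direct computation, while the $C^{2m-1}(\ov\Omega)$-regularity of $R_{\alpha,x_0}$ gives $\|R_{\alpha,x_0}\|_{C^{2m-1}(\ov\Omega)}\le C$, and these two contributions combine to produce the stated inequalities (the constant terms being absorbed by the singular ones once $|x-x_0|$ is bounded by $\diam(\Omega)$). The only point requiring care is that the constant can be chosen uniformly in $x_0\in\Omega$; this uniformity, delicate only as $x_0$ approaches $\pa\Omega$, can be obtained either by a local straightening of the boundary combined with uniform boundary $L^p$-estimates, or via the resolvent identity
$$
G_{\alpha,x_0}(x)=G_{0,x_0}(x)+\alpha\int_\Omega G_{0,x}(y)G_{\alpha,x_0}(y)\,dy,
$$
which reduces the question to the uniform bounds for $\alpha=0$ proved in \cite{AcSw}.

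For part (3), I would apply Proposition \ref{parts} to the pair $(G_{\alpha,x_0},G_{\alpha,x_1})$ on $\Omega\sm(B_\rho(x_0)\cup B_\rho(x_1))$ and let $\rho\to 0$. The homogeneous Dirichlet conditions kill the boundary terms on $\pa\Omega$, the bilinear form $\int(\Delta^{m/2}u\cdot\Delta^{m/2}v-\alpha uv)$ is symmetric in $(u,v)$, and a direct computation using the fundamental-solution part of $G_{\alpha,x_i}$ shows that the limiting boundary integrals on $\pa B_\rho(x_i)$ contribute exactly $G_{\alpha,x_1}(x_0)-G_{\alpha,x_0}(x_1)$, which must therefore vanish. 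The main obstacle throughout is the uniformity of constants in part (2) as $x_0$ approaches $\pa\Omega$; once this standard technical point is handled, the whole proposition is a routine perturbation of the $\alpha=0$ case.
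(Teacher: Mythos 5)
The paper does not actually prove Proposition \ref{prop green}: it cites \cite{Boggio} and \cite{AcSw} for the case $\alpha=0$ and asserts that ``the general case can be obtained with minor modifications.'' Your sketch is a correct and reasonable account of what those modifications are --- writing $G_{\alpha,x_0}=H_{x_0}+R_{\alpha,x_0}$ with $\lm R_{\alpha,x_0}-\alpha R_{\alpha,x_0}=\alpha H_{x_0}$ and smooth boundary data, bootstrapping $L^p$ regularity to get $R_{\alpha,x_0}\in C^{2m-1,\gamma}(\ov\Omega)$, using the resolvent identity $G_{\alpha,x_0}=G_{0,x_0}+\alpha\int_\Omega G_{0,\cdot}G_{\alpha,x_0}$ to transfer the uniform-in-$x_0$ bounds from the $\alpha=0$ case, and proving symmetry by integration by parts on $\Omega\sm(B_\rho(x_0)\cup B_\rho(x_1))$ --- so the proposal matches the route the authors have in mind. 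One minor gap worth flagging: the opening ``coercivity on $H^m_0$'' remark is not quite enough to produce $G_{\alpha,x_0}$ directly, since $\delta_{x_0}\notin H^{-m}(\Omega)$ in dimension $2m$; the cleanest existence argument is exactly the decomposition you use afterwards, namely define $G_{\alpha,x_0}:=G_{0,x_0}+w$ where $w\in H^m_0(\Omega)$ is the unique (by coercivity) solution of $\lm w-\alpha w=\alpha G_{0,x_0}$ with the right-hand side in $L^2(\Omega)$.
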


In addition, using integration by parts and  Proposition \ref{prop green},   we can establish the following new property.   

\begin{lemma}\label{int Green}
For any $x_0\in \Omega$ and $0\le \alpha<\lambda_1(\Omega)$, we have
$$
\int_{\Omega\setminus B_\delta(x_0)} |\Delta^\frac{m}{2} G_{\alpha,x_0} |^2 dx  = \alpha \|G_{\alpha,x_0} \|_{L^2(\Omega)}^2 - \frac{2m }{\beta^*} \log \delta + C_{\alpha,x_0} +H_m + O(\delta |\log \delta|),
$$
as $\delta \to 0$, where $C_{\alpha, x_0}$ is as in Proposition \ref{prop green} and
\begin{equation}\label{Hm}
H_m := \begin{Si}{cc}
\dis{\bra{\frac{2m}{\beta^*} }^2   \omega_{2m-1}  \sum_{j=1}^{m-1} (-1)^{j+m} K_{m,\frac{j}{2}} K_{m,\frac{2m-j-1}{2}}} & \mbox{ if } m\ge 2,\\
0  & \mbox{ if } m=1. 
\end{Si}
\end{equation}
\end{lemma}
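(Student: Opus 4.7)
The plan is to apply the integration by parts formula of Proposition \ref{parts} with $u=v=G_{\alpha,x_0}$ on the punctured domain $\Omega_\delta:=\Omega\setminus B_\delta(x_0)$, and reduce the whole identity to a collection of spherical integrals on $\partial B_\delta(x_0)$. On $\Omega_\delta$ the Green's function is smooth and satisfies $\lm G_{\alpha,x_0}=\alpha G_{\alpha,x_0}$ pointwise, so the bulk contribution gives
\begin{equation*}
\int_{\Omega_\delta} G_{\alpha,x_0}\,\lm G_{\alpha,x_0}\,dx=\alpha\|G_{\alpha,x_0}\|_{L^2(\Omega_\delta)}^2=\alpha\|G_{\alpha,x_0}\|_{L^2(\Omega)}^2+O(\delta^{2m}|\log\delta|^2),
\end{equation*}
where the error bounds $\int_{B_\delta(x_0)} G_{\alpha,x_0}^2\,dx$ via the pointwise estimate $|G_{\alpha,x_0}|\le C|\log|x-x_0||$ of Proposition \ref{prop green}.

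Next I would dispose of the boundary integrals on $\partial\Omega$. Since $G_{\alpha,x_0}$ together with its first $m-1$ normal derivatives vanishes on $\partial\Omega$, a standard argument (tangential derivatives of vanishing traces are again zero) shows that \emph{every} partial derivative of $G_{\alpha,x_0}$ of order at most $m-1$ vanishes on $\partial\Omega$; in particular $\Delta^{j/2}G_{\alpha,x_0}=0$ on $\partial\Omega$ for every $0\le j\le m-1$, which kills all the corresponding summands. What remains is the sum of boundary integrals on $\partial B_\delta(x_0)$, with outer normal $\nu=-(x-x_0)/|x-x_0|$, into which I would substitute the decomposition
\begin{equation*}
G_{\alpha,x_0}(x)=-\frac{2m}{\beta^*}\log|x-x_0|+C_{\alpha,x_0}+\psi_{\alpha,x_0}(x)
\end{equation*}
of Proposition \ref{prop green} (with $\psi_{\alpha,x_0}\in C^{2m-1}(\ov\Omega)$ and $\psi_{\alpha,x_0}(x_0)=0$), together with the explicit formulas \eqref{ej} for $\Delta^{j/2}\log|x-x_0|$.

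The $j=0$ contribution is the only one containing the logarithmic term and $C_{\alpha,x_0}$ directly; after interpreting the summand in Proposition \ref{parts} as $G_{\alpha,x_0}\,\partial_\nu\Delta^{m-1}G_{\alpha,x_0}$ (so that $\nu$ is paired with the unique vector factor), a direct spherical integration together with the identity $\frac{2m}{\beta^*}K_{m,(2m-1)/2}\omega_{2m-1}=(-1)^{m+1}$, which follows from $\gamma_m=\omega_{2m-1}2^{2m-2}[(m-1)!]^2$ and the explicit formula \eqref{Kml2}, produces precisely $-\frac{2m}{\beta^*}\log\delta+C_{\alpha,x_0}+O(\delta|\log\delta|)$. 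For each $j=1,\ldots,m-1$, both $\Delta^{j/2}G_{\alpha,x_0}$ and $\Delta^{(2m-j-1)/2}G_{\alpha,x_0}$ are singular of orders $|x-x_0|^{-j}$ and $|x-x_0|^{-(2m-j-1)}$; the dot product with $\nu$ produces a factor $|x-x_0|^{-(2m-1)}$ that exactly balances the $(2m-1)$-dimensional area element, so each such term contributes the constant $(-1)^{m+j}\bra{\frac{2m}{\beta^*}}^2K_{m,\frac{j}{2}}K_{m,\frac{2m-j-1}{2}}\omega_{2m-1}$, and summing over $j=1,\ldots,m-1$ reconstructs $H_m$.

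The main difficulty is bookkeeping combined with error control. On the bookkeeping side, one must carefully combine (i) the $(-1)^{m+j}$ prefactor from Proposition \ref{parts}, (ii) the sign from $\nu=-(x-x_0)/|x-x_0|$ (the outer normal to $\Omega_\delta$ on $\partial B_\delta(x_0)$ points towards $x_0$), and (iii) the fact that since $j+(2m-j-1)=2m-1$ is odd, exactly one of $\Delta^{j/2}G_{\alpha,x_0}$ and $\Delta^{(2m-j-1)/2}G_{\alpha,x_0}$ is a vector and the other a scalar, so that the dot product with $\nu$ in Proposition \ref{parts} always pairs $\nu$ with whichever factor is a gradient. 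For the error term, the delicate point is that the cross products between the singular and regular parts of $G_{\alpha,x_0}$ on $\partial B_\delta(x_0)$ could a priori be as large as $O(1)$; checking that they are actually $O(\delta|\log\delta|)$ uses both $\psi_{\alpha,x_0}(x_0)=0$ (which forces $\psi_{\alpha,x_0}=O(\delta)$ on $\partial B_\delta(x_0)$) and the symmetry identity $\int_{\partial B_\delta(x_0)}\tfrac{x-x_0}{|x-x_0|}\,d\sigma=0$, which annihilates the linear-in-$y$ cross terms.
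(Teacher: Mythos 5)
Your proposal follows essentially the same route as the paper: integrate by parts via Proposition \ref{parts} on $\Omega\setminus B_\delta(x_0)$, use $\lm G_{\alpha,x_0}=\alpha G_{\alpha,x_0}$ for the bulk, drop the $\partial\Omega$ terms by the Dirichlet conditions, and expand the $\partial B_\delta(x_0)$ integrals via the decomposition of $G_{\alpha,x_0}$ and the formulas \eqref{ej}. The only (harmless) superfluity is the zero-average identity $\int_{\partial B_\delta(x_0)}\tfrac{x-x_0}{|x-x_0|}\,d\sigma=0$: it is not needed to reach the stated $O(\delta|\log\delta|)$, since for $j\ge 1$ the cross terms are already $O(\delta^{2m-1-j})+O(\delta^{j})=O(\delta)$ by scaling alone, and for $j=0$ the normalization $\psi_{\alpha,x_0}(x_0)=0$ together with the area element gives $O(\delta)+O(\delta^{2m-1}|\log\delta|)$.
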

\begin{proof}
From  Proposition \ref{parts} applied in $\Omega \setminus B_\delta(x_0)$ and \eqref{green}, we find 
$$
\int_{\Omega\setminus B_\delta(x_0)} |\Delta^\frac{m}{2} G_{\alpha,x_0} |^2 dx  =  \alpha \int_{\Omega\setminus B_\delta(x_0)} G_{\alpha,x_0}^2 dx + \sum_{j=0}^{m-1}\int_{\partial B_\delta(x_0)} (-1)^{m+j} \nu \cdot \Delta^{\frac{j}{2}} G_{\alpha,x_0} \Delta^{\frac{2m-j-1}{2}} G_{\alpha,x_0} \, d\sigma.
$$ 
On $\partial B_\delta(x_0)$, Proposition \ref{prop green}, \eqref{ej},  and the identity $\frac{2m}{\beta^*}K_{m,\frac{m-1}{2}}= \frac{(-1)^{m-1}}{\omega_{2m-1}} $ yield  
\[\begin{split}
\nu \cdot G_{\alpha,x_0} \Delta^{\frac{2m-1}{2}} G_{\alpha,x_0}   & =\bra{-\frac{2m}{\beta^*} \log \delta + C_{\alpha,x_0} +O(\delta)} \bra{\frac{-2m}{\beta^*}K_{m,\frac{2m-1}{2}}\delta^{1-2m}+O(1)} \\
&= \frac{(-1)^m  }{\omega_{2m-1}} \delta^{1-2m}  \bra{-\frac{2m}{\beta^*}\log \delta +C_{\alpha,x_0} + O(\delta) + O(\delta^{2m-1} |\log \delta|)},
\end{split}
\]
and, for $m\ge 2$ and $1\le j \le m-1$, that
\[\begin{split}
 \nu \cdot \Delta^{\frac{j}{2}} G_{\alpha,x_0} \Delta^{\frac{2m-j-1}{2}} G_{\alpha,x_0}  &= \bra{-\frac{2m}{\beta^*}K_{m,\frac{j}{2}}\delta^{-j}+O(1)}  \bra{-\frac{2m }{\beta^*} K_{m,\frac{2m-j-1}{2}} \delta^{1+j-2m}+ O(1)} \\
& = \bra{\frac{2m}{\beta^*}}^2 K_{m,\frac{j}{2}} K_{m,\frac{2m-j-1}{2}} \delta^{1-2m} (1+O(\delta^j)).
\end{split}
\]
Then, we get 
\begin{equation}\label{int boun}
\sum_{j=0}^{m-1}\int_{\partial B_\delta(x_0)} (-1)^{m+j} \nu \cdot \Delta^{\frac{j}{2}} G_{\alpha,x_0} \Delta^{\frac{2m-j-1}{2}} G_{\alpha,x_0} \, d\sigma = - \frac{2m }{\beta^*} \log \delta + C_{\alpha,x_0} +H_m + O(\delta |\log \delta|),
\end{equation}
with $H_m$ as in \eqref{Hm}. 
Finally, applying again Proposition \ref{prop green}, 
we find 
\begin{equation}\label{l2 part}
\int_{\Omega\setminus B_\delta(x_0)} G_{\alpha,x_0}^2 dx = \|G_{\alpha,x_0} \|_{L^2(\Omega)}^2 + O(\delta^{2m}\log^2 \delta).
\end{equation}
The conclusion follows by \eqref{int boun} and \eqref{l2 part}. 
\end{proof}

\begin{rem}One can further observe that  
$$
H_m = \frac{m}{\beta^*} \sum_{j=1}^{m-1} \frac{(-1)^{[\frac{2j}{m}]}}{j}.
$$
Indeed,  we have the identity
$$
(-1)^m \omega_{2m-1} \frac{2m}{\beta^*} K_{m,\frac{j}{2}} K_{m,m-\frac{j}{2}-\frac{1}{2}} = 
\begin{Si}{cc}
 \frac{1}{j} &  j  \text{ even}, \\
\rule{0cm}{0.5cm} \frac{1}{2m-j-1}  &  j \text{ odd}.
\end{Si}
$$
Hence,
\[\begin{split}
\omega_{2m-1} \frac{2m}{\beta^*}  \sum_{j=1}^{m-1} (-1)^{j+m} K_{m,\frac{j}{2}} K_{m,m-\frac{j}{2}-\frac{1}{2}}  &=  \sum_{j=1,\, j \text{ even}}^{m-1} \frac{1}{j} -  \sum_{j=1,\, j \text{ odd}}^{m-1} \frac{1}{2m-j-1} \\
& = \sum_{j=1,\, j \text{ even}}^{m-1} \frac{1}{j} -  \sum_{j=m,\,  j \text{ even}}^{2m-2} \frac{1}{j} \\
& = \frac{1}{2} \sum_{j=1}^{m-1} \frac{(-1)^{[\frac{2j}{m}]}}{j}.
\end{split}
\]
\end{rem}

We conclude this section, by recalling the following standard consequence of Adams' inequality and the density of $C^{\infty}_c(\Omega)$ in $H^m_0(\Omega)$.

\begin{lemma}\label{integrability}
For any $u\in H^m_0(\Omega)$ and $\beta\in \R^+$, we have $e^{\beta u^2}\in L^1(\Omega)$. 
\end{lemma}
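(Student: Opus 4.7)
The plan is to reduce the integrability of $e^{\beta u^2}$ for an arbitrary $u\in H^m_0(\Omega)$ to the a priori sub-critical regime covered by Adams' inequality \eqref{Adams}, exploiting the density of $C^\infty_c(\Omega)$ in $H^m_0(\Omega)$ to write $u$ as the sum of a smooth bounded piece and a piece with arbitrarily small $H^m_0$-norm.

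First, I would fix $u\in H^m_0(\Omega)$ and $\beta>0$, and pick $\eps>0$. For any $\varphi\in C^\infty_c(\Omega)$, using the elementary bound $(a+b)^2\le(1+\eps)a^2+(1+\eps^{-1})b^2$, one obtains pointwise
$$
e^{\beta u^2}\le e^{\beta(1+\eps)\varphi^2}\,e^{\beta(1+\eps^{-1})(u-\varphi)^2}.
$$
I would then apply H\"older's inequality with conjugate exponents $p,q>1$ to split the integral, so that
$$
\int_\Omega e^{\beta u^2}\,dx\le\bra{\int_\Omega e^{p\beta(1+\eps)\varphi^2}\,dx}^{1/p}\bra{\int_\Omega e^{q\beta(1+\eps^{-1})(u-\varphi)^2}\,dx}^{1/q}.
$$
The first factor is clearly finite, since $\varphi$ is smooth and compactly supported, hence bounded on $\ov\Omega$.

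For the second factor, the key step is to invoke density: choose $\varphi\in C^\infty_c(\Omega)$ so close to $u$ in $H^m_0(\Omega)$ that
$$
q\beta(1+\eps^{-1})\|u-\varphi\|_{H^m_0(\Omega)}^{2}\le \beta^*.
$$
Setting $v:=(u-\varphi)/\|u-\varphi\|_{H^m_0(\Omega)}$ (if $u=\varphi$ there is nothing to prove), one has $v\in M_0$ and
$$
q\beta(1+\eps^{-1})(u-\varphi)^2\le \beta^* v^2,
$$
so Adams' inequality \eqref{Adams} bounds the second factor by a finite constant. Combining the two estimates yields $e^{\beta u^2}\in L^1(\Omega)$.

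There is no genuine obstacle here: the only mildly delicate point is the bookkeeping to ensure that the exponent on $(u-\varphi)^2$ falls at or below the critical threshold $\beta^*$, which is guaranteed by choosing $\varphi$ sufficiently close to $u$ in $H^m_0(\Omega)$ after the parameters $\eps,p,q$ have been fixed. Note also that the parameters $\eps$, $p$, $q$ can simply be fixed (e.g.\ $\eps=1$, $p=q=2$) before selecting $\varphi$, so no circularity arises.
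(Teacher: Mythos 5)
Your proof is correct and follows essentially the same strategy as the paper: approximate $u$ by $\varphi\in C^\infty_c(\Omega)$ with $\|u-\varphi\|_{H^m_0}$ small, split $u^2$ via an elementary inequality, and apply Adams' inequality \eqref{Adams} to the small remainder. The only cosmetic difference is that you invoke H\"older's inequality to separate the two factors, whereas the paper simply pulls out the $L^\infty$ norm of $e^{2\beta\varphi^2}$ (which is finite since $\varphi$ is bounded), avoiding the extra exponent bookkeeping.
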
 
\begin{proof}
For any $\eps>0$ we can find a function $v_{\eps}\in  C^\infty_0(\Omega)$ such that  $\|v_\eps - u \|_{H^m_0(\Omega)}^2\le \eps $. Since 
$$
u^2 = v_\eps^2 + (u-v_\eps)^2 + 2 v_\eps (u-v_\eps) \le 2 v_\eps^2 + 2 (u-v_\eps)^2,
$$
we have 
$$
e^{\beta u^2} \le \|e^{2\beta v_\eps^2}\|_{L^\infty(\Omega)} e^{2\beta (u-v_\eps)^2} \le  \|e^{2\beta v_\eps^2}\|_{L^\infty(\Omega)} e^{2\beta\eps  \bra{\frac{u-v_\eps}{\|u-v_\eps\|_{H^m_0(\Omega)}}}^2}.
$$
If we choose $\eps>0$ small enough, we get $2\eps \beta \le \beta^*$ and, applying Adam's inequality \eqref{Adams}, we find 
$$
\int_{\Omega} e^{\beta u^2}dx 
\le \|e^{2\beta v_\eps^2}\|_{L^\infty(\Omega)} F_{\beta^*}\bra{ \frac{u-v_\eps}{\|u-v_\eps\|_{H^m_0(\Omega)}}} <+\infty. 
$$
\end{proof}

\section{Subcritical inequalities and their extremals}\label{subcrit}
In this section, we prove the existence of extremal functions  for $F_\beta$ on $M_\alpha$ in the subcritical case  $\beta<\beta^*$, $0\le \alpha<\lambda_1(\Omega)$. As in the case $m=1$, this is a consequence of Vitali's convergence theorem and of the following improved Adams-type inequality, which is a generalization of Theorem $1.6$ in \cite{Lions}.  

\begin{prop}\label{Lions}
Let $u_n\in H^m_0(\Omega)$ be a sequence of functions such that $\|u_n\|_{H^m_0(\Omega)}\le 1$ and $u_n\rw u_0$ in $H^m_0(\Omega)$. Then, for any $0<p<\frac{1}{1-\|u_0\|_{H^m_0}^2}$, we have 
$$
\limsup_{n\to +\infty } F_{p\beta^*} (u_n) <+\infty. 
$$
\end{prop}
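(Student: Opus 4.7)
The plan is to adapt the classical argument of P.-L. Lions, using that $u_n - u_0 \rightharpoonup 0$ weakly in $H^m_0(\Omega)$, combined with H\"older's inequality, Adams' inequality \eqref{Adams} applied to a normalization of the remainder, and Lemma \ref{integrability} to control the contribution of $u_0$.

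First, I would set $v_n := u_n - u_0$, so that $v_n \rightharpoonup 0$ in $H^m_0(\Omega)$. Expanding $\|u_n\|_{H^m_0(\Omega)}^2 = \|u_0\|_{H^m_0(\Omega)}^2 + \|v_n\|_{H^m_0(\Omega)}^2 + 2\langle u_0,v_n\rangle_{H^m_0(\Omega)}$ and using $\langle u_0,v_n\rangle_{H^m_0(\Omega)}\to 0$ together with $\|u_n\|_{H^m_0(\Omega)}\le 1$, I would conclude
$$
\limsup_{n\to+\infty}\|v_n\|_{H^m_0(\Omega)}^2 \le 1-\|u_0\|_{H^m_0(\Omega)}^2.
$$
Since $p<\frac{1}{1-\|u_0\|_{H^m_0(\Omega)}^2}$, I can choose $\eps>0$ small and $q>1$ sufficiently close to $1$ so that
$$
q(1+\eps)\,p\,\bra{1-\|u_0\|_{H^m_0(\Omega)}^2}<1;
$$
this is the key quantitative room given by the hypothesis on $p$. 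Then, for all $n$ large enough, the real number $c_n:=q(1+\eps)\,p\,\|v_n\|_{H^m_0(\Omega)}^2$ satisfies $c_n\le 1$.

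Next, using Young's inequality $2u_0v_n\le \eps v_n^2+\eps^{-1}u_0^2$ I would write pointwise
$$
u_n^2=u_0^2+2u_0v_n+v_n^2\le (1+\eps)v_n^2+(1+\eps^{-1})u_0^2,
$$
which implies $e^{p\beta^* u_n^2}\le e^{(1+\eps)p\beta^* v_n^2}\,e^{(1+\eps^{-1})p\beta^* u_0^2}$. Applying H\"older's inequality with exponents $q$ and $q'=q/(q-1)$ yields
$$
F_{p\beta^*}(u_n)\le \bra{\int_\Omega e^{q(1+\eps)p\beta^* v_n^2}\,dx}^{\!\!1/q}\!\bra{\int_\Omega e^{q'(1+\eps^{-1})p\beta^* u_0^2}\,dx}^{\!\!1/q'}.
$$
The second factor is finite, independently of $n$, by Lemma \ref{integrability}, since $u_0\in H^m_0(\Omega)$.

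For the first factor, on the set $\{\|v_n\|_{H^m_0(\Omega)}>0\}$ I would rewrite the integrand as $e^{c_n\beta^*(v_n/\|v_n\|_{H^m_0(\Omega)})^2}$. Since $c_n\le 1$ for $n$ large, the pointwise bound $e^{c_n\beta^* (v_n/\|v_n\|_{H^m_0(\Omega)})^2}\le e^{\beta^* (v_n/\|v_n\|_{H^m_0(\Omega)})^2}$ together with Adams' inequality \eqref{Adams} applied to the unit-norm function $v_n/\|v_n\|_{H^m_0(\Omega)}\in M_0$ gives a uniform bound on this integral; the case $v_n\equiv 0$ is trivial. Combining the two estimates yields $\limsup_n F_{p\beta^*}(u_n)<+\infty$, which is the claim. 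The only delicate point in the argument is the simultaneous choice of $\eps$ and $q$: both quantities must be selected so that $q(1+\eps)\,p\,(1-\|u_0\|_{H^m_0(\Omega)}^2)<1$, which is possible precisely because of the strict inequality $p<\frac{1}{1-\|u_0\|_{H^m_0(\Omega)}^2}$ assumed in the statement; everything else is a standard application of Adams' inequality to the weakly vanishing remainder.
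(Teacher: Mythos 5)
Your proof is correct and follows essentially the same strategy as the paper's: decompose $u_n$ into $u_0$ plus the weakly vanishing remainder, bound $u_n^2$ via Young's inequality, split with H\"older, control the $u_0$ factor by Lemma \ref{integrability}, and control the remainder by normalizing and applying Adams' inequality; the only difference is cosmetic (the paper parametrizes the Young inequality by $\gamma$ and introduces an intermediate $\sigma$, while you use $\eps$ and choose $q$ directly), which amounts to the same choice of constants.
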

\begin{proof}
First, we observe that 
$$
\|u_n-u_0\|_{H^m_0(\Omega)}^2 = \|u_n\|_{H^m_0(\Omega)}^2 + \|u_0\|_{H^m_0(\Omega)}^2 - 2(u_n,u_0)_{H^m_0(\Omega)} \le 1 - \|u_0\|_{H^m_0(\Omega)}^2 + o(1). 
$$ 
Hence, there exists $\sigma>0$ such that    
$$
p\|u_n-u_0\|_{H^m_0(\Omega)}^2 \le \sigma <1, 
$$
for sufficiently large $n$. For any $\gamma>0$, we have
$$
u_n^2 
 \le (1+\gamma^2)  u_0^2  + (1+\frac{1}{\gamma^2}) (u_n-u_0)^2.
$$
Since $0<\sigma<1$, we can choose $\gamma$ sufficiently large so that $\sigma \bra{1+\frac{1}{\gamma^2}}<1$. Applying H\"older's inequality with exponents $q =\frac{1}{\sigma \bra{1+\frac{1}{\gamma^2}} } $ and $q'= \frac{q}{q-1}$, we get 
$$
F_{p \beta^*} (u_n) \le  \int_{\Omega} e^{p\beta^* (1+\gamma^2) u_0^2} e^{p\beta^* (1+\frac{1}{\gamma^2}) (u_n-u_0)^2} dx  \le \|e^{p\beta^* (1+\gamma^2) u_0^2}\|_{L^{q'}(\Omega)} \| e^{p\beta^* (1+\frac{1}{\gamma^2}) (u_n-u_0)^2} \|_{L^q(\Omega)}.
$$
Lemma \ref{integrability} guarantees that  $\|e^{p\beta^* (1+\gamma^2) u_0^2}\|_{L^{q'}(\Omega)}<+\infty$. Moreover, since 
$$
p q (1+\frac{1}{\gamma^2}) \|u_n-u_0\|_{H^m_0(\Omega)}^2 = \frac{p}{\sigma}\|u_n-u_0\|_{H^m_0(\Omega)}^2 \le 1,
$$
for large $n$, Adams' inequality \eqref{Adams} yields
$$
 \| e^{p\beta^* (1+\frac{1}{\gamma^2}) (u_n-u_0)^2} \|_{L^q(\Omega)} = F_{\beta^*} \bra{\sqrt{p q (1+\frac{1}{\gamma^2})}(u_n-u_0)}^\frac{1}{q} \le S_{0,\beta^*}^\frac{1}{q}<+\infty. 
$$
Hence, $\limsup_{n\to + \infty } F_{p\beta^*} (u_n) <+\infty.$
\end{proof}

Next we recall the following consequence of Vitali's convergence theorem (see e.g. \cite{Rudin}
).  

\begin{trm}
\label{Vitali}
Let $\Omega\subseteq \R^{2m}$ be a bounded open set and take a sequence $\{f_n\}_{n\in \N} \subseteq L^1(\Omega).$ Assume that:
\begin{enumerate}
\item For a.e. $x\in \Omega$ the pointwise limit $f(x):=\lim_{n\to + \infty}f_n(x)$ exists.
\item There exists $p>1$ such that $\|f_n\|_{L^p(\Omega)}\le C$. 
\end{enumerate}
Then, $f\in L^1(\Omega)$ and $f_n\to f$ in $L^1(\Omega)$. 
\end{trm}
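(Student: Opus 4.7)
The plan is to derive this statement as a direct corollary of the classical Vitali convergence theorem, exploiting the $L^p$ bound with $p>1$ to get uniform integrability of $\{f_n\}$ (tightness being automatic since $\Omega$ is bounded, hence has finite Lebesgue measure).

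First, I would establish uniform integrability. For any measurable $E\subseteq \Omega$, H\"older's inequality with exponents $p$ and $p' = \frac{p}{p-1}$ gives
$$
\int_E |f_n|\, dx \le \|f_n\|_{L^p(\Omega)} |E|^{1/p'} \le C |E|^{1/p'},
$$
and since $p>1$, the right-hand side tends to $0$ uniformly in $n$ as $|E|\to 0$. Next, I would show that $f\in L^p(\Omega)\subseteq L^1(\Omega)$: by Fatou's lemma applied to $|f_n|^p$ (which converges a.e. to $|f|^p$), $\|f\|_{L^p(\Omega)}\le \liminf_n \|f_n\|_{L^p(\Omega)}\le C$, and since $|\Omega|<+\infty$, H\"older gives $f\in L^1(\Omega)$.

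To conclude $f_n\to f$ in $L^1(\Omega)$, I would combine uniform integrability with Egorov's theorem. Given $\eps>0$, uniform integrability provides $\delta>0$ such that $\int_E |f_n|\,dx<\eps$ for every measurable $E$ with $|E|<\delta$ and every $n$; by Fatou the same bound holds for $f$. By Egorov's theorem, since $f_n\to f$ a.e. on the bounded set $\Omega$, there exists $A\subseteq \Omega$ with $|\Omega\setminus A|<\delta$ on which $f_n\to f$ uniformly. Then
$$
\int_\Omega |f_n-f|\,dx \le \int_A |f_n-f|\,dx + \int_{\Omega\setminus A}|f_n|\,dx + \int_{\Omega\setminus A}|f|\,dx \le |A|\,\|f_n-f\|_{L^\infty(A)} + 2\eps,
$$
and the first term tends to $0$ as $n\to +\infty$. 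Since $\eps$ is arbitrary, $\|f_n-f\|_{L^1(\Omega)}\to 0$.

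There is no genuine obstacle here — the statement is a textbook specialization of Vitali's theorem. The only point that requires a little care is the derivation of uniform integrability from the $L^p$ bound, which is the elementary H\"older estimate above; everything else is the standard Egorov-plus-uniform-integrability argument.
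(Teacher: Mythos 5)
Your proof is correct and is precisely the standard argument (H\"older for uniform integrability, Fatou for $f\in L^1$, and Egorov to finish). The paper in fact does not prove this statement at all; it is stated as a known consequence of Vitali's convergence theorem with a citation to Rudin's textbook, so there is no proof in the paper to compare against — but your blind reconstruction is exactly what that reference supplies.
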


We can now prove the existence of subcritical extremals. 

\begin{prop}\label{sub}
For any $\beta <\beta^*$ and $0\le \alpha <\lambda_1(\Omega)$, we have $S_{\alpha,\beta}<+\infty$. Moreover $S_{\alpha, \beta}$ is attained, i.e., there exists $u_{\alpha,\beta}\in M_{\alpha}$ such that $S_{\alpha,\beta}=F_\beta(u_{\alpha,\beta}).$ 
\end{prop}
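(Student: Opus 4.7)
Let $u_n\in M_\alpha$ be a maximizing sequence for $S_{\alpha,\beta}$; the goal is to show $F_\beta(u_n)\to F_\beta(u_0)$ for some weak limit $u_0\in M_\alpha$, which gives both $S_{\alpha,\beta}<+\infty$ and attainment at once. Since $\|\cdot\|_\alpha$ is equivalent to $\|\cdot\|_{H^m_0(\Omega)}$ by Poincar\'e, $\{u_n\}$ is bounded in $H^m_0(\Omega)$, and up to a subsequence $u_n\rw u_0$ weakly in $H^m_0(\Omega)$, $u_n\to u_0$ strongly in $L^2(\Omega)$ (Rellich) and pointwise a.e.\ on $\Omega$; weak lower semicontinuity of $\|\cdot\|_\alpha$ yields $u_0\in M_\alpha$.

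The strategy is to set $t_n:=\|u_n\|_{H^m_0(\Omega)}$, pass to a subsequence so that $t_n\to t\ge 0$, rescale $v_n:=u_n/t_n$ so that $\|v_n\|_{H^m_0}=1$ fits the normalization of Proposition~\ref{Lions}, and track the exponent through the identity $F_{s}(u_n)=F_{s t_n^2}(v_n)$. The case $t=0$ is easy: then $u_0=0$ and for any $p>1$ one has $p\beta t_n^2<\beta^*$ eventually, so Adams' inequality~\eqref{Adams} applied to $v_n$ yields a uniform $L^p$ bound on $e^{\beta u_n^2}$. For $t>0$, the weak limit is $v_0:=u_0/t$ and $\|v_0\|_{H^m_0}^2=\|u_0\|_{H^m_0}^2/t^2$. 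The key quantitative estimate, coming from $\|u_n\|_\alpha\le 1$ and $u_n\to u_0$ strongly in $L^2(\Omega)$, is
$$
t^2-\|u_0\|_{H^m_0(\Omega)}^2 \;\le\; 1+\alpha\|u_0\|_{L^2(\Omega)}^2-\|u_0\|_{H^m_0(\Omega)}^2 \;=\; 1-\|u_0\|_\alpha^2\;\le\; 1.
$$

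Since $\beta<\beta^*$, any $p\in(1,\beta^*/\beta)$ satisfies $p\beta(t^2-\|u_0\|_{H^m_0}^2)<\beta^*$, which rewrites as $p\beta t^2/\beta^*<1/(1-\|v_0\|_{H^m_0}^2)$. For $n$ large, Proposition~\ref{Lions} applied to $v_n$ therefore gives $\limsup_n F_{p\beta}(u_n)=\limsup_n F_{p\beta t_n^2}(v_n)<+\infty$. Combined with the a.e.\ convergence $e^{\beta u_n^2}\to e^{\beta u_0^2}$, Vitali's theorem (Theorem~\ref{Vitali}) applied with this $p>1$ yields $F_\beta(u_n)\to F_\beta(u_0)$, forcing $S_{\alpha,\beta}=F_\beta(u_0)<+\infty$ and identifying $u_0$ as the sought extremal. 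The only real subtlety is the rescaling step needed to fit the normalization of Proposition~\ref{Lions} and to absorb the $\alpha$-correction in $\|\cdot\|_\alpha$ through the estimate $t^2-\|u_0\|_{H^m_0}^2\le 1-\|u_0\|_\alpha^2$; the rest is a routine compactness/Vitali argument made possible by the strict subcriticality $\beta<\beta^*$.
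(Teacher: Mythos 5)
Your proof is correct and follows essentially the same route as the paper's: a maximizing sequence is controlled by combining Proposition~\ref{Lions} (after rescaling to unit $H^m_0$-norm) with Adams' inequality, yielding a uniform $L^p$ bound on $e^{\beta u_n^2}$ for some $p>1$, and then Theorem~\ref{Vitali} gives the convergence of $F_\beta(u_n)$. The one small difference is cosmetic: you split according to whether $t=\lim\|u_n\|_{H^m_0}$ vanishes, whereas the paper splits on whether the weak limit $u_0$ vanishes, and you extract the needed strict inequality from the subcriticality $\beta<\beta^*$ via $p\beta(t^2-\|u_0\|_{H^m_0}^2)\le p\beta<\beta^*$, while in the case $u_0\neq 0$ the paper extracts it from $1-\|u_0\|_\alpha^2<1$. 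Both choices are sound and deliver the same conclusion.
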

\begin{proof}
Let $u_n \in M_\alpha$ be a maximizing sequence for $F_\beta$, i.e. such that $F_\beta(u_n)\to S_{\alpha,\beta}$ as $n\to + \infty$. Since $F_\beta(u_n)\le F_\beta(\frac{u_n}{\|u_n\|_\alpha})$, w.l.o.g we can assume $\|u_n\|_\alpha =1$, for any $n\in \N$. 
Since $\alpha<\lambda_1(\Omega)$, $u_n$ is uniformly bounded in $H^m_0(\Omega)$. In particular, extracting   a subsequence,  we can find $u_0\in H^m_0(\Omega)$ such that $u_n\rw u_0$ in $H^m_0(\Omega)$, $u_n\to u_0$ in $L^2(\Omega)$ and $u_n \to u_0$ a.e. in $\Omega$. Observe that 
\[
\begin{split}
\|u_0\|_{\alpha}^2 = \|u_0\|_{H^m_0(\Omega)}^2 -\alpha \|u_0 \|_{L^2(\Omega)}^2  \le \liminf_{n\to +\infty }\|u_n\|_{H^m_0(\Omega)}^2 -\alpha \|u_n \|_{L^2(\Omega)}^2 = \liminf_{n\to +\infty }\|u_n\|_{\alpha} ^2 =  1, 
\end{split}
\]
hence $u_0\in M_\alpha$. If we prove that there exists $p>1$ such that 
\begin{equation}\label{p>1}
\|e^{\beta u_n^2}\|_{L^p(\Omega)}\le C,
\end{equation} 
then we can apply Theorem \ref{Vitali} to $f_n:= e^{\beta u_n^2}$ and we obtain $F_{\beta}(u_0)=S_{\alpha,\beta}$ and $S_{\alpha,\beta}<+\infty$, which concludes the proof. To prove \eqref{p>1} we shall treat two differnt cases. 

Assume first that $u_0=0$. Then we have 
$$
 \beta \|u_n\|_{H^m_0(\Omega)}^2 =  \beta(1 +\alpha \|u_n  \|_{L^2(\Omega)}^2 )  = \beta + o(1) < \beta^*,
$$
and we can find $p >1$ such that 
$$
 p \beta \|u_n\|_{H^m_0(\Omega)}^2 \le \beta^*,
$$
for $n$ large enough. In particular, using \eqref{Adams}, we obtain
$$
\|e^{\beta u_n^2}\|_{L^p(\Omega)}^p  = \int_{\Omega} e^{p\beta u_n^2} dx \le F_{\beta^*}\bra{\frac{u_n}{\|u_n\|_{H^m_0(\Omega)}}}\le S_{0,\beta^*}<+\infty.  
$$

Assume instead $u_0\neq 0$. Consider the sequence $v_n:= \frac{u_n}{\|u_n\|_{H^m_0(\Omega)}}$, and observe that $v_n\rw v_0$ in $H^m_0(\Omega)$ where $v_0 = \frac{u_0}{\sqrt{1+\alpha\|u_0\|^2_{L^2}}}$.  Since
\[\begin{split}
\|u_n\|_{H^m_0}^2 (1-\|v_0\|_{H^m_0}^2)  & = \bra{1+\alpha \|u_n\|_{L^2}^2 }\bra{1 -\frac{\|u_0\|_{H^n_0}^2}{1+\alpha\|u_0\|^2_{L^2}}}  \\
& =     1 + \alpha\|u_0\|_{L^2}^2- \|u_0\|_{H^n_0}^2 + o(1)\\
& =     1 - \|u_0\|_{\alpha}^2 + o(1),
\end{split}
\]
and $u_0\neq 0$, we get
$$
\limsup_{n\to +\infty} \|u_n\|_{H^m_0}^2 < \frac{1}{1-\|v_0\|_{H^m_0}^2}. 
$$
In particular, there exist $p,q>1$ such that 
$$
p\| u_n\|_{H^m_0}^2\le q <   \frac{1}{1-\|v_0\|_{H^m_0}^2},
$$
for $n$ large enough. Then, we get 
$$
\|e^{\beta u_n^2}\|_{L^p}^p  \le \|e^{\beta^* u_n^2}\|_{L^p}^p = \|e^{\beta^* \|u_n\|_{H^m_0}^2  v_n^2}\|_{L^p}^p \le \|e^{\beta^* q v_n^2}\|_{L^1}= F_{q\beta^*}(v_n) \le C,
$$
where the last inequality follows from Proposition \ref{Lions}.  Therefore, the proof of \eqref{p>1} is complete. 
\end{proof}

Finally, we stress that, as $\beta \to \beta^*$, the family $u_{\alpha,\beta}$ is a maximizing family for the critical functional $F_{\beta^*}$.

\begin{lemma}\label{limsub}
For any $0\le \alpha <\lambda_1(\Omega)$, we have 
$$
\lim_{\beta \nearrow \beta^*} S_{\alpha,\beta}= S_{\alpha,\beta^*}. 
$$
\end{lemma}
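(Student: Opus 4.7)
The plan is to prove equality by two opposite inequalities, exploiting the monotonicity of $\beta\mapsto S_{\alpha,\beta}$ on one side and the monotone convergence theorem on the other. Note that the result is purely measure-theoretic and does not require any of the heavy blow-up analysis from later sections; in particular, it is compatible with the case $S_{\alpha,\beta^*}=+\infty$.

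First I would observe that for any fixed $u\in H^m_0(\Omega)$, since $u^2\ge 0$, the map $\beta\mapsto e^{\beta u^2(x)}$ is pointwise non-decreasing on $\R^+$. Integrating over $\Omega$, this gives $F_{\beta}(u)\le F_{\beta^*}(u)$ whenever $\beta\le \beta^*$, and taking the supremum over $M_\alpha$ yields
$$
S_{\alpha,\beta}\le S_{\alpha,\beta^*}\qquad\text{for every }\beta\le \beta^*.
$$
In particular, the monotone limit $L:=\lim_{\beta\nearrow \beta^*}S_{\alpha,\beta}$ exists in $[0,+\infty]$ and satisfies $L\le S_{\alpha,\beta^*}$.

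For the reverse inequality, fix any $u\in M_\alpha$. As $\beta\nearrow \beta^*$, the integrands $e^{\beta u^2(x)}$ increase pointwise to $e^{\beta^* u^2(x)}$. Since they are all non-negative and measurable, the monotone convergence theorem gives
$$
\lim_{\beta\nearrow\beta^*}F_\beta(u)=\int_\Omega e^{\beta^* u^2}dx = F_{\beta^*}(u),
$$
where the limit is taken in $[0,+\infty]$ (the integrability of $e^{\beta^* u^2}$ guaranteed by Lemma \ref{integrability} is a bonus, but not needed here). On the other hand, $F_\beta(u)\le S_{\alpha,\beta}\le L$ for every $\beta<\beta^*$, so passing to the limit we obtain $F_{\beta^*}(u)\le L$. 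Taking the supremum over $u\in M_\alpha$ then yields $S_{\alpha,\beta^*}\le L$, which combined with the previous inequality gives $L=S_{\alpha,\beta^*}$.

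There is no real obstacle here: the only point requiring a moment's care is ensuring that the monotone convergence argument makes sense even when $F_{\beta^*}(u)=+\infty$, which is immediate since the monotone convergence theorem holds in $[0,+\infty]$ without any integrability hypothesis. Thus the lemma is essentially a formal consequence of monotonicity in $\beta$ of the integrand.
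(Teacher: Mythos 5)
Your proof is correct and follows essentially the same two-step argument as the paper: monotonicity of $\beta\mapsto S_{\alpha,\beta}$ for one inequality, and the monotone convergence theorem applied to $F_\beta(u)$ for fixed $u\in M_\alpha$ for the other. The extra remark that monotone convergence works in $[0,+\infty]$ without integrability hypotheses is a small but sensible clarification that the paper leaves implicit.
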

\begin{proof}
Clearly, $S_{\alpha,\beta}$ is monotone increasing with respect to $\beta$. In particular, we must have 
$$
  \lim_{\beta\nearrow \beta^* } S_{\alpha,\beta}\le S_{\alpha,\beta^*}.
$$
To prove the opposite inequality, we observe that, for any function $u\in M_\alpha$, the monotone convergence theorem implies 
$$
F_{\beta^*}(u) = \lim_{\beta\nearrow \beta^*} F_\beta(u) \le   \lim_{\beta\nearrow \beta^*} S_{\alpha,\beta}. 
$$
Since $u$ is an arbitrary function in $M_\alpha$, we get 
$$
S_{\alpha,\beta^*} \le \lim_{\beta \nearrow \beta^*} S_{\alpha,\beta_n}.
$$

\end{proof}

%

\section{Blow-up analysis at the critical exponent}\label{main sec}
%

In this section, we will study the behaviour of subcritical extremals as  $\beta$ approaches the critical exponent  $\beta^*$ from below. In the following, we will take a sequence $(\beta_n)_{n\in \N}$ such that
\begin{equation}\label{betan}
0< \beta_n <\beta^* \quad \mbox{ and } \quad \beta_n \to \beta^*, \text{ as } n\to+\infty.
\end{equation}
Due to Proposition \ref{sub}, for any $n\in \N$, we can find a function $u_n\in M_{\alpha}$ such that 
\begin{equation}\label{extremal}
F_{\beta_n}(u_n)= S_{\alpha,\beta_n}.
\end{equation}

\begin{lemma}\label{primo}
If $u_n\in M_\alpha$ satisfies \eqref{extremal}, then $u_n$ has the following properties
\begin{enumerate}
\item $\|u_n\|_\alpha=1$.
\item $u_n$  is a  solution to 
\begin{equation}\label{star}
\begin{Si}{ll}
(-\Delta )^m u_n = \lambda_n u_n e^{\beta_n u_n^2} +\alpha u_n & \mbox{ in }\Omega, \\
u_n = \partial_\nu u_n= \cdots = \partial^{m-1}_{\nu} u_n = 0 & \mbox{ on }\partial \Omega, 
\end{Si}
\end{equation}
where 
\begin{equation}\label{lambdan}
\lambda_n = \bra{\int_{\Omega} u_n^2 e^{\beta_n u_n^2}dx }^{-1}. 
\end{equation}
\item $u_n\in C^{\infty}(\ov{\Omega})$.
\item $F_{\beta_n}(u_n) \to S_{\alpha,\beta^*}$ as $n\to +\infty$. 
\item If $\lambda_n$ is as in \eqref{lambdan}, then $\dis{\limsup_{n\to +\infty} \lambda_n <+\infty}$. 
\end{enumerate}
\begin{proof}
\emph{1.} Since $u_n\in M_{\alpha}$, we have $\|u_n\|_{\alpha}\le 1$, $\forall\; n\in \N$. Moreover, the maximality of $u_n$ implies $u_n\neq 0$. If $\|u_{n}\|_\alpha<1$, then we would have
$$
S_{\alpha,\beta_n } = F_{\beta_n}(u_n) < F_{\beta_n}\bra{\frac{u_n}{\|u_n\|_{\alpha}}},
$$
which is a contradiction. 

\emph{2.} 
Since $u_n$ is a critical point  for $F_{\beta_n}$ constrained to $M_\alpha$,  there exists $\gamma_n \in \R$ such that 
\begin{equation}\label{e1}
\gamma_n\bra{ (u_n,\ph)_{H^m_0} -\alpha (u_n,\ph)_{L^2} }  = \beta_n \int_{\Omega} u_n e^{\beta_n u_n^2}\ph dx,
\end{equation}
for any  function $\ph \in H^m_0(\Omega)$. Taking $u_n$ as test function and using \emph{1}., we find
\begin{equation}\label{e2}
\gamma_n = \beta_n \int_{\Omega} u_n^2 e^{\beta_n u_n^2} dx.
\end{equation}
In particular, $\gamma_n \neq 0$ and \eqref{e1} implies that $u_n$ is a weak solution of \eqref{star} with  $\lambda_n:=\frac{\beta_n}{\gamma_n}$. Finally, \eqref{e2} is equivalent to \eqref{lambdan}.

\emph{3.} By Lemma \ref{integrability}, we know that $u_n$ and $e^{\beta_n u_n^2}$ belong to every $L^p$ space, $p>1$. Then, applying standard elliptic regularity results (see e.g. Proposition \ref{ell zero}) and Sobolev embedding theorem, we find $u_n \in W^{2m,p}(\Omega)\subseteq C^{2m-1,\gamma}(\Omega)$, for any $\gamma\in (0,1)$. Then, we also have $(-\Delta)^m u_n\in C^{2m-1,\gamma}(\Omega)$ and, applying recursively Schauder estimates (Proposition \ref{ell shau}), we conclude that $u_n \in C^{\infty}(\ov \Omega)$.

\emph{4.} This is a direct consequence of Lemma \ref{limsub}.

\emph{5.}   Assume by contradiction that there exists a subsequence for which $\lambda_n \to +\infty$, as $n\to +\infty$. Then, by \eqref{lambdan}, we have 
$$
\int_{\Omega}u_n^2 e^{\beta_n u_n^2} dx \to 0,
$$
as $n\to +\infty$. Exploiting the basic inequality  $e^{t}\le 1+t e^{t}$ for $t\ge 0$, we obtain
$$
F_{\beta_n}(u_n) \le |\Omega|+\beta_n \int_{\Omega} u_n^2 e^{\beta_n u_n^2}dx \to |\Omega|.
$$
Since, by \emph{4.}, $F_{\beta_n}(u_n) = S_{\alpha,\beta_n}\to S_{\alpha,\beta^*} >|\Omega|$, we get a contradiction. 

\end{proof}
\end{lemma}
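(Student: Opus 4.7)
The five assertions are largely independent, so I would tackle them in order, trying to keep the arguments modular.

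For (1), I would argue by contradiction: if $\|u_n\|_\alpha<1$, then $v_n:=u_n/\|u_n\|_\alpha\in M_\alpha$ satisfies $v_n^2>u_n^2$ pointwise on $\{u_n\ne 0\}$, so $F_{\beta_n}(v_n)>F_{\beta_n}(u_n)=S_{\alpha,\beta_n}$, contradicting the definition of the supremum (note that $u_n\not\equiv 0$, since otherwise $F_{\beta_n}(u_n)=|\Omega|$ which is strictly less than $S_{\alpha,\beta_n}$ as one checks by testing against any nontrivial bump in $M_\alpha$). For (2), since $u_n$ maximizes a smooth functional under the $C^1$ constraint $\|u\|_\alpha^2=1$ (it really is $C^1$ thanks to Lemma \ref{integrability}), the Lagrange multiplier rule gives some $\gamma_n\in\R$ with
\[
\gamma_n\bigl((u_n,\varphi)_{H^m_0}-\alpha(u_n,\varphi)_{L^2}\bigr)=\beta_n\int_\Omega u_n e^{\beta_n u_n^2}\varphi\,dx
\]
for every $\varphi\in H^m_0(\Omega)$. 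Testing against $u_n$ itself and using (1) pins down $\gamma_n=\beta_n\int_\Omega u_n^2 e^{\beta_n u_n^2}dx>0$, so $\lambda_n:=\beta_n/\gamma_n$ is well defined and reading the Euler--Lagrange identity back gives \eqref{star} and \eqref{lambdan}.

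For (3), I would bootstrap. Lemma \ref{integrability} gives $e^{\beta_n u_n^2}\in L^p(\Omega)$ for all $p<\infty$, so the right-hand side of \eqref{star} lies in every $L^p$. Applying the elliptic regularity result in Proposition \ref{ell zero} gives $u_n\in W^{2m,p}(\Omega)$, which via Sobolev embedding puts $u_n$ in $C^{2m-1,\gamma}(\overline\Omega)$. Now the right-hand side of \eqref{star} is H\"older continuous, so iterating Schauder estimates (Proposition \ref{ell shau}) raises the regularity by $2m$ H\"older derivatives at each step, giving $u_n\in C^\infty(\overline\Omega)$. Statement (4) is then immediate from Lemma \ref{limsub}, since $F_{\beta_n}(u_n)=S_{\alpha,\beta_n}\to S_{\alpha,\beta^*}$.

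For (5), suppose along a subsequence $\lambda_n\to+\infty$. Then \eqref{lambdan} forces $\int_\Omega u_n^2 e^{\beta_n u_n^2}dx\to 0$. Using the elementary bound $e^t\le 1+te^t$ for $t\ge 0$ with $t=\beta_n u_n^2$, we get
\[
F_{\beta_n}(u_n)\le |\Omega|+\beta_n\int_\Omega u_n^2 e^{\beta_n u_n^2}dx \longrightarrow |\Omega|.
\]
Combined with (4), this would yield $S_{\alpha,\beta^*}\le |\Omega|$. But choosing any nonzero $w\in M_\alpha$ produces $F_{\beta^*}(w)=\int_\Omega e^{\beta^* w^2}dx>|\Omega|$ since $e^{\beta^* w^2}>1$ on $\{w\ne 0\}$, so $S_{\alpha,\beta^*}>|\Omega|$, a contradiction. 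The only mildly delicate point in the whole argument is justifying that the constraint $\{\|u\|_\alpha=1\}$ really behaves like a smooth manifold on which Lagrange multipliers apply and that $F_{\beta_n}$ is differentiable there; once Lemma \ref{integrability} is invoked these are standard, so I expect no serious obstacle.
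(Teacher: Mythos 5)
Your proposal follows essentially the same route as the paper for all five parts: the normalization argument by scaling, the Lagrange multiplier derivation of the Euler--Lagrange equation with the same identification of $\gamma_n$ and $\lambda_n$, the $L^p$/Schauder bootstrap, the appeal to Lemma \ref{limsub}, and the contradiction via $e^t\le 1+te^t$. The only difference is that you spell out explicitly why $u_n\not\equiv 0$ and why $S_{\alpha,\beta^*}>|\Omega|$ (by testing against a nontrivial element of $M_\alpha$), which the paper leaves implicit; this is a harmless elaboration, not a different argument.
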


In order to prove that $S_{\alpha,\beta^*}$ is finite and attained, we need to show that $u_n$ does not blow-up as $n\to +\infty$.  Let us take a point $x_n \in \Omega$ such that 
\begin{equation}\label{mun and xn}
\mu_n:=\max_{\ov \Omega} |u_n| =|u_n(x_n)|.
\end{equation}
Extracting a subsequence and changing the sign of $u_n$ we can always assume that
\begin{equation}\label{mun and xn2}
u_n (x_n) = \mu_n \quad \mbox{ and } \quad x_n \to  x_0 \in \ov \Omega, \text{ as }n\to +\infty. 
\end{equation}
The main purpose of this  section consists in proving the following Proposition.

\begin{prop}\label{big}
Let $\beta_n$, $u_n$, $\mu_n$, $x_n$, and $x_0$ be as in \eqref{betan}, \eqref{extremal}, \eqref{mun and xn}, and \eqref{mun and xn2}. If $\mu_n\to +\infty$, then  $x_0\in \Omega$ and we have
$$
S_{\alpha,\beta^*} = \lim_{n\to +\infty} F_{\beta_n}(u_n) \le|\Omega| + \frac{\omega_{2m}}{2^{2m}} e^{\beta^* \bra{C_{\alpha,x_0} - I_m}},
$$ 
where $C_{\alpha,x_0}$ is as in Proposition \ref{prop green} and 
\begin{equation}\label{Imbrutto}
I_m:= - \frac{m 4^{2m}}{\beta^*\omega_{2m} }\int_{\R^{2m}} \frac{\log\bra{1+\frac{|y|^2}{4}}}{\bra{4+|y|^2}^{2m}}dy.
\end{equation}
\end{prop}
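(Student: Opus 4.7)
My plan is to carry out a blow-up analysis around the sequence of maxima $x_n$, splitting $F_{\beta_n}(u_n)$ into contributions on a small inner ball of radius $Rr_n$, an intermediate annulus, and an outer piece, and extracting each contribution separately. Everything but the neck estimate follows a now-classical scheme; the novelty and main obstacle is a direct polyharmonic comparison replacing the capacity minimization $i(a,b,R_1,R_2)$.

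First, I would rule out boundary blow-up. Since by part 5 of Lemma \ref{primo} the multipliers $\lambda_n$ stay bounded, \eqref{star} and the blow-up hypothesis force the whole mass of $\lambda_n u_n e^{\beta_n u_n^2}$ to concentrate at $x_0$. If $x_0\in\partial\Omega$, one tests on a small neighborhood of $x_0$ the Pohozaev-type identity of Robert-Wei (as in \cite{MarPet} and alluded to in the introduction): the boundary contribution produced by a blow-up point on $\partial\Omega$ is incompatible with the sign/scaling of the interior mass of $e^{\beta_n u_n^2}$, giving the desired contradiction. Hence $x_0\in\Omega$.

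Next I would perform the standard rescaling. Choose $r_n\to 0$ such that $\lambda_n\mu_n^2 r_n^{2m} e^{\beta_n\mu_n^2}$ is bounded away from $0$ and $\infty$, and set
$$
\eta_n(y) := \frac{2\beta_n\mu_n}{\beta^*}\bigl(u_n(x_n+r_ny)-\mu_n\bigr).
$$
From \eqref{star} and a Taylor expansion of $\beta_n u_n^2$ around $\mu_n$, the function $\eta_n$ satisfies a polyharmonic equation whose right-hand side stays uniformly bounded on every compact set of $\R^{2m}$. Combining this with the Lorentz-Zygmund regularity estimates mentioned in the introduction, $\eta_n\to\eta_0$ in $C^{2m-1}_{\loc}(\R^{2m})$, where $\eta_0$ is the standard spherical solution of $\lm\eta_0=e^{\eta_0}$ with $\eta_0\le\eta_0(0)$, normalized so that $\int_{\R^{2m}}e^{\eta_0}dy=\omega_{2m}/2^{2m}$. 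A change of variables then gives
$$
\int_{B_{Rr_n}(x_n)} e^{\beta_n u_n^2}\,dx \longrightarrow \frac{\omega_{2m}}{2^{2m}}\,e^{A}\quad\text{as first } n\to+\infty,\text{ then }R\to+\infty,
$$
with $A$ an a priori unknown constant that the neck analysis will determine. On the complement $\Omega\setminus B_\delta(x_0)$, testing \eqref{star} against $\mu_n u_n$ and applying Schauder-type estimates gives $\mu_n u_n\rw G_{\alpha,x_0}$ in $H^m_0(\Omega)$ and $\mu_n u_n\to G_{\alpha,x_0}$ in $C^{2m-1,\gamma}_{\loc}(\Omega\setminus\{x_0\})$; in particular $u_n\to 0$ uniformly there, and dominated convergence yields $\int_{\Omega\setminus B_\delta(x_n)}e^{\beta_n u_n^2}\,dx\to|\Omega|$.

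The crux is computing $A$. Proposition \ref{parts} applied on the annulus $\{Rr_n<|x-x_n|<\delta\}$ converts the annular Dirichlet energy of $u_n$ into boundary-trace integrals on the two concentric spheres (plus an $L^2$ remainder controlled by $\alpha$). On the outer sphere, these traces come from the Green expansion of Proposition \ref{prop green} and reproduce, via Lemma \ref{int Green}, the quantities $C_{\alpha,x_0}$ and $H_m$; on the inner sphere, they come from the bubble profile $\eta_0$ and, after a direct computation using the identities \eqref{ej}, produce precisely the integral $I_m$ in \eqref{Imbrutto} together with logarithmic terms in $r_n/\delta$. To obtain the matching \emph{lower bound} on the annular energy without invoking the capacity problem $i(a,b,R_1,R_2)$, I would compare $u_n$ on the annulus to the unique polyharmonic interpolant $w_n$ sharing the Green-function boundary data on $\partial B_\delta$ and the bubble boundary data on $\partial B_{Rr_n}$; since $\|u_n\|_\alpha^2$ and $\|w_n\|_\alpha^2$ can both be evaluated through Proposition \ref{parts} and differ by a nonnegative quadratic form in $u_n-w_n$, this gives the required sharp energy bound in closed form. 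Plugging the resulting asymptotics into the constraint $\|u_n\|_\alpha^2=1$ fixes $\beta_n\mu_n^2$ and in turn yields $A=\beta^*(C_{\alpha,x_0}-I_m)$; combining with the outer contribution $|\Omega|$ gives the claimed upper bound. The principal difficulty is the annular accounting, namely correctly identifying the polyharmonic interpolant, matching all the boundary traces of order $0,\dots,m-1$ on both spheres, and controlling the cancellations that make the constant $I_m$ appear.
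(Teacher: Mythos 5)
Your outline matches the paper's general architecture (Pohozaev at the boundary, inner bubble plus outer Green-function convergence plus a neck estimate), but the two central technical steps are either wrong or missing, and these are precisely where the paper's novelty lies.

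\textbf{The neck estimate.} You propose to take $w_n$ as the $m$-harmonic interpolant on the annulus that matches all the traces $\partial_\nu^i u_n$, $i=0,\dots,m-1$, on \emph{both} spheres, and claim that $\|\Delta^{m/2}w_n\|_{L^2}^2$ ``can be evaluated through Proposition~\ref{parts}''. That is exactly the capacity quantity $i(a,b,R_1,R_2)$ that the introduction singles out as intractable for general $m$. Proposition~\ref{parts} converts $\int |\Delta^{m/2}w_n|^2$ on the annulus into boundary integrals of the form $\nu\cdot\Delta^{j/2}w_n\,\Delta^{(2m-j-1)/2}w_n$; for $j=0,\dots,m-1$ this involves the higher-order traces $\Delta^{(2m-j-1)/2}w_n$ of order $m,\dots,2m-1$, which are \emph{not} prescribed and can only be obtained by actually solving the boundary value problem, i.e.\ by computing the capacity minimizer. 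Your orthogonality identity $\|\Delta^{m/2}u_n\|^2 = \|\Delta^{m/2}w_n\|^2 + \|\Delta^{m/2}(u_n-w_n)\|^2$ is correct, but the first term on the right is not a closed-form quantity. The paper's trick is different and is what makes the argument work for all $m$: it compares against the fixed $m$-harmonic function $\W_n(x)=-\tfrac{2m}{\beta^*\mu_n}\log|x-x_n|$, which does \emph{not} match the traces of $u_n$, but whose full Cauchy data $\Delta^{j/2}\W_n$ for $j=0,\dots,2m-1$ is explicit. Young's inequality reduces the lower bound for $\|\Delta^{m/2}u_n\|^2 - \|\Delta^{m/2}\W_n\|^2$ to a cross term $2\int\Delta^{m/2}(u_n-\W_n)\cdot\Delta^{m/2}\W_n$, which Proposition~\ref{parts} converts into boundary integrals involving only the \emph{low}-order traces of $u_n$ (controlled by the bubble and by $G_{\alpha,x_0}$) against the explicit high-order traces of $\W_n$. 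Without this substitution your plan collapses back to the explicit capacity computation the paper was designed to avoid.

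\textbf{The outer/neck mass of $e^{\beta_n u_n^2}$.} You write that ``dominated convergence yields $\int_{\Omega\setminus B_\delta(x_n)}e^{\beta_n u_n^2}\,dx\to|\Omega|$'' and that the inner ball alone contributes the $\frac{\omega_{2m}}{2^{2m}}e^{A}$ term. But between the inner ball $B_{Rr_n}(x_n)$ and $B_\delta(x_0)$ the function $u_n$ still ranges from roughly $\mu_n$ down to $O(\mu_n^{-1})$, so $e^{\beta_n u_n^2}$ is not uniformly bounded there and dominated convergence gives nothing on that region. Controlling this neck contribution is what Section~4.5 of the paper is for: the polyharmonic truncations $u_n^A$, Proposition~\ref{trunc}, and Lemma~\ref{lemma crucial} are used precisely to show $S_{\alpha,\beta^*}=|\Omega|+\lim_n(\lambda_n\mu_n^2)^{-1}$, after which the neck energy estimate bounds $(\lambda_n\mu_n^2)^{-1}$. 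Your proposal has no substitute for this step.

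Two smaller points: $\mu_n u_n$ converges to $G_{\alpha,x_0}$ weakly only in $W^{m,p}_0(\Omega)$ for $1<p<2$, not in $H^m_0(\Omega)$, since $\Delta^{m/2}G_{\alpha,x_0}\sim|x-x_0|^{-m}$ just fails to be in $L^2$; and your normalization of the bubble profile (``$\int_{\R^{2m}}e^{\eta_0}\,dy=\omega_{2m}/2^{2m}$'') is internally inconsistent with your own rescaling of $\eta_n$.
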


The proof of Proposition \ref{big} is quite long and it will be divided into several subsections. Some standard properties of $u_n$ will be established  in section 4.1. Then, in section 4.2, as a consequence of Lorentz-Zygmund elliptic estimates, we will prove uniform bounds for $\Delta u_n^2$. Such bounds will be crucial in the analysis given in section 4.3, where we will study the behaviour of $u_n$ on a small scale. Sections 4.4, 4.5 and 4.6 contain respectively estimates on the derivatives of $u_n$, the definition of suitable polyharmonic truncations of $u_n$, and the description of the behaviour of $u_n$ far from $x_0$. In section 4.7 we will deal with blow-up at the boundary, which will be excluded using Pohozaev-type identities. Finally, we conclude the proof in section 4.8 by introducing a new technique to obtain lower bounds on the Dirichlet energy for  $u_n$ on suitable annular regions. 

In the rest of this section $\beta_n$, $u_n$, $\mu_n$, $x_n$, and $x_0$ will always be as in Proposition \ref{big} and we will always assume that $\mu_n \to  +\infty$. 

\subsection{Concentration near the blow-up point}
In this subsection we will prove that, if $\mu_n \to +\infty$, $u_n$ must concentrate around the blow-up point $x_0$. We start by proving that its weak limit in $H^m_0(\Omega)$ is $0$.   

\begin{lemma}\label{weak lim}
If $\mu_n \to +\infty$, then $u_n \rw 0$  in $H^m_0(\Omega)$ and  $u_n \to 0$ in $L^p(\Omega)$ for any $p\ge 1$. 
\end{lemma}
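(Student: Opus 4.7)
The plan is to proceed by contradiction: assume the weak limit $u_0$ of $\{u_n\}$ in $H^m_0(\Omega)$ is non-zero and deduce that $\{u_n\}$ is bounded in $L^\infty(\Omega)$, contradicting $\mu_n\to+\infty$.

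First I would extract a weakly convergent subsequence. Since $\|u_n\|_\alpha=1$ by Lemma \ref{primo} and $\|\cdot\|_\alpha$ is equivalent to $\|\cdot\|_{H^m_0(\Omega)}$ (as $\alpha<\lambda_1(\Omega)$), $\{u_n\}$ is bounded in $H^m_0(\Omega)$. Up to a subsequence, $u_n\rw u_0$ in $H^m_0(\Omega)$, and by the Rellich--Kondrachov compact embedding $H^m_0(\Omega)\hookrightarrow L^p(\Omega)$ (compact for every $p\in[1,\infty)$, as $\dim\Omega=2m$), $u_n\to u_0$ strongly in every such $L^p$ and a.e.\ in $\Omega$.

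Next, assuming for contradiction $u_0\neq 0$, set $w_n:=u_n-u_0$, so that $w_n\rw 0$ in $H^m_0(\Omega)$ and $w_n\to 0$ in $L^p$. Using $\|u_n\|_{H^m_0}^2=1+\alpha\|u_n\|_{L^2}^2\to 1+\alpha\|u_0\|_{L^2}^2$ and weak convergence,
$$
\|w_n\|_{H^m_0}^2=\|u_n\|_{H^m_0}^2-\|u_0\|_{H^m_0}^2+o(1)\longrightarrow\ell:=1-\|u_0\|_\alpha^2.
$$
Weak lower semicontinuity forces $\|u_0\|_\alpha\leq 1$, and $u_0\neq 0$ gives $\|u_0\|_\alpha>0$; thus $\ell\in[0,1)$. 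For any $\eta>0$ the inequality $u_n^2\leq(1+\eta)u_0^2+(1+\eta^{-1})w_n^2$ and H\"older with conjugate exponents $r,r'>1$ yield
$$
\int_\Omega e^{p\beta_n u_n^2}\,dx\leq\left\|e^{rp(1+\eta)\beta_n u_0^2}\right\|_{L^1(\Omega)}^{1/r}\left\|e^{r'p(1+\eta^{-1})\beta_n w_n^2}\right\|_{L^1(\Omega)}^{1/r'}.
$$
The first factor is finite and uniformly bounded in $n$ by Lemma \ref{integrability}. Choosing $\eta$ so that $(1+\eta^{-1})\ell<1$ (trivial when $\ell=0$) and then $r'>1$, $p>1$ close enough to $1$ so that $r'p(1+\eta^{-1})\ell<1$, for $n$ large one has $r'p(1+\eta^{-1})\beta_n\|w_n\|_{H^m_0}^2<\beta^*$, and Adams' inequality \eqref{Adams} applied to $w_n/\|w_n\|_{H^m_0}$ bounds the second factor. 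Hence $\|e^{p\beta_n u_n^2}\|_{L^1(\Omega)}\leq C$ for some $p>1$.

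To conclude, pick $\tilde p\in(1,p)$ and $\varepsilon>0$ small with $\tilde p+\varepsilon<p$; using $|u_n|^{\tilde p}\leq C_\varepsilon e^{\varepsilon\beta_n u_n^2}$ one deduces
$$
\int_\Omega|u_n|^{\tilde p}e^{\tilde p\beta_n u_n^2}\,dx\leq C_\varepsilon\int_\Omega e^{p\beta_n u_n^2}\,dx\leq C.
$$
Combined with the boundedness of $\lambda_n$ (Lemma \ref{primo}, item $5$), the right-hand side $f_n:=\lambda_n u_n e^{\beta_n u_n^2}+\alpha u_n$ of \eqref{star} is bounded in $L^{\tilde p}(\Omega)$. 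Standard elliptic regularity for $(-\Delta)^m$ under Dirichlet boundary conditions (Proposition \ref{ell zero}) then gives $\|u_n\|_{W^{2m,\tilde p}(\Omega)}\leq C$, and since $2m\tilde p>\dim\Omega=2m$, the Sobolev embedding $W^{2m,\tilde p}(\Omega)\hookrightarrow L^\infty(\Omega)$ yields $\mu_n\leq C$, contradicting $\mu_n\to+\infty$. Thus $u_0=0$, and both convergences in the statement follow. The main delicacy will be the H\"older--Adams bookkeeping: one must extract an exponent $p>1$ while simultaneously keeping the effective Adams exponent $r'p(1+\eta^{-1})\|w_n\|_{H^m_0}^2$ strictly below the critical threshold, which is feasible precisely because $\ell<1$, i.e.\ because $u_0\neq 0$.
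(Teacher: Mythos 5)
Your proof is correct and follows essentially the same route as the paper's: both argue by contradiction, showing that $u_0\neq 0$ would force a uniform $L^{p_0}(\Omega)$ bound ($p_0>1$) on $e^{\beta_n u_n^2}$ via a Lions-type concentration-compactness estimate, and then use elliptic regularity (Proposition \ref{ell zero}) together with Sobolev embedding to obtain $\mu_n\le C$, contradicting $\mu_n\to+\infty$. The only stylistic difference is that the paper simply cites Proposition \ref{Lions} (which, strictly speaking, assumes $\|u_n\|_{H^m_0}\le 1$ and therefore needs the same rescaling by $\|u_n\|_{H^m_0}$ carried out in the proof of Proposition \ref{sub}), whereas you re-derive the required bound inline directly in terms of $\|\cdot\|_\alpha$ via the splitting $u_n^2\le(1+\eta)u_0^2+(1+\eta^{-1})w_n^2$ and the computation $\|w_n\|^2_{H^m_0}\to 1-\|u_0\|_\alpha^2<1$, which sidesteps the normalization step.
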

\begin{proof}
 Since $u_n$ is bounded in $H^m_0(\Omega)$, we can assume that $u_n \rw u_0$ in $H^m_0(\Omega)$ with $u_0 \in H^m_0(\Omega)$.  The compactness of the embedding of $H^m_0(\Omega)$ into $L^p(\Omega)$ implies $u_n \to u_0$ in $L^p(\Omega)$, for any $p\ge 1$.  
 
 If $u_0\neq 0$, then, by Proposition \ref{Lions},  $e^{\beta_n u_n^2}$ is bounded in $L^{p_0}(\Omega)$ for some $p_0>1$. By Lemma \ref{primo}, we know that $\lambda_n$ is bounded. Hence $\lm u_n$ is bounded in $L^{s}(\Omega)$ for any $1<s<p_0$. Then, by elliptic estimates (see Proposition \ref{ell zero}), we find that $u_n$ is bounded in $W^{2m,s}(\Omega)$ and, by Sobolev embeddings, in $L^\infty(\Omega)$. This contradicts $\mu_n \to +\infty$. Hence, we have $u_0=0$. 

\end{proof}

In fact, $u_n$ converges to $0$ in a much stronger sense if we stay far from the blow-up point $x_0$, while $|\Delta^\frac{m}{2} u_n|^2 $ concentrates around $x_0$.

\begin{lemma}\label{conv0C} If $\mu_n\to +\infty$, then we have:
\begin{enumerate}
\item $|\Delta^\frac{m}{2} u_n|^2 \rw \delta_{x_0}$ in the sense of measures.
\item $e^{\beta_n u_n^2}$ is bounded in $L^s(\Omega\setminus B_\delta(x_0))$, for any $s\ge1$, $\delta>0$. 
\item $u_n\to 0$ in $C^{2m-1,\gamma}(\Omega\setminus B_\delta(x_0))$, for any $\gamma\in (0,1)$, $\delta>0$.
\end{enumerate}
\end{lemma}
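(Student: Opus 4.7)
The three statements describe the standard concentration picture: all the Dirichlet energy collects at $x_0$, so far from $x_0$ the functions $u_n$ regularize to $0$. I would prove them in the order $(1)\Rightarrow(2)\Rightarrow(3)$. Throughout, Lemma \ref{primo}(1) together with Lemma \ref{weak lim} gives $\|u_n\|_{H^m_0}^2 = 1 + \alpha\|u_n\|_{L^2}^2 = 1 + o(1)$, so, up to subsequence, $|\Delta^\frac{m}{2}u_n|^2 dx \rightharpoonup \mu$ in the sense of measures on $\overline\Omega$ with $\mu(\overline\Omega)=1$.

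\textbf{Proof of (1).} Suppose by contradiction $\mu(\{x_0\})<1$; then $\mu(\overline{B_\delta(x_0)})\le 1-2\varepsilon$ for some $\delta,\varepsilon>0$. Fix a cutoff $\eta\in C^\infty_c(B_\delta(x_0))$ with $\eta\equiv 1$ on $B_{\delta/2}(x_0)$; since $u_n\in H^m_0(\Omega)$, so is $\eta u_n$. Expanding $\Delta^\frac{m}{2}(\eta u_n)$ by Leibniz produces $\eta\,\Delta^\frac{m}{2}u_n$ plus remainders involving derivatives of $u_n$ of order at most $m-1$; these tend to $0$ in $L^2$ by $u_n\rightharpoonup 0$ in $H^m_0$ and the compact embedding $H^m_0\hookrightarrow H^{m-1}$. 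Hence
$$
\|\eta u_n\|_{H^m_0}^2 \le \int \eta^2|\Delta^\frac{m}{2}u_n|^2 dx + o(1) \le 1-\varepsilon
$$
for $n$ large. Applying \eqref{Adams} to $\eta u_n/\|\eta u_n\|_{H^m_0}$ and exploiting $\beta_n<\beta^*$ yields $p>1$ such that $e^{\beta_n u_n^2}$ is bounded in $L^p(B_{\delta/2}(x_0))$. Combining with Lemma \ref{primo}(5) and H\"older, $\lm u_n$ is bounded in $L^{p'}(B_{\delta/2}(x_0))$ for some $p'>1$. Interior $W^{2m,p'}$ estimates (Proposition \ref{ell zero}) together with Morrey's embedding then give a uniform $L^\infty$ bound on $u_n$ on $B_{\delta/4}(x_0)$, contradicting $x_n\to x_0$ and $u_n(x_n)=\mu_n\to+\infty$. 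Hence $\mu=\delta_{x_0}$, proving (1).

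\textbf{Proofs of (2) and (3).} By (1), $\int_{\Omega\setminus B_{\delta/2}(x_0)}|\Delta^\frac{m}{2}u_n|^2 dx\to 0$ for any $\delta>0$. Take $\phi\in C^\infty(\overline\Omega)$ vanishing on $B_{\delta/2}(x_0)$ and equal to $1$ on $\Omega\setminus B_\delta(x_0)$; then $\phi u_n\in H^m_0(\Omega)$ (because $u_n$ has vanishing normal derivatives up to order $m-1$ on $\partial\Omega$), and the same Leibniz argument yields $\|\phi u_n\|_{H^m_0}^2\to 0$. For any $s\ge 1$, eventually $s\beta_n\|\phi u_n\|_{H^m_0}^2\le\beta^*$, so Adams applied to $\phi u_n/\|\phi u_n\|_{H^m_0}$ bounds $\int e^{s\beta_n(\phi u_n)^2}dx$ uniformly; restricting to $\{\phi=1\}\supseteq\Omega\setminus B_\delta(x_0)$ proves (2). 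For (3), (2) combined with $u_n$ bounded in every $L^q(\Omega)$ makes the right-hand side of \eqref{star} bounded in $L^s(\Omega\setminus B_{\delta/2}(x_0))$ for every $s$. Using nested cutoffs vanishing near $x_0$, a finite bootstrap based on Propositions \ref{ell zero} and \ref{ell shau} yields uniform $C^{2m-1,\gamma}$ bounds for $u_n$ on $\Omega\setminus B_{2\delta}(x_0)$; combined with $u_n\to 0$ in $L^1$ (Lemma \ref{weak lim}), Arzel\`a-Ascoli and interpolation give $u_n\to 0$ in $C^{2m-1,\gamma}$ there, and the arbitrariness of $\delta$ concludes (3).

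\textbf{Main obstacle.} The decisive step is (1). The crucial leverage is the strict subcriticality $\beta_n<\beta^*$: the borderline Adams bound on $\eta u_n$ merely gives $e^{\beta^* u_n^2/(1-\varepsilon)}\in L^1(B_{\delta/2}(x_0))$, and it is only because $\beta_n<\beta^*$ that this can be promoted to an $L^p$ bound with $p>1$ for $e^{\beta_n u_n^2}$, which is what drives the elliptic Morrey step to a contradiction. A secondary technicality, used in all three steps, is the control of the Leibniz cross terms in $\Delta^\frac{m}{2}(\eta u_n)$ via $H^{m-1}$-compactness; since $x_0$ is not a priori interior to $\Omega$, some care is also needed to ensure that the cutoffs used in Steps 2 and 3 produce admissible elements of $H^m_0(\Omega)$.
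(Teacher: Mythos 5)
Your proposal is correct and follows essentially the same route as the paper: establish $\|\Delta^{\frac m2}(\xi u_n)\|_{L^2}^2=\int\xi^2|\Delta^{\frac m2}u_n|^2\,dx+o(1)$ for a cutoff $\xi$ via Leibniz and the compactness of $H^m_0\hookrightarrow H^{m-1}$, then combine this with Adams' inequality to obtain $L^s$ ($s>1$) bounds on $e^{\beta_n u_n^2}$, first near $x_0$ (for the contradiction in part (1)) and then away from $x_0$ (for part (2)), and close with elliptic regularity and bootstrapping for parts (1) and (3). Your set-up of the contradiction via a weak-$\ast$ subsequential limit $\mu\neq\delta_{x_0}$ is, if anything, a slightly cleaner reformulation of the paper's ``$\limsup\|\Delta^{\frac m2}u_n\|^2_{L^2(B_r(x_0)\cap\Omega)}<1$'' hypothesis. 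One technical point worth flagging in part (1): the $L^{p'}$ bound you get on $\lm u_n$ is only local, on $B_{\delta/2}(x_0)$, and at this stage $x_0$ could still lie on $\partial\Omega$ (this is excluded only later, in the Pohozaev section), so neither the global estimate of Proposition \ref{ell zero} nor a purely interior estimate (Proposition \ref{ell loc}) is literally the right tool; one needs a local-up-to-the-boundary $W^{2m,p'}$ estimate on $B_{\delta/4}(x_0)\cap\Omega$, which can be obtained for instance from Green's representation by splitting $\lm u_n$ into the part supported on $B_{\delta/2}(x_0)$ (bounded in $L^{p'}$) and the complementary part (only $L^1$, but convolved against a kernel bounded at distance $\geq\delta/4$). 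The paper's own write-up of this step is similarly terse, so this is not a gap relative to the paper, but you should be aware of it.
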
 
\begin{proof}
First of all, for any function $\xi\in C^{2m}(\ov{\Omega})$,  we observe that
$$
\Delta^{\frac{m}{2}} (u_n \xi)  =  \xi \Delta^\frac{m}{2} u_n  +  f_n, 
$$
with 
$$
|f_n|\le C_1 \sum_{l=0}^{m-1} |\nabla^l u_n||\nabla^{m-l} \xi| \le C_2   \sum_{l=0}^{m-1} |\nabla^l u_n|,
$$
for some constants $C_1,C_2>0$, depending only on $m,l,$ and $\xi$. Since $u_n\rw 0$ in $H^m_0(\Omega)$, and $H^m_0(\Omega)$ is compactly embedded in $H^{m-1}(\Omega)$, we get that $f_n \to 0$ in $L^2(\Omega)$. In particular, we have 
\begin{equation}\label{cutoff1}
\begin{split}
\|\Delta^\frac{m}{2} ( u_n \xi)\|^2_{L^2(\Omega)} &= \int_{\Omega} \xi^2 |\Delta^\frac{m}{2} u_n |^2dx + 2 \int_{\Omega} \Delta^\frac{m}{2} u_n \cdot f_n dx  + \int_{\Omega} |f_n|^2 dx \\
& = \int_{\Omega} \xi^2 |\Delta^\frac{m}{2} u_n |^2dx +o(1).
\end{split}
\end{equation}

We can now prove the first statement of this lemma. Assume by contradiction that there exists $r>0$ such that 
\begin{equation}\label{cut2}
\limsup_{n\to +\infty} \|\Delta^\frac{m}{2} u_n\|_{L^2(B_{r}(x_0)\cap \Omega)}^2 <1.
\end{equation}
Take a function $\xi \in C^\infty_c(\R^{2m})$ such that $\xi \equiv 1$ on $B_\frac{r}{2}(x_0)$, $\xi \equiv 0$ on $\R^{2m}\setminus B_{r}(x_0)$ and $0\le \xi\le 1$. By \eqref{cutoff1} and \eqref{cut2}, we have that $
\limsup_{n\to + \infty} \|\Delta^\frac{m}{2} ( u_n\xi) \|_{L^2(\Omega)}^2 <1.
$
Adams' inequality implies that we can find $s> 1$ such that $e^{\beta_n (u_n\xi)^2}$ is bounded in  $L^s(\Omega)$. In particular, $e^{\beta_n u_n^2}$ is bounded in $L^{s}(B_\frac{r}{2}(x_0))$. By Lemma \ref{weak lim}, $u_n \to 0$ in $L^p(\Omega)$ for any $p\ge 1$. Therefore, we get that $\lm u_n \to 0 $ in  $L^{q}(\Omega)$ for any $1<q<s$.   Then, Proposition \ref{ell zero} yields  $u_n \to 0$  in $W^{2m,q}(\Omega)$ and, since $q>1$, in $L^\infty(\Omega)$. This contradicts $\mu_n\to +\infty$. 

To prove \emph{2.}, we fix a cut-off function $\xi_2\in C^{\infty}_c(\R^{2m})$ such that $\xi_2 \equiv 1$ in $\R^{2m}\setminus B_\delta(x_0)$, $\xi_2 \equiv 0$ in $B_\frac{\delta}{2}(\Omega)$, and $\xi \le 1$. Since $|\Delta^\frac{m}{2} u_n |\rw \delta_{x_0}$, from \eqref{cutoff1} we get $\|\Delta^\frac{m}{2} (u_n\xi_2) \|_{L^2(\Omega)} \to 0$. Then, Adams' inequality implies that $e^{\beta_n (u_n \xi_2)^2}$ is bounded in $L^s(\Omega)$, for any $s>1$. Because of the definition of $\xi_2$, we get the conclusion.  

To prove \emph{3.}, we apply standard elliptic estimates. By part \emph{2.}, we know that $u_n$ and $e^{\beta_n u_n^2}$ are bounded in $L^s(\Omega \setminus B_\delta(x_0))$ for any $s\ge 1$.  Since $\lambda_n$ is bounded, the same bound holds for $\lm u_n$. Then, elliptic estimates (Propostion \ref{ell new}) imply that $u_n $ is bounded in $W^{2m,s}(\Omega\setminus B_{2\delta}(x_0))$.  By Sobolev embedding theorem, it is also bounded in $C^{2m-1,\gamma}(\Omega \setminus B_{2\delta}(x_0))$, for any $\gamma \in (0,1)$. Then, up to a subsequence, we can find a function $u_0\in C^{2m-1,\gamma}(\Omega \setminus B_{2\delta}(x_0))$ such that $u_n  \to u_0$ in $C^{2m-1,\gamma}(\Omega \setminus B_{2\delta} (x_0))$. Since $u_n \rw 0$ in $H^m_0(\Omega)$, we must have $u_0 \equiv 0$ in $\Omega \setminus B_{2\delta}(x_0)$ and $u_n \to 0$ in $C^{2m-1,\gamma}(\Omega \setminus B_{2\delta}(x_0))$.        
\end{proof}

\subsection{Lorentz-Sobolev elliptic estimates}
In this subsection, we prove uniform integral estimates on the derivatives of $u_n$.  Notice that Sobolev's inequality implies $\|\nabla^l u_n\|_{L^{\frac{2m}{l}}(\Omega)}\le C$ for any $1\le l \le m-1$. In addition, standard elliptic estimates (Proposition \ref{ell L1}) yield  $\|\nabla^l u_n\|_{L^p(\Omega)}\le C$, for any $p<\frac{2m}{l}$ and $m \le l\le 2m-1$. Arguing as in \cite{mar}, we will prove that sharper estimates can be obtained  thanks to Lorentz-Zygmund elliptic regularity theory (see Proposition \ref{ell Lor} in Appendix). In the following, for any $\alpha\ge 0$, $1<p<+\infty$, and $1\le q \le +\infty$,  $(L(\log L)^\alpha,\|\cdot\|_{L(\log L)^\alpha})$ and $(L^{(p,q)}(\Omega),\|\cdot\|_{(p,q)})$, will denote respectively the Zygmund and Lorentz spaces on $\Omega$.   We refer to the Appendix for the  precise definitions (see \eqref{zygm}-\eqref{Lor fin}).  

\begin{lemma}\label{Lorentz} For any $1\le l\le 2m-1$, we have
$$\| \nabla^l u_n \|_{(\frac{2m}{l},2)} \le C.$$ 
\end{lemma}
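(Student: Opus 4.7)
Following the approach of \cite{mar}, the plan is to rewrite the Euler--Lagrange equation \eqref{star} as $\lm u_n = g_n+\alpha u_n$ with $g_n:=\lambda_n u_n e^{\beta_n u_n^2}$, and then invoke the Lorentz--Zygmund regularity estimate of Proposition \ref{ell Lor} in the Appendix. That result gives an embedding of the type $L(\log L)^{1/2}\to\{\nabla^l u\in L^{(2m/l,2)}\}$ with Dirichlet boundary conditions, so the whole task reduces to bounding the right-hand side of the equation uniformly in $L(\log L)^{1/2}(\Omega)$.

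The linear term is harmless: by Lemma \ref{weak lim}, $u_n\to 0$ in every $L^p(\Omega)$, so $\alpha u_n$ is uniformly bounded in $L^p$ and hence in $L(\log L)^{1/2}(\Omega)$. The core of the argument is the uniform estimate $\|g_n\|_{L(\log L)^{1/2}}\le C$, which I would obtain by splitting $\Omega=A_n\cup(\Omega\setminus A_n)$ with $A_n:=\{u_n^2\ge 1\}$. On $\Omega\setminus A_n$, $|g_n|\le \lambda_n e^{\beta^*}\le C$ by part 5 of Lemma \ref{primo}, so that portion of the integral is controlled by $|\Omega|$. On $A_n$, one has $|g_n|\le C e^{\beta^* u_n^2}$, which gives the key pointwise inequality
\begin{equation*}
\log^{1/2}(e+|g_n|)\le C(1+|u_n|)\le C'|u_n|\quad\text{on }A_n.
\end{equation*}
Multiplying by $|g_n|$ and integrating,
\begin{equation*}
\int_{A_n}|g_n|\log^{1/2}(e+|g_n|)\,dx\le C'\int_\Omega \lambda_n u_n^2 e^{\beta_n u_n^2}\,dx = C',
\end{equation*}
where the equality is precisely the definition \eqref{lambdan} of $\lambda_n$. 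This is the desired Zygmund bound.

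With $\lm u_n = g_n+\alpha u_n$ uniformly bounded in $L(\log L)^{1/2}(\Omega)$ and zero Dirichlet data up to order $m-1$, Proposition \ref{ell Lor} yields the claimed estimate $\|\nabla^l u_n\|_{(2m/l,2)}\le C$ for every $1\le l\le 2m-1$. The main obstacle I foresee is purely technical: reconciling the precise definition of the Zygmund norm \eqref{zygm} (which typically involves the non-increasing rearrangement or a normalized integral) with the naive integral $\int|g_n|\log^{1/2}(e+|g_n|)\,dx$ that I actually control. This is standard and only produces an additional multiplicative constant depending on $|\Omega|$. Once this is sorted out, the rest is a direct appeal to the appendix, and the strength of the estimate lies in the fact that the critical exponent $\beta_n u_n^2$ inside the exponential is exactly compensated by taking the square root of the logarithm, which is the whole point of working in the Zygmund scale $L(\log L)^{1/2}$ rather than in plain $L^p$.
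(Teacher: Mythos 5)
Your proof is correct and follows the paper's argument: both bound $\lm u_n$ uniformly in $L(\log L)^{1/2}$ via the pointwise inequality $\log^{1/2}(2+|\lm u_n|)\le C(1+|u_n|)$, integrate using the normalization \eqref{lambdan} (after splitting on $\{|u_n|\ge 1\}$), and then invoke Proposition \ref{ell Lor}. The only cosmetic difference is that you peel off the linear term $\alpha u_n$ and treat it via the $L^p$ embedding, whereas the paper bounds the full right-hand side $f_n=\lm u_n$ in one step; note also that the paper's definition \eqref{zygm norm} is exactly the ``naive integral'' $\int_\Omega|f|\log^{1/2}(2+|f|)\,dx$ you were worried about reconciling, so that technicality disappears.
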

\begin{proof}
Set $f_n:= \lm u_n$. By Proposition \ref{ell Lor}, there exists a constant $C>0$ such that 
$$
\|\nabla^{l} u_n\|_{(\frac{2m}{l},2)}\le C \|f_n\|_{L(Log L)^\frac{1}{2}}, 
$$
for any $1\le l\le 2m-1$, $n\in \N$. Therefore, it is sufficient to prove that $f_n$ is bounded in $L(\log L)^\frac{1}{2}$.For any $x \in \R^+$, let $\log^+x := \max\{0,\log x\}$ be the positive part of $\log x$.  Since $\beta_n$ and $\lambda_n$ are bounded, using the simple inequalities 
$$
\log(x+y) \le x+ \log^+ y  \qquad \text{and} \qquad \log^+(xy) \le \log^+x  +\log^+ y,\qquad x,y\in \R^+,
$$
we find
\[
\begin{split}
\log(2+|f_n|) &  \le   2 + \log^+|u_n| + \log^+ \bra{ \lambda_n e^{\beta_n u_n^2}+\alpha}\\
 & \le C+  \log^+ |u_n| +   \beta_n u_n^2 \\
  & \le C (|u_n| + 1)^2.
\end{split}
\]
Then,
$$
|f_n| \log^\frac{1}{2}(2+|f_n|) \le C |f_n|(1+|u_n|) \le C  \bra{ \lambda_n |u_n| e^{\beta_n u_n^2} +    \lambda_n u_n^2 e^{\beta_n u_n^2} + \alpha |u_n| + \alpha u_n^2 },
$$
and, by Lemma \ref{weak lim} and \eqref{lambdan}, as $n\to + \infty$ we get
\[\begin{split}
\int_{\Omega } |f_n| \log^\frac{1}{2}(2+|f_n|)dx &\le C \bra{\lambda_n \int_{\Omega} |u_n| e^{\beta_n u_n^2} dx + 1+ o(1)} \\
 & \le  C \bra{\lambda_n \int_{\{ |u_n|<1\}} |u_n| e^{\beta_n u_n^2} dx + \lambda_n \int_{\{ |u_n|\ge 1\}} |u_n|^2 e^{\beta_n u_n^2} dx  +  1+ o(1)}\\
 & \le C\bra{\lambda_n  e^{\beta_n} |\Omega|+2 + o(1)} =O(1).
\end{split}
\]
Hence, $f_n$ is bounded in $L(Log L)^\frac{1}{2}$. 
\end{proof}

As a consequence of Lemma \ref{Lorentz}, we obtain an integral estimate on the derivatives of $u_n^2$,   which will play an important role in Sections 4.3 and 4.4. The idea behind this estimate is based on the following  remark: up to terms involving only lower order derivatives, which can be controlled using Lemma \ref{Lorentz}, $\lm u_n^2$ coincides with   $u_n\lm u_n$, which  is bounded in $L^1(\Omega)$. Then, estimates on $u_n^2$  can be obtained via Green's representation formula.

\begin{lemma}\label{integral est}
There exists a constant $C>0$ such that  for any $1\le l\le 2m-1$, $x\in \Omega$,  and $\rho>0$ with $B_{\rho}(x)\subseteq \Omega$, we have
$$
\int_{B_{\rho}(x)} |\nabla^l u_n^2| dy \le C \rho^{2m-l}.
$$ 
\end{lemma}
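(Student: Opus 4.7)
The plan is to first obtain a uniform $L^1(\Omega)$ bound on $\lm(u_n^2)$ and then recover the stated bounds on $\int_{B_\rho(x)} |\nabla^l u_n^2|\, dy$ from a Green's representation for the polyharmonic operator.

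For the $L^1$ estimate I would expand $\lm(u_n^2)$ by a generalised Leibniz rule:
$$
\lm(u_n^2) = 2 u_n \lm u_n + \sum_{j=1}^{2m-1} c_j\, Q_j,
$$
where each $Q_j$ is a linear combination of tensor contractions of $\partial^\alpha u_n$ and $\partial^\beta u_n$ with $|\alpha|=j$, $|\beta|=2m-j$. By \eqref{star}, \eqref{lambdan} and the uniform bound on $\|u_n\|_{L^2(\Omega)}$ (which follows from $u_n\in M_\alpha$), the leading term $2u_n \lm u_n = 2\lambda_n u_n^2 e^{\beta_n u_n^2} + 2\alpha u_n^2$ is nonnegative and satisfies $\|2u_n\lm u_n\|_{L^1(\Omega)} = 2 + 2\alpha \|u_n\|_{L^2(\Omega)}^2 \le C$. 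For the remaining terms, H\"older's inequality in Lorentz spaces (with exponents $(2m/j,2)$ and $(2m/(2m-j),2)$, whose product lies in $L^{(1,1)}=L^1$) combined with Lemma \ref{Lorentz} yields $\|Q_j\|_{L^1(\Omega)} \le C$ for every $1\le j \le 2m-1$. I expect this to be the main obstacle, because the classical Sobolev embeddings do not deliver $\nabla^j u_n \in L^{2m/j}(\Omega)$ uniformly when $j>m$, and the Lorentz-Zygmund refinement of Lemma \ref{Lorentz} is precisely what makes the product estimate work.

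The function $u_n^2$ belongs to $C^\infty(\ov{\Omega})$ by Lemma \ref{primo}, and the Dirichlet conditions $\partial_\nu^k u_n|_{\partial\Omega}=0$ for $k\le m-1$, together with Leibniz, immediately give $\partial_\nu^k(u_n^2)|_{\partial \Omega}=0$ for every $k\le 2m-1$, since in each term of the Leibniz expansion at least one factor is a normal derivative of $u_n$ of order at most $m-1$. In particular $u_n^2 \in H^m_0(\Omega) \cap C^\infty(\ov{\Omega})$, and denoting by $G(y,z):=G_{0,z}(y)$ the Green's function for $\lm$ on $\Omega$ with Dirichlet boundary conditions (Proposition \ref{prop green} with $\alpha=0$), the representation
$$
u_n^2(y) = \int_{\Omega} G(y,z)\, \lm(u_n^2)(z)\, dz
$$
holds.

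Differentiating $l$ times in $y$ and applying the estimate $|\nabla_y^l G(y,z)| \le C |y-z|^{-l}$ from Proposition \ref{prop green}, I obtain
$$
|\nabla^l u_n^2(y)| \le C\int_\Omega \frac{|\lm(u_n^2)(z)|}{|y-z|^l}\,dz.
$$
Integrating over $B_\rho(x)$ and invoking Fubini reduces the problem to a uniform bound on $\int_{B_\rho(x)} |y-z|^{-l}\, dy$. A short case analysis, splitting on whether $|z-x|\le 2\rho$ (in which case $B_\rho(x)\subseteq B_{3\rho}(z)$) or $|z-x|>2\rho$ (in which case $|y-z|\ge |z-x|/2 > \rho$ on $B_\rho(x)$), shows that this inner integral is bounded by $C\rho^{2m-l}$ uniformly in $x,z,\rho$, since $l<2m$. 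Combined with the $L^1$ bound from the first step, this produces the desired estimate $\int_{B_\rho(x)} |\nabla^l u_n^2|\,dy \le C\rho^{2m-l}$.
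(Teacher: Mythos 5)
Your proposal follows essentially the same route as the paper's proof: bound $\lm(u_n^2)$ in $L^1(\Omega)$ by isolating $2u_n\lm u_n$ (controlled via \eqref{lambdan}) and treating the cross terms $|\nabla^j u_n||\nabla^{2m-j}u_n|$ with Lorentz H\"older (Proposition \ref{HolLor}) and Lemma \ref{Lorentz}, then use Green's representation with the kernel estimate from Proposition \ref{prop green} and integrate over $B_\rho(x)$. The only cosmetic difference is in bounding $\int_{B_\rho(x)}|y-z|^{-l}\,dy$, where you argue by a two-case split on $|z-x|$ while the paper invokes the rearrangement-type fact that the integral is maximized at $z=x$; both yield $C\rho^{2m-l}$.
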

\begin{proof}
We start by observing that $\lm u_n^2$ is bounded in $L^1(\Omega)$.  Clearly 
\[
\begin{split}
|\lm u_n^2 | &\le  2 |u_n\lm u_n| + C \sum_{j=1}^{2m-1}|\nabla^j u_n| |\nabla^{2m-j} u_n|.
\end{split}
\]
Equation \eqref{lambdan} and Lemma \ref{weak lim} imply that $u_n\lm u_n$ is bounded in $L^1(\Omega)$. As a consequence of  H\"older's inequality for  Lorentz spaces (Proposition \eqref{HolLor}) and Lemma \ref{Lorentz}, we find  
$$\int_{\Omega} |\nabla^{2m-j} u_n||\nabla^j u_n|dx \le \|\nabla^{2m-j} u_n \|_{(\frac{2m}{2m-j},2)}\|\nabla^{j} u_n \|_{(\frac{2m}{j},2)} \le C.
$$ Thus, $\lm u_n^2$ is bounded in $L^1(\Omega)$. 

Now, we apply  Green's representation formula to $u_n^2$ to get 
$$
u_n^2 (y) = \int_{\Omega} G_y (z) \lm u_n^2(z) dz,
$$
for any $y \in \Omega$
where $G_y:= G_{0,y}$ is defined as in \eqref{green}. By the properties of $G_y$ (see Proposition \ref{prop green}), we have 
$$
|\nabla^l_y G_y (z) |\le \frac{C}{|y-z|^l},
$$ for any $y,z\in \Omega$ with $z\neq y$. Hence
$$
|\nabla^l u_n^2(y)|\le \int_{\Omega} \frac{C |\lm u_n^2(z)|}{|y-z|^l} dz. 
$$
Let $x\in \Omega$ and   $\rho>0$  be as in the statement. Then, we find
\[
\begin{split}
\int_{B_\rho(x)} |\nabla^l u_n^2| dy &\le \int_{B_\rho(x)} \int_{\Omega} \frac{C |\lm u_n^2(z)|}{|y-z|^l} dzdy \\
&=  C \int_{\Omega} |\lm u_n^2(z)| \int_{B_\rho(x)} \frac{1}{|y-z|^l} dydz.
\end{split}
\]
Since 
$$
 \int_{B_\rho(x)} \frac{1}{|y-z|^l} dy \le  \int_{B_\rho(x)} \frac{1}{|y-x|^l} dy = C \rho^{2m-l},
$$
and $\lm u_n^2$ is bounded in $L^1(\Omega)$, we get the conclusion. 
\end{proof}

\subsection{The behavior on a small scale}
Let $u_n$, $\mu_n$ and $x_n$ be as in \eqref{extremal}, \eqref{mun and xn}, \eqref{mun and xn2}. In this  subsection, we  will study the behavior of $u_n$ on small balls centered at the maximum point $x_n$. Define $r_n>0 $ so that 
\begin{equation}\label{rn}
\omega_{2m} r_n^{2m} \lambda_n \mu_n^2 e^{\beta_n \mu_n^2}=1,
\end{equation}
with $\omega_{2m}$ as in \eqref{omega}.

\begin{rem}
Note that, as $n\to +\infty$, we have $r_n^{2m} = o(\mu_n^{-2})$ and, in particular, $r_n \to 0$.
\end{rem}
\begin{proof}
Indeed, by \eqref{lambdan}, we have
$$\frac{1}{\lambda_ne^{\beta_n\mu_n^2}} =  \frac{1}{e^{\beta_n \mu_n^2}}\int_{\Omega} u_n^2 e^{\beta_n u_n^2} dx \le  \|u_n\|_{L^2(\Omega)}^2.$$
Since $u_n \to 0$ in $L^2(\Omega)$, the definition of $r_n^{2m}$ yields $r_n^{2m} \mu_n^2 \to 0$ as $n\to + \infty$. 
\end{proof}

%

Let us now consider the  scaled function 
\begin{equation}\label{etan}
\eta_n (y):= \mu_n ( u_n(x_n+ r_n y) -\mu_n),
\end{equation} 
which is defined on the set
\begin{equation*}
\Omega_n :=\{y\in \R^{2m}\;:\; x_n + r_n y \in \Omega\}. 
\end{equation*}
The main purpose of this subsection consists in proving the following convergence result. 
\begin{prop}\label{conv eta}
We have $\frac{d(x_n, \partial \Omega)}{r_n} \to  +\infty$ and, in particular, $\Omega_n$ approaches $\R^{2m}$ as $n\to + \infty$. Moreover,
$\eta_n$ converges to the limit function 
\begin{equation}\label{eta0}
\eta_0 (y) = - \frac{m}{\beta^*} \log\bra{1+\frac{|y|^2}{4}}
\end{equation}
in $C^{2m-1,\gamma}_{loc}(\R^{2m})$, for any $\gamma\in (0,1)$. 
\end{prop}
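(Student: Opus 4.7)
The plan is a standard rescaling/blow-up argument, with the important twist (which the paper emphasises in the introduction) that classical pointwise elliptic estimates are replaced by the Lorentz--Zygmund bounds of Lemma \ref{Lorentz} and the integral estimate of Lemma \ref{integral est}. I will organise the work in three main steps.

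First, plugging $u_n(x_n+r_ny)=\mu_n+\eta_n(y)/\mu_n$ into \eqref{star} and exploiting the defining identity \eqref{rn}, I will obtain on $\Omega_n$
\begin{equation*}
\lm \eta_n = \frac{1}{\omega_{2m}}\bra{1+\frac{\eta_n}{\mu_n^2}}\exp\sqbra{\beta_n\bra{2\eta_n+\frac{\eta_n^2}{\mu_n^2}}} + \alpha\, r_n^{2m}\bra{\mu_n^2+\eta_n}.
\end{equation*}
Because $|u_n|\le\mu_n$ forces $\eta_n/\mu_n^2\in[-2,0]$, the quantity $2\eta_n+\eta_n^2/\mu_n^2=\eta_n(2+\eta_n/\mu_n^2)$ is $\le 0$, whence the exponential factor is $\le 1$ and $|1+\eta_n/\mu_n^2|\le 1$. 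Combined with $r_n^{2m}\mu_n^2=o(1)$ (the Remark following \eqref{rn}), this gives a uniform bound $|\lm \eta_n|\le C$ on $\Omega_n$, and the $\alpha$-term tends to $0$ uniformly on any set on which $\eta_n$ stays bounded.

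Secondly, I need uniform local bounds on $\eta_n$, which I will extract from the auxiliary function $g_n(y):=u_n(x_n+r_ny)^2-\mu_n^2=2\eta_n+\eta_n^2/\mu_n^2$, which vanishes at the origin. Rescaling Lemma \ref{integral est} yields $\int_{B_R(0)\cap \Omega_n}|\nabla^l g_n|\,dy\le CR^{2m-l}$ for $1\le l\le 2m-1$, while the argument of Lemma \ref{integral est} together with Lemma \ref{Lorentz} and H\"older's inequality in Lorentz spaces also gives $\|\lm g_n\|_{L^1(\Omega_n)}\le C$. A Green's representation on a ball $B_R(0)\subset\Omega_n$ anchored by $g_n(0)=0$ then produces $L^p_{\mathrm{loc}}$ bounds on $g_n$ for every $p<\infty$; since $\eta_n/\mu_n^2\in[-2,0]$, these transfer to $\eta_n$, and combining them with $|\lm\eta_n|\le C$ and interior elliptic regularity (Propositions \ref{ell zero}, \ref{ell shau}) yields uniform bounds on $\eta_n$ in $C^{2m-1,\gamma}_{\mathrm{loc}}$. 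If, along a subsequence, $d(x_n,\partial\Omega)/r_n$ stayed bounded by some $M$, then some $y_n^*\in\ov{B_M(0)}\cap\partial\Omega_n$ would satisfy $\eta_n(y_n^*)=-\mu_n^2\to -\infty$, contradicting the local bound just obtained. Hence $d(x_n,\partial\Omega)/r_n\to+\infty$ and $\Omega_n$ exhausts $\R^{2m}$.

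Thirdly, by Arzel\`a--Ascoli the bounds of Step~2 give, up to a subsequence, a limit $\eta_0\in C^{2m-1,\gamma}_{\mathrm{loc}}(\R^{2m})$ with $\eta_n\to \eta_0$ in that topology, and passing to the limit in Step~1 yields
\begin{equation*}
\lm \eta_0 = \frac{1}{\omega_{2m}}e^{2\beta^*\eta_0} \quad\text{in } \R^{2m},\qquad \eta_0(0)=\max_{\R^{2m}}\eta_0=0,\qquad \eta_0\le 0.
\end{equation*}
A change of variables in $\lambda_n\int_\Omega u_n^2 e^{\beta_n u_n^2}\,dx=1$ gives $\int_{\Omega_n}(1+\eta_n/\mu_n^2)^2 e^{\beta_n(2\eta_n+\eta_n^2/\mu_n^2)}dy=\omega_{2m}$, whence by Fatou $\int_{\R^{2m}} e^{2\beta^*\eta_0}\,dy\le\omega_{2m}$, so $\eta_0$ has finite total mass. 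The classification of entire solutions of the higher-order Liouville equation on $\R^{2m}$ with finite total $Q$-curvature (Lin for $m=2$; Chang--Chen and Martinazzi for general $m$, see \cite{mar}), together with $\eta_0(0)=\max\eta_0=0$, forces $\eta_0(y) = -\frac{m}{\beta^*}\log\bra{1+|y|^2/4}$; using $\beta^*=m(2m-1)!\omega_{2m}$ and the $Q$-curvature identity for the round sphere in stereographic coordinates, one verifies directly that this function solves the limiting equation. The main obstacle will be Step~2, i.e.\ obtaining the local $L^\infty$ bound on $\eta_n$ (and hence ruling out $d(x_n,\partial\Omega)/r_n$ from staying bounded) without a classical pointwise elliptic theory for $\lm$ at the critical scaling, which is precisely where the Lorentz--Zygmund machinery of Section \ref{main sec}.2 together with the integral estimate Lemma \ref{integral est} plays the decisive role.
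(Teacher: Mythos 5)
Your overall strategy (blow-up rescaling, obtaining $L^1$ control on a Laplacian of a quadratic quantity through Lemma \ref{integral est}, and classification of the limit) matches the paper's route, but three steps carry real gaps.

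\textbf{The classification step is incomplete.} For $m\ge 2$, the classification results you invoke (Lin, Chang--Chen, Martinazzi) precisely assert that finite total $Q$-curvature together with $\eta_0(0)=\max\eta_0=0$ does \emph{not} determine the standard bubble. There exist non-spherical entire solutions of $\lm\eta_0=\omega_{2m}^{-1}e^{2\beta^*\eta_0}$ with $e^{2\beta^*\eta_0}\in L^1(\R^{2m})$ for which $\eta_0$ grows like a negative quadratic polynomial at infinity, i.e.\ $\Delta\eta_0(y)\to a<0$ as $|y|\to\infty$. Such solutions can also be normalized so that $0$ is a global maximum with value $0$. The paper rules them out using the quantitative bound of Lemma \ref{lap etan}: $\|\Delta\eta_n\|_{L^1(B_R(0))}\le CR^{2m-2}$, which passes to the limit and gives $\int_{B_R}|\Delta\eta_0|\,dy\le CR^{2m-2}$, incompatible with $\Delta\eta_0\to a\neq 0$ (which would force $\int_{B_R}|\Delta\eta_0|\sim R^{2m}$). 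You derive the rescaled $L^1$ estimate but never use it for this purpose, and without it the conclusion does not follow.

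\textbf{Step 2 does not actually produce the local bound on $\eta_n$.} Green's representation for $\lm$ on a ball requires control of the Dirichlet boundary data $\partial_\nu^j g_n$, $0\le j\le m-1$, on $\partial B_R(0)$; a single interior anchor $g_n(0)=0$ does not suffice. More concretely, after writing $g_n$ as a Newtonian potential of $\lm g_n$ plus an $m$-harmonic remainder $h_n$, you still need to bound $h_n$, which requires either boundary information or an $L^1$ bound on $h_n$ itself, neither of which is available at this stage. The paper avoids this by only controlling the \emph{second-order} Laplacian: Lemma \ref{lap etan} gives $\Delta\eta_n$ bounded in $L^1_{\mathrm{loc}}$, then Proposition \ref{ell loc} applied to $\Delta^{m-1}(\Delta\eta_n)=\lm\eta_n=O(1)$ upgrades this to $L^\infty_{\mathrm{loc}}$ bounds on $\Delta\eta_n$, and finally the Harnack-type Proposition \ref{ell use} (which is stated precisely for $-\Delta$, not $\lm$, because the maximum principle is available) is applied to $\eta_n$, which is bounded above and equals $0$ at the origin. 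The one-sided bound $\eta_n\le C$ is essential here, and the passage through the second-order object $\Delta\eta_n$ is what makes the Harnack argument applicable.

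\textbf{The argument that $d(x_n,\partial\Omega)/r_n\to+\infty$ is circular.} You deduce this from local bounds on $\eta_n$ and the boundary value $\eta_n=-\mu_n^2$ on $\partial\Omega_n$, but those local bounds were themselves derived assuming $B_R(0)\subset\Omega_n$, which is exactly what is in question when the scaled boundary distance is bounded. The paper's Lemma \ref{boundarydist0} handles this by an independent argument: if $d(\tilde x_n,\partial\Omega)/s_n$ stayed bounded, the scaled domains converge in $C^\infty_{\mathrm{loc}}$ to a half-space; the normalized functions $w_n=v_n/D_n$ satisfy $\lm w_n=O(1)$, are bounded, and have $\|\nabla w_n\|_{L^{2m}}\to 0$, so the limit $w_0$ is constant, vanishes on the limiting boundary hyperplane (via the Navier/Dirichlet boundary conditions passed to the limit), and hence vanishes identically --- contradicting the normalization $\max_{0\le j\le 2m-1}|\nabla^j w_n(0)|=1$. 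This boundary analysis must be done \emph{before} establishing the interior bounds, not after.
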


In order to avoid repetitions, it is convenient to see Proposition \ref{conv eta}  as  a special case of the following more general result, which will be  useful also in the proof of Proposition \ref{stima Luca}.  

\begin{prop}\label{conv eta gen}
Given two sequences $\tilde x_n\in \Omega$ and $s_n\in\R^+$,  consider the scaled set $\tilde \Omega_n :=\{ y\in \R^{2m}\;:\; \tilde x_n +s_n y \in \Omega \}$ and the functions $v_n(y):= u_n(\tilde x_n +s_n y )$ and 
$\tilde \eta_n(y):= \tilde  \mu_n \bra{ v_n (y )- \tilde \mu_n}  $, where $\tilde \mu_n := u_n(\tilde x_n)$. Assume that
\begin{enumerate}
\item $\omega_{2m} s_n^{2m} \lambda_n \tilde \mu_n^2 e^{\beta_n \tilde \mu_n^2} = 1$ and $|\tilde \mu_n| \to +\infty$, $s_n^{2m} \to 0$,   as $n\to+ \infty$.
\item For any $R>0$ there exists a constant $C(R)>0$ such that  
\begin{equation}\label{ass}
\left|\frac{v_n}{\tilde \mu_n} \right| \le C(R) \quad \mbox{and }\quad v_n^2 - \tilde \mu_n^2 \le  C(R) \quad \text{ in }\tilde \Omega_n \cap B_{R}(0).
\end{equation}
\end{enumerate}
Then, we have $\frac{d(\tilde x_n, \partial \Omega)}{s_n} \to  +\infty$ and $\frac{v_n}{\mu_n}\to 1$ in $C^{2m-1,\gamma}_{loc}(\R^{2m})$, for any $\gamma\in (0,1)$.   Moreover  $\tilde \eta_n\to \eta_0$  in $C^{2m-1,\gamma}_{loc}(\R^{2m})$, where $\eta_0$ is defined as in \eqref{eta0}.
\end{prop}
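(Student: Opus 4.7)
The plan is to rescale the Euler–Lagrange equation \eqref{star}, pass to a limit by elliptic regularity, and identify the limit via the classification of entire solutions to the Liouville-type equation of order $2m$. Setting $w_n := v_n/\tilde\mu_n$ and using $(-\Delta_y)^m\tilde\eta_n(y) = \tilde\mu_n s_n^{2m}\bigl((-\Delta)^m u_n\bigr)(\tilde x_n+s_n y)$ together with the normalization in assumption~(1), I would first derive
\begin{equation*}
(-\Delta)^m\tilde\eta_n \;=\; \frac{w_n}{\omega_{2m}}\exp\bigl(\beta_n(v_n^2-\tilde\mu_n^2)\bigr) \;+\; \alpha\, s_n^{2m}\tilde\mu_n v_n \qquad \text{on }\tilde\Omega_n,
\end{equation*}
with vanishing Cauchy data on $\partial\tilde\Omega_n$. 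Since $v_n^2-\tilde\mu_n^2=\tilde\eta_n(1+w_n)$, assumption~(2) makes the exponential factor uniformly bounded on $B_R(0)\cap\tilde\Omega_n$; the second summand is of order $s_n^{2m}\tilde\mu_n^2=(\omega_{2m}\lambda_n e^{\beta_n\tilde\mu_n^2})^{-1}$, which tends to $0$ by the same reasoning as in the remark following \eqref{rn}, invoking Lemma \ref{primo} and $\|u_n\|_{L^2}\to 0$ from Lemma \ref{weak lim}.

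To prove $d(\tilde x_n,\partial\Omega)/s_n\to +\infty$, I argue by contradiction: if some subsequence satisfies $d(\tilde x_n,\partial\Omega)/s_n\to d_0<\infty$, then after a rigid motion $\tilde\Omega_n$ Hausdorff-converges to a half-space $H$ containing the origin. Dividing the equation above by $\tilde\mu_n^2$, the function $w_n$ satisfies a polyharmonic equation whose right-hand side is uniformly bounded and tends to $0$, with homogeneous Dirichlet data $\partial_\nu^j w_n=0$ on $\partial\tilde\Omega_n$ for $0\le j\le m-1$. A boundary variant of the elliptic estimates (Proposition \ref{ell new}) then gives uniform $C^{2m-1,\gamma}$ bounds for $w_n$ on compact subsets of $\overline H$; the extracted limit $w_\infty$ is polyharmonic on $H$ with vanishing Cauchy data on $\partial H$. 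Reflecting across $\partial H$ and invoking the real-analyticity of polyharmonic functions on $\R^{2m}$ forces $w_\infty\equiv 0$, which contradicts $w_n(0)=1$ and hence $w_\infty(0)=1$.

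Once $\tilde\Omega_n$ exhausts $\R^{2m}$, interior estimates (Proposition \ref{ell new}) combined with the bound in the first paragraph yield $\tilde\eta_n\to\eta_\infty$ in $C^{2m-1,\gamma}_{loc}(\R^{2m})$. The identity $w_n = 1+\tilde\eta_n/\tilde\mu_n^2$ and $\tilde\mu_n\to\infty$ give $v_n/\tilde\mu_n\to 1$ in $C^{2m-1,\gamma}_{loc}$, and therefore $v_n^2-\tilde\mu_n^2=\tilde\eta_n(1+w_n)\to 2\eta_\infty$ locally uniformly. Passing to the limit in the rescaled equation,
\begin{equation*}
(-\Delta)^m\eta_\infty = \frac{1}{\omega_{2m}}\,e^{2\beta^*\eta_\infty}\text{ on }\R^{2m},\qquad \eta_\infty(0)=0=\max\eta_\infty.
\end{equation*}
Finite total curvature $\int_{\R^{2m}}e^{2\beta^*\eta_\infty}dy<\infty$ follows from Fatou applied to the uniform bound $F_{\beta_n}(u_n)\le S_{\alpha,\beta^*}$, so the classification of entire solutions of the Liouville equation of order $2m$ (Chang--Chen/Lin/Martinazzi) identifies $\eta_\infty$, up to translation and dilation, with a spherical solution. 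The pinning $\max\eta_\infty=\eta_\infty(0)=0$ fixes translation and scale and forces $\eta_\infty=\eta_0$ as in \eqref{eta0}; uniqueness of the limit upgrades subsequential to full convergence. The decisive difficulty is the boundary exclusion: both the up-to-the-boundary elliptic estimate on the varying domain $\tilde\Omega_n$ and the polyharmonic reflection-plus-analyticity step require care, and they are what prevents the blow-up profile from concentrating on a half-space.
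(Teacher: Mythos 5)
Your overall strategy (rescale, get uniform elliptic bounds, pass to a limit equation, invoke the classification of entire solutions) is the same skeleton as the paper, but there are three genuine gaps where the execution fails.

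\textbf{Boundary exclusion.} Your reflection-plus-analyticity argument does not close the case. A polyharmonic function on a half-space with vanishing Cauchy data of order $m-1$ on the boundary is \emph{not} forced to vanish: $w(x)=x_N^m$ on $\{x_N>0\}$ is $m$-harmonic, has $\partial_\nu^j w = 0$ on $\{x_N=0\}$ for $0\le j\le m-1$, and is nonzero. Real-analyticity plus vanishing to order $m$ on a hyperplane (a codimension-one set) gives no contradiction. The paper's argument uses the crucial additional fact that Sobolev embedding yields $\|\nabla^j w_n\|_{L^{2m/j}} = O(D_n^{-1})\to 0$ for $1\le j\le m$, so in the limit $\nabla w_\infty\equiv 0$; combined with $w_\infty=0$ on $\partial\mathcal P$ this forces $w_\infty\equiv 0$, contradicting the normalization at the origin. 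You need this vanishing-gradient ingredient (or an equivalent); analyticity alone is not enough.

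\textbf{Uniform bounds on $\tilde\eta_n$.} Your sentence ``interior estimates \ldots combined with the bound in the first paragraph yield $\tilde\eta_n\to\eta_\infty$ in $C^{2m-1,\gamma}_{loc}$'' is incomplete. The only bound established so far is on $(-\Delta)^m\tilde\eta_n$. Elliptic estimates (Proposition \ref{ell new} or \ref{ell loc}) also require a bound on $\tilde\eta_n$ itself in some norm (e.g.\ $L^1_{loc}$ or $L^\infty_{loc}$), and this is precisely the hard part: unlike the case $m=1$, you do not have a maximum principle, and an upper bound $\tilde\eta_n\le C(R)$ plus $\tilde\eta_n(0)=0$ does not immediately give a lower bound. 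The paper supplies the missing control via the Lorentz--Zygmund machinery: Lemma \ref{integral est} gives $\|\Delta u_n^2\|_{L^1}$ bounds, Lemma \ref{lap etan} translates this into $\|\Delta\tilde\eta_n\|_{L^1(B_R)}\le CR^{2m-2}$, and then Proposition \ref{ell use} (a Harnack-type estimate for the Laplacian, using $\tilde\eta_n(0)=0$) yields $L^\infty_{loc}$ bounds for $\tilde\eta_n$. Your proposal does not produce any such lower/$L^1$ control.

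\textbf{Classification.} For $m\ge 2$ the classification of entire solutions of $(-\Delta)^m\eta = \omega_{2m}^{-1}e^{2\beta^*\eta}$ with finite energy and $\eta\le 0$, $\eta(0)=0$ is \emph{not} unique up to translation/dilation: there exist non-spherical solutions characterized by $\Delta\eta(y)\to a<0$ as $|y|\to\infty$ (Theorems 1 and 2 of \cite{MarClass}). Pinning $\max\eta=\eta(0)=0$ does not rule these out. The paper excludes them using the estimate $\int_{B_R}|\Delta\eta_0|\,dy\le CR^{2m-2}$, which again comes from Lemma \ref{lap etan} and is incompatible with $\Delta\eta_0\to a\neq 0$. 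Your write-up implicitly assumes the spherical solution is the only candidate, which is false for $m\ge 2$.

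In short, the differential-equation scaling and the appeal to Liouville classification are correct in spirit, but the three ingredients you are missing --- the $L^{2m}$ gradient decay for the half-space exclusion, the $L^1$ control on $\Delta\tilde\eta_n$ (via Lorentz--Zygmund estimates) for local compactness, and the exclusion of non-spherical profiles --- are exactly what the paper spends Lemmas \ref{boundarydist0}, \ref{boundarydist}, \ref{integral est}, \ref{lap etan} and Proposition \ref{ell use} establishing.
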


Note that the assumptions of Proposition \ref{conv eta gen} are satisfied when $\tilde x_n =x_n$ and $s_n =r_n$. Hence, Proposition \ref{conv eta} follows from Proposition \ref{conv eta gen}. We split the proof of Proposition \ref{conv eta gen} into four steps. The first two steps (Lemma \ref{boundarydist0} and Lemma \ref{boundarydist}) are stated under more general assumptions, since they will be reused in the proof of Proposition \ref{stime grad}.

\begin{lemma}\label{boundarydist0}
Given two sequences $\tilde x_n \in \Omega$ and $s_n\in \R^+$,  let $\tilde \Omega_n$ and $v_n$ be defined as in Proposition \ref{conv eta gen}. Let also $\Sigma$ be a finite (possibly empty) subset of $\R^{2m}\setminus \{0\}$. Assume that 
\begin{enumerate}
\item $s_n\to 0$ and $\dis{D_n:= \max_{0\le i\le 2m-1}|\nabla^i v_n(0)|\to +\infty}$ as $n\to +\infty$.
\item For any $R>0$, there exist $C(R)>0$ and $N(R)\in \N$ such that 
$$
|v_n(y)|\le C(R)D_n \qquad \mbox{ and } \qquad |\lm v_n(y)| \le C(R)D_n,
$$
for any $\dis{y\in \tilde \Omega_{n,R}:= \tilde \Omega_n \cap B_R(0)\setminus \bigcup_{\xi \in \Sigma} B_\frac{1}{R}(\xi)}$ and any $n\ge N(R)$. 
\end{enumerate} 
Then, we have  $$
\lim_{n\to + \infty} \frac{d(\tilde x_n,\partial \Omega)}{s_n} = +\infty.
$$
\end{lemma}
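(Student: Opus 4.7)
The plan is to argue by contradiction: suppose that, along a subsequence, $\rho_n := d(\tilde x_n, \partial\Omega)/s_n \le K$ remains bounded, with limit $d_\infty \in [0, K]$. I first normalize by setting $w_n := v_n/D_n$. Because $u_n$ satisfies the full Dirichlet boundary conditions of \eqref{star}, $w_n$ inherits $w_n = \partial_\nu w_n = \cdots = \partial_\nu^{m-1} w_n = 0$ on $\partial\tilde\Omega_n$; hypothesis~(2) becomes $\|w_n\|_{L^\infty(\tilde\Omega_{n,R})} \le C(R)$ and $\|\lm w_n\|_{L^\infty(\tilde\Omega_{n,R})} \le C(R)$, while by construction $\max_{0 \le i \le 2m-1} |\nabla^i w_n(0)| = 1$. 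Since $\partial\Omega$ is smooth and $s_n\to 0$, the rescaled boundary $\partial\tilde\Omega_n$ flattens locally in $C^k$ (for every $k$) to a hyperplane $\partial H$, where $H$ is a half-space with $0$ at distance $d_\infty$ from $\partial H$.

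Next I would apply the boundary $L^p$ Agmon--Douglis--Nirenberg estimates (recalled in the Appendix) together with the Sobolev embedding to obtain uniform $C^{2m-1,\gamma}_{\loc}(\overline H \setminus \Sigma)$ bounds on $w_n$ up to the smooth portion of $\partial\tilde\Omega_n$. Arzelà--Ascoli and a diagonal extraction then give, along a further subsequence, $w_n \to w$ in this topology. Since $\Sigma$ is finite and both $w$ and $\lm w$ remain locally bounded near each $\xi \in \Sigma$, the singularities are removable, yielding $w \in C^{2m-1,\gamma}_{\loc}(\overline H)$ satisfying the full Dirichlet conditions on $\partial H$ and $\max_{0\le i\le 2m-1} |\nabla^i w(0)| = 1$. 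Expanding the Euler--Lagrange equation $\lm v_n = s_n^{2m}(\lambda_n v_n e^{\beta_n v_n^2} + \alpha v_n)$ and using Lemma~\ref{primo} together with $\tilde\mu_n \to +\infty$ and the local bounds on $v_n$, I would then verify that $\lm v_n = o(D_n)$ uniformly on compact subsets of $\overline H \setminus \Sigma$; passing to the limit, $\lm w = 0$ on $H$, so $w$ is polyharmonic.

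The crux, and the step I expect to be the main obstacle, is a Liouville-type rigidity for $w$: a function polyharmonic of order $m$ on the half-space $H$, satisfying full Dirichlet conditions on $\partial H$, and with the growth inherited from the local bounds on $w_n$, must vanish identically. Concretely, I would extend $w$ across $\partial H$ by a reflection formula adapted to the order-$m$ Dirichlet data, producing a polyharmonic function on $\R^{2m}$ with polynomial growth; the classical polyharmonic Liouville theorem then implies that the extension is a polynomial. Full Dirichlet on $\partial H$ forces divisibility by $(y_{2m}+d_\infty)^m$, and combining this with the global $L^\infty$ control $|v_n| \le \mu_n$ coming from $u_n$ being a subcritical extremal (which translates into polynomial growth of order less than $m$ for $w$), the polynomial must be identically zero. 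This contradicts $\max_i |\nabla^i w(0)| = 1$ and completes the proof. The delicate points are the precise form of the order-$m$ reflection and matching the growth rate of $w$ with the degree of the resulting polynomial.
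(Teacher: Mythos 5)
The proposal and the paper agree on the preliminary steps (rescaling $w_n := v_n/D_n$, the contradiction hypothesis, convergence of $\tilde\Omega_n$ to a half-space, and Agmon--Douglis--Nirenberg plus Sobolev estimates to extract a limit $w_0$ with $\max_{0\le i\le 2m-1}|\nabla^i w_0(0)| = 1$). But the central step — why $w_0$ must vanish — is handled in a fundamentally different way, and the route you propose has a genuine gap.

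You aim to show $\lm w_0 = 0$ on the half-space, then run a reflection / polyharmonic Liouville argument based on polynomial growth. Two problems. First, the hypotheses of the lemma only give $|\lm v_n| \le C(R)D_n$, i.e.\ $\lm w_n = O(1)$ locally, \emph{not} $o(1)$. You claim you would upgrade this to $o(D_n)$ using the Euler--Lagrange equation and $\tilde\mu_n\to+\infty$, but $\tilde\mu_n$ is not among the hypotheses of this lemma; it is stated and used for general $\tilde x_n$, $s_n$ (in particular in Lemmas~\ref{grad dist} and~\ref{grad dist case l}, where the scaling is by the Euclidean distance $s_{n,l} = |x_{n,l}-x_n|$, for which $\lm w_n$ includes a term $O(s_{n,l}^{2m}D_{n,l})$ that is not evidently $o(1)$). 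So you cannot assert that $w_0$ is polyharmonic. Second, even granting polyharmonicity, the growth-rate argument is not forced by what you have: the local bound $|w_n|\le C(R)$ on $B_R(0)\setminus\Sigma$ lets $C(R)$ grow arbitrarily with $R$, so it does not control the growth of $w_0$ at infinity, and your claimed ``order less than $m$'' restriction has no basis in the hypotheses.

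The missing ingredient, which the paper uses in place of all of this, is elementary but decisive: the Sobolev norms $\|\nabla^j v_n\|_{L^{2m/j}(\tilde\Omega_n)} = \|\nabla^j u_n\|_{L^{2m/j}(\Omega)}$ are \emph{scaling-invariant}, hence bounded by $\|u_n\|_{H^m_0(\Omega)} = O(1)$. Dividing by $D_n$ gives $\|\nabla^j w_n\|_{L^{2m/j}(\tilde\Omega_n)} = O(D_n^{-1}) \to 0$, so in the limit $\nabla w_0 \equiv 0$ on $\mathcal P\setminus\Sigma$. Thus $w_0$ is constant, and the boundary condition $w_0\equiv 0$ on $\partial\mathcal P\setminus\Sigma$ forces $w_0\equiv 0$, contradicting $\max_i|\nabla^i w_0(0)|=1$. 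No equation satisfied by $w_0$, no reflection, and no Liouville theorem are needed. I recommend replacing your Liouville step by this scaling-invariance observation.
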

\begin{proof}
Let us consider the functions $w_n(y):= \frac{v_n(y)}{D_n}.$ First, we observe that the assumptions on $\tilde x_n$ and $s_n$ imply  
\begin{equation}\label{cond wn 1}
w_n = O(1),
\end{equation}
and 
\begin{equation}\label{cond wn 2}
|\lm w_n| = O(1),
\end{equation}
uniformly in $\tilde \Omega_{n,R}$, for any $R>0$. Moreover, by Sobolev's inequality, for any $1\le j\le m$ we have that 
\begin{equation}\label{cond wn 3}
\|\nabla^j w_{n}\|_{L^{\frac{2m}{j}}(\tilde \Omega_n)} = D_n^{-1} \|\nabla^j u_{n}\|_{L^{\frac{2m}{j}}( \Omega)} \le C D_{n}^{-1} \|\Delta^\frac{m}{2} u_n\|_{L^2(\Omega)} = O(D_{n}^{-1}).  
\end{equation}
Then, using H\"older's inequality, \eqref{cond wn 1} and \eqref{cond wn 3} give 
\begin{equation}\label{cond wn 4}
\|w_n\|_{W^{m,1}(\tilde \Omega_{n,R})} =O(1). 
\end{equation}
Now, we assume by contradiction that for a subsequence
\[
\frac{d(\tilde x_n,\partial \Omega)}{s_n} \to R_0 \in [0,+\infty).
\] 
Then, the sets $\tilde \Omega_n$ converge in $C^{\infty}_{loc}$ to a hyperplane $\mathcal P$ such that $d(0,\partial \mathcal P)=R_0$.  For any sufficiently large $R>0$ and any $p>1$, using \eqref{cond wn 2}, \eqref{cond wn 4}, Proposition \ref{ell new}, and Remark \ref{remdomains},  we find a constant $C=C(R)$ such that
$\|w_n\|_{W^{2m,p}(\tilde \Omega_{n,\frac{R}{2}})} \le C.$  Then, Sobolev's embeddings imply that $\|w_n\|_{C^{2m-1,\gamma}(\tilde \Omega_{n,\frac{R}{2}})}\le C$, for any $\gamma\in (0,1)$.  Reproducing the standard proof of the  Ascoli-Arzelà theorem, we find a function $w_0\in C^{2m-1,\gamma}_{loc}(\ov{\mathcal P}\setminus \Sigma)$ such that, up to a subsequence, we have 
\begin{equation}\label{conv wn}
w_n \to w_0 \qquad \text{ in } C^{2m-1}_{loc}(\mathcal P \setminus \Sigma)
\end{equation}
and 
\begin{equation}\label{conv wn bound}
\nabla^j w_n (\xi_n) \to \nabla^j w_0 (\xi), \quad 0\le j\le 2m-1,
\end{equation}
for any $\xi \in \ov{\mathcal P}\setminus \Sigma$ and any sequence $\{\xi_n\}_{n\in \N}$ such that $\xi_n \to \xi$. Since $w_n =0$ on $\partial \tilde \Omega_n$ and $\tilde \Omega_n$ converges to $\mathcal P$, \eqref{conv wn bound} yields $w_0 \equiv 0$ in $\partial \mathcal P \setminus \Sigma$. Furthermore, \eqref{cond wn 3} and \eqref{conv wn} imply that $\nabla w_0  \equiv 0$ in $\mathcal P\setminus \Sigma $. Therefore, $w_0 \equiv 0$ on $\ov{\mathcal P} \setminus \Sigma$.  But, by definition of $D_n$ and $w_n$, we have 
$$
\max_{0\le i\le 2m-1}|\nabla^i w_n(0)| = 1,
$$
which contradicts either \eqref{conv wn} (if $R_0>0$) or \eqref{conv wn bound} (if $R_0=0$). 
\end{proof}

\begin{lemma}\label{boundarydist}
Let $s_n$, $\tilde x_n$, $v_n$, $\tilde \Omega_n$, $D_n$ and $\Sigma$ be as in Lemma \ref{boundarydist0}. Then, $|v_n(0)|\to + \infty$ and 
$$
\frac{v_n}{v_n(0)} \to 1 \qquad \text{ in } C^{2m-1,\gamma}_{loc}(\R^{2m}\setminus \Sigma),
$$ for any  $\gamma\in (0,1)$. 
\end{lemma}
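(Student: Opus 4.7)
The plan is to reuse the normalisation $w_n := v_n/D_n$ from the proof of Lemma \ref{boundarydist0} and push the argument one step further, exploiting the defining relation $\max_{0\le i\le 2m-1}|\nabla^i w_n(0)|=1$, which so far was only used to derive a contradiction. The a priori bounds \eqref{cond wn 1}, \eqref{cond wn 2} and \eqref{cond wn 4}, combined with Proposition \ref{ell new} and Sobolev embeddings exactly as in the proof of Lemma \ref{boundarydist0}, give a uniform $C^{2m-1,\gamma}$ bound for $w_n$ on every compact subset of $\R^{2m}\setminus\Sigma$; moreover Lemma \ref{boundarydist0} itself guarantees that the sets $\tilde\Omega_n$ exhaust $\R^{2m}$. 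Up to a subsequence, I can therefore find $w_0\in C^{2m-1,\gamma}_{loc}(\R^{2m}\setminus\Sigma)$ such that $w_n\to w_0$ in $C^{2m-1}_{loc}(\R^{2m}\setminus\Sigma)$.

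The next step is to pin down $w_0$. The estimate \eqref{cond wn 3} with $j=1$ gives $\|\nabla w_n\|_{L^{2m}(\tilde\Omega_n)}=O(D_n^{-1})=o(1)$, so $\nabla w_0\equiv 0$ on $\R^{2m}\setminus\Sigma$. Since $\Sigma$ is finite and $2m\ge 2$, this set is connected, so $w_0$ reduces to a real constant $c$.

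Then I would identify $c$ by passing to the limit in the anchoring identity. Because $0\notin\Sigma$, the $C^{2m-1}$ convergence is valid on a neighbourhood of the origin, so $\nabla^i w_n(0)\to 0$ for $1\le i\le 2m-1$ and $w_n(0)\to c$, which forces $|c|=1$. The two conclusions of the lemma then follow at once: from $v_n(0)=D_n w_n(0)=D_n(c+o(1))$ and $D_n\to+\infty$ one obtains $|v_n(0)|\to+\infty$, while
$$
\frac{v_n}{v_n(0)}=\frac{w_n}{w_n(0)}\longrightarrow \frac{c}{c}=1
$$
in $C^{2m-1,\gamma}_{loc}(\R^{2m}\setminus\Sigma)$. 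Since the limit $1$ is independent of whether $c=+1$ or $c=-1$, a standard subsequence argument upgrades the conclusion from subsequences to the full sequence.

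I do not expect any serious obstacle here: the delicate work, namely showing $d(\tilde x_n,\partial\Omega)/s_n\to+\infty$ and setting up the compactness of $w_n$, was already done in Lemma \ref{boundarydist0}. What remains is the rigidity statement \emph{all derivatives of the limit vanish, hence the limit is constant}, together with the identification of that constant via the anchoring identity at $0$, and both are light.
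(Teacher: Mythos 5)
Your proposal is correct and follows essentially the same path as the paper: normalize by $D_n$, extract a subsequential limit via elliptic estimates, show the limit is constant because $\|\nabla w_n\|_{L^{2m}}=O(D_n^{-1})\to 0$, pin down the modulus of the constant as $1$ from the anchoring identity at $0$, and conclude. The only cosmetic difference is that you invoke Proposition \ref{ell new} rather than the purely interior estimate Proposition \ref{ell loc}, which the paper uses here since Lemma \ref{boundarydist0} already ensures the domains exhaust $\R^{2m}$; either route works.
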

\begin{proof}
Consider the function 
$
w_n (y) := \dfrac{v_n(y)}{D_n}
$, $y\in \tilde \Omega_n$. 
As in \eqref{cond wn 1}, \eqref{cond wn 2} and \eqref{cond wn 3}, we have
\begin{equation}\label{eq risc1}
\begin{split}
w_n = O(1) \qquad \text{ and }\qquad (-\Delta)^m w_n   =  O(1),
\end{split}
\end{equation}
uniformly in $B_{R}(0)\setminus \bigcup_{\xi\in \Sigma}B_\frac{1}{R}(\xi)$, for any $R>0$, and  
\begin{equation}\label{grad zero1}
\|\nabla w_n\|_{L^{2m}(\tilde \Omega_n)}\to 0.
\end{equation}
By \eqref{eq risc1}, Proposition \ref{ell loc}, Sobolev's embeddings, and \eqref{grad zero1}, a subsequence of  $w_n$ must converge to a constant function $w_0$ in $C^{2m-1,\gamma}_{loc}(\R^{2m}\setminus \Sigma)$, for any $\gamma \in (0,1)$. In particular, we have $|\nabla^j w_n(0)|\to 0$ for any $1\le j\le 2m-1$. Then, the definitions of $D_n$ and $w_n$ give 
$$
1 = \max_{0\le j\le 2m-1} |\nabla^j w_n(0)| = |w_n(0)|,
$$
which implies that $|v_n(0)|=D_n\to +\infty$ and that $|w_0|\equiv 1$ in $\R^{2m}\setminus \Sigma$. Hence,
$$
\frac{v_n}{v_n(0)} = \frac{w_n}{w_n(0)} \to 1 \qquad \mbox{ in }C^{2m-1,\gamma}_{loc}(\R^{2m}\setminus \Sigma). 
$$
\end{proof}

Next, we let $\tilde x_n, s_n$, $\tilde \mu_n$ and $\tilde \eta_n$ be as in Proposition \ref{conv eta gen} and we  apply Lemma \ref{integral est} to prove bounds for $\Delta \tilde \eta_n$ in $L^1_{loc}(\R^{2m})$.

\begin{lemma}\label{lap etan}
 Under the assumptions of Proposition \ref{conv eta gen}, there exists a constant $C>0$ such that  
 $$
 \|\Delta \tilde  \eta_n\|_{L^1(B_R(0))} \le C R^{2m-2},
 $$
for any $R>1$ and for sufficiently large $n$.  
\end{lemma}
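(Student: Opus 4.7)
The strategy is to deduce the bound from Lemma~\ref{integral est} applied to $u_n^2$, via a rescaling argument. The pointwise identity $\Delta v_n^2 = 2v_n\Delta v_n + 2|\nabla v_n|^2$ naturally produces an $L^1_{loc}$-bound on $v_n\Delta v_n$, and to convert this into the desired bound on $\Delta\tilde\eta_n = \tilde\mu_n\Delta v_n$ one needs the uniform lower bound $|v_n|\ge |\tilde\mu_n|/2$ on compacts, which will be provided by invoking Lemmas~\ref{boundarydist0} and~\ref{boundarydist}.

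First I would verify the hypotheses of Lemmas~\ref{boundarydist0} and~\ref{boundarydist} for the sequences $(\tilde x_n,s_n,v_n)$ with $\Sigma=\emptyset$ and $D_n:=\max_{0\le i\le 2m-1}|\nabla^i v_n(0)|$. The bound $D_n\ge |v_n(0)|=|\tilde\mu_n|\to +\infty$ is immediate, and $|v_n|\le C(R)|\tilde\mu_n|\le C(R) D_n$ on $B_R(0)$ follows from \eqref{ass}. Rescaling \eqref{star} and using the normalization $\omega_{2m}s_n^{2m}\lambda_n\tilde\mu_n^2 e^{\beta_n\tilde\mu_n^2}=1$ gives
\[
\lm v_n(y) = \frac{v_n(y)}{\omega_{2m}\tilde\mu_n^2}\,e^{\beta_n(v_n^2(y)-\tilde\mu_n^2)} + s_n^{2m}\alpha v_n(y),
\]
so by \eqref{ass} and $s_n^{2m}\to 0$, one has $|\lm v_n|\le C(R)|\tilde\mu_n|\le C(R) D_n$ on $B_R(0)$ for $n$ large. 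The two lemmas then yield $d(\tilde x_n,\partial\Omega)/s_n\to +\infty$ and $v_n/\tilde\mu_n\to 1$ in $C^{2m-1,\gamma}_{loc}(\R^{2m})$. In particular, for each $R>1$ there exists $N(R)\in\N$ such that $B_R(0)\subseteq\tilde\Omega_n$ and $|v_n|\ge|\tilde\mu_n|/2$ on $B_R(0)$ for $n\ge N(R)$.

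Next, for $n\ge N(R)$, Lemma~\ref{integral est} applied with $l=2$ on $B_{Rs_n}(\tilde x_n)\subseteq\Omega$, combined with the chain rule $\Delta_y[u_n^2(\tilde x_n+s_n y)]=s_n^2(\Delta u_n^2)(\tilde x_n+s_n y)$, produces
\[
\int_{B_R(0)}|\Delta v_n^2|\,dy = s_n^{2-2m}\int_{B_{Rs_n}(\tilde x_n)}|\Delta u_n^2|\,dx \le CR^{2m-2}.
\]
Similarly, the Sobolev embedding $H^m_0(\Omega)\hookrightarrow W^{1,2m}(\Omega)$ gives $\|\nabla u_n\|_{L^{2m}(\Omega)}\le C$, whence by H\"older's inequality and the same rescaling $\int_{B_R(0)}|\nabla v_n|^2\,dy\le CR^{2m-2}$. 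The pointwise identity $\Delta v_n^2 = 2v_n\Delta v_n + 2|\nabla v_n|^2$ therefore yields
\[
\int_{B_R(0)}|v_n\Delta v_n|\,dy \le CR^{2m-2}.
\]

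The proof is concluded using the lower bound $|v_n|\ge|\tilde\mu_n|/2$ from the first step: on $B_R(0)$ with $n\ge N(R)$,
\[
|\Delta\tilde\eta_n| \;=\; \frac{|\tilde\mu_n|}{|v_n|}\,|v_n\Delta v_n|\;\le\; 2|v_n\Delta v_n|,
\]
and integration yields the claim. The main obstacle is precisely this last replacement: the natural integral bounds directly control $v_n\Delta v_n$ rather than $\tilde\mu_n\Delta v_n=\Delta\tilde\eta_n$, and $v_n$ could a priori vanish somewhere in $B_R(0)$. The preliminary application of Lemma~\ref{boundarydist}, whose hypotheses are verified via the rescaled Euler--Lagrange equation, supplies the uniform comparison $v_n\sim\tilde\mu_n$ that allows one to absorb the extra factor.
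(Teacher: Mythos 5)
Your proof is correct and follows essentially the same strategy as the paper's: verify the hypotheses of Lemmas \ref{boundarydist0} and \ref{boundarydist} from the rescaled Euler--Lagrange equation and \eqref{ass}, deduce $v_n/\tilde\mu_n\to 1$ locally uniformly, rescale Lemma \ref{integral est} with $l=2$, peel off $|\nabla v_n|^2$ via H\"older with the scale-invariant $\|\nabla u_n\|_{L^{2m}}$ bound, and convert $v_n\Delta v_n$ into $\tilde\mu_n\Delta v_n=\Delta\tilde\eta_n$ using $|v_n|\ge|\tilde\mu_n|/2$. The only cosmetic difference is that you apply the comparison $|\Delta\tilde\eta_n|\le 2|v_n\Delta v_n|$ pointwise, whereas the paper expresses the same fact as $\|v_n\Delta v_n\|_{L^1}=\|\Delta\tilde\eta_n\|_{L^1}(1+o(1))\ge\tfrac12\|\Delta\tilde\eta_n\|_{L^1}$.
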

\begin{proof} First, we observe that $\tilde x_n$ and $s_n$ satisfy the assumptions of Lemma \ref{boundarydist0} and Lemma \ref{boundarydist}. Indeed, equation \eqref{star}, the definition of $v_n$, and the assumptions on $\tilde x_n$ and $s_n$ yield $v_n = O(|\tilde \mu_n|)$ and
\begin{equation}\begin{split}\label{verify}
\lm v_n & =  s_n^{2m } \lambda_n v_n e^{\beta_n v_n^2} +s_n^{2m}\alpha v_n \\
 & = \omega_{2m}^{-1} \frac{v_n}{\tilde \mu_n^2} e^{\beta_n (v_n^2-\tilde \mu_n^2)} + s_n^{2m}\alpha v_n \\
 & = O(|\tilde \mu_n^{-1}|) + O(s_n^{2m}|\tilde \mu_n|),
\end{split}
\end{equation}
uniformly in $\tilde \Omega_n \cap B_R(0)$, for any $R>0$. Then, Lemma \ref{boundarydist0} and Lemma \ref{boundarydist} imply that $\tilde \Omega_n$ approaches $\R^{2m}$ and 
\begin{equation}\label{rip}
\frac{v_n}{\tilde \mu_n} \to 1 \qquad \text{ in } C^{2m-1,\gamma}_{loc}(\R^{2m}), \text{ for any }\gamma\in (0,1). 
\end{equation} 
Next, we rewrite the estimates of Lemma \ref{integral est} in terms of $\tilde \eta_n$. On the one hand, by Lemma \ref{integral est}, there exists $C>0$, such that 
\begin{equation*}
\|\Delta u_n^2\|_{L^1(B_{Rs_n}(\tilde x_n))} \le C (s_n R)^{2m-2}, 
\end{equation*}
for any $R>0$ and $n\in \N$.
On the other hand, we have 
\[
\begin{split}
\|\Delta u_n^2\|_{L^1(B_{Rs_n}(\tilde x_n))}  & \ge 2 \|u_n \Delta u_n\|_{L^1(B_{Rs_n}(\tilde x_n))} -  2 \| \nabla u_n\|^2_{L^2(B_{Rs_n}(\tilde x_n))}  \\
& = 2 s_n^{2m-2} \bra{ \|v_n \Delta v_n\|_{L^1(B_{R}(0))} -  \| \nabla v_n\|^2_{L^2(B_{R}(0))} }.
\end{split}
\]
Then, we obtain 
\begin{equation}\label{stima20b}
\|v_n \Delta v_n\|_{L^1(B_{R}(0))} \le C R^{2m-2} +  \| \nabla v_n\|^2_{L^2(B_{R}(0))}.
\end{equation}
By \eqref{rip} and the definition of $\tilde \eta_n$, we infer 
\begin{equation}\label{stima20c}
\begin{split}
\|v_n \Delta v_n\|_{L^1(B_{R}(0))}  = |\tilde \mu_n | \|\Delta v_n\|_{L^1(B_{R}(0))} (1+o(1)) &=    \| \Delta \tilde \eta_n \|_{L^1(B_{R}(0))} (1+o(1))  \\ &\ge \frac{1}{2} \| \Delta \tilde \eta_n \|_{L^1(B_{R}(0))},
\end{split}
\end{equation}
for sufficiently large $n$. Finally, applying H\"older's inequality,
\begin{equation}\label{stima20d}
\| \nabla v_n\|^2_{L^2(B_{R}(0))} \le  \| \nabla v_n\|^2_{L^{2m}(B_{R}(0))}  |B_R|^{1-\frac{1}{m}} \le \| \nabla u_n\|^2_{L^{2m}(\Omega)}  |B_R|^{1-\frac{1}{m}} \le C R^{1-\frac{1}{m}}.
\end{equation}
Since $1-\frac{1}{m}\le 2m -2$, the conclusion follows from \eqref{stima20b}, \eqref{stima20c}, and \eqref{stima20d}. 
\end{proof}

We can now complete the proof of Proposition \ref{conv eta gen}. 

\begin{proof}[Proof of Proposition \ref{conv eta gen}]  Arguing as in the previous Lemma, we have that $\frac{d(\tilde x_n,\partial \Omega)}{s_n}\to +\infty$ and that \eqref{rip} holds.  Observe that \eqref{rip} implies 
\begin{equation}\label{explanation}
(1+o(1))s_n^{2m} \tilde \mu_n^2 =  \frac{s_n^{2m}}{\omega_{2m}}\int_{B_1(0) }v_n^2(y) dy = \frac{1}{\omega_{2m}} \int_{B_{s_n}(\tilde x_n)} u_n^2 (x)dx = O(\|u_n\|_{L^2(\Omega)}^2) =o(1). 
\end{equation}
Moreover, as in \eqref{verify}, by the definitions of $\tilde \eta_n$ and $v_n$, and the assumptions on $\tilde \mu_n$, $s_n$ and $\tilde x_n$,
we get 
\begin{equation}\label{lap bound}
\lm \tilde  \eta_n =  O(1) + O(s_n^{2m}\tilde \mu_n^2) =O(1),
\end{equation}
uniformly in $B_R(0)$, for any $R>0$.   In addition, Lemma \ref{lap etan} implies that $\Delta \tilde \eta_n$  is bounded in $L^1_{loc}(\R^{2m})$. By Proposition \ref{ell loc} and Sobolev's embedding theorem, $\Delta \tilde \eta_n$ is bounded in $L^\infty_{loc}(\R^{2m})$.  As a consequence of \eqref{ass} and  \eqref{rip},  we have 
$$
C(R) \ge v_n^2 - \tilde \mu_n^2 = (v_n -\tilde \mu_n)(v_n  + \tilde \mu_n) = \tilde  \eta_n (2+o(1))
$$ 
in $B_{R}(0)$. Since  $\tilde \eta_n(0)=0$, Proposition \ref{ell use} shows that $\tilde \eta_n$ is bounded in $L^\infty_{loc}(\R^{2m})$. Together with \eqref{lap bound}, Proposition \ref{ell loc}, and Sobolev's embeddings, this implies that $\eta_n$ it is bounded in $C^{2m-1,\gamma}_{loc}(\R^{2m})$, for any $\gamma\in (0,1)$. Then, we can extract a subsequence such that $\tilde \eta_n$ converges in $C^{2m-1,\gamma}_{loc}(\R^{2m})$ to a limit function $\eta_0\in C^{2m-1,\gamma}_{loc}(\R^{2m})$.  Observe that, as $n\to +\infty$,
$$
\lm \tilde \eta_n = 
 \bra{1+ \frac{\tilde\eta_n}{\tilde \mu_n^2}} \bra{ \omega_{2m}^{-1} e^{2 \beta_n \tilde \eta_n +\beta_n \frac{\tilde \eta_n^2}{\tilde \mu_n^2}} + \alpha s_n^{2m} \tilde \mu_n^2 } \to \omega_{2m}^{-1} e^{2\beta^* \eta_0},
$$ 
locally uniformly in $\R^{2m}$. This implies that $\eta_0$ must be  a weak solution of   
\begin{equation}\label{Liouville}
\begin{Si}{l}
\lm \eta_0 = \omega_{2m}^{-1} e^{2\beta^* \eta_0}, \\
e^{2\beta^*\eta_0}\in L^1(\R^{2m}), \\
\eta_0\le 0, \eta_0 (0) =0.
\end{Si}
\end{equation}  
Solutions of problem \eqref{Liouville} have been classified in \cite{MarClass} (see also \cite{Lin} and \cite{Xu}). In particular, Theorems 1 and 2 in \cite{MarClass} imply that there exists a real number $a\le 0$, such that $\lim_{|y|\to +\infty} \Delta \eta_0(y) = a$.  Moreover, either $a\neq 0$, or $\eta_0 (y) = - \frac{m}{\beta^*} \log\bra{1+\frac{|y|^2}{4}},$ for any $y\in \R^{2m}$.  To exclude the first possibility we observe that, if $a\neq 0$, then we can find $R_0>0$ such that $|\Delta \eta_0|\ge \frac{|a|}{2}$ for $|y|\ge R_0$. This yields 
\begin{equation}\label{lap eta0}
\int_{B_R(0)} |\Delta \eta_0| dy  \ge  \int_{B_{R_0}(0)} |\Delta \eta_0| dy +  \frac{|a|}{2} \omega_{2m} (R^{2m}-R_0^{2m}),
\end{equation}
for any $R>R_0$. But Lemma \ref{lap etan} implies 
\begin{equation}\label{cont1}
\int_{B_R(0)}|\Delta \eta_0| dy \le C R^{2m-2},
\end{equation}
for any $R>1$. For large values of $R$, \eqref{cont1}  contradicts \eqref{lap eta0}.
\end{proof}
 
This completes the proof of Proposition \ref{conv eta gen}. Now, we state some properties of the function $\eta_0$ that will play a crucial role in the next sections.

\begin{lemma}\label{int eta0}Let $\eta_0$ be as in \eqref{eta0}. Then, as $R\to +\infty $, we have
\begin{equation}\label{propeta1}
\omega_{2m}^{-1} \int_{B_R(0)} e^{2\beta^*\eta_0} dy=1 + O(R^{-2m})
\end{equation}
and
\begin{equation}\label{propeta2}
\int_{B_R(0)}|\Delta^\frac{m}{2}\eta_0|^2 dy= \frac{2m}{\beta^*}\log \frac{R}{2}  +  I_m   -H_m + O(R^{-2}\log R ),
\end{equation}
where $H_m$ is defined as in \eqref{Hm}  and 
\begin{equation}\label{Im}
I_m=\int_{\R^{2m}} \eta_0 \lm \eta_0\: dy=  - \frac{m 4^{2m}}{\beta^*\omega_{2m}}\int_{\R^{2m}} \frac{\log\bra{1+\frac{|y|^2}{4}}}{\bra{4+|y|^2}^{2m}}dy
\end{equation}
is as in \eqref{Imbrutto}.
\end{lemma}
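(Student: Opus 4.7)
The plan is to handle both estimates by direct asymptotic analysis using the explicit form of $\eta_0$ together with the identity $\lm\eta_0=\omega_{2m}^{-1}e^{2\beta^*\eta_0}$ from \eqref{Liouville}.

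For \eqref{propeta1}, since $e^{2\beta^*\eta_0(y)}=(1+|y|^2/4)^{-2m}$, passing to polar coordinates and making the substitution $t=|y|^2/4$ reduces $\int_{\R^{2m}}e^{2\beta^*\eta_0}dy$ to $\omega_{2m-1}\cdot 2^{2m-1}B(m,m)$; using the explicit values of $\omega_{2m-1}$ and $\omega_{2m}$ in \eqref{omega} one verifies that this equals exactly $\omega_{2m}$. The tail bound $\int_{\R^{2m}\setminus B_R}(1+|y|^2/4)^{-2m}dy\le C R^{-2m}$ is immediate and gives \eqref{propeta1}.

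For \eqref{propeta2}, I would apply Proposition \ref{parts} on $B_R(0)$ with $u=v=\eta_0$:
\[
\int_{B_R}|\Delta^{m/2}\eta_0|^2\, dy = \int_{B_R}\eta_0\,\lm\eta_0\, dy - \sum_{j=0}^{m-1}(-1)^{m+j}\int_{\partial B_R}\nu\cdot\Delta^{j/2}\eta_0\,\Delta^{(2m-j-1)/2}\eta_0\, d\sigma.
\]
The identity $I_m=\int_{\R^{2m}}\eta_0\lm\eta_0\,dy$ is immediate from \eqref{Liouville} and the explicit form of $\eta_0$, and since $\eta_0\lm\eta_0=\omega_{2m}^{-1}\eta_0\, e^{2\beta^*\eta_0}$ decays like $|y|^{-4m}\log|y|$ at infinity one has $\int_{B_R}\eta_0\,\lm\eta_0\,dy = I_m + O(R^{-2m}\log R)$.

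The key observation for the boundary sum is that $\eta_0$ has the same leading asymptotic structure as a Green's function for $\lm$, namely
\[
\eta_0(y) = -\frac{2m}{\beta^*}\log|y| + \frac{2m}{\beta^*}\log 2 + O(|y|^{-2}), \qquad |y|\to\infty,
\]
with the remainder radial and $|\nabla^k O(|y|^{-2})|=O(|y|^{-k-2})$. Inserting this expansion together with the formulas \eqref{ej} for $\Delta^{j/2}\log|y|$ into each boundary integrand, and using the identity $\frac{2m}{\beta^*}K_{m,(2m-1)/2}=\frac{(-1)^{m-1}}{\omega_{2m-1}}$ (which is the same identity exploited in Lemma \ref{int Green}), the computation reduces to the one in that lemma, with $\delta$ replaced by $R$, $\alpha=0$, and $C_{\alpha,x_0}$ replaced by $-\frac{2m}{\beta^*}\log 2$: the $j=0$ boundary term yields $\frac{2m}{\beta^*}\log(R/2)$ (the sign flip of the log coming from interchanging inner and outer normals), while the $j=1,\ldots,m-1$ terms collectively produce exactly $-H_m$ with $H_m$ as in \eqref{Hm} (and no such terms appear if $m=1$). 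The error $O(R^{-2}\log R)$ arises from combining the $O(|y|^{-2})$ correction in $\eta_0$ with the leading $O(R^{-(2m-1)}\log R)$ factor in $\Delta^{(2m-1)/2}\eta_0$ and integrating over $\partial B_R$, whose surface area is $\omega_{2m-1}R^{2m-1}$.

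The only mildly delicate step is careful bookkeeping of signs and powers through the $m-1$ boundary integrals; conceptually the argument is a direct repetition of the one in Lemma \ref{int Green}, applied to $\eta_0$ rather than to $G_{\alpha,x_0}$ and on the exterior boundary $\partial B_R$ rather than the interior boundary $\partial B_\delta$.
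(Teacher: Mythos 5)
Your proposal follows the same route as the paper --- integration by parts via Proposition \ref{parts}, then expansion of the boundary terms using the asymptotics of $\Delta^{j/2}\eta_0$ at infinity --- and your final claims for each contribution ($\frac{2m}{\beta^*}\log\frac{R}{2}$ from $j=0$, $-H_m$ from $j=1,\dots,m-1$, $I_m+O(R^{-2m}\log R)$ from the bulk) are all correct. The minor differences: for \eqref{propeta1} you reduce to a Beta function while the paper invokes the stereographic parametrization of $\mathbb S^{2m}$; for the asymptotics of $\Delta^{j/2}\eta_0$ you differentiate the expansion $\eta_0(y)=-\frac{2m}{\beta^*}\log\frac{|y|}{2}+O(|y|^{-2})$, while the paper computes $\Delta^l\eta_0$ and $\Delta^{l+\frac12}\eta_0$ explicitly in closed form. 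Both are valid, though your route should make explicit that the remainder $-\frac{m}{\beta^*}\log(1+4/|y|^2)$ is radial and smooth away from the origin with $|\nabla^k|=O(|y|^{-k-2})$, so that applying $\Delta^{j/2}$ only contributes $O(|y|^{-j-2})$ and reproduces \eqref{asym}.

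One point of bookkeeping is stated wrongly even though your conclusions are right. From your own expansion, the constant that plays the role of $C_{\alpha,x_0}$ in the analogy with Lemma \ref{int Green} is $+\frac{2m}{\beta^*}\log 2$, not $-\frac{2m}{\beta^*}\log 2$. The domain change from $\Omega\setminus B_\delta(x_0)$ (normal pointing inward on $\partial B_\delta$) to $B_R(0)$ (normal pointing outward on $\partial B_R$) flips the sign of the \emph{entire} boundary sum, not only of the logarithm: this is precisely why the $j\geq 1$ terms produce $-H_m$ instead of $+H_m$, and it is also why the constant from $\eta_0$ enters with an overall minus sign, turning $+\frac{2m}{\beta^*}\log 2$ into $-\frac{2m}{\beta^*}\log 2$ in the final answer. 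Following your stated substitution ($C\to -\frac{2m}{\beta^*}\log 2$) together with the full sign flip would instead produce $\frac{2m}{\beta^*}\log(2R)-H_m$, off by $\frac{4m}{\beta^*}\log 2$. The computation in the paper resolves this cleanly because it tracks the sign of the boundary sum through the $\pm$ prefactor of Proposition \ref{parts} directly, rather than by an informal "sign flip of the log."
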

\begin{proof}
First, using    a straightforward change of variable and the representation of $\mathbb{S}^{2m}$ through the standard stereographic projection, we observe that 
\[
\int_{\R^{2m}} e^{2\beta^*\eta_0} dy=  \int_{\R^{2m}} \frac{4^m}{(1+|y|^2)^{2m}}dy = \omega_{2m}.
\]
Since $e^{2\beta^*\eta_0}= O(\frac{1}{|y|^{4m}})$ as $|y|\to +\infty$, we get \eqref{propeta1}. 

The proof of \eqref{propeta2} relies on the integration by parts formula of Proposition \ref{parts}. For any $1\le l\le m-1$, we have
\[
\Delta^l \eta_0(y)= \frac{m}{\beta^*} \sum_{k=0}^l a_{k,l} \frac{|y|^{2k}}{(4+|y|^2)^{2l}}, \quad a_{k,l}=  (-1)^l (l-1)!  \binom{l}{k} \frac{(m+l-1)! (m-l+k-1)!}{(m+k-1)!(m-l-1)!} 2^{4l-2k},
\]
and 
\[
\Delta^{l+\frac{1}{2}} \eta_0(y)= \frac{m}{\beta^*} \sum_{k=0}^l b_{k,l} \frac{|y|^{2k} y }{(4+|y|^2)^{2l+1}}  , 
\quad b_{k,l}= \begin{Si}{cl }
 8(k+1)a_{k+1,l}+ (2k-4l) a_{k,l} & 0\le k \le l-1, \\
 -2l a_{ll} & k = l.
\end{Si}
\]
Note that $a_{ll}=-2\tilde K_{m,l}$, where $\tilde K_{m,l}$ is as in \eqref{Kml1}. In any case, for $1\le j\le 2m-1$, we find
\begin{equation}\label{asym}
\Delta^\frac{j}{2} \eta_0  =-\frac{2m}{\beta^*} K_{m,\frac{j}{2}} \frac{e_j(y)}{|y|^j}  + O(|y|^{-2-j}),
\end{equation} 
as $|y|\to +\infty$, where $K_{m,\frac{j}{2}}$ and $e_{j}$ are defined  as in \eqref{Kml2} and \eqref{ej}.
Integrating by parts, we find 
\[
\int_{B_R(0)}|\Delta^\frac{m}{2}\eta_0|^2 dy= \int_{B_R(0)} \eta_0 \lm \eta_0 \, dy - \sum_{j=0}^{m-1}\int_{\partial B_R(0)} (-1)^{j+m} \nu \cdot \Delta^{\frac{j}{2}} \eta_0 \Delta^{\frac{2m-j-1}{2}} \eta_0 \, d\sigma.
\]
On $\partial B_R(0)$, \eqref{asym} and the  identity $\frac{2m}{\beta^*}K_{m,\frac{2m-1}{2}}= \frac{(-1)^{m-1}}{\omega_{2m-1}}$ imply 
\[\begin{split}
\nu \cdot \eta_0 \Delta^{\frac{2m-1}{2}} \eta_0   & =\bra{-\frac{2m}{\beta^*} \log \frac{R}{2} +O(R^{-2})} \bra{\frac{-2m}{\beta^*}K_{m,\frac{2m-1}{2}}R^{1-2m}+O(R^{-2m-1})} \\
&= \frac{(-1)^m  }{\omega_{2m-1}} R^{1-2m}  \bra{-\frac{2m}{\beta^*}\log \frac{R}{2} +  O(R^{-2}\log R)},
\end{split}
\]
and, for $1\le j \le m-1$, that
\[\begin{split}
 \nu \cdot \Delta^{\frac{j}{2}} \eta_0 \Delta^{\frac{2m-j-1}{2}} \eta_0  &= \bra{-\frac{2m}{\beta^*}K_{m,\frac{j}{2}}R^{-j}+O(R^{-j-2})}  \bra{-\frac{2m }{\beta^*} K_{m,\frac{2m-j-1}{2}} R^{1+j-2m}+ O(R^{j-2m-1})} \\
& = \bra{\frac{2m}{\beta^*}}^2 K_{m,\frac{j}{2}} K_{m,\frac{2m-j-1}{2}} R^{1-2m} +O(R^{-2m-1}).
\end{split}
\]
Hence, we have
\begin{equation}\label{for}
\int_{B_R(0)}|\Delta^\frac{m}{2}\eta_0|^2 dy = \int_{B_R(0)} \eta_0 \lm \eta_0 \, dy +\frac{2m}{\beta^*}\log \frac{R}{2}  -H_m +O(R^{-2}\log R).
\end{equation}
Finally, since $\eta_0 \lm \eta_0$ decays like $|y|^{-4m}\log |y|$ as $|y|\to +\infty$, we get
\[
\int_{B_R(0)}\eta_0 \lm \eta_0 \,dy=  I_m +O(R^{-2m} \log R),
\]
which, together with \eqref{for}, gives the conclusion. 
\end{proof}

\begin{rem}\label{rem integrals}
Proposition \ref{conv eta} and Lemma \ref{int eta0} imply
\begin{enumerate}
\item $\dis{\lim_{n\to +\infty}\int_{B_{R r_n(x_n) }} \lambda_n u_n^2 e^{\beta_n u_n^2} dx =1+O(R^{-2m})}$.
\item $\dis{\lim_{n\to +\infty}\int_{B_{R r_n(x_n) }} \lambda_n  \mu_n u_n e^{\beta_n u_n^2} dx =1+O(R^{-2m})}$.
\item $\dis{\lim_{n\to + \infty}\int_{B_{R r_n(x_n) }} \lambda_n \mu_n |u_n| e^{\beta_n u_n^2} dx =1+O(R^{-2m})}$.
\item $\dis{\lim_{n\to +\infty}\int_{B_{R r_n(x_n) }} \lambda_n  \mu_n^2  e^{\beta_n u_n^2} dx =1+O(R^{-2m})}$.
\end{enumerate}
Indeed, all the integrals converge to $\dis{\omega_{2m}^{-1}\int_{B_R(0)} e^{2\beta^* \eta_0} dy}$. 
\end{rem}

\subsection{Estimates on the derivatives of \texorpdfstring{$\boldsymbol{u_n}$}{un} }
In this subsection, we prove some pointwise estimates on $u_n$ and its derivatives that  are inspired from the ones in Theorem 1 of \cite{mar} and Proposition 11 of \cite{MS} (where the authors assume $\alpha=0$ and $u_n\ge0$).

\begin{prop}\label{stima Luca}
There exists a constant $C>0$, such that  
$$
|x-x_n|^{2m}\lambda_n u_n^2 e^{\beta_n u_n^2} \le C,
$$
for any $x\in \Omega$. 
\end{prop}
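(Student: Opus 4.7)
I would argue by contradiction. Assume there are sequences $n_k\to \infty$ and $y_{n_k}\in\Omega$ (relabel as $y_n$) such that
$$L_n := |y_n-x_n|^{2m}\lambda_n u_n^2(y_n) e^{\beta_n u_n^2(y_n)}\to +\infty.$$
Since the function $x\mapsto |x-x_n|^{2m}\lambda_n u_n^2(x) e^{\beta_n u_n^2(x)}$ is continuous on $\overline{\Omega}$ and vanishes on $\partial\Omega\cup\{x_n\}$, I may take $y_n$ to be a maximum point of this function on $\overline{\Omega}$. Set $\tilde\mu_n := u_n(y_n)$ and define $s_n>0$ by
$$\omega_{2m}s_n^{2m}\lambda_n \tilde\mu_n^2 e^{\beta_n\tilde\mu_n^2}=1.$$
Then $L_n = \omega_{2m}^{-1}(|y_n-x_n|/s_n)^{2m}\to +\infty$, so $|y_n-x_n|/s_n\to+\infty$. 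Since $|y_n-x_n|$ is bounded and $\lambda_n$ is bounded by Lemma \ref{primo}, $L_n\to+\infty$ forces $|\tilde\mu_n|\to+\infty$ and therefore $s_n^{2m}\to 0$. The strategy is to show that $u_n$ concentrates a full unit of the mass $\lambda_n u_n^2 e^{\beta_n u_n^2}$ around $y_n$, producing, together with the concentration at $x_n$, a contradiction with \eqref{lambdan}.

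The key step is to apply Proposition \ref{conv eta gen} at the point $y_n$ with scale $s_n$. Condition 1 holds by construction. For Condition 2, fix $R>0$: for $|z|\le R$ and $n$ large, $s_n/|y_n-x_n|\to 0$ yields $|y_n+s_nz-x_n|\ge |y_n-x_n|/2$, so the maximality of $y_n$ gives
$$\lambda_n v_n^2(z)e^{\beta_n v_n^2(z)} \le \frac{L_n}{|y_n+s_nz-x_n|^{2m}} \le 2^{2m}\lambda_n\tilde\mu_n^2 e^{\beta_n\tilde\mu_n^2}.$$
If $v_n^2(z)\le \tilde\mu_n^2$, then $v_n^2(z)-\tilde\mu_n^2\le 0$ and $|v_n(z)/\tilde\mu_n|\le 1$; otherwise $e^{\beta_n(v_n^2(z)-\tilde\mu_n^2)} \le 2^{2m}\tilde\mu_n^2/v_n^2(z)\le 2^{2m}$, which produces $v_n^2(z)-\tilde\mu_n^2 \le 2m\log 2/\beta_n$ and hence $|v_n(z)/\tilde\mu_n|^2 \le 1+O(\tilde\mu_n^{-2})$. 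In either case Condition 2 holds with a constant $C(R)$ independent of $n$.

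Proposition \ref{conv eta gen} then yields $\tilde\eta_n\to\eta_0$ in $C^{2m-1,\gamma}_{\mathrm{loc}}(\R^{2m})$, and the same computation as in Remark \ref{rem integrals} gives
$$\lim_{n\to+\infty}\int_{B_{Rs_n}(y_n)}\lambda_n u_n^2 e^{\beta_n u_n^2}\,dx = 1 + O(R^{-2m}).$$
Since $\tilde\mu_n^2 e^{\beta_n\tilde\mu_n^2}\le\mu_n^2 e^{\beta_n\mu_n^2}$ we have $s_n\ge r_n$, and combined with $|y_n-x_n|/s_n\to+\infty$ this makes $B_{Rr_n}(x_n)$ and $B_{Rs_n}(y_n)$ disjoint for large $n$. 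Combining with Remark \ref{rem integrals}(1) applied at $x_n$,
$$1 = \int_\Omega \lambda_n u_n^2 e^{\beta_n u_n^2}\,dx \ge 2 + O(R^{-2m}),$$
a contradiction once $R$ is chosen large.

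\textbf{Main obstacle.} The only nontrivial point is the verification of Condition 2 of Proposition \ref{conv eta gen} in the rescaled picture around $y_n$. Because $y_n$ is a maximum point of $|x-x_n|^{2m}\lambda_n u_n^2 e^{\beta_n u_n^2}$ and we work on balls of radius $R s_n$ around $y_n$ that sit well inside the cone where $|y_n+s_nz-x_n|\ge |y_n-x_n|/2$, the maximality translates into a two-sided comparison between $v_n^2$ and $\tilde\mu_n^2$. Without this comparison we could not invoke Proposition \ref{conv eta gen} and, in turn, could not extract the second unit of concentrated mass that gives the contradiction.
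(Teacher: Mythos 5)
Your proof is correct and follows essentially the same route as the paper's: argue by contradiction, take a maximum point $y_n$ of $x\mapsto|x-x_n|^{2m}\lambda_n u_n^2 e^{\beta_n u_n^2}$, rescale at scale $s_n$, use the maximality together with $|y_n-x_n|/s_n\to+\infty$ to verify the hypotheses of Proposition \ref{conv eta gen}, and then derive a contradiction by counting two disjoint units of concentrated mass against \eqref{lambdan}. The only cosmetic difference is that you make the constant $2^{2m}$ explicit in the comparison where the paper leaves it as $C(R)$; the logical content is identical.
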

\begin{proof}
Let us denote 
\begin{equation}\label{defln1}
L_n:= \sup_{x\in  \ov{\Omega}} |x-x_n|^{2m} \lambda_n u_n^2(x)e^{\beta_n u_n^2(x)}.
\end{equation}
Assume by contradiction that $L_n\to +\infty$ as $n\to +\infty$. Take a point $\tilde x_n\in \Omega$ such that 
\begin{equation}\label{defln2}
L_n =  |\tilde x_n -x_n |^{2m} \lambda_n u_n^2(\tilde x_n) e^{\beta_n  u_n^2(\tilde x_n)}, 
\end{equation}
and define $\tilde \mu_n := u_n(\tilde x_n)$ and $s_n\in \R^+$ such that 
\begin{equation}\label{defsn}
\omega_{2m} s_n^{2m}  \lambda_n \tilde \mu_n^2 e^{\beta_n \tilde \mu_n^2}= 1. 
\end{equation}
We will show that $\tilde x_n$ and $s_n$ satisfy the assumptions of Proposition \ref{conv eta gen}. Clearly, since $L_n\to+\infty$, \eqref{defln2} and \eqref{defsn} imply that  
\begin{equation}\label{ratio}
|\tilde \mu_n| \to +\infty\qquad \text{ and } \qquad \frac{|x_n-\tilde x_n|}{s_n}\to +\infty.
\end{equation} 
In particular, $s_n\to 0$. Let $v_n$ and $\tilde \Omega_n$ be as in Proposition \ref{conv eta gen}. Using \eqref{defln1} and \eqref{defln2}, we obtain 
\begin{equation}\label{important}
\frac{v_n^2}{\tilde \mu_n^2} e^{v_n^2- \tilde \mu_n^2} \le \frac{|y_n|^{2m}}{|y-y_n|^{2m}},
\end{equation}
for any $y\in \tilde \Omega_{n}$, where $y_n:= \frac{x_n -\tilde x_n}{s_n}$.  Since  $|y_n|\to+\infty$,  \eqref{important} yields
\begin{equation}\label{important2}
\frac{v_n^2}{\tilde \mu_n^2} e^{v_n^2- \tilde \mu_n^2} \le C(R) \qquad \text{in } \tilde \Omega_n\cap B_R(0),
\end{equation}
for sufficiently large $n$. Thanks to \eqref{important2}, we infer that
$$
\left|\frac{v_n}{\tilde \mu_n}\right| \le C(R) \quad \text{ and } \quad v_n^2 - \tilde \mu_n^2 \le C(R)
$$
on the set $\{|v_n|\ge |\tilde \mu_n|\}\cap B_R(0)$, and therefore on $\tilde \Omega_n\cap B_R(0)$. Then, all the assumptions of Proposition \ref{conv eta gen} are satisfied. In particular, as in Remark \ref{rem integrals}, by Proposition \ref{conv eta gen} and Lemma \ref{int eta0}, we get 
\begin{equation}\label{mass}
\lim_{n\to+\infty}\int_{B_{Rs_n}(\tilde x_n)} \lambda_n u_n^2 e^{\beta_n u_n^2}dx =  \omega_{2m}^{-1}\int_{B_R(0)}e^{2\beta^*\eta_0} dy = 1 +O(R^{-2m}). 
\end{equation}
Besides, if $r_n$ is as in \eqref{rn}, we have $r_n\le s_n$ and, by \eqref{ratio}, $B_{Rs_n}(\tilde x_n)\cap B_{Rr_n}(x_n)=\0$, for any $R>0$. Then, \eqref{lambdan}, Remark \ref{rem integrals}, and  \eqref{mass}  imply
$$
1=\lim_{n\to +\infty} \int_{\Omega} \lambda_n u_n^2 e^{\beta_n u_n^2} dx \ge \lim_{n\to +\infty} \int_{B_{R r_n}(x_n)\cup B_{Rs_n}(\tilde x_n)} \lambda_n u_n^2 e^{\beta_n u_n^2} dx   =   2 +O(R^{-2m}),
$$
which is a contradiction for large values of $R$. 
\end{proof}

Next, we prove pointwise estimates on $|\nabla^l u_n|$ for any $1\le l\le 2m-1$.

\begin{prop}\label{stime grad}
There exists a constant $C>0$ such that 
$$
|x-x_n|^l | u_n \nabla^l u_n | \le C,
$$
for any $x\in \Omega$ and $1\le l\le 2m-1$. 
\end{prop}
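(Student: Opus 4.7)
The proof follows the scheme of Proposition \ref{stima Luca}. Setting
\[
L_n^{(l)}:=\sup_{x\in\ov\Omega}|x-x_n|^l\,|u_n(x)\nabla^l u_n(x)|,
\]
one argues by contradiction assuming $L_n^{(l)}\to+\infty$. Let $\tilde x_n\in\Omega$ attain the supremum (the Dirichlet boundary data in \eqref{star} preclude attainment on $\partial\Omega$) and write $\tilde\mu_n:=u_n(\tilde x_n)$. As a first step we localise $\tilde x_n$ away from $x_n$: writing $x=x_n+r_n y$, Proposition \ref{conv eta} gives
\[
|x-x_n|^l|u_n\nabla^l u_n|=|y|^l\bigl(1+O(\mu_n^{-2})\bigr)|\nabla^l\eta_n(y)|,
\]
which is uniformly bounded on every compact set of $\R^{2m}$, since $\eta_n\to\eta_0\in C^\infty(\R^{2m})$ in $C^{2m-1,\gamma}_{\mathrm{loc}}$. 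Hence $|\tilde x_n-x_n|/r_n\to+\infty$, and by Lemma \ref{conv0C} we must have $\tilde x_n\to x_0$ with $|\tilde x_n-x_n|\to0$.

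The core case is $|\tilde\mu_n|\to+\infty$. Define $s_n>0$ by
\[
\omega_{2m}s_n^{2m}\lambda_n\tilde\mu_n^2 e^{\beta_n\tilde\mu_n^2}=1,
\]
so that Proposition \ref{stima Luca} forces $|\tilde x_n-x_n|/s_n$ to be bounded. Arguing exactly as in the proof of Proposition \ref{stima Luca} (applying that Proposition at the points $\tilde x_n+s_n y$ and using the monotonicity of $t\mapsto t e^t$), the rescalings $v_n(y):=u_n(\tilde x_n+s_n y)$ fulfil the hypotheses of Proposition \ref{conv eta gen}: one obtains $|v_n/\tilde\mu_n|\le C(R)$ and $v_n^2-\tilde\mu_n^2\le C(R)$ on each $B_R(0)\cap\tilde\Omega_n$. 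Consequently $\tilde\eta_n(y):=\tilde\mu_n(v_n(y)-\tilde\mu_n)\to\eta_0$ in $C^{2m-1,\gamma}_{\mathrm{loc}}(\R^{2m})$. Since $\nabla^l\tilde\eta_n=\tilde\mu_n\nabla^l v_n$ for $l\ge1$ and $\nabla^l v_n(0)=s_n^l\nabla^l u_n(\tilde x_n)$, we deduce
\[
L_n^{(l)}=|\tilde x_n-x_n|^l\,|\tilde\mu_n\nabla^l u_n(\tilde x_n)|=\biggl(\frac{|\tilde x_n-x_n|}{s_n}\biggr)^l|\nabla^l\tilde\eta_n(0)|\le C,
\]
contradicting $L_n^{(l)}\to+\infty$.

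The remaining case, in which $\tilde\mu_n$ stays bounded, is handled analogously by rescaling with $s_n:=|\tilde x_n-x_n|/2$: Proposition \ref{stima Luca} applied on $B_{s_n}(\tilde x_n)\subset\{|z-x_n|\ge s_n\}$ provides a bounded right-hand side for the rescaled equation $\lm v_n=s_n^{2m}(\lambda_n v_n e^{\beta_n v_n^2}+\alpha v_n)$; the Lorentz-Sobolev bounds of Lemma \ref{Lorentz} control $v_n$ in $L^p(B_1)$ for every finite $p$; and Proposition \ref{ell new} then yields $|\nabla^l v_n(0)|\le C$, from which $L_n^{(l)}\le 2^l|\tilde\mu_n||\nabla^l v_n(0)|\le C$, a contradiction. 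The main technical obstacle is the verification of the hypotheses of Proposition \ref{conv eta gen} in the core case: since $s_n$ is only comparable to, not asymptotically smaller than, $|\tilde x_n-x_n|$, the bounds on $v_n^2-\tilde\mu_n^2$ must be extracted carefully from Proposition \ref{stima Luca} by exploiting the monotonicity of $t\mapsto te^t$, exactly as in the proof of that Proposition.
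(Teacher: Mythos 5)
Your ``core case'' contains a genuine gap. You want to apply Proposition \ref{conv eta gen} to the rescaling at $\tilde x_n$ with the bubble scale $s_n$ (defined by $\omega_{2m}s_n^{2m}\lambda_n\tilde\mu_n^2 e^{\beta_n\tilde\mu_n^2}=1$), but its hypothesis 2 requires $v_n^2-\tilde\mu_n^2\le C(R)$ on \emph{all} of $\tilde\Omega_n\cap B_R(0)$. Applying Proposition \ref{stima Luca} at $\tilde x_n+s_n y$, as you propose, only yields
$$
\frac{v_n^2}{\tilde\mu_n^2}\,e^{\beta_n(v_n^2-\tilde\mu_n^2)}\le\frac{C}{\omega_{2m}\,|y-y_n|^{2m}},\qquad y_n:=\frac{x_n-\tilde x_n}{s_n},
$$
and by your own deduction $|y_n|$ is only \emph{bounded}, not divergent. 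Near $y_n$ the bound genuinely fails: $v_n(y_n)=u_n(x_n)=\mu_n$, so $v_n^2(y_n)-\tilde\mu_n^2=\mu_n^2-\tilde\mu_n^2\to+\infty$ whenever $\mu_n\gg|\tilde\mu_n|$, and consequently $\tilde\eta_n(y_n)=\tilde\mu_n(\mu_n-\tilde\mu_n)\to+\infty$. In the proof of Proposition \ref{stima Luca} the analogous obstruction is overcome because there $|y_n|\to+\infty$ (so the singularity escapes every compact set) and the \emph{maximality} of $L_n$ gives the sharper \eqref{important}; neither mechanism is available here (you only have the conclusion of Proposition \ref{stima Luca}, not maximality of that functional). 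Hence $\tilde\eta_n\to\eta_0$ in $C^{2m-1,\gamma}_{\mathrm{loc}}(\R^{2m})$ does not follow, and the crucial step $L_n^{(l)}=(|\tilde x_n-x_n|/s_n)^l|\nabla^l\tilde\eta_n(0)|\le C$ is unjustified -- in particular, if $y_n\to 0$ even local convergence at the origin is lost. The closing remark that the verification ``must be extracted carefully \ldots exactly as in the proof of that Proposition'' is precisely where the argument breaks down.

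This is exactly what the paper's proof is designed to avoid. It scales with $s_{n,l}:=|x_{n,l}-x_n|$ rather than the bubble scale, so the singular point is pinned to unit distance $|y_{n,l}|=1$, and then invokes Lemmas \ref{boundarydist0} and \ref{boundarydist} with a singular set $\Sigma=\{\ov y_l\}$ -- these yield the weaker but attainable conclusion $v_{n,l}/\mu_{n,l}\to 1$ in $C^{2m-1,\gamma}_{\mathrm{loc}}(\R^{2m}\setminus\Sigma)$, not full bubble convergence. The contradiction is then obtained through an auxiliary normalized function $z_n$: for $l=1$ it hinges on the $L^1$ bound on $\nabla u_n^2$ from Lemma \ref{integral est} (to force $\nabla z_0\equiv 0$), and for $2\le l\le 2m-1$ the argument bootstraps from the already established $l=1$ estimate. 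Note also that your separate ``$\tilde\mu_n$ bounded'' case does not arise in the paper's set-up: Lemma \ref{boundarydist} shows $|\mu_{n,l}|=D_{n,l}\to+\infty$ automatically.
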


The proof of Proposition \ref{stime grad} follows the same steps of the ones of Propositions \ref{conv eta gen}. However, in this case it will be more difficult to obtain uniform bounds on $u_n$ on a small scale. For any $1\le l \le 2m-1$, we denote 
\begin{equation}\label{Lnl}
L_{n,l}:= \sup_{x\in \ov \Omega} |x-x_n|^l | u_n| |\nabla^l u_n |.
\end{equation}
Let $ x_{n,l} \in \Omega$ be such that
\begin{equation}\label{Lnl2}
| x_{n,l} -x_n|^l |u_n( x_{n,l}) \nabla^l u_n ( x_{n,l})|=L_{n,l}.
\end{equation}
We define $s_{n,l} := | x_{n,l} -x_n |$, $ \mu_{n,l}:=u_n( x_{n,l})$, and $y_{n,l} := \frac{x_n- x_{n,l}}{s_{n,l}}$. Up to subsequences,  we can assume  $y_{n,l} \to \ov{y}_l\in \mathbb S^{2m-1}$ as $n\to +\infty$.  Consider now the scaled functions
\[
v_{n,l} (y) =  u_{n} ( x_{n,l} + s_{n,l} y),
\]
which are defined on the sets $\Omega_{n,l}:=\{ y\in \R^{2m}\;:\: x_{n,l} + s_{n,l}y \in \Omega  \}$. Observe that $v_{n,l}$ satisfies 
\begin{equation}\label{eqvnl}
\begin{Si}{cc}
\lm v_{n,l} = s_{n,l}^{2m} \lambda_n v_{n,l} e^{\beta_n v_{n,l}^2}   +s_{n,l}^{2m} \alpha v_{n,l} & \text{ in }\Omega_{n,l},\\
v_{n,l}= \partial_{\nu} v_{n,l} = \ldots = \partial_{\nu}^{m-1} v_{n,l}=0, & \text{ on }\partial \Omega_{n,l}. 
\end{Si}
\end{equation}
Moreover, Proposition \ref{stima Luca} yields 
\begin{equation}\label{scaled estimate}
s_{n,l}^{2m } \lambda_n v_{n,l}^{2} e^{\beta_n v_{n,l}^2} \le \frac{C}{|y-y_{n,l}|^{2m}},
\end{equation}
for any $y\in \Omega_{n,l}$, and   \eqref{Lnl2} can be rewritten as
\begin{equation}\label{Lnl3}
L_{n,l}= |v_{n,l}(0) ||\nabla^l v_{n,l}(0)| =  |\mu_{n,l}| |\nabla^l v_{n,l}(0)|.
\end{equation}

\begin{rem}\label{rem sn}
If $L_{n,l}\to +\infty$ as $n\to +\infty$, then Lemma \ref{conv0C} implies that $s_{n,l} \to 0$. In particular,  \eqref{scaled estimate} gives
\[
s_{n,l}^{2m } \lambda_n v_{n,l} e^{\beta_n v_{n,l}^2} \to 0
\]
as $n\to +\infty$, uniformly in $\Omega_{n,l} \setminus B_{\frac{1}{R}}(\ov y_l)$, for any $R>0$. Indeed, if we choose a sequence $\{a_n\}_{n\in \N}$ such that $a_n \to +\infty$ and $s_{n,l}^{2m}\lambda_n a_n e^{\beta_n a_n^2} \to 0$ as $n\to +\infty$, then we have 
$$
\left|s_{n,l}^{2m } \lambda_n v_{n,l} e^{\beta_n v_{n,l}^2} \right|\le s_{n,l}^{2m } \lambda_n a_ne^{\beta_n a_n^2},
$$
on the set $\{|v_{n,l}|\le a_n\}$, while \eqref{scaled estimate} gives
$$
\left|s_{n,l}^{2m } \lambda_n v_{n,l} e^{\beta_n v_{n,l}^2} \right|\le  \frac{s_{n,l}^{2m } \lambda_n v_{n,l}^2 e^{\beta_n v_{n,l}^2} }{a_n} \le \frac{C}{a_n|y-y_{n,l}|},
$$
on the set $\{|v_{n,l}|\ge a_n\}$. 
\end{rem}

In the following, we will treat separately the cases $l=1$ and $2\le l \le 2m-1$. 

\begin{lemma}\label{grad dist}
If $L_{n,1}\to +\infty$ as $n\to +\infty$, then we have $\frac{d( x_{n,1},\partial \Omega)}{s_{n,1}} \to +\infty $.  Moreover,  $\frac{v_{n,1}}{\mu_{n,1}} \to 1$ in $C^{2m-1,\gamma}_{loc}(\R^{2m}\setminus \{\ov y_1\})$, for any $\gamma\in (0,1)$. 
\end{lemma}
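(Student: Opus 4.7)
The plan is to apply Lemma \ref{boundarydist0} and Lemma \ref{boundarydist} with the data $\tilde x_n = x_{n,1}$, $s_n = s_{n,1}$, $v_n = v_{n,1}$, and singular set $\Sigma = \{\ov y_1\}\subset \R^{2m}\setminus\{0\}$. Since $v_{n,1}(0)=\mu_{n,1}$, both conclusions of the statement then follow at once from those two general results. The task reduces to verifying the three hypotheses of Lemma \ref{boundarydist0}: $s_{n,1}\to 0$; $D_n := \max_{0\le i\le 2m-1}|\nabla^i v_{n,1}(0)|\to +\infty$; and for every $R>0$ there exists $C(R)>0$ such that $|v_{n,1}|\le C(R)D_n$ and $|\lm v_{n,1}|\le C(R)D_n$ on $B_R(0)\setminus B_{1/R}(\ov y_1)$ for all sufficiently large $n$.

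The first condition is exactly the content of Remark \ref{rem sn}. For the second, \eqref{Lnl3} gives $|\nabla v_{n,1}(0)| = L_{n,1}/|\mu_{n,1}|$, so using that the maximum of two positive reals dominates their geometric mean,
\[
D_n \ge \max\bigl(|\mu_{n,1}|,\, L_{n,1}/|\mu_{n,1}|\bigr) \ge \sqrt{L_{n,1}} \longrightarrow +\infty.
\]

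The heart of the argument is the third condition, and in particular the sup bound on $v_{n,1}$. Unwinding the scaling in the definition of $L_{n,1}$ produces the pointwise bound
\[
|\nabla v_{n,1}^2(y)| = 2|v_{n,1}\nabla v_{n,1}|(y) \le \frac{2L_{n,1}}{|y-y_{n,1}|}\qquad \text{on } \Omega_{n,1}.
\]
Since $y_{n,1}\to \ov y_1$, for $n$ large one has $y_{n,1}\in B_{1/(2R)}(\ov y_1)$. The set $B_R(0)\setminus B_{1/(2R)}(\ov y_1)$ is then path-connected, and any $y\in B_R(0)\setminus B_{1/R}(\ov y_1)$ can be joined to $0$ by a curve of length at most $C(R)$ lying inside that set, hence at distance at least $1/(4R)$ from $y_{n,1}$. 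Integrating the gradient bound along such a curve gives $|v_{n,1}^2(y)-\mu_{n,1}^2|\le C(R)L_{n,1}$. Combined with $|\mu_{n,1}|\le D_n$ and $L_{n,1} = |\mu_{n,1}||\nabla v_{n,1}(0)|\le D_n^2$, this yields $|v_{n,1}(y)|\le C(R)D_n$. For the bound on $\lm v_{n,1}$, equation \eqref{eqvnl} rewrites $\lm v_{n,1} = s_{n,1}^{2m}\lambda_n v_{n,1} e^{\beta_n v_{n,1}^2} + s_{n,1}^{2m}\alpha v_{n,1}$: the first summand tends to $0$ uniformly on $B_R(0)\setminus B_{1/R}(\ov y_1)$ by Remark \ref{rem sn} and is therefore $\le D_n$ for $n$ large, while the second is controlled by $\alpha|v_{n,1}|\le C(R)D_n$.

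With all three hypotheses verified, Lemma \ref{boundarydist0} delivers $d(x_{n,1},\partial\Omega)/s_{n,1}\to +\infty$ while Lemma \ref{boundarydist} forces $|\mu_{n,1}|\to +\infty$ and produces $v_{n,1}/\mu_{n,1}\to 1$ in $C^{2m-1,\gamma}_{loc}(\R^{2m}\setminus\{\ov y_1\})$ for any $\gamma\in(0,1)$. The delicate point is the path-integration step for the sup bound on $v_{n,1}$: one must detour around the singular direction $\ov y_1$ while keeping both the curve length and the minimum distance to $y_{n,1}$ controlled uniformly in terms of $R$. Everything else is bookkeeping between the scaling relations, the already-established pointwise control of Proposition \ref{stima Luca}, and the uniform decay from Remark \ref{rem sn}.
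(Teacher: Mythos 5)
Your proof is correct and follows essentially the same route as the paper's: you reduce the claim to verifying the hypotheses of Lemma \ref{boundarydist0} and Lemma \ref{boundarydist} with $\Sigma=\{\ov y_1\}$, obtain the sup bound on $v_{n,1}$ by integrating the gradient bound $|v_{n,1}\nabla v_{n,1}|\le L_{n,1}/|y-y_{n,1}|$ along a path that avoids $\ov y_1$ (the paper's ``Taylor expansion''), and control $\lm v_{n,1}$ via \eqref{eqvnl} together with Remark \ref{rem sn}. The only cosmetic difference is that you make the path-integration step and the estimate $D_n\ge \sqrt{L_{n,1}}$ explicit, whereas the paper states them more tersely.
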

\begin{proof}
It is sufficient to prove that $x_{n,1}$, $s_{n,1}$ and $v_{n,1}$ satisfy the assumptions of Lemma \ref{boundarydist0} and Lemma \ref{boundarydist}, with $\Sigma=\{\ov y_1\}$.  First of all, we observe that, for any $R>0$, the definition of $L_{n,1}$ implies $|\nabla v_{n,1}^2|\le C(R)L_{n,1}$ in $\Omega_{n,1} \setminus B_\frac{1}{R}(\ov y_1)$. Then, a Taylor expansion and \eqref{Lnl3} yield
\begin{equation}\label{L1}
v_{n,1}^2 \le \mu_{n,1}^2+ C(R) L_{n,1}\le C(R) D_{n,1}^2
\end{equation}
in $\Omega_{n,1} \cap B_{R}(0)\setminus B_\frac{1}{R}(\ov y_1)$, where $D_{n,1}:= \max_{0\le i\le 2m-1}|\nabla^i v_{n,1}(0)|$.  Moreover, by equation \eqref{eqvnl},  Remark \ref{rem sn}, and  \eqref{L1}, we get  
$$|\lm v_{n,1} | = o(1) + s_{n,1}^{2m} \alpha v_{n,1} =  o(1) + O(s_{n,1}^{2m}D_{n,1}),$$
uniformly in $\Omega_{n,1} \cap B_{R}(0)\setminus B_\frac{1}{R}(\ov y_1)$. Finally, Remark \ref{rem sn} gives $s_{n,1}\to 0$, while \eqref{Lnl3} and the condition $L_{n,1}\to +\infty$ imply $D_{n,1}\to +\infty$.
\end{proof}

We can now prove Proposition \ref{stime grad} for $l=1$. 

\begin{proof}[Proof of  Proposition \ref{stime grad} for $l=1$]   Assume by contradiction that $L_{n,1}\to +\infty$, as $n\to +\infty$.  Consider the function $z_n(y):= \dfrac{v_{n,1}(y)-  \mu_{n,1}}{| \nabla v_{n,1}(0)|}$. On the one hand, by the definitions of $L_{n,1}$ and $ x_{n,1}$ in \eqref{Lnl} and \eqref{Lnl2}, and by Lemma \ref{grad dist},  we have 
$$
|\nabla v_{n,1}(y)|\le  \frac{|\nabla v_{n,1}(0)|(1+o(1))}{|y-y_{n,1}|} \le C(R) |\nabla v_{n,1}(0)|,
$$
uniformly in $B_R(0)\setminus B_\frac{1}{R}(\ov y_{1})$, for any $R>0$. In particular,
\[
|\nabla z_n(y) |
\le C(R)   \qquad \text{ in } B_{R}(0)\setminus B_\frac{1}{R}(\ov y_1).
\]
Since $z_n(0)=0$,  $z_n$ is bounded in $L^\infty_{loc}(\R^{2m}\setminus \{\ov y_1\})$.  On the other hand, arguing as in \eqref{explanation}, Lemma \ref{grad dist}  implies  that
$$s_{n,1}^{2m} \mu_{n,1}^2 =o(1),$$
and, using also \eqref{scaled estimate}, that
$$
\lm z_n = \frac{\lambda_n s_{n,1}^{2m} v_{n,1} e^{\beta_n v_{n,1}^2} + \alpha s_{n,1}^{2m} v_{n,1}}{|\nabla v_{n,1} (0)|} = O\bra{\frac{1}{ \mu_{n.1} |\nabla v_{n,1}(0)|}}=o(1),  \qquad \text{ in } B_{R}(0)\setminus B_\frac{1}{R}(\ov y_1).
$$
By Proposition \ref{ell loc}, we find a function $z_0$, harmonic in $\R^{2m}\setminus \{ \ov y_1\}$, such that, up to subsequences, $z_n\to z_0$ in $C^{2m-1,\gamma}_{loc}(\R^{2m} \setminus \{\ov  y_{1}\})$, for any $\gamma\in (0,1)$. We claim now that $z_0$ must be constant on $\R^{2m} \setminus \{ \ov y_{1}\}$. To prove this, we observe that, by Lemma \ref{integral est}, for any $R>0$ there exists a constant $C(R)>0$ such that 
$$
\| \nabla v_{n,1}^2\|_{L^1(B_R(0))} \le C(R).
$$
Applying Lemma \ref{grad  dist} and \eqref{Lnl3}, we obtain
\[\begin{split}
\| \nabla v_{n,1}^2\|_{L^1(B_R(0))} &\ge 2 \int_{B_{R}(0)\setminus B_\frac{1}{R}(\ov y_1)} |v_{n,1}| |\nabla v_{n,1}| dy \\ 
&= 2|{ \mu_{n,1}}| (1+o(1)) \|\nabla v_{n,1}\|_{L^1(B_{R}(0)\setminus B_\frac{1}{R}(\ov y_1))} \\ 
& =2 L_{n,1} (1+o(1)) \|\nabla z_n\|_{L^1(B_{R}(0)\setminus B_\frac{1}{R}(\ov y_1))}.
\end{split}
\]
Thus, as $n\to +\infty$, we have
$$
\|\nabla z_n\|_{L^1(B_{R}(0)\setminus B_\frac{1}{R}(\ov y_1))}  \le \frac{C(R)}{L_{n,1}}\to 0.
$$
Hence, $z_0$ must be constant, which contradicts
$$
|\nabla z_0(0)| = \lim_{n\to +\infty} |\nabla z_n(0)| =1.
$$

\end{proof}

We shall now deal with the case $2\le l\le 2m-1$. Since Proposition \ref{stime grad} has been proved for $l=1$, we know that $L_{n,1}$ is bounded, i.e.  
$$
|x-x_n| |u_n(x)| |\nabla u_n (x)| \le C,
$$ 
for any $x\in \Omega$. Equivalently, given any $1\le l\le 2m-1$, we have 
\begin{equation}\label{new}
|v_{n,l}(y)||\nabla v_{n,l}(y)| \le \frac{C}{|y-y_{n,l}|},
\end{equation}
for any $y\in \Omega_{n,l}$. In particular, \eqref{new} yields
\begin{equation}\label{new2}
\|\nabla v_{n,l}^2\|_{L^\infty(\Omega_{n,l}\setminus B_{\frac{1}{R}}(\ov y_l))}\le C(R),
\end{equation}
for any $R>0$.

\begin{lemma}\label{grad dist case l}
Fix any  $2\le l\le 2m-1$. If $L_{n,l}\to +\infty$ as $n\to +\infty$, then we have $\frac{d( x_{n,l},\partial \Omega)}{s_{n,l}} \to +\infty $.  Moreover,  $\frac{v_{n,l}}{\mu_{n,l}} \to 1$ in $C^{2m-1,\gamma}_{loc}(\R^{2m}\setminus \{\ov y_l\})$, for any $\gamma\in (0,1)$. 
\end{lemma}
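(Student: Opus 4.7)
The plan is to verify the hypotheses of Lemma~\ref{boundarydist0} and Lemma~\ref{boundarydist} for the triple $(x_{n,l}, s_{n,l}, v_{n,l})$ with $\Sigma=\{\ov y_l\}$, which will yield both assertions simultaneously. The key new input, absent in the treatment of $l=1$ in Lemma~\ref{grad dist}, is that the case $l=1$ of Proposition~\ref{stime grad} has already been proved, so the global gradient control \eqref{new2} is available from the outset and one no longer needs to pay a factor of $L_{n,l}$ to bound $|\nabla v_{n,l}^2|$.

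First I would check $s_{n,l}\to 0$: this is precisely Remark~\ref{rem sn}, which applies because $L_{n,l}\to+\infty$ together with the $C^{2m-1,\gamma}$ decay of $u_n$ away from $x_0$ from Lemma~\ref{conv0C}. Next, setting $D_{n,l}:=\max_{0\le i\le 2m-1}|\nabla^i v_{n,l}(0)|$, identity \eqref{Lnl3} yields
\[
D_{n,l}\ge |v_{n,l}(0)|=|\mu_{n,l}|, \qquad D_{n,l}\ge |\nabla^l v_{n,l}(0)|=\frac{L_{n,l}}{|\mu_{n,l}|},
\]
and multiplying these two lower bounds gives $D_{n,l}^2\ge L_{n,l}\to +\infty$, so the rescaling hypothesis is met.

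For the two uniform bounds on the good region $\Omega_{n,l}\cap B_R(0)\setminus B_{1/R}(\ov y_l)$, I would path-integrate \eqref{new2} from the origin: since $|\ov y_l|=1$ and the punctured ball is connected of diameter $O(R)$ for all large $R$, any point $y$ in the good region can be joined to $0$ by a curve of length at most $C(R)$ lying in that region, producing $v_{n,l}^2(y)\le \mu_{n,l}^2+C(R)\le D_{n,l}^2+C(R)\le 4D_{n,l}^2$ for $n$ large. Hence $|v_{n,l}|\le C(R)D_{n,l}$. The companion bound on $|\lm v_{n,l}|$ then follows at once from \eqref{eqvnl}: the exponential term $s_{n,l}^{2m}\lambda_n v_{n,l}e^{\beta_n v_{n,l}^2}$ is $o(1)$ uniformly on the good region by Remark~\ref{rem sn}, while the lower order term $\alpha s_{n,l}^{2m} v_{n,l}$ is $O(s_{n,l}^{2m}D_{n,l})=o(D_{n,l})$.

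Having verified all hypotheses, Lemma~\ref{boundarydist0} delivers $d(x_{n,l},\partial\Omega)/s_{n,l}\to+\infty$, and Lemma~\ref{boundarydist} yields $|\mu_{n,l}|=|v_{n,l}(0)|\to+\infty$ together with $v_{n,l}/v_{n,l}(0)=v_{n,l}/\mu_{n,l}\to 1$ in $C^{2m-1,\gamma}_{loc}(\R^{2m}\setminus\{\ov y_l\})$, which is exactly the second conclusion. The step that requires the most attention is the path-integration argument for the pointwise control of $v_{n,l}$; but the geometry of the punctured ball makes this routine, and the conceptual point is that availability of the case $l=1$ of Proposition~\ref{stime grad} completely removes the subtlety that made the corresponding step in Lemma~\ref{grad dist} delicate (there the gradient could only be controlled by the diverging quantity $L_{n,1}$, while here we get a uniform constant).
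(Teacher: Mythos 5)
Your proof is correct and follows essentially the same route as the paper: both verify the hypotheses of Lemma~\ref{boundarydist0} and Lemma~\ref{boundarydist} with $\Sigma=\{\ov y_l\}$, and both exploit the same key point, namely that once the $l=1$ case of Proposition~\ref{stime grad} is available, \eqref{new2} gives a bound on $|\nabla v_{n,l}^2|$ that is uniform in $n$ (rather than of size $L_{n,l}$ as in Lemma~\ref{grad dist}), which is what makes the Taylor/path-integration step $v_{n,l}^2\le\mu_{n,l}^2+C(R)$ work. Your derivation of $D_{n,l}\to+\infty$ via $D_{n,l}^2\ge|\mu_{n,l}|\cdot|\nabla^l v_{n,l}(0)|=L_{n,l}$ is a clean way to make explicit what the paper leaves implicit; the only point you pass over a little quickly is the claim that the connecting curve can be taken \emph{inside} $\Omega_{n,l}\cap B_R(0)\setminus B_{1/R}(\ov y_l)$ — this is harmless because $v_{n,l}=0$ (hence $\nabla v_{n,l}^2=0$) on $\partial\Omega_{n,l}$, so one may either extend $v_{n,l}^2$ by zero and integrate along any curve in $B_R(0)\setminus B_{1/R}(\ov y_l)$, or restart the integration from the last boundary crossing, but the paper is equally terse here.
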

\begin{proof}
As in Lemma \ref{grad dist}, we show that $x_{n,l}$, $s_{n,l}$ and $v_{n,l}$ satisfy the assumptions of Lemma \ref{boundarydist0} and Lemma \ref{boundarydist}, with $\Sigma=\{\ov y_l\}$.  Let us denote $D_{n,l}:= \max_{0\le i\le 2m-1}|\nabla^i v_{n,l}(0)|$.  Note that \eqref{Lnl3} and the condition $L_{n,l}\to +\infty$ imply $D_{n,l}\to +\infty$.  Then, for any $R>0$, a Taylor expansion and \eqref{new2} yield
\begin{equation}\label{L1 case l}
v_{n,l}^2 \le \mu_{n,l}^2+ C(R) \le C(R) D_{n,l}^2
\end{equation}
in $\Omega_{n,l} \cap B_{R}(0)\setminus B_\frac{1}{R}(\ov y_l)$.  Moreover, by equation \eqref{eqvnl},  Remark \ref{rem sn}, and  \eqref{L1 case l}, we get  
$$|\lm v_{n,l} | = o(1) + s_{n,l}^{2m} \alpha v_{n,l} =  o(1) + O(s_{n,l}^{2m}D_{n,l}),$$
uniformly in $\Omega_{n,l} \cap B_{R}(0)\setminus B_\frac{1}{R}(\ov y_l)$. 
 \end{proof}

\begin{proof}[Proof of Proposition \ref{stime grad} for $2\le l\le 2m -1$.] Assume by contradiction that $L_{n,l}\to +\infty$ as $n\to +\infty$. Consider the function $z_n:= \frac{v_{n,l}- \mu_{n,l}}{|\nabla^l v_n(0)|}$. Observe that \eqref{Lnl3}, \eqref{new}, and Lemma \ref{grad dist case l}, yield
\begin{equation}\label{grad fin}
|\nabla z_n (y)|\le \frac{C(R)}{L_{n,l}} \to 0,
\end{equation}
uniformly in $B_R(0)\setminus B_\frac{1}{R}(\ov y_l)$, for any $R>0$. Since $z_n(0)=0$, \eqref{grad fin} implies that 
\[
|z_n|\le \frac{C(R)}{L_{n,l}}\to 0,
\]  uniformly in $B_R(0)\setminus B_\frac{1}{R}(\ov y_l)$. Similarly, as a consequence of equation \eqref{eqvnl}, \eqref{scaled estimate}, and Lemma  \ref{grad dist case l}, one has 
$$
|\lm z_n |\le \frac{C(R)}{L_{n,l}},
$$
in $B_R(0)\setminus B_\frac{1}{R}(\ov y_l)$. Therefore, up to subsequnces,  $z_n\to 0$  in $C^{2m-1,\gamma}_{loc}(\R^{2m}\setminus \{\ov y_{l}\})$,  for any $\gamma\in (0,1)$. Since $|\nabla^l z_n(0)| = 1$ for any $n$, we get a contradiction.  
\end{proof}

\subsection{Polyharmonic truncations}

In this subsection, we will generalize the truncation argument introduced in \cite{AD} and \cite{Li1}. For any $A>1$ and $n\in \N$, we will introduce a new function $u_n^A$ whose values are close to $\frac{\mu_n}{A}$ in a small ball centered at $x_n$, and which coincides with $u_n$ outside the same ball.   

\medskip
\begin{lemma}\label{def rhon}
For any $A>1$ and $n\in \N$, there exists a radius $0<\rho_n^A <d(x_n,\partial \Omega)$ and a constant $C=C(A)$ such that
\begin{enumerate}
\item $u_n\ge  \frac{\mu_n}{A}$ in $B_{\rho^A_n}(x_n)$.
\item $|u_n - \frac{\mu_n}{A}|\le C \mu_n^{-1}$  on $\partial B_{\rho^A_n}(x_n)$.
\item $|\nabla^l u_n |\le \dfrac{C}{\mu_n (\rho_n^A)^l}$  on $\partial B_{\rho_n^A}(x_n)$, for any $1\le l\le 2m-1$.
\item If $r_n$ is defined as in \eqref{rn}, then $\frac{\rho_n^A}{r_n} \to +\infty$ as $n\to +\infty$. 
\end{enumerate}
\end{lemma}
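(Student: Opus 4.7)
The plan is to define $\rho_n^A$ as the smallest radius on which the minimum of $u_n$ on the closed ball $\overline{B_\rho(x_n)}$ first reaches the value $\mu_n/A$, and then to derive all four properties from the pointwise estimates of Proposition \ref{stime grad} together with the blow-up description of Proposition \ref{conv eta}.

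More precisely, set $f_n(\rho) := \min_{\overline{B_\rho(x_n)}} u_n$. Since $u_n\in C(\overline\Omega)$, $f_n(0)=\mu_n>\mu_n/A$ and $u_n$ vanishes on $\partial\Omega$, the intermediate value theorem furnishes
\[\rho_n^A := \inf\{\rho>0 : f_n(\rho)=\mu_n/A\}\in (0, d(x_n,\partial\Omega)).\]
By minimality, $f_n(\rho)>\mu_n/A$ for every $\rho<\rho_n^A$, so the minimum at $\rho=\rho_n^A$ is attained on $\partial B_{\rho_n^A}(x_n)$. This simultaneously yields property 1 ($u_n\ge \mu_n/A$ on $B_{\rho_n^A}(x_n)$) and the existence of a point on $\partial B_{\rho_n^A}(x_n)$ where $u_n=\mu_n/A$.

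For property 2 I combine Proposition \ref{stime grad} with $l=1$ and the lower bound of property 1 to get, on $\partial B_{\rho_n^A}(x_n)$, $|\nabla u_n|\le C/(|x-x_n|\,|u_n|)\le CA/(\rho_n^A\mu_n)$. As $2m\ge 2$, the sphere is connected and any two points on it can be joined by an arc of length $\le \pi\rho_n^A$, giving
\[\osc_{\partial B_{\rho_n^A}(x_n)} u_n \le \pi\rho_n^A\,\max_{\partial B_{\rho_n^A}(x_n)}|\nabla u_n| \le \pi CA/\mu_n.\]
Together with the equality $u_n=\mu_n/A$ at one point and $u_n\ge \mu_n/A$ everywhere on the sphere, this gives $0\le u_n-\mu_n/A\le C(A)/\mu_n$, which is property 2. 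Property 3 is an immediate application of Proposition \ref{stime grad} with the same input: $|\nabla^l u_n|\le C/(|x-x_n|^l|u_n|)\le CA/(\mu_n(\rho_n^A)^l)$ on $\partial B_{\rho_n^A}(x_n)$ for each $1\le l\le 2m-1$.

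Property 4 is a direct consequence of Proposition \ref{conv eta}. For any fixed $R>0$, the convergence $\eta_n\to \eta_0$ in $C^0_{\loc}(\R^{2m})$ gives a bound $|\eta_n(y)|\le M(R)$ on $\overline{B_R(0)}$ for large $n$, whence
\[u_n(x_n+r_n y) \;=\; \mu_n+\eta_n(y)/\mu_n \;\ge\; \mu_n - M(R)/\mu_n \;>\; \mu_n/A\]
on $B_{Rr_n}(x_n)$ since $A>1$ is fixed and $\mu_n\to+\infty$. Hence $\rho_n^A\ge R r_n$ for all sufficiently large $n$, and $R$ being arbitrary yields $\rho_n^A/r_n\to+\infty$. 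The only subtle step is the oscillation-to-proximity transfer on the sphere in property 2; the rest is a straightforward assembly of previously established bounds, and no essentially new estimate is required.
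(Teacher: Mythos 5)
Your proof is correct and follows essentially the same strategy as the paper: define $\rho_n^A$ as the first radius at which $u_n$ drops to $\mu_n/A$, then derive properties 2 and 3 from Proposition \ref{stime grad} together with the arc-length bound on the sphere, and property 4 from the local convergence of Proposition \ref{conv eta}. The only (cosmetic) difference is the packaging of the definition of $\rho_n^A$: the paper first defines the hitting time $t_n^A(\sigma)$ along each ray and minimizes over $\sigma\in\mathbb S^{2m-1}$ using lower semicontinuity, whereas you minimize the function $f_n(\rho)=\min_{\overline{B_\rho(x_n)}}u_n$ directly in $\rho$; the two definitions coincide, and yours sidesteps the (easy) semicontinuity argument.
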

\begin{proof}
For any $\sigma\in \mathbb S^{2m-1}$, the function $t\mapsto u_n(x_n + t \sigma)$ ranges from $\mu_n$ to $0$ in the interval $[0,t^*_n(\sigma)]$, where $t^*_n(\sigma) := \sup\{ t>0 \;:\; x_n +s \sigma \in \Omega \text{ for any } s\in [0,t] \}$. Since $u_n\in C(\ov\Omega)$, one can define 
$$
t_n^A(\sigma):= \inf\{t \in [0,t^*_n(\sigma))\;:\; u_n(x_n +t\sigma)=\frac{\mu_n}{A}\}. 
$$ 
Clearly, one has $0<t_n^A(\sigma)<t_n^*(\sigma)$ and $u_n(x_n +t_n^A(\sigma) \sigma) = \frac{\mu_n}{A}$, for any $\sigma \in \mathbb S^{2m-1}$. Moreover,  the function $\sigma\longmapsto t_n^A(\sigma)$ is lower semi-continuous on $\mathbb S^{2m-1}$. In particular, we can find $\ov{\sigma}_n^A$ such that $\dis{t_n^A(\ov \sigma_n^A)=\min_{\sigma\in \mathbb S^{2m-1}} t_n^A(\sigma)}$. We  define $\rho_n^A:=t_n^A(\ov \sigma_n)$, and $y_n^A:= x_n + \rho_n^A\ov \sigma_n^A \in \partial B_{\rho_n^A}(x_n)$. By construction we have, $0<\rho_{n}^A<d(x_n,\partial \Omega)$,  $u_n\ge \frac{\mu_n}{A}$ on $B_{\rho_n^A}(x_n)$, and $u_n(y_n^A)=\frac{\mu_n}{A}$. Thus, applying Proposition \ref{stime grad}, we get 
$$
|\nabla^l u_n|\le \frac{C A}{\mu_n (\rho_n^A)^l},
$$ 
on $\partial B_{\rho_n^A} (x_n)$, for any $1\le l\le 2m-1$. Furthermore, for any $x\in \partial B_{\rho_n^A} (x_n)$, one has
$$
|u_n(x) - \frac{\mu_n}{A}| = |u_n(x) - u_n(y_n^A)| \le \pi \rho_n^A \sup_{\partial B_{\rho_n^A} (x_n)} |\nabla u_n|  \le \frac{C}{\mu_n}. 
$$
Finally, if $r_n$ is as in \eqref{rn}, Proposition \ref{conv eta} and \eqref{etan} imply that $u_n = \mu_n + O(\mu_n^{-1})$ uniformly in $B_{r_nR}(x_n)$, for any $R>0$. Therefore, for sufficiently large $n$, we have $r_n R < \rho_n^A$. Since $R$ is arbitrary, we get the conclusion. 
\end{proof}

Let $\rho_n^A$ be as in the previous lemma and let  $v_n^A \in C^{2m} (\ov{B_{\rho_n^A}(x_n)})$ be the unique solution of 
$$
\begin{Si}{ll}
\lm v_n^A =0  &  \text{ in }  B_{\rho_n^A}(x_n), \\
  \partial^i_\nu v_n^A  =  \partial^i_\nu u_n  &    \text{ on } \partial B_{\rho_n^A}(x_n), 0\le i \le m-1.
\end{Si}
$$
We consider the function 
\begin{equation}\label{unA}
u_n^A(x) := \begin{Si}{cl}
v_n^A &   \text{ in }  B_{\rho_n^A}(x_n),\\
u_n &    \text{ in } \Omega \setminus B_{\rho_n^A}(x_n).
\end{Si}
\end{equation}
By definition, we have $u_n^A\in H^{m}_0(\Omega)$. The main purpose of this section is to study the properties of $u_n^A$.

\begin{lemma}\label{est unA}
For any $A>1$, we have 
$$
u_n^A = \frac{\mu_n}{A} + O(\mu_n^{-1}),
$$
uniformly on $\ov{B_{\rho_n^A}(x_n)}$. 
\end{lemma}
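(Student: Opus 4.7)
The plan is to reduce the statement to an $L^\infty$ estimate for a polyharmonic Dirichlet problem with small boundary data on the unit ball. Consider the shifted function $w_n^A := v_n^A - \frac{\mu_n}{A}$ defined on $B_{\rho_n^A}(x_n)$. By linearity and the definition of $v_n^A$, it solves
$$
\begin{Si}{ll}
\lm w_n^A = 0 &  \text{ in } B_{\rho_n^A}(x_n), \\
w_n^A = u_n - \frac{\mu_n}{A} &   \text{ on } \partial B_{\rho_n^A}(x_n), \\
\partial_\nu^i w_n^A  = \partial_\nu^i u_n   &  \text{ on } \partial B_{\rho_n^A}(x_n), \ 1\le i\le m-1.
\end{Si}
$$
Lemma \ref{def rhon} then provides the crucial size estimates on the boundary data: setting $g_0:= u_n - \frac{\mu_n}{A}$ and $g_i:=\partial_\nu^i u_n$ for $1\le i \le m-1$, one has
$$
\|g_i\|_{L^\infty(\partial B_{\rho_n^A}(x_n))} \le C\mu_n^{-1}(\rho_n^A)^{-i}, \qquad 0\le i \le m-1.
$$

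Next I would rescale to the unit ball. Define $\tilde w_n^A(y):= w_n^A(x_n + \rho_n^A y)$ for $y\in B_1(0)$. Then $\tilde w_n^A$ satisfies $\lm \tilde w_n^A = 0$ in $B_1(0)$, with boundary data
$\tilde g_i(y):= (\rho_n^A)^i g_i(x_n +\rho_n^A y)$ on $\partial B_1(0)$. The key observation is that the scaling exactly absorbs the $(\rho_n^A)^{-i}$ factors:
$$
\|\tilde g_i\|_{L^\infty(\partial B_1(0))} = (\rho_n^A)^i \|g_i\|_{L^\infty(\partial B_{\rho_n^A}(x_n))} \le C \mu_n^{-1}, \qquad 0\le i \le m-1.
$$
Hence all boundary data of $\tilde w_n^A$, including the Dirichlet trace itself, are uniformly $O(\mu_n^{-1})$ on the fixed unit sphere.

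Finally I would invoke standard elliptic regularity for the polyharmonic Dirichlet problem on $B_1(0)$ (either via Boggio's explicit Green representation, or via the Agmon--Douglis--Nirenberg Schauder estimates quoted in the Appendix). Since the operator $\lm$ together with the Dirichlet boundary operators $\{\partial_\nu^i\}_{i=0}^{m-1}$ forms a well-posed elliptic boundary value problem on the unit ball, there exists a universal constant $C_m>0$ such that every $\lm$-harmonic function $\tilde w$ on $B_1(0)$ satisfies
$$
\|\tilde w\|_{L^\infty(B_1(0))} \le C_m \sum_{i=0}^{m-1} \|\partial_\nu^i \tilde w\|_{L^\infty(\partial B_1(0))}.
$$
Applying this to $\tilde w_n^A$ yields $\|\tilde w_n^A\|_{L^\infty(B_1(0))} \le C \mu_n^{-1}$, which translates back to $\|w_n^A\|_{L^\infty(B_{\rho_n^A}(x_n))}\le C\mu_n^{-1}$. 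Since $u_n^A \equiv v_n^A = w_n^A + \frac{\mu_n}{A}$ on $B_{\rho_n^A}(x_n)$, this is exactly the claim.

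The only delicate point is obtaining the universal constant $C_m$ independent of $n$; this is standard on the unit ball (the full strength of Schauder estimates is not needed, only an $L^\infty$ bound in terms of $C^{m-1}$ boundary data for a homogeneous polyharmonic Dirichlet problem), and the scaling argument guarantees no $\rho_n^A$-dependence enters.
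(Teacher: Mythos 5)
Your proposal is correct and follows essentially the same route as the paper: subtract $\frac{\mu_n}{A}$, rescale $B_{\rho_n^A}(x_n)$ to $B_1(0)$ so that the $(\rho_n^A)^{-i}$ factors from Lemma \ref{def rhon} are absorbed, and then invoke a uniform $L^\infty$ bound for $m$-harmonic functions on the unit ball in terms of their boundary data. The paper cites precisely such a bound as Proposition \ref{ell harm2} (which is exactly the estimate you write with $C_m$), rather than going through Boggio's kernel or the full Schauder machinery as you suggest as alternatives; otherwise the arguments coincide.
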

\begin{proof}
Define $\tilde v_n(y) := v_n^A ( x_n + \rho_n^A y) -\frac{\mu_n}{A}$ for $y\in B_1(0)$. Then, by elliptic estimates (Proposition \ref{ell harm2}), we have
\[\begin{split}
\|v_n^A-\frac{\mu_n}{A}\|_{L^\infty(B_{\rho_n^A}(x_n))} = \|\tilde v_n\|_{L^\infty(B_{1}(0))} 
& \le C\sum_{l=0}^{m-1} \|\nabla^l \tilde v_n\|_{L^\infty(\partial B_1(0))} \\
& = C \sum_{l=0}^{m-1} (\rho_n^A)^l\|\nabla^l  v_n^A\|_{L^\infty(\partial B_{\rho_n^A}(x_n))}\\
& = C \sum_{l=0}^{m-1} (\rho_n^A)^l\|\nabla^l  u_n\|_{L^\infty(\partial B_{\rho_n^A}(x_n))}
\end{split}
\]
By Lemma \ref{def rhon}, we know  that $(\rho_n^A)^l\|\nabla^l  u_n\|_{L^\infty(\partial B_{\rho_n^A}(x_n))}\le \frac{C}{\mu_n}$ and the proof is complete.
\end{proof}

\begin{prop}\label{trunc}
For any $A>1$, we have
$$
\limsup_{n\to +\infty} \int_{\Omega} |\Delta^\frac{m}{2} u_n^A|^2 dx \le \frac{1}{A}. 
$$
\end{prop}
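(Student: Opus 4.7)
The plan is to split $\int_\Omega |\Delta^{m/2}u_n^A|^2 dx$ into the integrals over $B_{\rho_n^A}(x_n)$ and its complement, and then exploit the fact that $v_n^A$ is the polyharmonic extension of the Cauchy data of $u_n$ on $\partial B_{\rho_n^A}(x_n)$. Setting $\psi_n := u_n - v_n^A$, we have $\psi_n \in H^m_0(B_{\rho_n^A}(x_n))$, so Proposition \ref{parts} together with $\lm v_n^A = 0$ gives $\int_{B_{\rho_n^A}(x_n)} \Delta^{m/2} v_n^A \cdot \Delta^{m/2}\psi_n \, dx = 0$. Expanding $|\Delta^{m/2}u_n|^2 - |\Delta^{m/2}v_n^A|^2 = |\Delta^{m/2}\psi_n|^2 + 2\,\Delta^{m/2}v_n^A \cdot \Delta^{m/2}\psi_n$ and integrating, the cross term vanishes. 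Since $u_n^A = u_n$ outside $B_{\rho_n^A}(x_n)$, this yields the key identity
\begin{equation*}
\int_\Omega |\Delta^{m/2} u_n^A|^2 dx = \int_\Omega |\Delta^{m/2} u_n|^2 dx - \int_{B_{\rho_n^A}(x_n)} |\Delta^{m/2}\psi_n|^2 dx.
\end{equation*}
The first term equals $\|u_n\|_{H^m_0}^2 = \|u_n\|_\alpha^2 + \alpha\|u_n\|_{L^2}^2 = 1 + o(1)$ thanks to $\|u_n\|_\alpha = 1$ and Lemma \ref{weak lim}. It therefore remains to prove that
\begin{equation*}
\int_{B_{\rho_n^A}(x_n)} |\Delta^{m/2}\psi_n|^2 dx = 1 - \frac{1}{A} + o(1).
\end{equation*}

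Applying Proposition \ref{parts} once more (all boundary terms vanish since $\psi_n \in H^m_0(B_{\rho_n^A}(x_n))$) and using equation \eqref{star},
\begin{equation*}
\int_{B_{\rho_n^A}(x_n)} |\Delta^{m/2}\psi_n|^2 dx = \int_{B_{\rho_n^A}(x_n)} \psi_n \lm u_n \, dx = \int_{B_{\rho_n^A}(x_n)} (u_n - v_n^A)\bigl(\lambda_n u_n e^{\beta_n u_n^2} + \alpha u_n\bigr)dx.
\end{equation*}
By Lemma \ref{est unA} we may replace $v_n^A$ by $\frac{\mu_n}{A} + O(\mu_n^{-1})$, so the right-hand side becomes
\begin{equation*}
\int_{B_{\rho_n^A}(x_n)} \lambda_n u_n^2 e^{\beta_n u_n^2}\, dx \;-\; \frac{1}{A}\int_{B_{\rho_n^A}(x_n)} \lambda_n \mu_n u_n e^{\beta_n u_n^2}\, dx \;+\; E_n,
\end{equation*}
where $E_n$ collects the error from the $O(\mu_n^{-1})$ term and the $\alpha$ term.

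The main task is to show that each of the two leading integrals equals $1 + o(1)$. Since $\int_\Omega \lambda_n u_n^2 e^{\beta_n u_n^2} dx = 1$ by \eqref{lambdan}, and Remark \ref{rem integrals} gives $\int_{B_{Rr_n}(x_n)} \lambda_n u_n^2 e^{\beta_n u_n^2} dx = 1 + O(R^{-2m}) + o_n(1)$, the mass outside $B_{Rr_n}(x_n)$ is $O(R^{-2m}) + o_n(1)$; letting $n\to\infty$ then $R\to\infty$ gives $\int_{B_{\rho_n^A}(x_n)} \lambda_n u_n^2 e^{\beta_n u_n^2} dx = 1 + o(1)$. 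For the second integral, which I expect to be the main obstacle since the integrand is larger than $\lambda_n u_n^2 e^{\beta_n u_n^2}$, I would use the defining property $u_n \ge \mu_n/A$ on $B_{\rho_n^A}(x_n)$ from Lemma \ref{def rhon}: it forces $\mu_n u_n \le A u_n^2$ pointwise on the ball, hence the integral on the annulus $B_{\rho_n^A}(x_n)\setminus B_{Rr_n}(x_n)$ is bounded by $A \cdot (O(R^{-2m}) + o_n(1))$, while on $B_{Rr_n}(x_n)$ Remark \ref{rem integrals} again yields $1 + O(R^{-2m}) + o_n(1)$.

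Finally, the error $E_n$ is negligible: the bound $|u_n| \le Au_n^2/\mu_n$ on $B_{\rho_n^A}(x_n)$ gives $\mu_n^{-1}\int \lambda_n |u_n| e^{\beta_n u_n^2}dx = O(\mu_n^{-2})$, and the $\alpha$-term is controlled via Cauchy--Schwarz and the Poincaré inequality on $H^m_0(B_{\rho_n^A}(x_n))$: $\|\psi_n\|_{L^2}\le C(\rho_n^A)^m \|\Delta^{m/2}\psi_n\|_{L^2} = O(1)$, which combined with $\|u_n\|_{L^2} = o(1)$ shows $\alpha\int \psi_n u_n\, dx = o(1)$. Combining everything gives $\int_{B_{\rho_n^A}(x_n)} |\Delta^{m/2}\psi_n|^2 dx = 1 - 1/A + o(1)$, so $\int_\Omega |\Delta^{m/2}u_n^A|^2 dx = 1/A + o(1)$, proving the claim.
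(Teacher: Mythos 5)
Your proof is correct, and it takes a genuinely different route from the paper's. Both proofs start from the same orthogonality identity
$\int_\Omega|\Delta^{m/2}u_n^A|^2 = \int_\Omega|\Delta^{m/2}u_n|^2 - \int_{B_{\rho_n^A}}|\Delta^{m/2}(u_n-v_n^A)|^2$
and the Green's formula reduction to $\int_{B_{\rho_n^A}}(u_n-v_n^A)\lm u_n\,dx$. The paper then observes $\lm u_n\ge 0$ on $B_{\rho_n^A}$ (from Lemma \ref{def rhon}), invokes the maximum principle to get $u_n\ge v_n^A$, and uses this positivity to drop the integral down to the core $B_{Rr_n}(x_n)$ and drop the $\alpha u_n$ piece, landing on the clean lower bound $\liminf\|\Delta^{m/2}(u_n-u_n^A)\|_{L^2}^2\ge 1-\frac1A$, which is all that is needed. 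You instead compute the full integral over $B_{\rho_n^A}$, splitting off the annulus $B_{\rho_n^A}\setminus B_{Rr_n}$ and controlling both $\lambda_n u_n^2 e^{\beta_n u_n^2}$ and $\lambda_n\mu_n u_n e^{\beta_n u_n^2}$ there via the pointwise inequality $u_n\ge\mu_n/A\Rightarrow \mu_n u_n\le A u_n^2$ combined with the mass identity $\int_\Omega\lambda_n u_n^2 e^{\beta_n u_n^2}=1$ and Remark \ref{rem integrals}. This avoids the maximum principle entirely (relying only on the sign $u_n>0$ on the ball, which is built into Lemma \ref{def rhon}) and delivers the sharper statement $\|\Delta^{m/2}u_n^A\|_{L^2}^2 = \frac1A + o(1)$ rather than just the $\limsup$ bound. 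The cost is that you must carry two extra estimates (the annulus bound and the $\alpha$-error via Poincar\'e on $H^m_0(B_{\rho_n^A})$) where the paper gets away with a one-sided inequality, but both routes are sound and comparable in length; the equality version could be mildly more useful downstream, and the extra transparency about where the mass sits is a small but real benefit.
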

\begin{proof}
Since $u_n^A \equiv u_n$ in $\Omega \setminus B_{\rho_n^A}(x_n)$, $u_n^A$ is $m-$harmonic in $B_{\rho_n^A}(x_n)$,  and $\partial_\nu^j u_n^A =\partial_\nu^j u_n$ on $\partial B_{\rho_n^A}(x_n)$ for $0\le j\le m-1$, we have 
\begin{equation}
\begin{split}\label{A1}
\int_{\Omega} |\Delta^\frac{m}{2} (u_n  -u_n^A)|^2 dx &  = \int_{B_{\rho_n^A}(x_n)} \Delta^\frac{m}{2} (u_n - u^A_n) \Delta^\frac{m}{2} u_n \,dx\\
& = \int_{B_{\rho_n^A}(x_n)}  (u_n - u_n^A) \lm u_n \,dx. 
\end{split}
\end{equation}
As a consequence of Lemma \ref{def rhon}, we get $\lm u_n\ge 0$ in $B_{\rho_n^A}(x_n)$. Therefore, the maximum principle guarantees $u_n\ge u_n^A$ in $B_{\rho_n^A}(x_n)$. Hence, if $r_n$ is as in \eqref{rn}, we have  
\begin{equation}\label{A2}
\begin{split}
\int_{B_{\rho_n^A}(x_n)}  (u_n - u^A_n) \lm u_n dx &\ge \int_{B_{Rr_n}(x_n)}  (u_n - u^A_n) \lm u_n dx \\
&\ge \int_{B_{Rr_n}(x_n)}  (u_n - u^A_n) \lambda_n u_n e^{\beta_n u_n^2} dx,
\end{split}
\end{equation}
for any $R>0$. By Lemma \ref{est unA}, \eqref{rn}, and Proposition \ref{conv eta}, we find 
\begin{equation}\label{A3}
\begin{split}
\int_{B_{Rr_n}(x_n)}  &(u_n - u^A_n) \lambda_n u_n e^{\beta_n u_n^2} dx \\& = r_n^{2m}\lambda_n \int_{B_{R}(0)}  \bra{\mu_n +\frac{\eta_n}{\mu_n}-\frac{\mu_n}{A} + O(\mu_n^{-1}) } \bra{\mu_n +\frac{\eta_n}{\mu_n}} e^{\beta_n \bra{ \mu_n^2 +2 \eta_n + \frac{\eta_n^2}{\mu_n^2}}} dy\\
&= \omega_{2m}^{-1}\bra{ 1- \frac{1}{A}} \int_{B_{R}(0)} e^{2\beta^*\eta_0} dy + o(1),
\end{split}
\end{equation}
where $\eta_n$ and $\eta_0$ are as in \eqref{etan} and \eqref{eta0}. Using \eqref{A1}, \eqref{A2}, \eqref{A3}, and Lemma \ref{int eta0}, as $n\to+\infty$  and $R\to +\infty$ we find 
\begin{equation}\label{A4}
\liminf_{n\to +\infty} \int_{\Omega} |\Delta^\frac{m}{2} (u_n  -u_n^A)|^2 dx \ge 1-\frac{1}{A}. 
\end{equation} 
Finally, since $u_n^A$ is $m-$harmonic in $B_{\rho_n^A}(x_n)$, we have
\begin{equation}\label{A5}
\begin{split}
1+o(1)&=\int_{\Omega} |\Delta^\frac{m}{2}u_n|^2 dx \\& = \int_{\Omega} |\Delta^\frac{m}{2} u_n^A|^2dx +\int_{\Omega} |\Delta^\frac{m}{2}\bra{u_n -u_n^A}|^2dx  + 2\int_{\Omega} \Delta^\frac{m}{2}u_n^A \cdot \Delta^\frac{m}{2}(u_n-u_n^A)dx \\ 
& = \int_{\Omega} |\Delta^\frac{m}{2} u_n^A|^2dx +\int_{\Omega} |\Delta^\frac{m}{2}\bra{u_n -u_n^A}|^2dx.
\end{split}
\end{equation}
Thus, \eqref{A4} and \eqref{A5} yield the conclusion. 
\end{proof}

As a consequence of Proposition \ref{trunc}, we get some simple but crucial  estimates. 

\begin{lemma}\label{lemma crucial}  Let $0\le \alpha<\lambda_1(\Omega) $ and let $S_{\alpha,\beta^*}$ be as in \eqref{sup}. Then, we have
$$S_{\alpha,\beta^*} = |\Omega| +  \lim_{n\to +\infty} \frac{1}{\lambda_n \mu_n^2 }.$$
In particular, $\lambda_n \mu_n\to 0$ as $n\to +\infty$.  
\end{lemma}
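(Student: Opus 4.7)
I would aim to prove the two-sided asymptotic
\[
F_{\beta_n}(u_n) \;=\; |\Omega| + \frac{1+o(1)}{\lambda_n\mu_n^2}, \qquad n\to+\infty,
\]
from which, combined with $F_{\beta_n}(u_n)\to S_{\alpha,\beta^*}$ (Lemma \ref{primo}.4), both the claimed identity and the subsidiary claim $\lambda_n\mu_n\to 0$ follow (the latter because $\mu_n\to+\infty$ while $\lambda_n\mu_n^2$ must remain bounded: this is forced by the strict inequality $S_{\alpha,\beta^*}>|\Omega|$, obtained by testing $F_{\beta^*}$ against any nonzero function in $M_\alpha$).

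For the lower bound I would split $\Omega=B_{Rr_n}(x_n)\cup(\Omega\setminus B_{Rr_n}(x_n))$, drop the nonnegative outer contribution to $\int(e^{\beta_nu_n^2}-1)dx$, and apply Remark \ref{rem integrals}.4 to the inner piece:
\[
F_{\beta_n}(u_n)-|\Omega|\;\ge\; \int_{B_{Rr_n}(x_n)}e^{\beta_nu_n^2}\,dx-|B_{Rr_n}(x_n)|\;=\;\frac{1+O(R^{-2m})+o_n(1)}{\lambda_n\mu_n^2},
\]
since $\lambda_n\mu_n^2\, r_n^{2m}=\omega_{2m}^{-1}e^{-\beta_n\mu_n^2}\to 0$ by the definition \eqref{rn} of $r_n$. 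Sending $n\to+\infty$ and then $R\to+\infty$ produces the lower bound with sharp constant $1$.

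For the upper bound I would use the polyharmonic truncation $u_n^A$ with $A>1$ fixed. On the outer region, since $u_n=u_n^A$ on $\Omega\setminus B_{\rho_n^A}(x_n)$, the essential claim is $\int_\Omega e^{\beta_n(u_n^A)^2}dx\to|\Omega|$, which I would derive by Vitali's theorem: Proposition \ref{trunc} yields $\|u_n^A\|_{H^m_0}^2\le 1/A+o(1)<1$, so Adams's inequality gives an $L^p$ bound with $p>1$ on $e^{\beta_n(u_n^A)^2}$; and $u_n^A\to 0$ a.e.\ because $u_n^A=u_n\to 0$ in every $L^q$ (Lemma \ref{weak lim}) outside $B_{\rho_n^A}(x_n)$, while $\rho_n^A\to 0$---indeed, Lemma \ref{conv0C}.3 combined with $\mu_n/A\to+\infty$ forces $|u_n|<\mu_n/A$ outside any small ball around $x_0$ for large $n$, so the first radial descent of $u_n$ to the level $\mu_n/A$ must occur near $x_0$. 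On the inner region $B_{\rho_n^A}(x_n)$ I would split as $B_{Rr_n}(x_n)\cup(B_{\rho_n^A}(x_n)\setminus B_{Rr_n}(x_n))$: the first piece is handled by Remark \ref{rem integrals}.4, while on the annulus the lower bound $u_n\ge \mu_n/A$ from Lemma \ref{def rhon}.1 gives $e^{\beta_nu_n^2}\le (A^2/\mu_n^2)u_n^2e^{\beta_nu_n^2}$, so Remark \ref{rem integrals}.1 bounds it by $A^2(O(R^{-2m})+o_n(1))/(\lambda_n\mu_n^2)$.

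Combining and multiplying by $\lambda_n\mu_n^2$, the boundedness of $\lambda_n\mu_n^2$ (extracted from the crude inequality $F_{\beta_n}(u_n)-|\Omega|\le A^2/(\lambda_n\mu_n^2)+o(1)$ together with $S_{\alpha,\beta^*}>|\Omega|$) lets the absolute $o(1)$ remainders be absorbed, and sending $n\to+\infty$ then $R\to+\infty$ yields $\limsup\lambda_n\mu_n^2(F_{\beta_n}(u_n)-|\Omega|)\le 1$, matching the lower bound. The principal technical obstacle is the convergence $\int_\Omega e^{\beta_n(u_n^A)^2}dx\to|\Omega|$: it requires simultaneously the pointwise decay $u_n^A\to 0$ a.e.\ (hence the proof that $\rho_n^A\to 0$) and the uniform $L^p$-equi-integrability of the exponentials, both of which rely sharply on Proposition \ref{trunc}.
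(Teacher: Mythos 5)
Your proof is correct and rests on the same pillars as the paper's: the polyharmonic truncation $u_n^A$ with Proposition \ref{trunc} and Adams' inequality to control the outer region via Vitali's theorem, the lower bound $u_n\ge\mu_n/A$ from Lemma \ref{def rhon} inside $B_{\rho_n^A}(x_n)$, and Remark \ref{rem integrals} for the inner contribution. The only real difference is in the endgame. The paper settles for the single crude upper bound $\int_{B_{\rho_n^A}(x_n)}e^{\beta_n u_n^2}\,dx\le A^2/(\lambda_n\mu_n^2)$, obtains $|\Omega|+\limsup(\lambda_n\mu_n^2)^{-1}\le S_{\alpha,\beta^*}\le|\Omega|+A^2\liminf(\lambda_n\mu_n^2)^{-1}$, and then lets $A\to 1$ to squeeze out the existence of the limit and the identity at once. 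You instead refine the inner region further into $B_{Rr_n}(x_n)$ (handled by Remark \ref{rem integrals}.4) plus an annulus (controlled, via $u_n\ge\mu_n/A$ and Remark \ref{rem integrals}.1, by $A^2\,O(R^{-2m})/(\lambda_n\mu_n^2)$), which gives a sharp constant $1+O(R^{-2m})$ directly without sending $A\to 1$ — at the cost of first extracting boundedness of $\lambda_n\mu_n^2$ from the crude bound so the absolute $o_n(1)$ remainders can be absorbed. Both routes close the argument; the paper's is a touch shorter, yours avoids the $A\to 1$ limit. One point where you are actually more careful than the paper: you explicitly justify $\rho_n^A\to 0$ (needed for $u_n^A\to 0$ a.e.) via Lemma \ref{conv0C}.3 and $\mu_n/A\to\infty$, whereas the paper leaves this implicit.
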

\begin{proof}
Fix $A>1$ and let $u_n^A$ be as in \eqref{unA}. By Adams' inequality \eqref{Adams} and Proposition \ref{trunc}, we know that $e^{\beta_n (u_n^A)^2}$ is bounded in $L^{p}(\Omega)$, for any  $1<p<A$. Since $u_n^A\to 0$ a.e. in $\Omega$, Theorem \ref{Vitali} gives
\begin{equation}\label{outside}
\lim_{n\to +\infty}\int_{\Omega\setminus B_{\rho_n^A}(x_n)} e^{\beta_n u_n^2} dx  = \lim_{n\to +\infty} \int_{\Omega\setminus B_{\rho_n^A}(x_n)} e^{\beta_n (u_n^A)^2} dx  = |\Omega|.
\end{equation}
By Lemma \ref{def rhon}, $u_n \ge \frac{\mu_{n}}{A}$ on   $B_{\rho_n^A}(x_n)$. Hence,
\begin{equation}\label{inside1}
\int_{B_{\rho_n^A(x_n)}}  e^{\beta_n u_n^2} dx \le \frac{A^2}{\mu_n^2} \int_{B_{\rho_n^A}(x_n)} u_n^2 e^{\beta_n u_n^2}dx \le  \frac{A^2}{\lambda_n \mu_n^2}.
\end{equation}
Moreover, for $R>0$ large enough, Lemma \ref{def rhon}  and Remark \ref{rem integrals} imply 
\begin{equation}\label{inside2}
\limsup_{n\to+\infty} \int_{B_{\rho_n^A(x_n)}}  e^{\beta_n u_n^2} dx \ge  \limsup_{n\to+ \infty} \int_{B_{r_nR(x_n)}}  e^{\beta_n u_n^2} dx  = (1+O(R^{-2m})) \limsup_{n\to +\infty} \frac{1}{\lambda_n \mu_n^2}.
\end{equation}
From \eqref{extremal}, \eqref{outside}, \eqref{inside1},  \eqref{inside2}, and Lemma \ref{limsub}, we get 
$$
|\Omega| + \limsup_{n\to +\infty} \frac{1}{\lambda_n\mu_n^2} \le S_{\alpha,\beta^*} \le |\Omega| + A^2\liminf_{n\to +\infty} \frac{1}{\lambda_n\mu_n^2}.
$$
Since $A$ is an arbitrary number greater than $1$, we get the conclusion. 
\end{proof}

We conclude this section with the following lemma, which gives $L^1$ bounds on $\lm (\mu_n u_n)$. This  will be important in the analysis of the behaviour of $u_n$ far from $x_0$, which is given in the next section.    
\begin{lemma}\label{lemma mun} The sequence
$\lambda_n \mu_n u_n e^{\beta_n u_n^2}$ is bounded in $L^1(\Omega)$. Moreover,  $\lambda_n \mu_n  u_n e^{\beta_n u_n^2} \rw \delta_0 $ in the sense of measures. 
\end{lemma}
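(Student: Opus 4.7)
\medskip

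\noindent\textbf{Proof plan.} The plan is to establish the $L^1$ bound by a single Cauchy--Schwarz estimate that decouples $\lambda_n u_n^2 e^{\beta_n u_n^2}$ (which integrates to $1$ by \eqref{lambdan}) from $\lambda_n \mu_n^2 e^{\beta_n u_n^2}$ (whose integral is bounded by Lemma~\ref{lemma crucial} and the boundedness of $F_{\beta_n}(u_n)$). Concretely, I would write
\[
\int_{\Omega}\lambda_n\mu_n|u_n|e^{\beta_n u_n^2}\,dx\le\bigl(\lambda_n\textstyle\int_\Omega u_n^2 e^{\beta_n u_n^2}\,dx\bigr)^{1/2}\bigl(\lambda_n\mu_n^2\int_\Omega e^{\beta_n u_n^2}\,dx\bigr)^{1/2}=\sqrt{\lambda_n\mu_n^2\,F_{\beta_n}(u_n)},
\]
which is bounded since $\lambda_n\mu_n^2$ converges to $1/(S_{\alpha,\beta^*}-|\Omega|)$ by Lemma~\ref{lemma crucial} and $F_{\beta_n}(u_n)\to S_{\alpha,\beta^*}$.

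\medskip

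\noindent For the weak convergence to $\delta_{x_0}$, I would fix $\varphi\in C(\ov\Omega)$ and split the integral into three regions, where $r_n$ is as in \eqref{rn} and $R>0$, $\delta>0$ are parameters to be sent to $+\infty$ and $0$ respectively:
\[
\int_\Omega \lambda_n\mu_n u_n e^{\beta_n u_n^2}\varphi\,dx = I_n^{(1)}+I_n^{(2)}+I_n^{(3)},
\]
corresponding to integration over $B_{Rr_n}(x_n)$, $B_\delta(x_0)\setminus B_{Rr_n}(x_n)$, and $\Omega\setminus B_\delta(x_0)$. For $I_n^{(1)}$, the rescaling $y\mapsto x_n+r_n y$, Proposition~\ref{conv eta}, and point \emph{2.} of Remark~\ref{rem integrals} give $I_n^{(1)}\to \varphi(x_0)(1+O(R^{-2m}))$, since $B_{Rr_n}(x_n)\subset B_\delta(x_0)$ for $n$ large and $\varphi$ is continuous at $x_0$. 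For $I_n^{(3)}$, Lemma~\ref{conv0C} gives uniform bounds on $|u_n|$ and $e^{\beta_n u_n^2}$ away from $x_0$, and $\lambda_n\mu_n=(\lambda_n\mu_n^2)/\mu_n\to 0$, so $I_n^{(3)}\to 0$.

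\medskip

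\noindent The main obstacle is $I_n^{(2)}$: on the annular region the integrand is not pointwise small, so I would bound it by the same Cauchy--Schwarz trick as above, now localized,
\[
|I_n^{(2)}|\le\|\varphi\|_\infty\sqrt{\lambda_n\mu_n^2\int_{B_\delta(x_0)\setminus B_{Rr_n}(x_n)}e^{\beta_n u_n^2}\,dx}.
\]
The key input is then that the remaining exponential integral over the annulus is small. To see this, combine the three facts: $F_{\beta_n}(u_n)\to S_{\alpha,\beta^*}=|\Omega|+1/(\lim\lambda_n\mu_n^2)$ (Lemma~\ref{lemma crucial}), Vitali's theorem plus Lemma~\ref{conv0C} yielding $\int_{\Omega\setminus B_\delta(x_0)}e^{\beta_n u_n^2}\,dx\to |\Omega\setminus B_\delta(x_0)|$, and the scaling identity $\int_{B_{Rr_n}(x_n)}e^{\beta_n u_n^2}\,dx\to (S_{\alpha,\beta^*}-|\Omega|)(1+O(R^{-2m}))$, which follows from Proposition~\ref{conv eta} and \eqref{propeta1} in Lemma~\ref{int eta0}. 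Subtracting gives $\int_{B_\delta(x_0)\setminus B_{Rr_n}(x_n)}e^{\beta_n u_n^2}\,dx\to|\Omega\cap B_\delta(x_0)|+O(R^{-2m})$, so $|I_n^{(2)}|$ can be made arbitrarily small by choosing first $R$ large and then $\delta$ small. Letting $n\to+\infty$, $R\to+\infty$, $\delta\to 0$ in this order yields $\int\lambda_n\mu_n u_n e^{\beta_n u_n^2}\varphi\,dx\to\varphi(x_0)$, completing the weak-$*$ convergence.
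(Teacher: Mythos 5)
Your Cauchy--Schwarz decoupling is a genuinely different and in principle cleaner route than the paper's. The paper proves the $L^1$ bound and the weak convergence together: it reduces to showing $\lim_{R\to+\infty}\limsup_n \int_{\Omega\setminus B_{r_nR}(x_n)}|f_n|\,dx=0$, then splits $\Omega\setminus B_{r_nR}(x_n)$ at the \emph{truncation radius} $\rho_n^A$ of Lemma \ref{def rhon}. Inside $B_{\rho_n^A}(x_n)$ it uses $|u_n|\ge\mu_n/A$ to turn $\mu_n|u_n|$ into $Au_n^2$ and invokes Remark \ref{rem integrals}; outside it uses H\"older with $e^{\beta_n(u_n^A)^2}\in L^p$ ($p<A$, from Proposition \ref{trunc} + Adams) together with $\lambda_n\mu_n\to 0$. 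Your argument replaces the truncation radius by a fixed $\delta$-ball and handles the middle annulus by the same Cauchy--Schwarz trick as in the $L^1$ step, which avoids direct manipulation of $u_n^A$ in this lemma.

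However, there is a gap that appears twice, and it is not cosmetic. At the stage where Lemma \ref{lemma mun} is proved one does \emph{not} yet know that $S_{\alpha,\beta^*}<+\infty$: finiteness is only established a posteriori by combining Lemma \ref{final} with Lemma \ref{lemma crucial}. Lemma \ref{lemma crucial} as stated is compatible with $S_{\alpha,\beta^*}=+\infty$, in which case $\lambda_n\mu_n^2\to 0$ while $F_{\beta_n}(u_n)\to+\infty$. Your justification that $\lambda_n\mu_n^2\,F_{\beta_n}(u_n)$ is bounded (``$\lambda_n\mu_n^2\to 1/(S_{\alpha,\beta^*}-|\Omega|)$ and $F_{\beta_n}(u_n)\to S_{\alpha,\beta^*}$'') is then a $0\cdot\infty$ indeterminate form, and knowing that the two sequences have equal limits of reciprocals does not control their product. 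The same issue reappears when you subtract $\int_{\Omega\setminus B_\delta}e^{\beta_nu_n^2}$ and $\int_{B_{Rr_n}}e^{\beta_nu_n^2}$ from $F_{\beta_n}(u_n)$: both $F_{\beta_n}(u_n)$ and $\int_{B_{Rr_n}}e^{\beta_nu_n^2}$ diverge if $S_{\alpha,\beta^*}=+\infty$, so the claimed limit $|\Omega\cap B_\delta(x_0)|+O(R^{-2m})$ for the annular integral is an $\infty-\infty$ subtraction. Note that the paper's proof is written so as to be insensitive to this: the bound $I_n^1\le A\cdot O(R^{-2m})$ comes directly from $\int_\Omega\lambda_n u_n^2e^{\beta_nu_n^2}\,dx=1$, and $I_n^2\to 0$ uses only $\lambda_n\mu_n\to 0$, neither of which needs $S_{\alpha,\beta^*}$ finite.

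The gap is fillable without changing your strategy, but you must go below the level of the \emph{statement} of Lemma \ref{lemma crucial}. Specifically, work at the level of $\lambda_n\mu_n^2\int e^{\beta_nu_n^2}$ rather than $\int e^{\beta_nu_n^2}$: from \eqref{inside1} you have $\lambda_n\mu_n^2\int_{B_{\rho_n^A}(x_n)}e^{\beta_nu_n^2}\,dx\le A^2$, and from \eqref{outside}, $\lambda_n\mu_n^2\int_{\Omega\setminus B_{\rho_n^A}(x_n)}e^{\beta_nu_n^2}\,dx\to c|\Omega|$ with $c:=\lim_n\lambda_n\mu_n^2\in[0,\infty)$; hence $\lambda_n\mu_n^2\,F_{\beta_n}(u_n)$ is bounded, validating your global Cauchy--Schwarz estimate. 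For the annulus, subtract instead the rescaled quantities: $\lambda_n\mu_n^2\int_{A_n}e^{\beta_nu_n^2}=\lambda_n\mu_n^2F_{\beta_n}(u_n)-\lambda_n\mu_n^2\int_{\Omega\setminus B_\delta}e^{\beta_nu_n^2}-\lambda_n\mu_n^2\int_{B_{Rr_n}}e^{\beta_nu_n^2}$, and use Remark \ref{rem integrals}(4) for the last term; every term is finite, the first is bounded as above, and the conclusion that $\lambda_n\mu_n^2\int_{A_n}e^{\beta_nu_n^2}$ is small for $R$ large, $\delta$ small follows. Once this repair is made, the argument is correct and is a genuine alternative to the paper's proof.
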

\begin{proof}
By Remark \ref{rem integrals},  it is sufficient to show that 
$$
\lim_{R\to 0}\limsup_{n\to +\infty} \lambda_n \int_{\Omega \setminus B_{r_n R}(x_n)} \mu_n |u_n| e^{\beta_n u_n^2} dx =0.
$$
Let us denote $f_n = \lambda_n \mu_n u_n e^{\beta_n u_n^2}$. Fix  $A>1$ and let $\rho_n^A$ and $u_n^A$ be as in Lemma \ref{def rhon} and \eqref{unA}. Then, for any $R>0$ and $n$ sufficiently large, we have 
$$
 \int_{\Omega \setminus B_{r_n R}(x_n)} | f_n(x)| dx =  \int_{B_{\rho_n^A}(x_n) \setminus B_{r_n R}(x_n)}|f_n(x)| dx + \int_{\Omega \setminus B_{\rho_n^A}(x_n)}  |f_n(x)| dx =: I_n^1+I_n^2. 
$$
By Lemma \ref{def rhon}, \eqref{lambdan}, and Remark \ref{rem integrals}, we obtain
\[\begin{split}
I_n^1 \le A \int_{B_{\rho_n^A}(x_n) \setminus B_{r_n R}(x_n)} \lambda_n u_n^2 e^{\beta_n u_n^2}  dx &\le A \int_{\Omega \setminus B_{r_n R}(x_n)} \lambda_n u_n^2 e^{\beta_n u_n^2} dx  \\ 
& = A \bra{1 - \int_{ B_{r_n R}(x_n)} \lambda_n u_n^2 e^{\beta_n u_n^2} dx} \\ 
&=  A \,O(R^{-2m}). 
\end{split}
\]
Therefore,
\begin{equation}\label{In1}
\dis{\limsup_{n\to +\infty} I_n^1 \le A\, O(R^{-2m})}.
\end{equation}  
For the second integral, we observe that Proposition \ref{trunc} and Adams' inequality imply that $e^{\beta_n (u_n^A)^2}$ is bounded in $L^p(\Omega)$, for any  $1<p<A$. In particular, applying H\"older's inequality and Lemma \ref{lemma crucial}, we get
\begin{equation}\label{In2}
\begin{split}
I_n^2 \le \int_{\Omega \setminus B_{\rho_n^A}(x_n)}  | f_n(x)| dx  &\le \lambda_n \mu_n \|e^{\beta_n (u_n^A)^2}\|_{L^p(\Omega)}\|u_n\|_{L^{\frac{p}{p-1}}(\Omega)} \\ & \le C \lambda_n \mu_n \|u_n\|_{L^{\frac{p}{p-1}}(\Omega)} \to 0,
\end{split}
\end{equation}
as $n\to +\infty$. Since $R$ is arbitrary, the conclusion follows from \eqref{In1} and \eqref{In2}. 
\end{proof}

\subsection{Convergence to  Green's fuction}
In this subsection, we will study the behavior of the sequence $\mu_n u_n$ according to the position of the blow-up point $x_0$.  First, we will  show that, if $x_0 \in \Omega,$ we have $\mu_n u_n \to G_{\alpha,x_0}$ locally uniformly in $\ov{\Omega} \setminus \{x_0\}$, where $G_{\alpha,x_0}$ is the Green's function for $\lm  -\alpha$, defined as in \eqref{green}.

\begin{lemma}\label{bound green}
The sequence $\mu_n u_n$ is bounded  in $W^{m,p}_0(\Omega)$,  for any $p \in [1,2)$.  
\end{lemma}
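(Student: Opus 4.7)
The plan is to rewrite the equation for $u_n$ in terms of $w_n := \mu_n u_n$. Multiplying equation \eqref{star} by $\mu_n$, one obtains
$$(\lm - \alpha) w_n = f_n \quad\text{in }\Omega, \qquad w_n = \partial_\nu w_n = \cdots = \partial_\nu^{m-1} w_n = 0 \text{ on } \partial\Omega,$$
where $f_n := \lambda_n \mu_n u_n e^{\beta_n u_n^2}$. By Lemma \ref{lemma mun}, $f_n$ is bounded in $L^1(\Omega)$, so the statement reduces to an $L^1$-regularity result for the operator $\lm - \alpha$ under Dirichlet-type boundary conditions.

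I would then represent $w_n$ via the Green's function $G_{\alpha,y}$ of $\lm - \alpha$ introduced in \eqref{green}. Since $w_n \in C^\infty(\ov\Omega)$ by Lemma \ref{primo}, the representation
$$w_n(x) = \int_\Omega G_{\alpha,y}(x) f_n(y)\, dy$$
is legitimate, and for $1 \le l \le 2m-1$, differentiation under the integral sign together with the pointwise bound $|\nabla_x^l G_{\alpha,y}(x)| \le C|x-y|^{-l}$ from Proposition \ref{prop green} yields
$$|\nabla^l w_n(x)| \le C \int_\Omega \frac{|f_n(y)|}{|x-y|^l}\, dy.$$
Since $y \mapsto \||x-y|^{-l}\|_{L^p(\Omega,dx)}$ is bounded uniformly on $\Omega$ whenever $pl < 2m$, Minkowski's integral inequality then produces
$$\|\nabla^l w_n\|_{L^p(\Omega)} \le C \|f_n\|_{L^1(\Omega)},$$
for every $1 \le l \le 2m-1$ and $p \in [1, 2m/l)$. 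Likewise, $\|w_n\|_{L^p(\Omega)}$ is bounded for every $p \in [1,+\infty)$, using the logarithmic estimate $|G_{\alpha,y}(x)| \le C|\log|x-y||$. Specializing to $l = m$ gives $\|\nabla^m w_n\|_{L^p(\Omega)} \le C$ for every $p < 2$, which together with the bounds on the lower order derivatives provides the desired control of $\|w_n\|_{W^{m,p}_0(\Omega)}$.

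No serious obstacle is expected: the hard analysis was carried out in Lemma \ref{lemma mun}, and the remaining argument is a direct invocation of $L^1$-elliptic theory via the explicit pointwise bounds on the Green's function recalled in Proposition \ref{prop green}. The only mild technicality is the justification of differentiation under the integral sign, which follows from the local integrability of $\nabla_x^l G_{\alpha,y}$ for $l \le 2m-1$ and the smoothness of $w_n$ provided by Lemma \ref{primo}. Alternatively, one can first deduce boundedness of $w_n$ in every $L^q(\Omega)$ from the logarithmic kernel bound alone, conclude that $\lm w_n = f_n + \alpha w_n$ is bounded in $L^1(\Omega)$, and then apply the standard $L^1$-elliptic regularity result for $\lm$ recalled as Proposition \ref{ell L1} in the appendix.
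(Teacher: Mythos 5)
Your argument is correct, but it takes a genuinely different route from the paper's. You absorb the lower-order term $\alpha u_n$ into the operator and work directly with the Green's function of $\lm-\alpha$: you represent $\mu_n u_n(x) = \int_\Omega G_{\alpha,y}(x)\, f_n(y)\,dy$, apply the pointwise kernel bounds $|\nabla^l_x G_{\alpha,y}(x)|\le C|x-y|^{-l}$ from Proposition~\ref{prop green}, and close with Minkowski's integral inequality (which requires the stated uniformity of the constant $C$ in $y$, and the symmetry $G_{\alpha,y}(x)=G_{\alpha,x}(y)$ to justify the representation). The paper instead treats the $\alpha$-term perturbatively: it writes $\mu_n u_n = v_n + w_n$ where $\lm v_n = f_n$ with Dirichlet data, bounds $v_n$ in $W^{m,p}_0$ for $p<2$ by the $L^1$-elliptic estimate of Proposition~\ref{ell L1} (which is the $\alpha=0$ case of what you use), and then observes that $w_n$ solves $\lm w_n = \alpha w_n + \alpha v_n$, so a simple energy estimate tested against $w_n$, using $\alpha<\lambda_1(\Omega)$ and Poincar\'e, bounds $w_n$ in $H^m_0(\Omega)$ outright. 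The trade-off is this: your argument is shorter and conceptually cleaner, but it leans on the uniform ($\alpha$-dependent) Green's function bounds of Proposition~\ref{prop green}, whose proof the paper delegates to the literature with ``minor modifications''; the paper's splitting needs only the $\alpha=0$ kernel (packaged as Proposition~\ref{ell L1}) together with an elementary coercivity argument, so it keeps the Green's-function machinery confined to the classical polyharmonic case. Your closing alternative (bound $\mu_n u_n$ in $L^q$, move $\alpha\mu_n u_n$ to the right-hand side, invoke Proposition~\ref{ell L1}) is another valid variant sitting between the two.
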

\begin{proof}
Let $v_n$ be the unique solution to 
\[
\begin{Si}{cc}
\lm  v_n = \lambda_n \mu_n u_n e^{\beta_n u_n^2}=:f_n & \mbox{ in }\Omega,\\
v_n = \partial_\nu v_n = \ldots = \partial^{m-1}_{\nu} v_n =0  & \mbox{ on }\partial \Omega.
\end{Si}
\]
By Lemma  \ref{lemma mun}, we know that $f_n$ is bounded in $L^1(\Omega)$.  
By Proposition \ref{ell L1}, we can conclude that $v_n$ is bounded in $W^{m,p}_0(\Omega)$ for any $1\le p<2$. Define now $w_n = \mu_n u_n -v_n$. Then $w_n$ solves
\[
\begin{Si}{cc}
\lm w_n = \alpha w_n + \alpha v_n &  \mbox{ in }\Omega,\\
w_n = \partial_\nu w_n =\ldots= \partial^{m-1} w_n =0 & \mbox{ on }\partial\Omega.
\end{Si} 
\]
If we test the equation against $w_n$,  using Poincare's and Sobolev's inequalities, we find that 
\[\begin{split}
\|w_n\|_{H^m_0(\Omega)}^2 = \alpha \|w_n\|_{L^2(\Omega)}^2 + \alpha\int_{\Omega} w_n v_n dx & \le \alpha \|w_n\|_{L^2(\Omega)}^2 + \alpha\|w_n\|_{L^2(\Omega)}\|v_n\|_{L^2(\Omega)}\\
&  \le \frac{\alpha}{\lambda_1(\Omega)}\|w_n\|_{H^m_0(\Omega)}^2 +\frac{\alpha}{\sqrt{\lambda_1(\Omega)}} \|w_n\|_{H^m_0(\Omega)} \|v_n\|_{L^2(\Omega)} \\
 & \le \frac{\alpha}{\lambda_1(\Omega)}\|w_n\|_{H^m_0(\Omega)}^2 +C \|w_n\|_{H^m_0(\Omega)}.
\end{split}
\]
Then,
$$
\|w_n\|_{H^m_0(\Omega)} \bra{1-\frac{\alpha}{\lambda_1(\Omega)}}\le C, 
$$
which implies that $w_n$ is bounded $H^{m}_0(\Omega)$. This yields the conclusion. 
\end{proof}

\begin{lemma}\label{conv Green}
Let $x_0$ be as in \eqref{mun and xn2}. If $x_0 \in \Omega$, then we have:
\begin{enumerate}
\item $\mu_n u_n \rw G_{\alpha,x_0}$  in $W^{m,p}_0(\Omega)$ for any $1< p<2$;
\item $ \mu_n u_n \to G_{\alpha,x_0}$ in $\dis{C^{2m-1,\gamma}_{loc}(\ov{\Omega}\setminus\{x_0\})}$. 
\end{enumerate}
\end{lemma}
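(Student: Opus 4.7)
\medskip
\noindent\textbf{Proof plan for Lemma \ref{conv Green}.}

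The plan is to first establish the weak $W^{m,p}_0$ convergence in (1), then identify the limit by passing to the limit in the Euler-Lagrange equation satisfied by $\mu_n u_n$, and finally upgrade to local $C^{2m-1,\gamma}$ convergence away from $x_0$ by standard bootstrap arguments.

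For part (1), by Lemma \ref{bound green} the sequence $\mu_n u_n$ is bounded in $W^{m,p}_0(\Omega)$ for every $1<p<2$, so up to a subsequence we can find $G\in \bigcap_{1<p<2} W^{m,p}_0(\Omega)$ such that $\mu_n u_n \rw G$ in $W^{m,p}_0(\Omega)$. By Rellich--Kondrachov, letting $p\nearrow 2$, we also obtain $\mu_n u_n \to G$ strongly in $L^q(\Omega)$ for every $q<+\infty$. I will then pass to the limit in the distributional formulation of \eqref{star} multiplied by $\mu_n$: for every $\varphi\in C^\infty_c(\Omega)$,
$$
\int_\Omega (\mu_n u_n)\, \lm \varphi \, dx = \alpha \int_\Omega (\mu_n u_n) \varphi \, dx + \int_\Omega \lambda_n \mu_n u_n e^{\beta_n u_n^2}\varphi \, dx.
$$
The left-hand side converges to $\int_\Omega G\, \lm \varphi\,dx$ by the weak convergence, the first term on the right converges to $\alpha \int_\Omega G \varphi\, dx$ by the strong $L^q$ convergence, and the second term converges to $\varphi(x_0)$ thanks to Lemma \ref{lemma mun}, which says that $\lambda_n \mu_n u_n e^{\beta_n u_n^2}\rw \delta_{x_0}$. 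Hence $G$ is a distributional solution of $\lm G-\alpha G = \delta_{x_0}$ on $\Omega$, and its vanishing trace and normal derivatives (coming from $G\in W^{m,p}_0$) identify it uniquely with $G_{\alpha,x_0}$ via the characterization \eqref{green}. Uniqueness of the limit implies that the full sequence converges, not just a subsequence.

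For part (2), fix $\delta>0$ small enough so that $B_{2\delta}(x_0)\subset\subset \Omega$ (the argument for points near $\partial \Omega$ will use the zero boundary conditions on $\partial \Omega$ together with Proposition \ref{ell new} in the Appendix). On $\Omega \setminus B_\delta(x_0)$ we have that $\mu_n u_n$ is bounded in $L^q$ for all $q<+\infty$ from part (1), and by Lemma \ref{conv0C} item 2, $e^{\beta_n u_n^2}$ is bounded in $L^s(\Omega\setminus B_\delta(x_0))$ for every $s\ge 1$. Since $\lambda_n$ is bounded by Lemma \ref{primo}, H\"older's inequality gives that $\lambda_n \mu_n u_n e^{\beta_n u_n^2}$ is bounded in $L^s(\Omega\setminus B_\delta(x_0))$ for every $s\ge 1$. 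From the equation
$$
\lm(\mu_n u_n) = \alpha (\mu_n u_n) + \lambda_n \mu_n u_n e^{\beta_n u_n^2}\qquad \text{in }\Omega,
$$
with zero Dirichlet-type data on $\partial \Omega$, we deduce that $\lm(\mu_n u_n)$ is bounded in $L^s(\Omega\setminus B_\delta(x_0))$ for every $s\ge 1$.

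Applying the elliptic estimates of Proposition \ref{ell new} (adapted to the region $\Omega\setminus B_{\delta/2}(x_0)$, exploiting the global Dirichlet boundary data on $\partial\Omega$), we conclude that $\mu_n u_n$ is bounded in $W^{2m,s}(\Omega\setminus B_{2\delta}(x_0))$ for every $s$, and hence, by the Sobolev embedding, in $C^{2m-1,\gamma}(\Omega\setminus B_{2\delta}(x_0))$ for every $\gamma\in(0,1)$. A standard Arzel\`a--Ascoli argument, combined with the uniqueness of the limit provided by part (1), yields $\mu_n u_n \to G_{\alpha,x_0}$ in $C^{2m-1,\gamma}_{\loc}(\ov\Omega\setminus\{x_0\})$. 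The only mildly delicate point, and the main technical obstacle, is the distributional passage to the limit in part (1): it relies crucially on Lemma \ref{lemma mun} to turn the concentrating nonlinear term into the Dirac mass that characterizes the Green's function.
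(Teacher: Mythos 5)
Your proof is correct and takes essentially the same route as the paper for part (1): extract a weak $W^{m,p}_0$ limit from Lemma \ref{bound green}, pass to the limit in the distributional equation using Lemma \ref{lemma mun} for the nonlinear term and the compact embedding into $L^q$ for the linear term $\alpha \mu_n u_n$, and identify the limit with $G_{\alpha,x_0}$ by uniqueness of \eqref{green}. For part (2) your argument is also correct but slightly different in flavour: you bound $\lambda_n\mu_n u_n e^{\beta_n u_n^2}$ in $L^s(\Omega\setminus B_\delta(x_0))$ by H\"older, using that $\mu_n u_n$ is bounded in every $L^q(\Omega)$ (from part (1) plus Sobolev) and that $e^{\beta_n u_n^2}$ is bounded in every $L^s$ away from $x_0$ (Lemma \ref{conv0C}), with $\lambda_n$ merely bounded. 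The paper instead factors the same term as $(\lambda_n\mu_n)\cdot u_n\cdot e^{\beta_n u_n^2}$ and relies on Lemma \ref{lemma crucial} ($\lambda_n\mu_n\to 0$) together with the $C^{2m-1,\gamma}$-decay of $u_n$ far from $x_0$ (Lemma \ref{conv0C}). Both reach the same $L^s$ bound on $\lm(\mu_n u_n)$ away from $x_0$ and then apply Proposition \ref{ell new}; your version has the small advantage of not invoking Lemma \ref{lemma crucial} at this stage, while the paper's version avoids the H\"older step. Your closing observation that uniqueness of the limit upgrades subsequential to full-sequence convergence is a tidy and harmless addition.
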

\begin{proof}
Fix $1<p<2$. By Lemma \ref{bound green}, we can find $\tilde u\in W^{m,p}_0(\Omega)$ such that  $\mu_n u_n \rw \tilde{u}$ in $W^{m,p}_0(\Omega)$. Let $\ph$ be any test function in  $ C^\infty_c(\Omega)$. Applying Lemma \ref{lemma mun} and the compactness of the embedding of $W^{m,p}_0(\Omega)$ into $L^1(\Omega)$, we obtain 
\[\begin{split}
\int_{\Omega} (\mu_n\lambda_n u_n e^{\beta_n u_n^2} +\alpha \mu_n u_n ) \ph dx &=\ph(x_0) +\alpha \int_{\Omega} \tilde{u} \ph \,  dx + o(1).
\end{split}
\]
Hence necessarily $\tilde{u}=G_{\alpha,x_0}$. To conclude the proof, it remains to show  that $\mu_n u_n\to G_{\alpha,x_0}$ in $C^{2m-1,\gamma}_{loc}(\ov \Omega \setminus \{x_0\})$.  By elliptic estimates (Proposition \ref{ell new}), it is sufficient to show that $\lm (\mu_n u_n )$ is bounded in $L^s(\Omega\setminus B_\delta(x_0))$, for any $s>1$, $\delta>0$. This follows from Lemma \ref{conv0C} and Lemma \ref{lemma crucial}. 
\end{proof}

Lemma \ref{conv Green} describes the behaviour of $\mu_n u_n$ when $x_0\in \Omega$. The following Lemma deals with the case $x_0\in \partial \Omega$. In fact, we will prove in the next subsection that blow-up at the boundary is not possible. 

\begin{lemma}\label{conv 0 boun}
If $x_0 \in \partial \Omega$, we have:
\begin{enumerate}
\item $\mu_n u_n \rw 0$  in $W^{m,p}_0(\Omega)$ for any $1<p<2$. 
\item  $ \mu_n u_n \to 0$ in $\dis{C^{2m-1,\gamma}_{loc}(\ov{\Omega}\setminus\{x_0\})}$, for any $\gamma \in (0,1)$. 
\end{enumerate}
\end{lemma}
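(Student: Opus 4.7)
The plan is to mirror the proof of Lemma \ref{conv Green}, taking advantage of the fact that the Dirac mass at $x_0$ now sits on $\partial\Omega$ and therefore annihilates every test function in $C^\infty_c(\Omega)$.

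\textbf{Step 1 (Extraction of a weak limit).} By Lemma \ref{bound green}, for any $1<p<2$ the sequence $\mu_n u_n$ is bounded in $W^{m,p}_0(\Omega)$. Passing to a subsequence, we find $\tilde u\in W^{m,p}_0(\Omega)$ with $\mu_n u_n\rightharpoonup \tilde u$ in $W^{m,p}_0(\Omega)$, and by Rellich-Kondrachov also $\mu_n u_n\to \tilde u$ strongly in $L^q(\Omega)$ for every $q<\frac{2mp}{2m-mp}$, in particular for $q$ sufficiently large.

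\textbf{Step 2 (Identification of $\tilde u$).} For any $\varphi\in C^\infty_c(\Omega)$, testing equation \eqref{star} multiplied by $\mu_n$ gives
$$
\int_\Omega \mu_n u_n\,\lm\varphi\,dx=\int_\Omega \lambda_n\mu_n u_n e^{\beta_n u_n^2}\varphi\,dx+\alpha\int_\Omega \mu_n u_n\,\varphi\,dx.
$$
Lemma \ref{lemma mun} lets us pass to the limit on the right-hand side; crucially, since $x_0\in\partial\Omega$ and $\varphi\in C^\infty_c(\Omega)$, one has $\varphi(x_0)=0$, so the Dirac term vanishes. Combined with the strong $L^q$ convergence of Step 1 we obtain
$$
\int_\Omega \tilde u\,\lm\varphi\,dx=\alpha\int_\Omega \tilde u\,\varphi\,dx,\qquad\forall\,\varphi\in C^\infty_c(\Omega),
$$
so $\tilde u$ solves $\lm\tilde u=\alpha\tilde u$ in $\mathcal D'(\Omega)$, with zero Dirichlet traces from its membership in $W^{m,p}_0(\Omega)$.

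\textbf{Step 3 (Uniqueness forces $\tilde u\equiv 0$).} Bootstrapping through Proposition \ref{ell zero} starting from $\alpha\tilde u\in L^p(\Omega)$, Sobolev embeddings, and the smoothness of $\partial\Omega$, we lift $\tilde u$ to $C^\infty(\overline\Omega)\cap H^m_0(\Omega)$. Testing the equation $\lm\tilde u=\alpha\tilde u$ against $\tilde u$ and using $\alpha<\lambda_1(\Omega)$ yields
$$
\|\tilde u\|_{H^m_0(\Omega)}^2=\alpha\|\tilde u\|_{L^2(\Omega)}^2\le \frac{\alpha}{\lambda_1(\Omega)}\|\tilde u\|_{H^m_0(\Omega)}^2,
$$
forcing $\tilde u\equiv 0$. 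Since every subsequence has a further subsequence converging weakly to $0$, the full sequence satisfies $\mu_n u_n\rightharpoonup 0$ in $W^{m,p}_0(\Omega)$, proving (1).

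\textbf{Step 4 (Local $C^{2m-1,\gamma}$ convergence away from $x_0$).} Fix a compact set $K\subset\overline\Omega\setminus\{x_0\}$ and choose $\delta>0$ so small that $K\cap B_{2\delta}(x_0)=\emptyset$. By Lemma \ref{conv0C}(2), $e^{\beta_n u_n^2}$ is bounded in $L^s(\Omega\setminus B_\delta(x_0))$ for every $s\ge 1$; by Lemma \ref{primo}(5), $\lambda_n$ is bounded; and by Step 1, $\mu_n u_n\to 0$ in $L^q(\Omega)$ for large $q$. Hölder's inequality then shows that
$$
\lm(\mu_n u_n)=\lambda_n\mu_n u_n e^{\beta_n u_n^2}+\alpha\mu_n u_n\longrightarrow 0\qquad\text{in }L^r(\Omega\setminus B_\delta(x_0))
$$
for some $r>1$. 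Applying the up-to-the-boundary elliptic regularity of Proposition \ref{ell new} on $\Omega\setminus B_{2\delta}(x_0)$ (noting that $\partial\Omega\cap(\Omega\setminus B_{2\delta}(x_0))$ stays smooth and that the Dirichlet boundary values of $\mu_n u_n$ vanish identically there), together with Sobolev embedding, yields $\mu_n u_n\to 0$ in $C^{2m-1,\gamma}(K)$, which is (2).

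\textbf{Main obstacle.} The only delicate step is Step 3: while the analogous argument in Lemma \ref{conv Green} relies on a Dirac mass inside $\Omega$ to identify the limit as a Green's function, here the vanishing of the Dirac term must be exploited to show $\tilde u=0$, and this requires a regularity bootstrap to upgrade $\tilde u\in W^{m,p}_0(\Omega)$ (with $p<2$) to $H^m_0(\Omega)$ before Poincaré's inequality combined with $\alpha<\lambda_1(\Omega)$ can be invoked.
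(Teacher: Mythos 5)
Your proof is correct and takes essentially the same approach as the paper: extract a weak limit $\tilde u$ in $W^{m,p}_0(\Omega)$, use Lemma \ref{lemma mun} together with the vanishing of test functions at $x_0\in\partial\Omega$ to identify $\tilde u$ as a weak solution of $\lm\tilde u=\alpha\tilde u$, bootstrap via Proposition \ref{ell zero} to lift $\tilde u$ into $H^m_0(\Omega)$, and conclude $\tilde u\equiv 0$ from $\alpha<\lambda_1(\Omega)$. The only cosmetic difference is ordering: the paper establishes the $C^{2m-1,\gamma}_{\loc}(\ov\Omega\setminus\{x_0\})$ convergence to $\tilde u$ first (by the same elliptic argument as in Lemma \ref{conv Green}) and then shows $\tilde u\equiv 0$, whereas you deduce it afterwards, but both orderings are valid.
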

\begin{proof}
As before, using Lemma \ref{bound green} and Lemma \ref{lemma mun}, we can find $\tilde u \in W^{m,p}_0(\Omega)$, $p\in (1,2)$, such that $\mu_nu_n\rw \tilde u$ in $W^{m,p}_0(\Omega)$ for any $p\in (1,2)$ and $\mu_n u_n \to \tilde u $ in $C^{2m-1,\gamma}_{loc}(\ov \Omega\setminus \{x_0\})$, for any $\gamma \in (0,1)$. Moreover, as $n\to +\infty$, we have
\[\begin{split}
\int_{\Omega} (\mu_n\lambda_n u_n e^{\beta_n u_n^2} +\alpha \mu_n u_n ) \ph dx &= \alpha \int_{\Omega} \tilde{u} \ph \,  dx + o(1),
\end{split}
\]
Then, $\tilde u$ is a weak solution of $\lm \tilde u = \alpha \tilde u$ in $\Omega$. Since $\tilde u\in W^{m,p}_0(\Omega)$, elliptic regularity (Proposition \ref{ell zero}) implies $\tilde u\in {W^{3m,p}(\Omega)}$, for any $p\in (1,2)$. In particular, we have $\tilde u\in H^m_0(\Omega)$, and 
$$
\|\tilde u\|_{H^m_0(\Omega)}^2 = \alpha \|\tilde  u\|_{L^2(\Omega)}^2. 
$$
Since $0\le \alpha <\lambda_1(\Omega)$, we must have $\tilde u \equiv 0$. 
\end{proof}

\subsection{The Pohozaev identity and blow-up at the boundary}
In this subsection, we prove that the blow-up point $x_0$ cannot lie on $\partial \Omega$. The proof is based on the following Pohozaev-type identity.

\begin{lemma}\label{Pohozaev}
Let $\Omega\subseteq \R^{2m}$ be a bounded open set with Lipschitz boundary. If $u\in C^{2m}(\ov{\Omega})$ is a solution of 
\begin{equation}\label{eq Poho}
\lm u = h(u),
\end{equation}
with $h:\R \ra \R$ continuous, then for any $y\in \R^{2m}$ the following identity holds:
$$
\frac{1}{2}\int_{\partial\Omega} |\Delta^\frac{m}{2} u|^2 (x-y)\cdot \nu \,d\sigma(x) +\int_{\partial\Omega} f(x)d\sigma(x) = \int_{\partial \Omega} H(u(x)) (x-y) \cdot \nu d\sigma(x) - 2m \int_{\Omega} H(u(x))dx, 
$$
where $H(t):= \int_0^t h(s)ds$ and 
$$
f(x):= \sum_{j=0}^{m-1} (-1)^{m+j} \nu\cdot \bra{\Delta^{\frac{j}{2}} ((x-y)\cdot \nabla u) \Delta^{\frac{2m-j-1}{2}}u}.
$$
\end{lemma}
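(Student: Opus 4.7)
By translating, I may assume without loss of generality that $y=0$; the general case follows by applying the identity to $\tilde u(x):= u(x+y)$ on $\tilde \Omega := \Omega -y$. With $y=0$, the strategy is the standard one: multiply the equation $\lm u = h(u)$ by the Pohozaev multiplier
$$
X(x):= x\cdot \nabla u(x),
$$
integrate over $\Omega$, and carefully perform integration by parts on the two sides.

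The right-hand side is easy: since $X\cdot h(u)= x\cdot \nabla H(u)$, the divergence theorem gives
$$
\int_\Omega X\, h(u)\, dx = \int_{\partial \Omega} H(u)\, x\cdot \nu\, d\sigma  - 2m \int_{\Omega} H(u)\, dx,
$$
which accounts for the right-hand side of the claimed identity. For the left-hand side, I apply Proposition \ref{parts} with the roles of $u$ and $v$ swapped, taking $v=X$, to obtain
$$
\int_\Omega X\, \lm u\, dx = \int_\Omega \Delta^{\frac{m}{2}}u \cdot \Delta^{\frac{m}{2}} X\, dx + \sum_{j=0}^{m-1}\int_{\partial \Omega}(-1)^{m+j}\nu\cdot \Delta^{\frac{j}{2}} X\, \Delta^{\frac{2m-j-1}{2}}u\, d\sigma,
$$
and the boundary sum is exactly $\int_{\partial \Omega} f\, d\sigma$ (with $y=0$). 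A technical remark: $X\in C^{2m-1}(\ov\Omega)$ only, so to apply Proposition \ref{parts} rigorously one either works one integration by parts at a time, or approximates $u$ in the $C^{2m}$-norm by smoother functions; since every quantity in the final identity depends only on derivatives of $u$ up to order $2m$, the identity is stable under such an approximation.

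The core of the proof is therefore to evaluate $\int_\Omega \Delta^{\frac{m}{2}}u \cdot \Delta^{\frac{m}{2}} X\, dx$, and the key ingredient is the commutator identity
$$
\Delta^k (x\cdot \nabla u) = x\cdot \nabla (\Delta^k u) + 2k\, \Delta^k u, \qquad k\ge 0,
$$
which follows by induction from the base case $\Delta(x\cdot \nabla u) = x\cdot \nabla \Delta u + 2\Delta u$, a direct computation using $\partial_j(x_i\partial_i u) = \delta_{ij}\partial_i u + x_i \partial_j\partial_i u$. If $m=2n$ is even, then $\Delta^{\frac{m}{2}}X = x\cdot \nabla \Delta^n u + 2n\, \Delta^n u$, and writing $w:= \Delta^n u$ one finds
$$
\int_\Omega w\,\Delta^{\frac{m}{2}}X\, dx = \int_\Omega w\, x\cdot \nabla w\, dx + 2n\int_\Omega w^2\, dx = \tfrac{1}{2}\int_{\partial \Omega} w^2\, x\cdot \nu\, d\sigma + (2n-m)\int_\Omega w^2\, dx,
$$
and the second term vanishes since $2n-m=0$. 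If $m=2n+1$ is odd, then $\Delta^{\frac{m}{2}}X = \nabla \Delta^n X$, and the analogous identity $\nabla \Delta^n X = (x\cdot \nabla)\nabla \Delta^n u + (2n+1)\nabla \Delta^n u$ combined with $\vec W \cdot (x\cdot \nabla)\vec W = \tfrac{1}{2} x\cdot \nabla |\vec W|^2$ (with $\vec W := \nabla \Delta^n u$) produces the same cancellation $m-m=0$ after one application of the divergence theorem. In both cases I obtain
$$
\int_\Omega \Delta^{\frac{m}{2}}u \cdot \Delta^{\frac{m}{2}} X\, dx = \tfrac{1}{2}\int_{\partial\Omega}|\Delta^{\frac{m}{2}}u|^2\, x\cdot \nu\, d\sigma.
$$
Assembling the two sides of the identity and translating back to arbitrary $y$ yields the claim. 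The main obstacle is the bookkeeping in the final step, particularly the unified treatment of the odd case (where $\Delta^{\frac{m}{2}}$ is vector-valued) and the verification that the would-be bulk terms $(2n-m)\int w^2$ or $m\int|\vec W|^2$ cancel exactly against those produced by the divergence theorem — this is precisely the conformal invariance that forces the dimensional factor to disappear.
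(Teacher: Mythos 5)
Your argument is correct and follows the same overall strategy as the paper: multiply $\lm u = h(u)$ by the Pohozaev multiplier $(x-y)\cdot\nabla u$, integrate, handle the right-hand side by the divergence theorem, and handle the left-hand side via Proposition~\ref{parts}. The one place where you deviate is that the paper simply cites Lemma~14 of \cite{MarPet} for the pointwise identity $\Delta^{\frac{m}{2}}\bra{(x-y)\cdot\nabla u}\cdot\Delta^{\frac{m}{2}}u = \tfrac{1}{2}\dv\bra{(x-y)|\Delta^{\frac{m}{2}}u|^2}$ and then applies the divergence theorem once more, whereas you reprove this ingredient from scratch via the commutator formula $\Delta^k(x\cdot\nabla u)=x\cdot\nabla\Delta^k u + 2k\Delta^k u$ and its gradient counterpart, arriving at the same boundary term $\tfrac12\int_{\partial\Omega}|\Delta^{m/2}u|^2\,x\cdot\nu\,d\sigma$ after one integration by parts; the two routes are mathematically equivalent and your version is merely more self-contained.
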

\begin{proof}
We multiply  equation \eqref{eq Poho} for $(x-y)\cdot\nabla u$ and integrate on $\Omega$ to obtain
\begin{equation}\label{first step Poho}
\int_{\Omega} (x-y)\cdot\nabla u \, \lm u \,dx = \int_{\Omega} (x-y)\cdot\nabla u \, h(u)dx. 
\end{equation}
On the one hand, using the divergence Theorem, we can rewrite the RHS of \eqref{first step Poho} as 
\[
\begin{split}
\int_{\Omega} (x-y)\cdot\nabla u \, h(u)dx & = \int_{\Omega} (x-y)\cdot\nabla H(u)dx  \\ 
& = \int_{\Omega} \dv \bra{ (x-y) H(u)}dx - 2m \int_{\Omega} H(u) dx  \\
& = \int_{\partial\Omega} H(u) (x-y)\cdot \nu \, d\sigma (x) - 2m \int_{\Omega} H(u) dx  . 
\end{split}
\]
On the other hand, we can integrate by parts the LHS  of \eqref{first step Poho} to find 
$$
\int_{\Omega} (x-y)\cdot\nabla u \, \lm u \, dx = \int_{\Omega} \Delta^\frac{m}{2} \bra{ (x-y) \cdot \nabla u } \Delta^\frac{m}{2} u \, dx + \int_{\partial\Omega} f d\sigma.  
$$
As proved in  Lemma 14 of \cite{MarPet}, we have the identity  
$$
 \Delta^\frac{m}{2} \bra{ (x-y) \cdot \nabla u } \cdot \Delta^\frac{m}{2} u  = \frac{1}{2} \dv \bra{ (x-y)|\Delta^\frac{m}{2} u|^2}.
$$
Hence, the divergence theorem yields 
$$
\int_{\Omega} (x-y)\cdot\nabla u \, \lm u \, dx = \frac{1}{2}\int_{\partial\Omega}  (x-y)\cdot \nu \, |\Delta^\frac{m}{2} u|^2 d\sigma(x) + \int_{\partial\Omega} f d\sigma.  
$$
\end{proof}

We now apply Lemma \ref{Pohozaev} to $u_n$ in a neighborhood of $x_0$, and we use Lemma \ref{conv 0 boun} to prove that $x_0$ must be in $\Omega$.  A smart choice of the point $y$ is crucial to control the boundary terms in the identity. This strategy was first introduced in \cite{RobWei} and was applied in \cite{MarPet} to Liouville equations in dimension $2m$. 

\begin{lemma}\label{no boundary}
Let $x_0$ be as in \eqref{mun and xn2}. Then $x_0 \in \Omega$.
\end{lemma}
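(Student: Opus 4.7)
The plan is to argue by contradiction via Pohozaev. Suppose $x_0\in\partial\Omega$. Lemma \ref{conv 0 boun} then gives $\mu_n u_n\to 0$ in $C^{2m-1,\gamma}_{loc}(\ov{\Omega}\setminus\{x_0\})$, and by compactness of the Sobolev embedding $W^{m,p}_0(\Omega)\subset\subset L^2(\Omega)$ (valid for any $p>1$ in dimension $2m$) also the strong convergence $\mu_n u_n\to 0$ in $L^2(\Omega)$. I would apply Lemma \ref{Pohozaev} to $u_n$, regarded as a solution of $\lm u_n=h(u_n)$ with $h(t):=\lambda_n t e^{\beta_n t^2}+\alpha t$ and primitive $H(t):=\frac{\lambda_n}{2\beta_n}(e^{\beta_n t^2}-1)+\frac{\alpha}{2}t^2$; since $H(0)=0$, the boundary term $\int_{\partial\Omega}H(u_n)(x-y)\cdot\nu\,d\sigma$ drops out of Lemma \ref{Pohozaev}.

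The key reduction is to eliminate the remaining boundary term $\int_{\partial\Omega}f\,d\sigma$ without computing it explicitly. I would rederive the Pohozaev identity by applying Proposition \ref{parts} to the pair $(u_n,(x-y)\cdot\nabla u_n)$: since $u_n\in H^m_0(\Omega)$, every boundary contribution on the $u_n$ side vanishes, giving $\int_\Omega \Delta^{\frac{m}{2}}u_n\cdot\Delta^{\frac{m}{2}}((x-y)\cdot\nabla u_n)\,dx=\int_\Omega u_n\lm((x-y)\cdot\nabla u_n)\,dx$. Combining this with the divergence identity of Lemma 14 of \cite{MarPet}, the commutator identity $\lm((x-y)\cdot\nabla u_n)=(x-y)\cdot\nabla\lm u_n+2m\lm u_n$, the equation $\lm u_n=h(u_n)$, and $\int_\Omega (x-y)\cdot\nabla H(u_n)\,dx=-2m\int_\Omega H(u_n)\,dx$ (again from $H(u_n)|_{\partial\Omega}=0$), produces the clean identity
$$\frac{1}{2}\int_{\partial\Omega}(x-y)\cdot\nu\,|\Delta^{\frac{m}{2}}u_n|^2\, d\sigma = 2m\int_{\Omega}H(u_n)\,dx, \qquad \forall\, y\in\R^{2m}.$$
Multiplying by $\mu_n^2$ and passing to the limit, Lemma \ref{lemma crucial} gives $\lambda_n\mu_n^2\to L:=(S_{\alpha,\beta^*}-|\Omega|)^{-1}>0$; together with $F_{\beta_n}(u_n)\to S_{\alpha,\beta^*}$ (from Lemma \ref{primo} and Lemma \ref{limsub}) and $\|\mu_n u_n\|_{L^2(\Omega)}\to 0$, the right-hand side converges to $\frac{mL}{\beta^*}(S_{\alpha,\beta^*}-|\Omega|)=\frac{m}{\beta^*}>0$.

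To contradict this, I would choose $y:=x_0+t_0\nu(x_0)$ with $t_0>0$ small, so that $y$ sits just outside $\ov{\Omega}$ along the outward normal at $x_0$. Smoothness of $\partial\Omega$ at $x_0$, through $(x-x_0)\cdot\nu(x)=O(|x-x_0|^2)$ and $\nu(x_0)\cdot\nu(x)=1+O(|x-x_0|^2)$, produces $\delta,c>0$ with $(x-y)\cdot\nu(x)\le -c$ on $\partial\Omega\cap B_\delta(x_0)$, which makes the corresponding piece of the boundary integrand non-positive; on the complement $\partial\Omega\setminus B_\delta(x_0)$, Lemma \ref{conv 0 boun} forces $\mu_n|\Delta^{\frac{m}{2}}u_n|\to 0$ uniformly, so the remaining contribution is $o(1)$. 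Hence the LHS is $\le o(1)$, contradicting its limit $\frac{m}{\beta^*}>0$ and forcing $x_0\in\Omega$. The main obstacle is precisely the clean reduction producing the simplified Pohozaev identity: the $f$-term is not individually controllable near a boundary blow-up point, since we have no sharp bound on the $m$-th normal derivative of $u_n$ at $x_0$, and the trick is to bypass it entirely through the integration-by-parts argument above rather than attempting a direct evaluation.
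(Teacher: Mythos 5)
Your proof is correct in its essentials and takes a genuinely different route from the paper's. The paper applies Lemma \ref{Pohozaev} on the truncated domain $\Omega_\delta := \Omega\cap B_\delta(x_0)$ and chooses the point $y = y_n$ data-dependently so that $\int_{\partial\Omega\cap B_\delta(x_0)}|\Delta^{\frac{m}{2}}u_n|^2(x-y_n)\cdot\nu\,d\sigma = 0$, controlling the remaining contributions on $\Omega\cap\partial B_\delta(x_0)$ via Lemma \ref{conv 0 boun}; you instead work globally on $\Omega$ with a \emph{fixed} $y$ placed just outside $\ov{\Omega}$, exploiting the sign $(x-y)\cdot\nu\le -c$ on $\partial\Omega\cap B_\delta(x_0)$ and the $C^{2m-1}$-decay of $\mu_n u_n$ on the rest of $\partial\Omega$. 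Both routes hinge on the same algebraic fact, namely that the clamped boundary conditions collapse the paper's $f$-term on $\partial\Omega$ to $-(x-y)\cdot\nu\,|\Delta^{\frac m2}u_n|^2$; the paper simply asserts this reduction, whereas your derivation via Proposition \ref{parts} applied to the pair $(u_n,(x-y)\cdot\nabla u_n)$ obtains it for free, since the boundary integrands in Proposition \ref{parts} involve only $\Delta^{\frac j2}u_n$ for $j\le m-1$, all of which vanish on $\partial\Omega$. The global identity $\frac12\int_{\partial\Omega}(x-y)\cdot\nu\,|\Delta^{\frac m2}u_n|^2\,d\sigma = 2m\int_\Omega H(u_n)\,dx$ that you derive is correct (the commutator $\lm((x-y)\cdot\nabla u)=(x-y)\cdot\nabla\lm u+2m\lm u$ and the normalization $H(0)=0$ check out), and the sign-plus-decay mechanism on $\partial\Omega$ closes the contradiction. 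This version is arguably cleaner in that no estimates on $\Omega\cap\partial B_\delta$ are needed; the paper's local version has the advantage of never invoking the geometry of $\partial\Omega$ away from $x_0$.

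One imprecision: the step ``$\lambda_n\mu_n^2\to L:=(S_{\alpha,\beta^*}-|\Omega|)^{-1}>0$, so the RHS converges to $\tfrac m{\beta^*}$'' presupposes $S_{\alpha,\beta^*}<\infty$, which is not yet known at this stage of the blow-up analysis (it is established only after Lemma \ref{final}). Lemma \ref{lemma crucial} gives $\lim 1/(\lambda_n\mu_n^2)=S_{\alpha,\beta^*}-|\Omega|$, so if $S_{\alpha,\beta^*}=\infty$ then $L=0$ and $L\cdot(S_{\alpha,\beta^*}-|\Omega|)$ is a $0\cdot\infty$ form. The repair is immediate and parallels what the paper does at the corresponding point: using \eqref{outside}, Lemma \ref{def rhon} and Remark \ref{rem integrals}, one gets $\liminf_n\lambda_n\mu_n^2\big(F_{\beta_n}(u_n)-|\Omega|\big)\ge\liminf_n\lambda_n\mu_n^2\int_{B_{Rr_n}(x_n)}e^{\beta_n u_n^2}\,dx\ge 1+O(R^{-2m})$, hence $\liminf_n 2m\mu_n^2\int_\Omega H(u_n)\,dx\ge\tfrac m{\beta^*}>0$, which is all the contradiction with the non-positive left-hand side requires.
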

\begin{proof}
We assume by contradiction that $x_0\in \partial \Omega$. If we fix a  sufficiently small $\delta >0$, we have that $\frac{1}{2}\le \nu \cdot \nu(x_0)\le 1$ on $\partial  \Omega \cap B_\delta(x_0)$. Then we can define  
\begin{equation*}
\rho_n := \frac{ \int_{\partial \Omega \cap B_\delta (x_0)}  |\Delta^\frac{m}{2} u_n|^2  (x-x_0)\cdot \nu d\sigma(x)  }{\int_{ \partial \Omega \cap B_\delta (x_0)}  |\Delta^\frac{m}{2} u_n|^2  \nu \cdot \nu(x_0)d\sigma(x) } \quad \text{ and } \quad y_n:= x_0 +\rho_n \nu(x_0).
\end{equation*}
Observe that $|y_n -x_0 | \le 2\delta$. Applying the Pohozaev identity of Lemma \ref{Pohozaev} on $\Omega_\delta = \Omega \cap B_\delta(x_0)$,  we obtain
\begin{equation}\label{our Pohozaev}
\begin{split}
\frac{1}{2}\int_{\partial \Omega_\delta}  |\Delta^\frac{m}{2} u_n|^2 &(x-y_n)\cdot \nu   \,d\sigma(x)  +\int_{\partial \Omega_\delta} f_n(x)d\sigma(x) \\ 
& =  \int_{\partial  \Omega_\delta} H_n(u_n(x)) (x-y_n) \cdot \nu d\sigma(x) - 2m \int_{ \Omega_\delta} H_n(u_n(x))dx, 
\end{split}
\end{equation}
where $H_n(t)= \frac{\lambda_n}{2\beta_n}e^{\beta_n t^2} + \frac{\alpha}{2}t^2$, and 
$$
f_n:= \sum_{j=0}^{m-1} (-1)^{m+j} \nu\cdot \bra{\Delta^{\frac{j}{2}} ((x-y_n)\cdot \nabla u_n) \Delta^{\frac{2m-j-1}{2}}u_n}.
$$  
Observe that the definition of $y_n$ implies 
\begin{equation}\label{int 0}
\int_{\partial \Omega \cap B_\delta (x_0)}  |\Delta^\frac{m}{2} u_n|^2 (x-y_n)\cdot \nu   \,d\sigma(x) =0, 
\end{equation}
and thus, by Lemma \ref{conv 0 boun}, we have
\begin{equation}\label{int 1}
 \int_{\partial \Omega_\delta} |\Delta^\frac{m}{2} u_n|^2  (x-y_n)\cdot \nu  \,d\sigma(x) = \int_{\Omega \cap  \partial B_\delta(x_0)} |\Delta^\frac{m}{2} u_n|^2  (x-y_n)\cdot \nu  \,d\sigma(x) = o(\mu_n^{-2}).
\end{equation}
Similarly, since $f_n= -  |\Delta^\frac{m}{2} u_n|^2 (x-y_n)\cdot \nu $ on $\partial \Omega \cap B_\delta(x_0)$, applying \eqref{int 0} and Lemma \ref{conv 0 boun}, we get
\begin{equation}\label{int 2}
 \int_{\partial \Omega_\delta} f_n(x)d\sigma(x) =   \int_{\Omega \cap \partial B_\delta(x_0)} f_n(x)d\sigma(x) = o(\mu_n^{-2}).
\end{equation}
Furthermore, we have 
\[\begin{split}
\int_{\partial \Omega_\delta}  e^{\beta_n u_n^2} (x-y_n)\cdot \nu  d\sigma(x)  &=  \int_{  \Omega \cap \partial B_\delta(x_0) }  e^{\beta_n u_n^2} (x-y_n)\cdot \nu  d\sigma (x) + \int_{\partial \Omega \cap B_\delta(x_0)} (x-y_n)\cdot \nu  d\sigma(x) \\
& = I_{\delta,n} + o(\mu_n^{-2}), 
 \end{split}
\]
where $I_{\delta,n} =  \int_{\partial \Omega_\delta} (x-y_n)\cdot \nu  d\sigma(x)=O(\delta)$ uniformly with respect to $n$. In particular,
\begin{equation}\label{int 3}
\begin{split}
 \int_{\partial \Omega_\delta} H_n(u_n(x)) (x-y_n)\cdot \nu  d\sigma(x)  &= \frac{\lambda_n }{2\beta_n} \int_{\partial \Omega_\delta}  e^{\beta_n u_n^2} (x-y_n)\cdot \nu  d\sigma(x) +  \frac{\alpha}{2} \int_{\Omega \cap \partial B_\delta(x_0) } u_n^2  (x-y_n)\cdot \nu d\sigma (x)\\ 
& =  \frac{\lambda_n}{2\beta_n}I_{\delta,n} +o(\mu_n^{-2}).\\
\end{split}
\end{equation}
Finally, we have 
\begin{equation} \label{int 4}
\begin{split}
 \int_{ \Omega_\delta} H_n(u_n(x))dx  &= \frac{\lambda_n }{2\beta_n} \int_{\Omega_\delta }  e^{\beta_n u_n^2} dx +  \frac{ \alpha}{2} \int_{ \Omega_\delta} u_n^2 dx \\ 
& = \frac{\lambda_n}{2\beta_n}  \int_{\Omega_\delta}e^{\beta_n u_n^2} dx+  o(\mu_{n}^{-2}). \\
\end{split}
\end{equation}
Therefore, \eqref{int 1}, \eqref{int 2}, \eqref{int 3}, \eqref{int 4} allow to rewrite  the identity in  \eqref{our Pohozaev} as 
\begin{equation}\label{Poho1}
\lambda_n \mu_n^2 \bra{ 2m \int_{\Omega_\delta}e^{\beta_n u_n^2} dx  -I_{\delta,n}} =o(1).
\end{equation}
Lemma \ref{conv 0 boun},  \eqref{extremal} and Lemma \ref{limsub}, assure
$$
 \int_{\Omega_\delta}e^{\beta_n u_n^2} dx = F_{\beta_n}(u_n) - \int_{\Omega\setminus B_\delta(x_0)} e^{\beta_n u_n^2}dx    \to S_{\alpha,\beta^*} - |\Omega \setminus B_\delta(x_0)|\ge S_{\alpha,\beta^*}- |\Omega| >0,
 $$
 as $n\to +\infty$. Then, for $\delta$ sufficiently small, the quantity $\int_{\Omega_\delta}e^{\beta_n u_n^2} dx - I_{n,\delta}$ is bounded away from $0$. Hence, the identity \eqref{Poho1} implies $\lambda_n \mu_n^2 \to 0$ and, since $I_{n,\delta}=O(\delta)$, 
\begin{equation}\label{Poho2}
\lambda_n \mu_n^2  \int_{\Omega_\delta}e^{\beta_n u_n^2} dx =o(1).
\end{equation}
But \eqref{Poho2} contradicts Remark \ref{rem integrals}, since for any large $R>0$ one has 
$$
\lambda_n \mu_n^2  \int_{\Omega_\delta}e^{\beta_n u_n^2} dx \ge  \lambda_n \mu_n^2  \int_{B_{R r_n }(x_n)}e^{\beta_n u_n^2} dx = 1 +O(R^{-2m}). 
$$
\end{proof}

\subsection{Neck analysis}
In this subsection, we complete the proof of Proposition \ref{big} by giving a sharp upper bound on $\dis{\frac{1}{\lambda_n \mu_n^2}}$. Let us fix a large $R>0$ and a small $\delta>0$ and  let us consider the annular region 
\begin{equation*}
A_n(R,\delta):=\{x \in \Omega : r_n R \le |x-x_n| \le \delta \},
\end{equation*}
where $r_n$ is given by \eqref{rn}. Note that, by Lemma \ref{no boundary}, we have  $A_n(R,\delta)\subseteq \Omega$, for any  $0<\delta<d(x_0,\partial \Omega)$ and any sufficiently large $n\in \N$. Our main idea is to compare the Dirichlet energy of $u_n$ on $A_n(R,\delta)$ with the energy of  the $m-$harmonic function 
\begin{equation*}
 \W_n(x):=-\frac{2m}{\beta^*\mu_n}\log |x-x_n|.  
\end{equation*}
As a consequence of Proposition \ref{conv eta} and \eqref{etan}, on $\partial B_{Rr_n}(x_n)$, we have 
$$
u_n(x) = \mu_n + \frac{\eta_0(\frac{x-x_n}{r_n})}{\mu_n} +o(\mu_n^{-1}) = \mu_n  -\frac{2m}{\beta^*\mu_n} \log \frac{R}{2} + \frac{O(R^{-2})}{\mu_n}+o(\mu_n^{-1}),
$$
as $n\to +\infty$. Similarly, using also \eqref{asym}, we find 
$$
\Delta^\frac{j}{2} u_n (x)= \frac{\Delta^\frac{j}{2} \eta_0(\frac{x-x_n}{r_n})}{r_n^j \mu_n}  + o(r_n^{-j}\mu_n^{-1}) =-\frac{2m K_{m,\frac{j}{2}}}{\beta^*r_n^j\mu_n R^{j}} e_{n,j}
+ \frac{O(R^{-j-2})}{r_n^j\mu_n}+o(r_n^{-j}\mu_n^{-1}),
$$
for any $1\le j \le 2m-1$, where $e_{n,j}:= e_j(x-x_n)$ with $e_j$ is as in  \eqref{ej}. 
The function $\W_n$ has an analog behaviour. Indeed, remembering the definition of $r_n$ in \eqref{rn}, we get
\begin{equation}\label{Wn1}
\W_n(x) = \frac{\beta_n}{\beta^*}  \mu_n - \frac{2m}{\beta^*\mu_n} \log R  + \frac{1}{\beta^* \mu_n} \log \bra{\omega_{2m} \lambda_n \mu_n^2}, 
\end{equation}
and, by \eqref{ej}, 
\begin{equation}\label{Wn2}
\Delta^\frac{j}{2} \W_n = -\frac{2m K_{m,\frac{j}{2}}}{\beta^*\mu_n r_n^j R^{j}}e_{n,j}, \qquad \mbox{ for any }\, 1\le j\le 2m-1,
\end{equation}
on $\partial B_{Rr_n}(x_n)$. We can so conclude that, as $n\to+\infty$, on $\partial  B_{R r_n}(x_n)$, we have the expansions
\begin{equation}\label{ex1}
u_n - \W_n =  \bra{1-\frac{\beta_n}{\beta^*} }\mu_n +\frac{1}{\beta^*\mu_n} \log \bra{ \frac{2^{2m}}{\omega_{2m} \lambda_n \mu_n^2}} + \frac{O(R^{-2})}{\mu_n}+o(\mu_n^{-1}),
\end{equation}
and
\begin{equation}\label{ex2}
\Delta^\frac{j}{2} (u_n -\W_n) =  \frac{O(R^{-j-2})}{r_n^j\mu_n}+o(r_n^{-j}\mu_n^{-1}),\qquad \text{ for any } 1\le j\le 2m-1.
\end{equation}
Similarly, on $\partial B_\delta (x_n)$, we can use Lemma \ref{conv Green} and Propositon \ref{prop green} to get
\begin{equation}\label{ex3}
u_n - \W_n =\frac{C_{\alpha, x_0} }{\mu_n} + \frac{O(\delta)}{\mu_n}+o(\mu_n^{-1}),
\end{equation}
and
\begin{equation}\label{ex4}
\Delta^\frac{j}{2} (u_n -\W_n)= \frac{O(1)}{\mu_n}+o(\mu_n^{-1}), \qquad \text{ for any } 1\le j\le 2m-1.
\end{equation} 
Here we have also used that $\frac{|x-x_n|}{|x-x_0|}\to 1$, uniformly on $\partial B_\delta(x_n)$. The asymptotic formulas  in \eqref{Wn1}-\eqref{ex4} allow to compare $\| \Delta^{\frac{m}{2}}u_n \|_{L^2(A_n(R,\delta))}$ and $\| \Delta^{\frac{m}{2}} \W_n \|_{L^2(A_n(R,\delta))}$. Since the quantity $\lambda_n \mu_n^2$ appears in \eqref{ex1}, this will result in the desired upper bound.  

\begin{lemma}\label{final} Under the assumptions of Proposition \ref{big}, we have 
$$
\lim_{n\to +\infty} \frac{1}{\lambda_n \mu_n^2} \le \frac{\omega_{2m}}{2^{2m}} e^{\beta^*\bra{C_{\alpha,x_0} -I_m}}. 
$$
\end{lemma}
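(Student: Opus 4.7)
The plan is to combine a three-piece decomposition of the global energy identity $\int_\Omega |\Delta^{m/2} u_n|^2\,dx = 1 + \alpha \|u_n\|_{L^2(\Omega)}^2$ with a capacity-type comparison between $u_n$ and $\W_n$ on the annular region $A_n(R,\delta)$. Writing $\int_\Omega = \int_{B_{Rr_n}(x_n)} + \int_{A_n(R,\delta)} + \int_{\Omega\setminus B_\delta(x_n)}$, the inner and outer pieces are already asymptotically understood: Proposition \ref{conv eta} together with Lemma \ref{int eta0} gives $\mu_n^2 \int_{B_{Rr_n}(x_n)} |\Delta^{m/2} u_n|^2 = \frac{2m}{\beta^*}\log\frac{R}{2} + I_m - H_m + o_n(1) + O(R^{-2}\log R)$, while Lemma \ref{conv Green} combined with Lemma \ref{int Green} gives $\mu_n^2 \int_{\Omega\setminus B_\delta(x_n)} |\Delta^{m/2} u_n|^2 \to \alpha\|G_{\alpha,x_0}\|_{L^2}^2 - \frac{2m}{\beta^*}\log\delta + C_{\alpha,x_0} + H_m + O(\delta|\log\delta|)$. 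Using also $\alpha\mu_n^2\|u_n\|_{L^2}^2 \to \alpha\|G_{\alpha,x_0}\|_{L^2}^2$ (from $\mu_n u_n \to G_{\alpha,x_0}$ in $L^2$), subtraction yields
$$\mu_n^2\!\int_{A_n(R,\delta)}\!|\Delta^{m/2} u_n|^2 = \mu_n^2 + \frac{2m}{\beta^*}\log\frac{2\delta}{R} - I_m - C_{\alpha,x_0} + o_n(1) + O(R^{-2}\log R) + O(\delta|\log\delta|).$$

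For the capacity step, I set $\phi_n := u_n - \W_n$ and use
$$\int_{A_n} |\Delta^{m/2}u_n|^2 = \int_{A_n} |\Delta^{m/2}\W_n|^2 + 2 I_n + \int_{A_n} |\Delta^{m/2}\phi_n|^2 \ge \int_{A_n} |\Delta^{m/2}\W_n|^2 + 2 I_n,$$
where $I_n := \int_{A_n} \Delta^{m/2}\W_n \cdot \Delta^{m/2}\phi_n$. Since $\lm\W_n = 0$ on $A_n$, Proposition \ref{parts} reduces both $\int_{A_n}|\Delta^{m/2}\W_n|^2$ and $I_n$ to boundary integrals on $\partial B_{Rr_n}(x_n)\cup \partial B_\delta(x_n)$. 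Using \eqref{ej}, the identity $\frac{2m}{\beta^*}K_{m,(2m-1)/2} = \frac{(-1)^{m-1}}{\omega_{2m-1}}$, and the definition \eqref{Hm} of $H_m$ (essentially the same computation as in Lemma \ref{int Green}), the contributions $H_m$ from the two boundaries cancel and one obtains $\int_{A_n}|\Delta^{m/2}\W_n|^2 = \frac{1}{\mu_n^2}\cdot\frac{2m}{\beta^*}\log\frac{\delta}{Rr_n}$; inserting the definition \eqref{rn} of $r_n$ produces
$$\mu_n^2\!\int_{A_n}\!|\Delta^{m/2}\W_n|^2 = \frac{2m}{\beta^*}\log\frac{\delta}{R} + \frac{1}{\beta^*}\log(\omega_{2m}\lambda_n\mu_n^2) + \frac{\beta_n}{\beta^*}\mu_n^2.$$
The cross term $I_n$ is evaluated from the asymptotic expansions \eqref{ex1}--\eqref{ex4} on both boundaries. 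At leading order only the $j=0$ summand survives (the $j\ge 1$ terms on $\partial B_{Rr_n}$ are $O(R^{-2}/\mu_n^2)$ and those on $\partial B_\delta$ are $O(\delta/\mu_n^2)$); the large $(1-\beta_n/\beta^*)\mu_n$ piece of $\phi_n$ on $\partial B_{Rr_n}$ and the $C_{\alpha,x_0}/\mu_n$ piece on $\partial B_\delta$ yield
$$2\mu_n^2 I_n = 2(1-\beta_n/\beta^*)\mu_n^2 + \frac{2}{\beta^*}\log\frac{2^{2m}}{\omega_{2m}\lambda_n\mu_n^2} - 2C_{\alpha,x_0} + O(R^{-2}) + O(\delta) + o_n(1).$$

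Plugging these expressions into the capacity inequality, the leading $\mu_n^2$ and $\frac{\beta_n}{\beta^*}\mu_n^2$ terms cancel, the two occurrences of $\log(\omega_{2m}\lambda_n\mu_n^2)$ combine into a single $-\frac{1}{\beta^*}\log(\omega_{2m}\lambda_n\mu_n^2)$, and the remaining constants collapse to give
$$\frac{1}{\beta^*}\log(\omega_{2m}\lambda_n\mu_n^2) \ge \frac{2m}{\beta^*}\log 2 + I_m - C_{\alpha,x_0} + \bra{1-\frac{\beta_n}{\beta^*}}\mu_n^2 + O(R^{-2}) + O(\delta) + o_n(1).$$
Since $(1-\beta_n/\beta^*)\mu_n^2 \ge 0$, exponentiating and taking $n \to \infty$, then $R \to \infty$, then $\delta \to 0$ yields $\limsup_n \frac{1}{\lambda_n\mu_n^2} \le \frac{\omega_{2m}}{2^{2m}} e^{\beta^*(C_{\alpha,x_0} - I_m)}$; the $\limsup$ is in fact a $\lim$ by Lemma \ref{lemma crucial}.

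The main obstacle is the careful sign bookkeeping in the boundary evaluation of $\int_{A_n}|\Delta^{m/2}\W_n|^2$ and $I_n$: the outward normal on the two components of $\partial A_n$ points in opposite radial directions, the $j=0$ boundary term on $\partial B_{Rr_n}$ involves the nontrivial quantity $(1-\beta_n/\beta^*)\mu_n$ which must exactly conspire with the $\frac{\beta_n}{\beta^*}\mu_n^2$ coming from $\int|\Delta^{m/2}\W_n|^2$, and one must verify that every other boundary contribution (for $1\le j \le m-1$ and the $j=0$ term on $\partial B_\delta$) produces only $O(R^{-2})$, $O(\delta)$, or $o_n(1)$ so that the iterated limit in $n$, $R$, $\delta$ is valid.
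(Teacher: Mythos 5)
Your proof is correct and takes essentially the same approach as the paper: the quadratic expansion $\int_{A_n}|\Delta^{m/2}u_n|^2 \ge \int_{A_n}|\Delta^{m/2}\W_n|^2 + 2\int_{A_n}\Delta^{m/2}(u_n-\W_n)\cdot\Delta^{m/2}\W_n$, the three-piece decomposition of the global energy identity to evaluate the left-hand side, and boundary-integral evaluation of the cross term via the expansions \eqref{ex1}--\eqref{ex4}. The only cosmetic difference is that you also evaluate $\int_{A_n}|\Delta^{m/2}\W_n|^2$ via boundary integrals (with the $H_m$ contributions cancelling between the two components of $\partial A_n(R,\delta)$), whereas the paper computes it directly as a radial integral; this changes nothing of substance.
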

\begin{proof}

First,  Young's inequality yields
\begin{equation}\label{first}
\begin{split}
\| \Delta^{\frac{m}{2}}u_n \|_{L^2(A_n(R,\delta))}^2 - \| \Delta^{\frac{m}{2}}\W_n \|_{L^2(A_n(R,\delta))}^2\ge 2  \int_{A_n(R,\delta)}  \Delta^\frac{m}{2} (u_n-\W_n)  \cdot \Delta^\frac{m}{2} \W_n dx. 
\end{split}
\end{equation}
Integrating by parts, the integral in the  RHS equals to
\begin{equation}\label{b0}
\int_{A_n(R,\delta)}  \Delta^\frac{m}{2} (u_n-\W_n)  \cdot \Delta^\frac{m}{2} \W_n dx = - \int_{\partial A_n(R,\delta)} \sum_{j=0}^{m-1} (-1)^{m+j} \nu\cdot \bra{\Delta^{\frac{j}{2}} (u_n-\W_n) \Delta^{\frac{2m-j-1}{2}}\W_n} d\sigma.
\end{equation}
Let us denote, $\Lambda_n:= \frac{2^{2m}}{\omega_{2m} \lambda_n \mu_n^2}$. On $\partial B_{R r_n}(x_n)$, by \eqref{Wn2}, \eqref{ex1}, \eqref{ex2}, and the explicit expression of $K_{\frac{2m-1}{2}}$ (see \eqref{Kml2}), we find
\begin{equation}\label{b1}
\begin{split}
(u_n -\W_n ) \Delta^{\frac{2m-1}{2}} \W_n \cdot \nu &=  -\frac{2m}{\beta^*}  \bra{1 - \frac{\beta_n}{\beta^*} +\frac{1}{\beta^* \mu_n^2} \log\bra{ \Lambda_n} +  \frac{O(R^{-2})}{\mu_n^2}+o(\mu_n^{-2})} \frac{K_{m,\frac{2m-1}{2}}}{(r_nR)^{2m-1}} \\
& =  \frac{(-1)^m}{\omega_{2m-1}(r_n R)^{2m-1}}  \bra{1 - \frac{\beta_n}{\beta^*} +\frac{1}{\beta^* \mu_n^2} \log\bra{ \Lambda_n} +  \frac{O(R^{-2})}{\mu_n^2}+o(\mu_n^{-2})},
\end{split}
\end{equation}
and, for $1\le j\le m-1$, 
\begin{equation}\label{b2}
\begin{split}
\Delta^\frac{j}{2}(u_n -\W_n ) \Delta^{\frac{2m-j-1}{2}} \W_n \cdot \nu &=  \bra{\frac{O(R^{-2})}{\mu_n^2} + o(\mu_n^{-2})}O(r_n R)^{1-2m}.
\end{split}
\end{equation}
Similarly, on $\partial B_{\delta}(x_0)$, \eqref{ej}, \eqref{ex3} and \eqref{ex4} yield 
\begin{equation}\label{b3}
\begin{split}
(u_n -\W_n ) \Delta^{\frac{2m-1}{2}} \W_n \cdot \nu  & =  \frac{(-1)^m}{\omega_{2m-1}\delta^{2m-1}}  \bra{\frac{C_{\alpha,x_0}}{\mu_n^2}+  \frac{O(\delta)}{\mu_n^2}+o(\mu_n^{-2})},
\end{split}
\end{equation}
and 
\begin{equation}\label{b4}
\begin{split}
\Delta^\frac{j}{2}(u_n -\W_n ) \Delta^{\frac{2m-j-1}{2}} \W_n \cdot \nu &= \bra{  \frac{O(1)}{\mu_n^2} + o(\mu_n^{-2})  }O(\delta^{1+j- 2m}),
\end{split}
\end{equation}
for any $1\le j\le m-1$.
Using \eqref{b1}, \eqref{b2}, \eqref{b3}, \eqref{b4}, we can rewrite \eqref{b0} as 
\[
\int_{A_n(R,\delta)}  \Delta^\frac{m}{2} (u_n-\W_n)  \cdot \Delta^\frac{m}{2} \W_n dx =  \Gamma_n+  \frac{O(R^{-2})}{\mu_n^2}+\frac{O(\delta)}{\mu_n^2}+o(\mu_n^{-2}),
\] 
with 
\begin{equation}\label{Gamman}
\Gamma_n := 1 - \frac{\beta_n}{\beta^*} +\frac{1}{\beta^* \mu_n^2} \log\bra{ \Lambda_n}  - \frac{C_{\alpha,x_0}}{\mu_n^2}.
\end{equation}
Therefore, \eqref{first} reads as 
\begin{equation}\label{lower}
\| \Delta^{\frac{m}{2}}u_n \|_{L^2(A_n(R,\delta))}^2 - \| \Delta^{\frac{m}{2}}\W_n \|_{L^2(A_n(R,\delta))}^2\ge 2\Gamma_n+  \frac{O(R^{-2})}{\mu_n^2}+\frac{O(\delta)}{\mu_n^2}+o(\mu_n^{-2}).
\end{equation}
We shall now compute the difference in the LHS of \eqref{lower} in a  precise way. Since $\|u_n\|_\alpha =1$, we have
\[
\| \Delta^{\frac{m}{2}}u_n \|_{L^2(A_n(R,\delta))}^2 =  1 + \alpha \|u_n\|^2_{L^2(\Omega)}  - \int_{\Omega\setminus B_\delta(x_0)} |\Delta^\frac{m}{2} u_n|^2 dx  - \int_{B_{r_n R}(x_n)} |\Delta^\frac{m}{2} u_n|^2 dx. \]
By Lemma \ref{conv Green} and Lemma \ref{int Green}, we infer 
$$
\|u_n\|_{L^2(\Omega)}^2 =  \frac{\|G_{\alpha,x_0}\|_{L^2(\Omega)}^2}{\mu_n^2} +o(\mu_n^{-2}), 
$$
and
$$ 
\int_{\Omega\setminus B_\delta(x_0)} |\Delta^\frac{m}{2} u_n |^2 dx  = \mu_n^{-2}\bra{ \alpha \|G_{\alpha,x_0} \|_{L^2(\Omega)}^2 - \frac{2m }{\beta^*} \log \delta + C_{\alpha,x_0} +H_m + O(\delta| \log \delta|)+o(1)}.
$$
Moreover, Proposition \ref{conv eta} and Lemma \ref{int eta0} imply 
\[
\int_{B_{r_nR}(x_n)}|\Delta^\frac{m}{2} u_n |^2 dx= \mu_n^{-2}\bra{\frac{2m}{\beta^*}\log \frac{R}{2}  +  I_m   -H_m + O(R^{-2}\log R )+o(1)}.
\]
Therefore, 
\[
\| \Delta^{\frac{m}{2}}u_n \|_{L^2(A_n(R,\delta))}^2 = 1+ \frac{2m}{\beta^*\mu_n^2} \log \frac{2\delta}{R} -\frac{C_{\alpha,x_0} +I_m}{\mu_n^2} +  \frac{O(R^{-2}\log R )}{\mu_n^2} +\frac{O(\delta |\log \delta| )}{\mu_n^2} +o(\mu_n^{-2}).
\]
The identity $\omega_{2m-1}\frac{2m}{\beta^*}K_{m,\frac{m}{2}}^2=1$ and a direct computation show that 
\[\begin{split}
 \| \Delta^{\frac{m}{2}}\W_n \|_{L^2(A_n(R,\delta))}^2 &=  \omega_{2m-1}\bra{\frac{2m K_{m,\frac{m}{2}}}{\beta^* \mu_n }}^2 \log \frac{\delta}{R r_n} \\
&= \frac{2m}{\beta^* \mu_n^2} \log \frac{\delta}{R} +\frac{\beta_n}{\beta^*}  +\frac{1}{\beta^*\mu_n^2}\log\bra{{\omega_{2m}\lambda_n \mu_n^2}}.
\end{split}
\]
Hence, 
\begin{equation}\label{upper}
\| \Delta^{\frac{m}{2}}u_n \|_{L^2(A_n(R,\delta))}^2 - \| \Delta^{\frac{m}{2}}\W_n \|_{L^2(A_n(R,\delta))}^2 = \Gamma_n -\frac{I_m}{\mu_n^2}  +\frac{O(R^{-2}\log R)}{\mu_n^2}+\frac{O(\delta |\log \delta|)}{\mu_n^2}+o(\mu_n^{-2}),
\end{equation}
with $\Gamma_n$ as in \eqref{Gamman}. Comparing \eqref{lower} and \eqref{upper}, we find the upper bound
\begin{equation}\label{quasi}
\Gamma_n \le -\frac{I_m}{\mu_n^2}  +\frac{O(R^{-2}\log R)}{\mu_n^2}+\frac{O(\delta| \log \delta|)}{\mu_n^2}+o(\mu_n^{-2}).
\end{equation}
Since $\beta_n <\beta^*$, the definition of $\Gamma_n$  in \eqref{Gamman} implies
\[
\Gamma_n \ge \frac{1}{\beta^* \mu_n^2} \log\bra{ \Lambda_n}  - \frac{C_{\alpha,x_0}}{\mu_n^2},
\]
Then, \eqref{quasi} yields
\[
 \log\bra{ \Lambda_n} \le \beta^* ( C_{\alpha,x_0} -I_m)  + O(R^{-2} \log R)+O(\delta|\log \delta|) + o(1). 
\]
Passing to the limit as $n \to +\infty$, $R\to +\infty$ and $\delta \to 0$, we can conclude
\[
\lim_{n\to +\infty}\Lambda_n \le e^{\beta^*\bra{ C_{\alpha,x_0} -  I_m}}.
\]
\end{proof}

We have so concluded the proof of Proposition \ref{big}, which follows directly from Lemma \ref{lemma crucial}, Lemma \ref{no boundary}, and Lemma \ref{final}.

\section{Test functions and the proof of Theorem \ref{main}}\label{sec test}
In this section, we complete the proof of Theorem \ref{main} by showing that the upper bound on $S_{\alpha,\beta^*}$, given in Proposition \ref{big}, cannot hold. Consequently, any sequence $u_n \in M_\alpha$ satisfying \eqref{extremal} must be uniformly bounded in $\Omega$. 

\begin{lemma}\label{polynomial}
For any $x_0\in \R^{2m}$, and  $\eps$,$R$,$\mu>0$, there exists a unique radially symmetric  polynomial $p_{\eps,R,\mu,x_0}$ such that 
\begin{equation}\label{derp}
\partial_\nu^i p_{\eps,R,\mu,x_0} (x)= -  \partial_\nu^i \bra{\mu^2+ \eta_0 \bra{ \frac{x -x_0}{\eps}} + \frac{2m}{\beta^*}\log |x-x_0| }   \quad \mbox{ on }\partial B_{\eps R} (x_0),
\end{equation}
for any $0\le i\le m-1$, where $\eta_0$ is as in \eqref{eta0}.  Moreover, 
$p_{\eps,R,\mu,x_0}$ has the form 
\begin{equation}\label{formp}
p_{\eps,R,\mu,x_0} (x) = -\mu^2 + \sum_{j=0}^{m-1} c_j(\eps,R) |x-x_0|^{2j},
\end{equation} 
with 
$$
c_0 (\eps,R) =  -\frac{2m}{\beta^*}\log (2\eps) +d_0(R)  \quad \mbox{and} \quad c_j(\eps,R) = \eps^{-{2j}} R^{-2j}  d_j (R), \quad 1\le j\le m-1, $$
where $d_j(R) =O(R^{-2}) $ as $R  \to +\infty, \text{ for } 0\le j \le m-1$.
\end{lemma}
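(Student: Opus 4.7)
By translation invariance I may assume $x_0=0$ and set $r=|y|$. Both the polynomial $p$ and the radial function $Q(r):=\eta_0(y/\eps)+\frac{2m}{\beta^*}\log r$ are radial, so on $\partial B_{\eps R}(0)$ the normal derivatives $\partial_\nu^i$ reduce to the radial derivatives $\partial_r^i$. Writing the ansatz $p(y)=-\mu^2+p_0(r)$ with $p_0(r)=\sum_{j=0}^{m-1}c_j r^{2j}$, the system \eqref{derp} becomes
$$p_0^{(i)}(r_0)=-Q^{(i)}(r_0),\qquad i=0,1,\ldots,m-1,\qquad r_0:=\eps R,$$
since the additive constants $-\mu^2$ and $\mu^2$ cancel at $i=0$ and vanish for $i\ge 1$. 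This is an $m\times m$ linear system for the $m$ unknowns $c_0,\ldots,c_{m-1}$.

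The key observation is that both sides are naturally functions of $r^2$. Indeed, $p_0(r)=f(r^2)$ with $f(s)=\sum_{j=0}^{m-1}c_j s^j$ of degree $\le m-1$, while
$$Q(r)=G(r^2),\qquad G(s):=-\frac{m}{\beta^*}\log\Bigl(1+\frac{s}{4\eps^2}\Bigr)+\frac{m}{\beta^*}\log s=\frac{m}{\beta^*}\log\frac{4\eps^2+s}{4\eps^2 s},$$
is smooth on $(0,\infty)$. Induction on $i$ via the chain rule (using $\partial_r^i\bigl(h(r^2)\bigr)=(2r)^i h^{(i)}(r^2)+\text{lower order derivatives of }h$) shows that the vector $\bigl(p_0^{(i)}(r_0)\bigr)_{i=0}^{m-1}$ depends triangularly on $\bigl(f^{(i)}(s_0)\bigr)_{i=0}^{m-1}$, with $s_0:=r_0^2=\eps^2 R^2$, and with nonzero diagonal entries $(2r_0)^i$; the same triangular relation holds between $Q$ and $G$. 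Hence \eqref{derp} is equivalent to the Hermite-interpolation conditions $f^{(i)}(s_0)=-G^{(i)}(s_0)$, $i=0,\ldots,m-1$, and since $\deg f\le m-1$ this determines $f$ uniquely as its own $(m-1)$-th Taylor polynomial centered at $s_0$,
$$f(s)=\sum_{j=0}^{m-1}\frac{(-G)^{(j)}(s_0)}{j!}(s-s_0)^j.$$
Existence and uniqueness of $p_{\eps,R,\mu,x_0}$ follow at once.

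To read off the coefficients I expand $(s-s_0)^j$ by the binomial theorem and collect powers of $s$, obtaining
$$c_k=\sum_{j=k}^{m-1}\frac{(-G)^{(j)}(s_0)}{j!}\binom{j}{k}(-s_0)^{j-k},$$
and compute explicitly, for $j\ge 1$,
$$(-G)^{(j)}(s_0)=\frac{m(-1)^{j-1}(j-1)!}{\beta^*\,\eps^{2j}}\left[\frac{1}{(4+R^2)^j}-\frac{1}{R^{2j}}\right],\qquad -G(s_0)=-\frac{2m}{\beta^*}\log(2\eps)+\frac{m}{\beta^*}\log\Bigl(1+\frac{4}{R^2}\Bigr).$$
For $k=0$ the $j=0$ term produces the stated leading term $-\frac{2m}{\beta^*}\log(2\eps)$ plus $O(R^{-2})$, and a direct simplification of each $j\ge 1$ term gives $\frac{m}{\beta^* j}\bigl[1-(R^2/(4+R^2))^j\bigr]=O(R^{-2})$, whence $c_0=-\frac{2m}{\beta^*}\log(2\eps)+d_0(R)$ with $d_0(R)=O(R^{-2})$. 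For $k\ge 1$ the factor $\eps^{-2j}$ from $(-G)^{(j)}(s_0)$ combines with $(-s_0)^{j-k}=(-1)^{j-k}\eps^{2(j-k)}R^{2(j-k)}$ to isolate the overall factor $\eps^{-2k}R^{-2k}$; after simplification
$$c_k=\eps^{-2k}R^{-2k}\cdot\frac{m(-1)^{k+1}}{\beta^*}\sum_{j=k}^{m-1}\frac{1}{j}\binom{j}{k}\left[\left(\frac{R^2}{4+R^2}\right)^j-1\right]=:\eps^{-2k}R^{-2k}d_k(R),$$
and since $(R^2/(4+R^2))^j-1=-4j/R^2+O(R^{-4})$ the function $d_k(R)$ is $O(R^{-2})$ as $R\to\infty$.

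No real obstacle arises: existence and uniqueness are immediate from Hermite interpolation after the substitution $s=r^2$, and the asymptotic estimates reduce to the elementary expansion $\bigl(R^2/(4+R^2)\bigr)^j=1-4j/R^{-2}+O(R^{-4})$. The only mild bookkeeping issue is tracking the powers of $\eps$ and $R$ emerging from the binomial expansion, so that the factor $\eps^{-2k}R^{-2k}$ separates cleanly; this is precisely what makes the $k\ge 1$ coefficients remainder-like in both parameters, which will be essential when $p_{\eps,R,\mu,x_0}$ is used as part of a test function in the next section.
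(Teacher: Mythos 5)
Your proposal is correct and takes a genuinely different, cleaner route than the paper. The paper proves existence and uniqueness by writing $p_{\eps,R,\mu,x_0}(x)=q\bigl(\tfrac{x-x_0}{\eps R}\bigr)$ for a polynomial $q$ on the unit ball, then setting up an $(m-1)\times(m-1)$ linear system \eqref{system} for the coefficients $d_1,\dots,d_{m-1}$ and asserting that this system is non-degenerate; the asymptotics $d_j(R)=O(R^{-2})$ are read off from the right-hand side of that system together with the asymptotic expansion of $\eta_0^{(i)}(R)$. Your substitution $s=r^2$ replaces the non-degeneracy claim with the immediate observation that the conditions $p_0^{(i)}(r_0)=-Q^{(i)}(r_0)$ are equivalent (via the triangular change of variables) to the Hermite data $f^{(i)}(s_0)=(-G)^{(i)}(s_0)$, $0\le i\le m-1$, so that $f$ is forced to be the $(m-1)$-th Taylor polynomial of $-G$ at $s_0$ — existence, uniqueness and explicit coefficient formulas all come for free. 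This is genuinely more transparent than the paper's linear-system argument, and your closed-form expression for $c_k$ isolates the factor $\eps^{-2k}R^{-2k}$ and the $O(R^{-2})$ remainder in a way the paper obtains only implicitly.

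Two small slips worth correcting (neither affects the argument): in simplifying $G$ you wrote $G(s)=\frac{m}{\beta^*}\log\frac{4\eps^2+s}{4\eps^2 s}$, but this is $-G(s)$; the correct simplification is $G(s)=\frac{m}{\beta^*}\log\frac{4\eps^2 s}{4\eps^2+s}$ (your subsequent formula for $(-G)^{(j)}(s_0)$ and the value of $-G(s_0)$ are both consistent with the correct $G$, so this is only a transcription error). And in the closing line the expansion should read $\bigl(R^2/(4+R^2)\bigr)^j=1-4j/R^{2}+O(R^{-4})$, not $4j/R^{-2}$.
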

\begin{proof}
We can construct $p_{\eps,R,\mu,x_0}$ in the following way. Let $ d_1(R)$,...,$ d_{m-1}(R)$ be the unique solution of the non-degenerate linear system
\begin{equation}\label{system}
\sum_{j=\sqbra{\frac{i+1}{2}}}^{m-1} \frac{(2j)!}{(2j-i)!}  d_j (R)=  \frac{2m}{\beta^*}(-1)^{i} (i-1)!- R^i \eta_0^{(i)} (R), \quad i=1,\ldots, m-1.
\end{equation}
Set also 
\begin{equation}\label{0}
\begin{split}
\tilde d_0(\eps, R,\mu):&= - \bra{\mu^2 +\eta_0(R) + \frac{2m}{\beta^*}\log (\eps R)}-\sum_{j=1}^{m-1} d_j(R),
\end{split}
\end{equation}
and 
$$
q(x):= \tilde{d}_0(\eps, R,\mu)+  \sum_{j=1}^{m-1}  d_j(R)|x|^{2j}.
$$
If we define $p_{\eps,R,\mu,x_0}(x):= q\bra{\frac{x-x_0}{\eps R}}$, then $p_{\eps,R,\mu,x_0}(x)$ satisfies \eqref{derp} for any $0\le i\le m-1$. 
Since, as $R\to +\infty$,
$$
\eta_0^{(i)} (R) = \frac{2m}{\beta^*}(-1)^{i} (i-1)! R^{-i} + O(R^{-i-2}), \quad \text{ for } 1\le i\le m-1,$$ 
and the system in \eqref{system} is nondegenerate, we find ${d}_j = O(R^{-2})$ as $R \to +\infty$ for $1\le j\le m-1$. Similarly, we have 
\[
\begin{split}
\tilde d_0 (\eps, R, \mu) &= -\mu^2 -\frac{2m}{\beta^*}\log (2\eps) + d_0(R),
\end{split}
\]
where 
\[
d_0(R):= -\eta_0(R)-\frac{2m}{\beta^*}\log \frac{ R}{2} - \sum_{j=1}^{m-1} d_j(R),
\]
and, by \eqref{0} and the asymptotic behavior at infinity of $\eta_0$,  $d_0(R)= O(R^{-2})$ as $R\to +\infty$. Then $p_{\eps,R,\mu,x_0}$ has the form \eqref{formp} with $c_0(\eps,R):= \tilde d_0(\eps, R,\mu) +\mu^2$ and
$c_j(\eps,R):= (\eps R)^{-2j}  d_j(R).$ 
\end{proof}

\begin{rem}\label{rem poly}
Observe that Lemma \ref{polynomial} gives 
$$
\left| p_{\eps,R,\mu,x_0} +\mu^2 +\frac{2m}{\beta^*}\log(2\eps) \right|\le  C R^{-2} \quad \mbox{ and } \quad  |\Delta^{\frac{m}{2}}p_{\eps,R,\mu,x_0}| \le C \eps^{-m} R^{-m-2} ,
$$
in $B_{\eps R}(x_0)$, where $C$ depends only on $m$.   
\end{rem}

\begin{prop}\label{proptest}
For any $x_0\in \Omega$, and $0\le \alpha<\lambda_1(\Omega)$, we have 
$$
S_{\alpha,\beta^*}> |\Omega|+ \frac{\omega_{2m}}{2^{2m}} e^{\beta^*\bra{C_{\alpha,x_0}-I_m}},
$$
where $C_{\alpha,x_0}$ and $I_m$ are respectively as in Proposition \ref{prop green} and \eqref{Im}. 
\end{prop}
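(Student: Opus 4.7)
The plan is to exhibit a test function $\phi_\eps\in H^m_0(\Omega)$ with $\|\phi_\eps\|_\alpha\le 1$ whose Moser--Trudinger energy strictly beats the upper bound of Proposition \ref{big} for sufficiently small $\eps>0$.  Guided by the blow-up profile uncovered in Section \ref{main sec}, I would set
\[
c_\eps\phi_\eps(x):=G_{\alpha,x_0}(x)+F_\eps(x)\chi_{B_{R\eps}(x_0)}(x),\qquad F_\eps(x):=c_\eps^2+\eta_0\bra{\frac{x-x_0}{\eps}}+\frac{2m}{\beta^*}\log|x-x_0|+p_{\eps,R,c_\eps,x_0}(x),
\]
with parameters $c_\eps>0$ and $R>0$ to be chosen, and $p_{\eps,R,c_\eps,x_0}$ the polynomial from Lemma \ref{polynomial}.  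The defining boundary condition \eqref{derp} of $p_{\eps,R,c_\eps,x_0}$ is precisely what makes $F_\eps$ and all its normal derivatives up to order $m-1$ vanish on $\partial B_{R\eps}(x_0)$, so that $\phi_\eps\in H^m_0(\Omega)$; moreover, inside $B_{R\eps}(x_0)$ the $\log|x-x_0|$ term in $F_\eps$ cancels the singular part of $G_{\alpha,x_0}$, leaving a smooth profile peaked at $x_0$.

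The first main step is to compute $\|\phi_\eps\|_\alpha^2$ by separating the inner and outer integrals.  Outside $B_{R\eps}(x_0)$ I would apply Lemma \ref{int Green} directly to the $G_{\alpha,x_0}$ piece, while inside I would use the change of variables $y=(x-x_0)/\eps$ together with Lemma \ref{int eta0} for the $\eta_0$ contribution and Remark \ref{rem poly} to control the polynomial; the cross term $\int\Delta^{m/2}G_{\alpha,x_0}\cdot\Delta^{m/2}F_\eps$ is reduced to an interior expression via Proposition \ref{parts} and the vanishing boundary data of $F_\eps$.  The $H_m$ contributions from the two lemmas cancel exactly, giving
\[
\|\phi_\eps\|_\alpha^2=\frac{1}{c_\eps^2}\bra{-\frac{2m}{\beta^*}\log(2\eps)+C_{\alpha,x_0}+I_m+o(1)}.
\]
I would then fix $c_\eps^2:=-\frac{2m}{\beta^*}\log(2\eps)+C_{\alpha,x_0}$, dropping the $I_m$ term from the normalization.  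Because $I_m\le 0$ by \eqref{Im}, this forces $\|\phi_\eps\|_\alpha^2=1+I_m/c_\eps^2+o(c_\eps^{-2})\le 1$, so $\phi_\eps$ already lies in $M_\alpha$; the $\alpha$-normalization $\tilde\phi_\eps:=\phi_\eps/\|\phi_\eps\|_\alpha$ then satisfies $\|\tilde\phi_\eps\|_\alpha=1$ and is pointwise \emph{strictly larger} than $\phi_\eps$.

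The second main step is to evaluate $F_{\beta^*}(\tilde\phi_\eps)$.  Outside $B_{R\eps}(x_0)$, $\tilde\phi_\eps=O(c_\eps^{-1})\to 0$ uniformly, so the outer integral converges to $|\Omega|$ by dominated convergence.  Inside, the factor $\|\phi_\eps\|_\alpha^{-2}=1-I_m/c_\eps^2+o(c_\eps^{-2})$ shifts the exponent upward by $-\beta^*I_m>0$; expanding $\beta^*\tilde\phi_\eps^2=\beta^*c_\eps^2-\beta^*I_m+2\beta^*\eta_0(y)+\text{lower order}$ and using \eqref{propeta1} gives
\[
\int_{B_{R\eps}(x_0)}e^{\beta^*\tilde\phi_\eps^2}dx=\frac{\omega_{2m}}{2^{2m}}e^{\beta^*(C_{\alpha,x_0}-I_m)}\bra{1+O(R^{-2m})+o(1)}.
\]
To establish \emph{strict} inequality, I would isolate the next-order positive correction from the term $\beta^*\eta_0^2/c_\eps^2$ appearing in the expansion of $\beta^*\tilde\phi_\eps^2$: after exponentiating it contributes an extra factor $1+\frac{\beta^*}{c_\eps^2\omega_{2m}}\int_{\R^{2m}}\eta_0^2e^{2\beta^*\eta_0}dy+o(c_\eps^{-2})$ to the inner integral, and this integral is strictly positive since $\eta_0\not\equiv 0$.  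The main obstacle I anticipate is the bookkeeping of all the residual error terms produced by the polynomial $p_{\eps,R,c_\eps,x_0}$, by the cross products $\Delta^{m/2}G_{\alpha,x_0}\cdot\Delta^{m/2}F_\eps$, and by the smooth regular part $\psi_{\alpha,x_0}$: these must be estimated uniformly in $R$ so that the strict positivity of the $\eta_0^2$-correction survives after sending $\eps\to 0$ and then $R\to+\infty$, thereby yielding $F_{\beta^*}(\tilde\phi_\eps)>|\Omega|+\frac{\omega_{2m}}{2^{2m}}e^{\beta^*(C_{\alpha,x_0}-I_m)}$ for all sufficiently small $\eps>0$.
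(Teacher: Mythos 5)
Your test function is essentially the same one the paper builds (up to a reparametrisation: you set $c_\eps^2 = -\frac{2m}{\beta^*}\log(2\eps)+C_{\alpha,x_0}$ and normalise by $\|\phi_\eps\|_\alpha$, the paper sets $\mu_\eps := \|\tilde u_\eps\|_\alpha$ directly), but your source of strict inequality is different, and the way you organise the limits leaves a genuine gap.

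\emph{Order of limits.} The gain you want to exploit, $\frac{\beta^*}{c_\eps^2}\int\eta_0^2 e^{2\beta^*\eta_0}\,dy$, is of order $c_\eps^{-2}\sim|\log\eps|^{-1}$ and therefore \emph{vanishes} as $\eps\to 0$ with $R$ held fixed. On the other hand, the residual errors coming from the polynomial $p_{\eps,R,c_\eps,x_0}$ (via Remark \ref{rem poly}) and from the Taylor expansions of $\eta_0$ at $\partial B_{\eps R}$ are of size $O(R^{-2})$ \emph{uniformly in} $\eps$: they do \emph{not} shrink as $\eps\to 0$. Their sign is not controlled, so for fixed $R$ the inner integral can be below $\frac{\omega_{2m}}{2^{2m}}e^{\beta^*(C_{\alpha,x_0}-I_m)}$ for all small $\eps$. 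Sending ``$\eps\to 0$ and then $R\to+\infty$'' loses the gain before the error shrinks. You must couple the two parameters, e.g.\ $R=R_\eps=|\log\eps|$ (this is exactly what the paper does from the start): then $R_\eps^{-2}\log R_\eps = o(|\log\eps|^{-1})=o(c_\eps^{-2})$, and the $O(c_\eps^{-2})$ gain wins.

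\emph{Where the paper finds the strict positivity.} The paper does not keep the quadratic correction inside the bubble: in \eqref{final1} it only uses the pointwise lower bound $u_\eps^2\ge\mu_\eps^2+2(\eta_0+C_{\alpha,x_0}+\psi+p_\eps)$. The positive correction beyond $|\Omega|+\frac{\omega_{2m}}{2^{2m}}e^{\beta^*(C_{\alpha,x_0}-I_m)}$ comes instead from the \emph{outer} integral, via $e^{t^2}\ge 1+t^2$, which produces the term $\frac{\beta^*}{\mu_\eps^2}\|G_{\alpha,x_0}\|_{L^2(\Omega)}^2$ in \eqref{final2}. That contribution is positive, of order $\mu_\eps^{-2}\sim|\log\eps|^{-1}$, manifestly independent of the cutoff radius, and immediately dominates the $O(R_\eps^{-2}\log R_\eps)$ errors once $R_\eps=|\log\eps|$. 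Your $\eta_0^2/c_\eps^2$ mechanism can be made to work, but it demands tighter bookkeeping of the polynomial errors; the outer $L^2$ contribution is the cheaper route and avoids having to expand $\phi_\eps^2$ to second order inside the ball. Note also that the observation $\tilde\phi_\eps^2>\phi_\eps^2$ pointwise (from $\|\phi_\eps\|_\alpha<1$) only compares the normalised and unnormalised test functions and does not by itself yield the strict inequality against the fixed constant $|\Omega|+\frac{\omega_{2m}}{2^{2m}}e^{\beta^*(C_{\alpha,x_0}-I_m)}$; you still need a quantitatively positive lower-order term that beats the error.
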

\begin{proof}
We consider the function 
\[
u_{\eps,\alpha,x_0}(x):= \begin{Si}{cc}
\mu_\eps +\dfrac{\eta_0\bra{\frac{x-x_0}{\eps}}}{\mu_\eps} + \dfrac{C_{\alpha,x_0} +  \psi_{\alpha, x_0}(x) + p_{\eps} (x)}{\mu_\eps}  & \text{ for } |x-x_0| < \eps R_\eps,\\
\dfrac{G_{\alpha,x_0}(x)}{\mu_\eps}  & \text{ for } |x-x_0|\ge \eps R_\eps, 
\end{Si}
\]
where $\psi_{\alpha,x_0}$ is as in the expansion of $G_{\alpha,x_0}$ given in Proposition \ref{prop green}, $R_\eps= |\log \eps|$, $\mu_\eps$ is a constant that will be fixed later, and $p_\eps := p_{\eps,R_\eps,\mu_\eps,x_0}$ is the polynomial defined in  Lemma \ref{polynomial}. To simplify the notation, in this proof we will write $u_\eps$ in place of $u_{\eps,\alpha,x_0}$ without specifying the dependence on $\alpha$ and $x_0$.

Note that the choice of $p_\eps$ (specifically \eqref{derp}) implies that, for sufficiently  small $\eps$, $u_\eps \in H^m_0(\Omega)$.   Moreover, we can  write $u_\eps= \frac{\tilde u_\eps}{\mu_\eps}$, 
where 
\begin{equation}\label{testfunction}
 \tilde u_\eps (x) = \begin{Si}{cc} 
\eta_0\bra{\frac{x-x_0}{\eps}} + C_{\alpha,x_0} +\psi_{\alpha, x_0} (x)  +  p_\eps +\mu_\eps^2 & \text{ if } |x-x_0|< \eps R_\eps,\\
G_{\alpha,x_0} & \text{ if } |x-x_0|\ge \eps R_\eps, 
\end{Si}
\end{equation}
is a function that does not depend on the choice of $\mu_\eps$, because of Lemma \ref{polynomial}. In particular, if we fix  $\mu_\eps := \|\tilde u_\eps \|_\alpha$, we get  $\|u_\eps\|_\alpha=1$, and so  $u_\eps \in M_\alpha$. In order to  compute $F_{\beta^*}(u_\eps)$, we need  a precise expansion of $\mu_\eps$. Observe that, by Lemma \ref{int eta0}, the function $\eta_\eps(x):= \eta_0\bra{\frac{x-x_0}{\eps}}$, satisfies 
\begin{equation}\label{test1}
\begin{split}
\int_{B_{\eps R_\eps}(x_0)}|\Delta^\frac{m}{2}  \eta_\eps|^2 dx  &= \int_{B_{R_\eps}(0)} |\Delta^{\frac{m}{2}} \eta_0|^2 dx \\
&= \frac{2m}{\beta^*}\log \frac{R_\eps}{2} + I_m - H_m + O(R_\eps^{-2}\log R_\eps).
\end{split}
\end{equation}
Since $\psi_{\alpha, x_0}\in C^{2m-1}(\ov \Omega)$, we have
\begin{equation}\label{test2}
\int_{B_{\eps R_\eps}(x_0)} |\Delta^\frac{m}{2} \psi_{\alpha,x_0} |^2 dx= O(\eps^{2m} R_\eps^{2m}),
\end{equation}
Remark \ref{rem poly} gives  $|\Delta^\frac{m}{2}p_\eps| = O(\eps^{-m}R_\eps^{-m-2})$ in $B_{\eps R_\eps}(x_0)$.  Therefore, 
\begin{equation}\label{test3}
\int_{B_{\eps R_\eps}(x_0)} |\Delta^\frac{m}{2} p_\eps |^2 dx  = O( R_\eps^{-4}).
\end{equation}
Using H\"older's inequality, \eqref{test1} and \eqref{test2}, we find
\begin{equation}\label{test4}
\begin{split}
\int_{B_{\eps R}(x_0)}  \Delta^\frac{m}{2}\eta_\eps \cdot \Delta^\frac{m}{2}\psi_{\alpha,x_0} dx  &\le \| \Delta^\frac{m}{2}\eta_\eps \|_{L^2(B_{\eps R_\eps}(x_0))} \| \Delta^\frac{m}{2}\psi_{\alpha,x_0} \|_{L^2(B_{\eps R_\eps}(x_0))}\\
&=O(\eps^{m}R_\eps^{m} \log^\frac{1}{2} R_\eps).
\end{split}
\end{equation}
Similarly, by \eqref{test1}, \eqref{test2} and \eqref{test3}, we get
\begin{equation}\label{test5}
\int_{B_{\eps R_\eps}(x_0)} \Delta^{\frac{m}{2}} \eta_\eps \cdot  \Delta^\frac{m}{2} p_\eps  dx = O (R_\eps^{-2}\log^\frac{1}{2} R_\eps),
\end{equation}
and 
\begin{equation}\label{test6}
\int_{B_{\eps R_\eps}(x_0)}  \Delta^\frac{m}{2}p_\eps \cdot \Delta^\frac{m}{2}\psi_{\alpha,x_0} dx  = O(\eps^{m}R_\eps^{m-2}).
\end{equation}
By \eqref{test1}, \eqref{test2}, \eqref{test3}, \eqref{test4}, \eqref{test5} and \eqref{test6}, we infer
$$
\int_{B_{\eps R_\eps}(x_0)} |\Delta^\frac{m}{2} \tilde u_\eps|^2dx = \frac{2m}{\beta^*}\log \frac{R_\eps}{2} +I_m- H_m  + O(R_\eps^{-2}\log R_\eps). 
$$
Furthermore, applying Lemma \ref{int Green}, we have
\[
\begin{split}
\int_{\Omega \setminus B_{\eps R_\eps}(x_0)} |\Delta^\frac{m}{2} \tilde u_\eps|^2 dx &= \int_{\Omega \setminus B_{\eps R_\eps}(x_0)} |\Delta^\frac{m}{2} G_{\alpha,x_0}|^2 dx   \\ &= -\frac{2m}{\beta^*} \log (\eps R_\eps) + C_{\alpha,x_0} + H_m +\alpha \|G_{\alpha,x_0}\|_{L^2(\Omega)}^2 + O(\eps R_\eps |\log (\eps R_\eps)|).
\end{split}
\]
Hence,
\begin{equation}\label{test7}
\int_{\Omega} |\Delta^\frac{m}{2} \tilde u_\eps|^2dx = -\frac{2m}{\beta^*}\log (2\eps)+C_{\alpha,x_0} + I_m + \alpha \|G_{\alpha,x_0} \|_{L^2(\Omega)}^2 + O(R_\eps^{-2}\log R_\eps). 
\end{equation}
Finally, since  \eqref{testfunction} and Remark \ref{rem poly}, imply $\tilde u_\eps = O( |\log \eps|)$ on $B_{\eps R_\eps}(x_0)$, and since  $G_{\alpha,x_0}=O(|\log|x-x_0||)$ near $x_0$, we find 
\begin{equation}\label{test8}
\begin{split}
\| \tilde u_\eps\|_{L^2(\Omega)}^2 &= \|G_{\alpha,x_0}\|_{L^2(\Omega\setminus B_{\eps R_\eps})}^2 + O(\eps^{2m} R_\eps^{2m} \log^2\eps )\\
& = \|G_{\alpha,x_0}\|_{L^2(\Omega)}^2 + O(\eps^{2m} R_\eps^{2m} \log^2 \eps ).
\end{split}
\end{equation}
Therefore, using \eqref{test7} and \eqref{test8}, we obtain
\begin{equation}\label{mueps}
\mu_\eps^2=\| \tilde u_\eps\|^2_\alpha = -\frac{2m}{\beta^*}\log (2\eps)+C_{\alpha,x_0} + I_m  + O(R_\eps^{-2}\log R_\eps).
\end{equation} 
We can now estimate $F_{\beta^*}(u_\eps)$. On $B_{\eps R_\eps} (x_0)$, by definition of $u_\eps$, we get
\[
u_\eps ^2  \ge  \mu_\eps^2 + 2\bra{ \eta_0\bra{\frac{x-x_0}{\eps}} +C_{\alpha,x_0} +\psi_{\alpha,x_0}(x) + p_\eps(x)}.
\]
Then, Lemma \ref{polynomial},  Remark \ref{rem poly}, and \eqref{mueps}, give 
\[
\begin{split}
u_\eps ^2  \ge  -\frac{2m}{\beta^*} \log(2 \eps) +  2\eta_0\bra{\frac{x-x_0}{\eps}} + C_{\alpha,x_0} -I_m +O(R_\eps^{-2}\log R_\eps).
\end{split}
\]
Hence, using  a change of variables  and Lemma \ref{int eta0}, 
\begin{equation}\label{final1}
\begin{split}
\int_{B_{\eps R_\eps}(x_0)} e^{\beta^* u_\eps^2} dx & \ge  \frac{1}{2^{2m}} e^{\beta^* \bra{C_{\alpha,x_0}-  I_m }} (1 + O(R_\eps^{-2} \log R_\eps)) \int_{B_{R_\eps}(0)} e^{2\beta^* \eta_0} dy \\
& =\frac{\omega_{2m}}{2^{2m}} e^{\beta^* \bra{C_{\alpha,x_0}  -I_m} } +  O(R_\eps^{-2} \log R_\eps)). 
\end{split}
\end{equation}
Outside $B_{\eps R_\eps}(x_0)$, the basic inequality $e^{t^2}\ge 1+t^2$ gives
\begin{equation}\label{final2}
\begin{split}
\int_{\Omega \setminus B_{\eps R_\eps}(x_0)} e^{\beta^* u_\eps^2} dx  & = \int_{\Omega \setminus B_{\eps R_\eps}(x_0)} e^{\frac{\beta^*}{\mu_\eps^2} G_{\alpha,x_0}^2 } dx \\ &  \ge |\Omega| + \frac{\beta^*}{\mu_\eps^2} \|G_{\alpha,x_0}\| _{L^2(\Omega)}^2 + o(\mu_\eps^{-2}) + O(\eps^{2m} R_\eps^{2m}).
\end{split}
\end{equation}
Since $R_\eps = O(\mu_\eps^2 )$, by \eqref{final1} and \eqref{final2}, we  conclude that 
\[\begin{split}
F_{\beta^*} (u_\eps) &\ge |\Omega|  + \frac{\omega_{2m}}{2^{2m}} e^{\beta^* \bra{C_{\alpha,x_0} - I_m} } +  \frac{\beta^*}{\mu_\eps ^2} \|G_{\alpha,x_0}\| _{L^2(\Omega)}^2 + o(\mu_\eps^{-2}).
\end{split}\]
In particular, for sufficiently small $\eps$, we find
$$
S_{\alpha,\beta^*}\ge F_{\beta^*}(u_\eps)>|\Omega|  + \frac{\omega_{2m}}{2^{2m}} e^{\beta^* \bra{C_{\alpha,x_0} - I_m} }.
$$
\end{proof}

We can now prove Theorem \ref{main} using Proposition \ref{big} and Proposition \ref{proptest}. 

\begin{proof}[Proof of Theorem \ref{main}] \mbox{ }

\emph{1.} Let $\beta_n$, $u_n$ and $\mu_n$ be as in \eqref{betan}, \eqref{extremal}, \eqref{mun and xn} and \eqref{mun and xn2}. Since $\|u_n\|_{\alpha}=1$ and $0\le \alpha <\lambda_1(\Omega)$, $u_n$ is bounded in $H^m_0(\Omega)$. In particular, we can find a function $u_0\in H^m_0(\Omega)$ such that, up to subsequences,   $u_n\rw u_0$ in $H^m_0(\Omega)$ and $u_n \to u_0$ a.e. in $\Omega$. The  weak lower semicontinuity of $\|\cdot\|_\alpha$ implies that $u_0\in M_\alpha$. By Propositions \ref{big} and \ref{proptest}, we must have $\dis{\limsup_{n\to +\infty}\mu_n \le C}$. Then, Fatou's Lemma and the dominated convergence theorem imply respectively $F_{\beta^*}(u_0)<+\infty$ and $F_{\beta_n}(u_n)\to F_{\beta^*}(u_0)$.  Since, by Lemma \ref{limsub}, $u_n$ is  maximizing sequence for $S_{\alpha,\beta^*}$, we conclude that $S_{\alpha,\beta^*}=F_{\beta^*}(u_0)$. Then, $S_{\alpha,\beta^*}$ is finite and attained. 

\emph{2.}  
Clearly, if $\beta>\beta^*$, using \eqref{Adams}, we get
$$
S_{\alpha,\beta} \ge S_{0,\beta}  =  +\infty,  \qquad \text{ for any } \alpha\ge 0. 
$$
Assume now $\alpha\ge \lambda_1(\Omega)$ and $0\le \beta \le  \beta^*$. Let $\ph_1$ be  an eigenfuntion for $\lm $ on $\Omega$ corresponding to $\lambda_1(\Omega)$, i.e. a nontrivial solution of 
\[
\begin{Si}{cc}
\lm \ph_1 = \lambda_1(\Omega)\ph_1  & \text{ in } \Omega,\\
\ph_1 = \partial_{\nu}\ph_1 = \ldots= \partial_{\nu}^{m-1} \ph_1 =0 & \text{ on }\partial \Omega. 
\end{Si}
\]
Observe that, for any $t\in \R$,
$$
\|t\ph_1\|_{\alpha}^2 = t^2(\lambda_1(\Omega) -\alpha)\|\ph_1\|^2_{L^2} \le 0.
$$
In particular, $t\ph_1\in M_\alpha$. Then we have
$$
S_{\alpha,\beta} \ge F_{\alpha,\beta}(t\ph_1) \to +\infty, 
$$
as $t\to +\infty$. 
\end{proof}

\appendix
\renewcommand\thesection{}
\section{Appendix: Some elliptic estimates}
\renewcommand\thesection{\Alph{section}}

In this appendix, we recall some useful elliptic estimates which have been used several times throughout the paper. We start by recalling that $m-$harmonic functions are of class $C^\infty$ and that bounds on their $L^1$-norm give local uniform estimates  on all their derivatives. 

\begin{prop}\label{ell harm}
Let $\Omega \subseteq \R^N$ be a bounded open set. Then, for any $m\ge 1$, $l \in \N$, $\gamma\in (0,1)$, and any open set  $V\subset\subset\Omega$, there exists a constant $C=C(m,l,\gamma,V,\Omega)$ such that every m-harmonic function $u$ in $\Omega$ satisfies 
$$
\|u\|_{C^{l,\gamma}(V)} \le C \|u\|_{L^1(\Omega)}. 
$$
\end{prop}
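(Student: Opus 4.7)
The plan is to prove the estimate by reducing via scaling and covering to a normalized version on the unit ball, and then upgrading $L^1$ data to $C^{l,\gamma}$ regularity through two successive gains using standard interior elliptic theory.

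First, I would reduce the claim to the following scale-invariant statement: there exists $C=C(m,l,\gamma,N)$ such that every $m$-harmonic function $u$ on $B_1(0)\subset \R^N$ satisfies
$$
\|u\|_{C^{l,\gamma}(B_{1/2}(0))} \le C \|u\|_{L^1(B_1(0))}.
$$
Since $V\subset\subset\Omega$, one can cover $\ov V$ by finitely many balls $B_r(x_i)$ with $B_{2r}(x_i)\subset\Omega$, where $r$ is comparable to $\dist(V,\partial\Omega)$. Applying the normalized estimate to the rescaled function $\tilde u(y):=u(x_i+2ry)$, which is again $m$-harmonic on $B_1(0)$, and absorbing the powers of $r$ coming from $\|\tilde u\|_{L^1(B_1)}=(2r)^{-N}\|u\|_{L^1(B_{2r}(x_i))}$ and from the derivatives of the change of variable into the final constant, yields the general statement.

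Next, I would establish the intermediate bound
$$
\|u\|_{L^\infty(B_{3/4}(0))}\le C\|u\|_{L^1(B_1(0))}
$$
for $u$ $m$-harmonic in $B_1(0)$. For $m=1$ this is immediate from the mean value property, since $|u(x)|=|\fint_{B_{1/4}(x)}u|\le C\|u\|_{L^1(B_1)}$ for $x\in B_{3/4}$. For general $m$, one can use Pizzetti's mean value formula evaluated at several distinct radii to solve for $u(x)$ by eliminating the intermediate terms $\Delta^k u(x)$ that appear with known polynomial coefficients, or equivalently invoke interior $L^\infty$ bounds for higher-order elliptic equations (Moser-type iteration extended to $\lm$, or the Almansi decomposition $u(x)=\sum_{k=0}^{m-1}|x-x_0|^{2k}h_k(x)$ with $h_k$ harmonic, combined with inverse-Laplacian smoothing on balls).

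Finally, once the $L^\infty$ bound is in hand, the upgrade to $C^{l,\gamma}$ is purely standard. Since $\lm u=0$ on $B_{3/4}$ with $u\in L^\infty(B_{3/4})$, interior Schauder estimates for the polyharmonic operator (Proposition \ref{ell shau} in the appendix, or its iteration) give $\|u\|_{C^{2m,\gamma}(B_{2/3})}\le C\|u\|_{L^\infty(B_{3/4})}$. Since every partial derivative $\partial^\alpha u$ is again $m$-harmonic, iterating this Schauder estimate on a sequence of nested balls with radii decreasing from $3/4$ to $1/2$ produces $\|u\|_{C^{l,\gamma}(B_{1/2})}\le C\|u\|_{L^\infty(B_{3/4})}$ for arbitrary $l$. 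Chaining the three estimates completes the proof.

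The main obstacle is the middle $L^\infty$-to-$L^1$ step. For $m=1$ it is a one-line consequence of the mean value property, but for $m\ge 2$ there is no analogous pointwise identity expressing $u(x)$ as a simple average of $u$ over a ball, so one must either adapt Moser iteration to the higher-order operator $\lm$ or manipulate Pizzetti's formula (or the Almansi decomposition) to extract an $L^\infty$ bound purely in terms of $\|u\|_{L^1}$; the scaling and Schauder steps surrounding it are routine by comparison.
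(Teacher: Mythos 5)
Your proposal is correct and follows essentially the same route the paper invokes: the paper does not prove this result but cites Proposition~12 in \cite{mar}, whose proof rests precisely on Pizzetti's formula to obtain the $L^\infty$-from-$L^1$ bound (solving the Vandermonde system at several radii exactly as you describe), followed by standard interior elliptic regularity and a scaling/covering argument. One caveat worth noting: the alternatives you offer for the key middle step are not all equally viable --- Moser iteration does not extend to $\lm$ for $m\ge 2$ in any routine way, since the technique relies on a maximum principle and on testing with truncated powers of $u$, both of which fail for higher-order operators --- so the Pizzetti argument (or the explicit Poisson-kernel representation for polyharmonic functions on balls) should be regarded as the proof, not one option among several. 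Also, Proposition \ref{ell shau} in the appendix is a global boundary-value estimate rather than an interior Schauder estimate, so the final upgrade should appeal either to the genuine interior Schauder theory for constant-coefficient elliptic operators or, more directly, to iterating the Pizzetti-derived $L^\infty$--$L^1$ bound on the derivatives $\partial^\alpha u$, which are themselves $m$-harmonic.
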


Proposition \ref{ell harm} can be deduced e.g. from Proposition 12 in \cite{mar}, and its proof is based on Pizzetti's formula \cite{Piz}, which is a generalization of the standard mean value property for harmonic functions. 

\medskip

If $m\ge 2$, in general $m-$harmonic functions on a bounded open set $\Omega$ do not satisfy the maximum principle, unless $\Omega$ is one of the so called positivity preserving domains (balls are the simplest example). However, it is always true that the $C^{m-1}$ norm of a $m-$harmonic function can be controlled in terms of the $L^\infty$ norm of its derivatives on $\partial \Omega$. 

\begin{prop}\label{ell harm2}
Let $\Omega \subseteq \R^N$ be a smooth bounded open set. Then, there exists a constant $C=C(\Omega)>0$  such that
$$
\|u\|_{C^{m-1}(\Omega)} \le C \sum_{l=0}^{m-1} \|\nabla^l u\|_{L^\infty (\partial \Omega)},
$$ 
for any $m-$harmonic function $u\in C^{m-1}(\ov \Omega)$. 
\end{prop}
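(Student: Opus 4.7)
My plan is to argue by contradiction, combining the interior regularity of $m$-harmonic functions provided by Proposition~\ref{ell harm} with the uniqueness of the polyharmonic Dirichlet problem. Suppose the stated inequality fails. Then there exist $m$-harmonic $u_n \in C^{m-1}(\ov{\Omega})$ normalized by $\|u_n\|_{C^{m-1}(\ov{\Omega})} = 1$ while $\eps_n := \sum_{l=0}^{m-1}\|\nabla^l u_n\|_{L^\infty(\partial\Omega)} \to 0$. The goal is to extract a subsequential limit $u_0$ of the $u_n$ which is $m$-harmonic and has vanishing Cauchy data on $\partial\Omega$, hence must vanish identically, and then to upgrade the convergence to $C^{m-1}(\ov{\Omega})$ in order to contradict $\|u_n\|_{C^{m-1}(\ov{\Omega})} = 1$.

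Using $\|u_n\|_{L^\infty(\Omega)} \le 1$ together with the interior estimate of Proposition~\ref{ell harm}, the sequence is bounded in $C^k_{loc}(\Omega)$ for every $k \in \N$, so a subsequence converges in $C^\infty_{loc}(\Omega)$ to some $u_0$ that is $m$-harmonic in $\Omega$. Moreover, since $\|\nabla^{m-1}u_n\|_{L^\infty(\Omega)} \le 1$, the lower-order derivatives $\nabla^l u_n$ with $l \le m-2$ are equi-Lipschitz on the compact set $\ov{\Omega}$; Arzel\`a-Ascoli together with a diagonal extraction yields a further subsequence along which $\nabla^l u_n \to \nabla^l u_0$ uniformly on $\ov{\Omega}$ for every $0 \le l \le m-2$. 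Combined with $\|\nabla^l u_n\|_{L^\infty(\partial\Omega)} \le \eps_n \to 0$, this forces $\partial_\nu^l u_0 \equiv 0$ on $\partial\Omega$ for $0 \le l \le m-2$.

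The crux of the argument is the top-order trace and the control of $\nabla^{m-1}u_n$ up to the boundary, which cannot be obtained directly from Arzel\`a-Ascoli since $\nabla^{m-1}u_n$ is only bounded in $L^\infty(\Omega)$. To overcome this I would invoke boundary elliptic regularity for the polyharmonic Dirichlet problem: the traces $g_j^n := \partial_\nu^j u_n|_{\partial\Omega}$ lie in $C^{m-1-j}(\partial\Omega)$ with norm bounded by $\|u_n\|_{C^{m-1}(\ov{\Omega})} = 1$, and by hypothesis tend to zero in $L^\infty(\partial\Omega)$; interpolating between these two estimates gives $g_j^n \to 0$ in $C^{m-1-j-\delta}(\partial\Omega)$ for every $\delta > 0$. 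Up-to-the-boundary Schauder estimates for $\lm$ then yield uniform precompactness of $u_n$ in $C^{m-1,\gamma}(\ov{\Omega})$ for some $\gamma \in (0,1)$, so that the convergence $u_n \to u_0$ is actually in $C^{m-1}(\ov{\Omega})$. In particular $\partial_\nu^{m-1}u_0 \equiv 0$ on $\partial\Omega$ as well.

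Having all Dirichlet data zero, $u_0 \in H^m_0(\Omega)$. Applying Proposition~\ref{parts} to $\int_\Omega u_0\, \lm u_0\,dx = 0$, all boundary terms cancel, so $\|u_0\|_{H^m_0(\Omega)}^2 = 0$ and hence $u_0 \equiv 0$. Combined with the uniform $C^{m-1}(\ov{\Omega})$ convergence, this contradicts $\|u_n\|_{C^{m-1}(\ov{\Omega})} = 1$. The main obstacle I foresee is precisely the Schauder-type step: one needs boundary estimates at a regularity level below the standard textbook threshold (the $j$-th normal derivative only in $C^{m-1-j}$ rather than $C^{2m-1-j,\alpha}$). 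A more robust substitute is the Agmon maximum principle for higher-order elliptic operators, which is tailored to exactly this sort of a priori bound and which, for $m=1$, simply reduces to the classical maximum principle for harmonic functions.
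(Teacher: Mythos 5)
Your contradiction framework is a natural first thought, but the gap you yourself flag at the top-order derivative is fatal, not merely technical. Steps 1--3 give $u_n\to u_0$ in $C^\infty_{\loc}(\Omega)$ and, via Arzel\`a--Ascoli applied to the equi-Lipschitz lower-order derivatives, in $C^{m-2}(\ov\Omega)$. Even granting $u_0\equiv 0$, this does \emph{not} contradict $\|u_n\|_{C^{m-1}(\ov\Omega)}=1$, because the $(m-1)$-th derivative can concentrate near $\partial\Omega$ while all lower-order derivatives go to zero. To close the argument you would need $\nabla^{m-1}u_n\to 0$ \emph{uniformly up to the boundary}, and this is exactly what the available bounds do not give: the Schauder estimate of Proposition \ref{ell shau} requires boundary data of class $C^{k-j,\gamma}(\partial\Omega)$ with $k\ge 2m$, whereas your interpolation only produces $g_j^n\to 0$ in $C^{m-1-j-\delta}(\partial\Omega)$, well below that threshold. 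A ``low-order'' Schauder estimate of the form $\|u\|_{C^{m-1,\gamma}(\ov\Omega)}\lesssim \sum_j\|g_j\|_{C^{m-1-j,\gamma}(\partial\Omega)}$ for the $m$-harmonic Dirichlet problem is not in the paper's Appendix and is itself a nontrivial assertion of the same nature as Proposition \ref{ell harm2}. There is also a secondary gap: knowing only $u_0\in C^{m-2}(\ov\Omega)\cap C^\infty(\Omega)$ with $\partial_\nu^l u_0=0$ for $l\le m-2$ does not place $u_0$ in $H^m(\Omega)$, so Proposition \ref{parts} cannot be applied as you propose.

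The remedy you name at the end is the right one, but it is not a ``substitute for the Schauder step'' --- the Agmon--Miranda maximum principle for higher-order Dirichlet problems \emph{is} Proposition \ref{ell harm2} (in the more general form where $\lm$ is replaced by an arbitrary uniformly elliptic operator of order $2m$), stated as a direct a priori estimate rather than by contradiction. Once you invoke it, the entire compactness scaffold becomes superfluous. This is in fact why the paper states the proposition without proof: it is a classical result, proved directly in Agmon's paper \emph{Maximum theorems for solutions of higher order elliptic equations} (Ann. Scuola Norm. Sup. Pisa, 1960), in work of Miranda, and presented in the Gazzola--Grunau--Sweers monograph \cite{GGS} under the name ``Agmon--Miranda maximum principle''. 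For $m=1$ it reduces to the maximum principle for harmonic functions, but for $m\ge 2$ there is no comparably elementary derivation; the correct move here is a direct citation, not an attempted reduction to the Schauder theory recalled in the Appendix.
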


We recall now the main results concerning Schauder and $L^p$ elliptic estimates for $\lm$.

\begin{prop}[see Theorem 2.18 of \cite{GGS}]\label{ell shau}
Let $\Omega\subseteq \R^N$ be a bounded open set with smooth boundary, and take $k,m\in \N$, $k \ge 2m$, and $\gamma \in (0,1)$. If $u \in H^m(\Omega)$ is a weak solution of the problem
\begin{equation}\label{general prob}
\begin{Si}{cl}
\lm u = f & \mbox{ in } \Omega, \\
\partial_\nu^j u = h_j  & \mbox{ on }\partial \Omega, \; 0\le j\le m-1,
\end{Si}
\end{equation}
with $f\in C^{k-2m,\gamma}(\Omega)$ and $h_j \in C^{k-j,\gamma}(\partial \Omega)$, $0\le j\le m-1$, then $u\in C^{k,\gamma}(\Omega)$ and there exists a constant $C=C(\Omega,k,\gamma)$ such that 
$$
\|u\|_{C^{k,\gamma}(\Omega)} \le C \bra{ \| f\|_{C^{k-2m,\gamma}(\Omega)}  +  \sum_{j=0}^{m-1}  \|h_j\|_{C^{k-j,\gamma}(\partial \Omega)}}.
$$
\end{prop}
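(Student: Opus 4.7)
The plan is to establish the base case $k=2m$ and then bootstrap to higher $k$ by induction.

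For the base case, I would work within the Agmon--Douglis--Nirenberg (ADN) framework. The first point to verify is that the Dirichlet-type system $\partial_\nu^j u = h_j$, $0 \le j \le m-1$, is a complementing boundary condition for $(-\Delta)^m$: its characteristic roots are $\pm i$ each of multiplicity $m$, and the Dirichlet traces $\partial_\nu^j$, $0 \le j \le m-1$, uniquely determine the $m$ ``stable'' modes at each point of $\partial\Omega$, so the Shapiro--Lopatinskii condition holds. The estimate then splits into (i) an interior part and (ii) a boundary part, glued by a partition of unity. For (i), a cutoff $\varphi u$ supported away from $\partial\Omega$ satisfies $(-\Delta)^m(\varphi u) = \varphi f + [\text{commutator involving derivatives of $u$ of order $\le 2m-1$}]$, and I would represent $\varphi u$ as a Newtonian convolution against the fundamental solution $-\frac{1}{\gamma_m}\log|x|$, then apply standard singular-integral and Campanato-type estimates to bound the $C^{2m,\gamma}$ norm by the $C^{0,\gamma}$ norm of the right-hand side plus lower-order terms. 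For (ii), I would flatten the boundary via a local diffeomorphism to a half-ball, obtaining a constant-coefficient principal part (after freezing coefficients) plus a smooth perturbation, and use the explicit Poisson kernels for $(-\Delta)^m$ on a half-space with Dirichlet data, which can be constructed from the Green function by reflection.

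The inductive step from $k$ to $k+1$ proceeds by tangential differentiation. Let $D_\tau$ be a tangential first-order operator near $\partial\Omega$ (or any first-order operator in the interior). Then $v := D_\tau u$ solves $(-\Delta)^m v = D_\tau f + [\,\text{commutator}\,]$ in $\Omega$, where the commutator involves derivatives of $u$ of order $\le 2m$, all controlled in $C^{k-1,\gamma}$ by the inductive hypothesis. The boundary traces $\partial_\nu^j v$ on $\partial\Omega$ differ from $D_\tau h_j$ by tangential combinations of lower-order traces of $u$, again controlled. Applying the inductive hypothesis at level $k$ (with the correct shift in regularities) yields $v \in C^{k,\gamma}(\Omega)$, i.e.\ one extra tangential derivative of $u$. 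After exhausting tangential derivatives, the pure normal derivatives of $u$ up to order $2m$ are recovered algebraically by solving $(-\Delta)^m u = f$ for $\partial_\nu^{2m} u$ in terms of $f$ and mixed/tangential derivatives; iterating this trick one gets all derivatives of order $k+1$ in $C^{0,\gamma}$.

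The main obstacle will be the boundary estimate in the base case: one must construct $m$ independent Poisson kernels corresponding to the $m$ Dirichlet traces on the half-space and prove sharp $C^\gamma$ bounds for their convolutions, which is substantially more delicate than the classical $m=1$ situation where a single Poisson kernel suffices. This is precisely the technical core of ADN for higher-order operators. A pragmatic alternative, and the route the authors effectively take, is to invoke the packaged statement (Theorem~2.18 of \cite{GGS}) rather than redoing the ADN machinery from scratch.
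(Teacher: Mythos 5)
The paper does not actually prove this proposition; it is stated purely as a citation to Theorem~2.18 of \cite{GGS}, which itself relies on the Agmon--Douglis--Nirenberg Schauder theory. Your closing sentence correctly identifies this, and the sketch you give before that sentence is a faithful outline of the ADN argument that underlies the cited theorem: verification of the Shapiro--Lopatinskii (complementing) condition for the Dirichlet system attached to $(-\Delta)^m$, a base case $k=2m$ obtained from interior singular-integral estimates plus half-space Poisson-kernel estimates after boundary flattening and coefficient freezing, and a bootstrap to $k>2m$ via tangential differentiation combined with algebraic recovery of the normal derivatives from the equation. So there is no conflict between your route and the paper's, since the paper simply outsources the entire argument.

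One small inaccuracy worth flagging: the proposition is stated for $\Omega\subseteq\R^N$ with arbitrary $N$, but the fundamental solution you quote, $-\frac{1}{\gamma_m}\log|x|$, is the one valid only in the conformal dimension $N=2m$ (which is the only case the paper uses). For general $N>2m$ the fundamental solution is $c_{N,m}|x|^{2m-N}$, and for even $N<2m$ logarithmic factors reappear; the interior Campanato/singular-integral estimates go through in all cases, but the kernel must be chosen dimension-appropriately. Also, in your inductive step you should be slightly careful that the commutator $[D_\tau,(-\Delta)^m]$ has order $2m$ rather than $2m-1$ when $D_\tau$ has variable coefficients, so the control from the inductive hypothesis must include all derivatives of $u$ up to order $k$ in $C^{0,\gamma}$, not merely up to order $2m-1$; as you state it this is implicit but could be made explicit. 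Neither point affects the soundness of the overall plan.
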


\begin{prop}[see Theorem 2.20 of \cite{GGS}]\label{ell zero}
Let $\Omega\subseteq \R^N$ be a bounded open set with smooth boundary, and take $m,k\in \N$, $k \ge 2m$, and $p >1$.  If $u \in H^m(\Omega)$ is a weak solution of  \eqref{general prob} with  $f\in W^{k-2m,p}(\Omega)$ and $h_j \in W^{k-j-\frac{1}{p},p}(\partial \Omega)$, $0\le j\le m-1$, then $u\in W^{k,p}(\Omega)$ and there exists a constant $C=C(\Omega,k,p)$ such that 
$$
\|u\|_{W^{k,p}(\Omega)} \le C \bra{ \| f\|_{W^{k-2m,p}(\Omega)}  +  \sum_{j=0}^{m-1}  \|h_j\|_{W^{k-j-\frac{1}{p},\gamma}(\partial \Omega)}}.
$$
\end{prop}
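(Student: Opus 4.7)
The strategy is the classical Agmon-Douglis-Nirenberg approach to $L^p$ regularity for elliptic boundary value problems, which I would carry out in three main steps. Since this is a well-known textbook result, the plan is to outline the standard argument rather than produce something novel.

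\textbf{Step 1: Reduction to homogeneous Dirichlet data.} By the standard trace theorem for Sobolev spaces, construct $w \in W^{k,p}(\Omega)$ with $\partial_\nu^j w = h_j$ on $\partial \Omega$ for $0 \le j \le m-1$ and
$$\|w\|_{W^{k,p}(\Omega)} \le C \sum_{j=0}^{m-1} \|h_j\|_{W^{k-j-\frac{1}{p},p}(\partial \Omega)}.$$
Setting $v := u - w$, one obtains a weak solution of the homogeneous Dirichlet problem $(-\Delta)^m v = \tilde f$, where $\tilde f := f - (-\Delta)^m w \in W^{k-2m,p}(\Omega)$, with $\|\tilde f\|_{W^{k-2m,p}} \le \|f\|_{W^{k-2m,p}} + C\|w\|_{W^{k,p}}$. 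It therefore suffices to prove $\|v\|_{W^{k,p}(\Omega)} \le C\|\tilde f\|_{W^{k-2m,p}(\Omega)}$.

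\textbf{Step 2: Base estimate $\|v\|_{W^{2m,p}(\Omega)} \le C\|\tilde f\|_{L^p(\Omega)}$.} This is the core of the proof. Using a finite open cover of $\overline{\Omega}$ by interior balls and boundary balls together with a subordinate partition of unity, the estimate is reduced to two model situations. In the interior case, for $v$ compactly supported in $\Omega$, the identity $v = N_m * (-\Delta)^m v$ (with $N_m$ the fundamental solution of $(-\Delta)^m$) combined with the Calder\'on-Zygmund theorem — the kernel $D^{2m} N_m$ being a Calder\'on-Zygmund singular integral — gives
$$\|D^{2m} v\|_{L^p(\R^N)} \le C\|(-\Delta)^m v\|_{L^p(\R^N)}.$$
In the boundary case, after locally flattening $\partial \Omega$ by a smooth diffeomorphism, the operator becomes a smooth perturbation of the constant-coefficient polyharmonic operator on the half-space with Dirichlet data. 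Here one exploits the explicit Green's function and Poisson kernels for the polyharmonic Dirichlet problem on $\R^N_+$; the Lopatinski-Shapiro complementing condition for the system $\{(-\Delta)^m;\, \gamma, \partial_\nu \gamma, \ldots, \partial_\nu^{m-1}\gamma\}$ is verified directly from the symbol, and ensures via Mihlin multiplier theory that the associated Poisson kernels generate $L^p$-bounded operators. Perturbation terms from the flattening are absorbed by choosing the localizing balls of sufficiently small radius. Patching the local estimates yields
$$\|v\|_{W^{2m,p}(\Omega)} \le C\bigl(\|\tilde f\|_{L^p(\Omega)} + \|v\|_{L^p(\Omega)}\bigr),$$
and the lower-order term $\|v\|_{L^p(\Omega)}$ is removed by a standard compactness-uniqueness argument: if it could not be absorbed, a normalized sequence would converge weakly to a non-trivial element of $\ker\{(-\Delta)^m\} \cap W^{m,p}_0(\Omega)$, contradicting the coercivity of $(-\Delta)^m$ with homogeneous Dirichlet data.

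\textbf{Step 3: Bootstrap to $W^{k,p}$.} With the base estimate in hand, I would differentiate the equation. Tangential derivatives $\partial_\tau^\alpha v$ with $|\alpha| \le k-2m$ preserve the homogeneous Dirichlet conditions (after flattening), so the base estimate applies to them with right-hand side $\partial_\tau^\alpha \tilde f \in L^p$, giving control of all mixed derivatives involving at most $2m$ normal derivatives. The remaining pure normal derivatives beyond order $2m$ are then reconstructed algebraically by solving $(-\Delta)^m v = \tilde f$ for the top normal derivative in terms of $\tilde f$ and lower-order or tangential derivatives. Iterating this procedure up to order $k$ yields the full estimate.

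\textbf{Main obstacle.} Step 2, and in particular the boundary half-space estimate, is the delicate point: one must construct and control Poisson kernels for the $m$ Dirichlet boundary operators simultaneously and verify the Lopatinski-Shapiro complementing condition. For $m=1$ this reduces to the classical Poisson kernel of the Laplacian, but for higher $m$ the kernels are genuinely more intricate, and establishing their $L^p$-boundedness requires the full machinery of singular-integral / multiplier theory adapted to the half-space. The interior estimate and the bootstrap are essentially mechanical once the boundary case is available.
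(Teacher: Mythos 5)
The paper does not prove this statement: Proposition \ref{ell zero} is quoted verbatim (with attribution ``see Theorem 2.20 of \cite{GGS}'') as a standard textbook result for polyharmonic Dirichlet problems, and no proof is given. Your outline is the classical Agmon--Douglis--Nirenberg route, which is indeed what underlies the theorem in Gazzola--Grunau--Sweers, so in that sense you are reproducing the ``same'' argument as the source the paper defers to, at the level of a plan rather than a detailed proof.

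A few remarks on the plan itself. Steps 1 and 3 are fine modulo routine bookkeeping. In Step 2, the compactness--uniqueness device that removes $\|v\|_{L^p(\Omega)}$ deserves more care: what you get from the normalized sequence is a weak $W^{2m,p}$ limit that is $m$-harmonic and lies in $W^{m,p}_0(\Omega)$, and you then need uniqueness for the polyharmonic Dirichlet problem in the $L^p$ setting (not just $L^2$), which follows from the $L^{p'}$ estimate and a duality argument but is not immediate from ``coercivity'' (which is an $H^m_0$ notion). Also, verifying the Lopatinski--Shapiro condition and building Poisson kernels for all $m$ boundary operators is the genuinely nontrivial content; you correctly flag it as the main obstacle, but a full proof would have to carry this out. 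None of this is a gap in logic, only a gap in detail consistent with your stated intent to sketch a standard argument.

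Two minor typographical points in the statement as reproduced: the last Sobolev trace norm in the displayed inequality should read $W^{k-j-\frac{1}{p},p}(\partial\Omega)$ (the exponent $\gamma$ is a typo carried from the paper's Schauder analogue), and the claimed constant $C=C(\Omega,k,p)$ depends as usual also on $m$ and $N$, but this is implicit.
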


In the absence of boundary conditions one can obtain local estimates combining Propositions \ref{ell shau} and \ref{ell zero} with Proposition \ref{ell harm}. 
\begin{prop}\label{ell loc}
Let $\Omega\subseteq \R^N$ be a bounded open set with smooth boundary and take $m,k\in \N$, $k \ge 2m$, $p >1$. If  $f\in W^{k-2m,p}(\Omega)$ and $u$ is a weak solution of  $\lm u = f$ in $\Omega$, then $u\in W^{k,p}_{loc}(\Omega)$ and, for any open set $V\subset \subset \Omega$, there exists a constant $C=C(k,p,V,\Omega)$ such that  
$$
\|u\|_{W^{k,p}(V)} \le C \bra{ \| f\|_{W^{k-2m,p}(\Omega)}  +  \|u\|_{L^1(\Omega)}}. 
$$
Similarly, if $f\in C^{k-2m,\gamma}(\Omega)$ and $u$ is a weak solution of $\lm u =f$ in $\Omega$, then $u\in C^{k,\gamma}_{loc}(\Omega)$ and, for any open set $V\subset \subset \Omega$, there exists a constant $C=C(k,\gamma,V,\Omega)$ such that  
$$
\|u\|_{C^{k,\gamma}(V)} \le C \bra{ \| f\|_{C^{k-2m,\gamma}(\Omega)}  +  \|u\|_{L^1(\Omega)}}. 
$$
\end{prop}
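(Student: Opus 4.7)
The plan is to combine the global boundary-value estimates (Propositions \ref{ell shau} and \ref{ell zero}) with the interior estimate for $m$-harmonic functions (Proposition \ref{ell harm}) via a splitting argument. Fix an auxiliary open set $U$ with smooth boundary such that $V\subset\subset U\subset\subset \Omega$; such a $U$ exists because $V$ is compactly contained in $\Omega$ and we can thicken $V$ slightly while staying inside $\Omega$.

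First, I would solve an auxiliary Dirichlet problem on $U$. Namely, let $v\in H^m_0(U)$ be the unique weak solution of
\[
\begin{Si}{cl}
(-\Delta)^m v = f & \text{ in } U, \\
\partial_\nu^j v = 0 & \text{ on }\partial U,\ 0\le j\le m-1,
\end{Si}
\]
where $f$ is understood as an element of the relevant function space on $U$ by restriction. In the Sobolev case, since $f\in W^{k-2m,p}(\Omega)\subseteq W^{k-2m,p}(U)$, Proposition \ref{ell zero} applied on $U$ (with zero boundary data) yields $v\in W^{k,p}(U)$ and
\[
\|v\|_{W^{k,p}(U)}\le C\|f\|_{W^{k-2m,p}(U)}\le C\|f\|_{W^{k-2m,p}(\Omega)},
\]
with $C$ depending only on $k$, $p$ and $U$ (hence on $V$ and $\Omega$). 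In the H\"older case, the analogous statement is obtained from Proposition \ref{ell shau}.

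Next, set $w:=u-v$. Since $u$ solves $(-\Delta)^m u=f$ in $\Omega$ in the distributional sense, and $(-\Delta)^m v=f$ in $U$, the difference satisfies $(-\Delta)^m w=0$ in $U$, i.e.\ $w$ is $m$-harmonic in $U$. By Sobolev embedding $W^{k,p}(U)\hookrightarrow L^1(U)$ (which holds since $k\ge 2m\ge 2$ and $p>1$), we also have $v\in L^1(U)$ with $\|v\|_{L^1(U)}\le C\|f\|_{W^{k-2m,p}(\Omega)}$, so that
\[
\|w\|_{L^1(U)}\le \|u\|_{L^1(\Omega)}+\|v\|_{L^1(U)}\le C\bigl(\|f\|_{W^{k-2m,p}(\Omega)}+\|u\|_{L^1(\Omega)}\bigr).
\]
Applying Proposition \ref{ell harm} to $w$ on the pair $V\subset\subset U$ (with $l$ chosen at least $k$, and any $\gamma\in(0,1)$ in the H\"older case), we obtain $w\in C^{l,\gamma}(V)\subseteq W^{k,p}(V)\cap C^{k,\gamma}(V)$ together with the bound $\|w\|_{W^{k,p}(V)}+\|w\|_{C^{k,\gamma}(V)}\le C\|w\|_{L^1(U)}$. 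Combining this with the bound on $v$ via the triangle inequality gives the desired estimates on $u=v+w$. The H\"older case is handled identically, replacing Proposition \ref{ell zero} by Proposition \ref{ell shau} and using that $f\in C^{k-2m,\gamma}(\Omega)\hookrightarrow C^{k-2m,\gamma}(\overline U)$ by restricting to $\overline U\subset\Omega$.

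There is essentially no serious obstacle: the proof is a routine localization/splitting that reduces the interior problem to the already-established global boundary-value estimates plus the mean-value-type interior bound for $m$-harmonic functions. The only minor point to check is the existence of the auxiliary $U$ with smooth boundary, which is standard (e.g.\ by smoothing the distance function from $V$), and the embedding $W^{k,p}(U)\hookrightarrow L^1(U)$, which is immediate because $U$ is bounded and $k\ge 2m\ge 2$, $p>1$. The constants produced depend only on $k$, $p$ (or $\gamma$), $U$, and $\Omega$, hence ultimately only on $k$, $p$ (or $\gamma$), $V$ and $\Omega$, as required.
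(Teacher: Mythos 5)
Your proof is correct and follows the same splitting strategy the paper intends: decompose $u$ into a solution $v$ of a Dirichlet problem (controlled via Propositions~\ref{ell zero} or~\ref{ell shau}) plus an $m$-harmonic remainder $w$ (controlled via Proposition~\ref{ell harm}). The one stylistic difference is that you introduce an intermediate smooth domain $U$ with $V\subset\subset U\subset\subset\Omega$ and solve the auxiliary Dirichlet problem on $U$. Since the proposition already assumes $\partial\Omega$ smooth, this is not needed: one can solve $(-\Delta)^m v=f$ with zero Dirichlet data directly on $\Omega$, get $\|v\|_{W^{k,p}(\Omega)}\le C\|f\|_{W^{k-2m,p}(\Omega)}$, and then $w=u-v$ is $m$-harmonic on all of $\Omega$, to which Proposition~\ref{ell harm} applies directly on the pair $V\subset\subset\Omega$. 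Your version is slightly longer but has the small advantage of showing the estimate is genuinely local and would go through even without smoothness of $\partial\Omega$; either route is fine.
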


In many cases, one has to deal with solutions of $\lm u =f$ in $\Omega$, with boundary conditions satisfied only on a subset of $\partial \Omega$. For instance, as a consequence of Proposition \ref{ell zero}, Green's representation formula, and the continuity of trace operators on $W^{m,1}(\Omega)$, one obtains the following Proposition. 

\begin{prop}\label{ell new}
Let $\Omega\subseteq \R^N$ be an open set with smooth boundary, and fix $x_0, x_1 \in \R^{2m}$ and $p>1$. For any $\delta,R >0$ such that $\Omega \cap B_R(x_1) \setminus B_{2\delta}(x_0)\neq \0$, there exists a constant $C=C(\Omega,x_0, x_1,\delta,R)$ such that every weak solution $u$ of problem \eqref{general prob}, with $f\in L^p(\Omega)$ and $h_j = 0$, $0\le j\le m-1$, satisfies 
$$
\|u\|_{W^{2m,p}(\Omega \cap B_R(x_1)\setminus B_{2\delta}(x_0))} \le C( \|f\|_{L^p(\Omega\cap B_{2R}(x_1)\setminus B_\delta(x_0))} + \|u\|_{W^{m,1}(\Omega\setminus B_{2R}(x_1)\cap B_\delta(x_0))}).
$$
\end{prop}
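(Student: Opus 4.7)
I would decompose $u=u_1+u_2$ by truncating the right-hand side. Fix a cutoff $\chi\in C_c^\infty(\R^N)$ with $\chi\equiv 1$ on an open set $U$ containing $\overline{V}$, where $V:=\Omega\cap B_R(x_1)\setminus B_{2\delta}(x_0)$, and with $\mathrm{supp}(\chi)\subset A_1:=\Omega\cap B_{2R}(x_1)\setminus B_\delta(x_0)$; concretely $\chi$ can be chosen to equal $1$ on a slightly enlarged annular neighbourhood of $V$ inside $A_1$. Let $u_1\in H^m_0(\Omega)$ be the unique solution of $\lm u_1=\chi f$ with homogeneous Dirichlet data, and set $u_2:=u-u_1$. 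Applying Proposition \ref{ell zero} to $u_1$ yields at once
\[
\|u_1\|_{W^{2m,p}(\Omega)}\le C\|\chi f\|_{L^p(\Omega)}\le C\|f\|_{L^p(A_1)},
\]
which controls the first summand globally, and in particular on $V$.

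The second summand $u_2$ inherits zero Dirichlet data on $\partial\Omega$ and satisfies $\lm u_2=(1-\chi)f$, whose right-hand side vanishes identically on $U$. Thus $u_2$ is polyharmonic on $U$, with $m-1$ vanishing normal derivatives on $\partial\Omega\cap U$. I aim to establish the local estimate
\[
\|u_2\|_{W^{2m,p}(V)}\le C\|u_2\|_{L^1(U)}.
\]
Granted this, the proof closes: by the embedding $W^{2m,p}(\Omega)\hookrightarrow L^1(\Omega)$ on the bounded set $\Omega$, applied to $u_1$,
\[
\|u_2\|_{L^1(U)}\le\|u\|_{L^1(A_1)}+\|u_1\|_{L^1(\Omega)}\le C\bigl(\|u\|_{W^{m,1}(A_1)}+\|f\|_{L^p(A_1)}\bigr),
\]
and summing the $u_1$ and $u_2$ bounds on $V$ yields the claim.

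The local estimate on $u_2$ is the main technical point. I would fix a further cutoff $\zeta$ with $\zeta\equiv 1$ on $V$ and $\mathrm{supp}(\zeta)\subset U$, and apply Proposition \ref{ell zero} to $\zeta u_2\in H^m_0(\Omega)$. By the Leibniz rule and the vanishing $\lm u_2\equiv 0$ on $U$, one has $\lm(\zeta u_2)=\sum c_{\alpha\beta}(\partial^\alpha\zeta)(\partial^\beta u_2)$ with $|\alpha|\ge 1$ and $|\beta|\le 2m-1$, supported in the transition zone $\{\nabla\zeta\neq 0\}\subset U\setminus V$; the problem reduces therefore to controlling $\|\partial^\beta u_2\|_{L^p}$ for $|\beta|\le 2m-1$ on this zone. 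This is obtained by a finite bootstrap using nested cutoffs inside $U$: starting from $u_2\in W^{m,1}(U)$, one alternately applies Proposition \ref{ell loc} for portions of the zone strictly inside $\Omega$ and Proposition \ref{ell zero} for portions meeting $\partial\Omega$, exploiting Sobolev embeddings at each step to raise the integrability of the derivatives of $u_2$. The main obstacle is precisely the boundary portion of the transition zone, where purely interior estimates fail; the continuity of the trace operator on $W^{m,1}(\Omega)$ guarantees that the localised auxiliary functions used throughout the iteration genuinely belong to $H^m_0(\Omega)$, allowing Proposition \ref{ell zero} to be applied at each stage and preserving the homogeneous Dirichlet structure.
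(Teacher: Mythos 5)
Your decomposition $u=u_1+u_2$ by cutting off the right-hand side is exactly what the paper does, and the $u_1$ estimate via Proposition \ref{ell zero} is correct. The divergence occurs in the treatment of the polyharmonic remainder $u_2$, and there the bootstrap you outline has a genuine gap for $m\ge 2$.

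The problem is the commutator. When you apply $\lm$ to $\zeta u_2$, the distributional identity $\lm(\zeta u_2)=\zeta\lm u_2+[\lm,\zeta]u_2$ produces a commutator $[\lm,\zeta]u_2$ that is a differential operator of order $2m-1$ acting on $u_2$. The only quantitative information you may let the constant depend on is $\|u_2\|_{W^{m,1}(U)}$ (or $\|u_2\|_{L^1(U)}$). Written in divergence form, $[\lm,\zeta]u_2$ is then only controlled in the negative Sobolev space $W^{-(m-1),1}$, not in $L^1$ or $L^p$. Propositions \ref{ell zero}, \ref{ell L1} and \ref{ell loc} all require the right-hand side to lie in $L^p$ (or $W^{k-2m,p}$ with $k\ge 2m$), so the ``alternate applications'' you describe cannot be initiated. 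Sobolev embeddings only trade integrability against differentiability downward; they do not let you manufacture the $m-1$ missing derivatives needed to place the commutator in $L^1$. For $m=1$ the issue is invisible, because the commutator $-2\nabla\zeta\cdot\nabla u_2-u_2\Delta\zeta$ has order $1\le m$ and your two-step bootstrap does close (first Proposition \ref{ell L1} from $W^{1,1}$ to $W^{1,q}$, then Proposition \ref{ell zero} to $W^{2,q}$); but for $m\ge 2$ the scheme stalls.

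The paper's intended route circumvents this entirely. Write $u_2(x)=\int_\Omega G(x,y)(1-\chi(y))\,\lm u(y)\,dy$ by Green's representation, and then integrate by parts $m$ times using Proposition \ref{parts}. The boundary terms involve $\Delta^{j/2}$ of the kernel $\nabla_x^k G(x,\cdot)(1-\chi)$ for $0\le j\le m-1$, and these vanish on $\partial\Omega$ because $G(x,\cdot)$ has zero Dirichlet data up to order $m-1$ (here is where the continuity of the trace on $W^{m,1}(\Omega)$ enters, to justify the integration by parts for $u$ merely in $W^{m,1}$). One is left with $\nabla_x^k u_2(x)=\int \Delta^{m/2}_y\bigl[\nabla_x^k G(x,y)(1-\chi(y))\bigr]\cdot\Delta^{m/2}u(y)\,dy$, and since for $x$ in a neighbourhood of $V$ the kernel is smooth on the support of $1-\chi$, this gives $|\nabla_x^k u_2(x)|\le C\|u\|_{W^{m,1}}$ directly, for all $k\le 2m$, with no bootstrap at all. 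To salvage your approach you would need either to run exactly this kernel computation, or to supplement the bootstrap with an interpolation-and-absorption argument (Ehrling-type) that the cited propositions do not provide.
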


\begin{rem}\label{remdomains}
The constant $C$ appearing in Proposition \ref{ell new} depends on $\Omega$ only through the $C^{2m}$ norms of the local maps that define $B_{2R}(x_1)\cap \partial \Omega$. In particular, Proposition \ref{ell new} can be applied uniformly to sequences $\{\Omega_n\}_{n\in \N}$, which  converge in the $C^{2m}_{loc}$ sense to a limit domain $\Omega$. 
\end{rem}

The following Proposition holds only in the special case $m=1$. It gives a Harnack-type inequality  which is useful to control the local behavior of a sequence of solutions of $-\Delta u = f$, when the behavior at one point is known. 

\begin{prop}\label{ell use}
Let  $u_n \in H^1(B_R(0))$ be a sequence of weak solutions of $-\Delta u_n = f_n$ in $B_R(0)\subseteq \R^N$, $R>0$. Assume that  $f_n$ is bounded in $L^\infty(B_R(0))$, and  there exists  $C>0$ such that  $u_n\le C$ and $u_n(0)\ge-C$. Then, $u_n$ is bounded in $L^\infty(B_\frac{R}{2}(0))$.
\end{prop}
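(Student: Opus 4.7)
The plan is to reduce to a harmonic function and then apply the classical Harnack inequality. The key observation is that the hypotheses control $u_n$ from above everywhere and from below only at a single point, which is exactly the setting in which Harnack's inequality (applied to a suitable nonnegative harmonic function) is effective.

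First, I would split $u_n = v_n + h_n$, where $v_n \in H^1_0(B_R(0))$ is the unique weak solution of
\begin{equation*}
-\Delta v_n = f_n \quad \text{in } B_R(0), \qquad v_n = 0 \quad \text{on } \partial B_R(0).
\end{equation*}
Since $f_n$ is bounded in $L^\infty(B_R(0))$, standard elliptic estimates (e.g., Proposition \ref{ell zero} combined with Sobolev embedding, or directly Calder\'on--Zygmund together with Morrey's inequality) give that $v_n$ is bounded in $W^{2,p}(B_R(0))$ for every $p < \infty$, and in particular there exists $M > 0$ such that $\|v_n\|_{L^\infty(B_R(0))} \le M$ for every $n$.

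The remainder $h_n := u_n - v_n$ is then harmonic in $B_R(0)$ and satisfies the two-sided pointwise control
\begin{equation*}
h_n(x) \le C + M \quad \text{for all } x \in B_R(0), \qquad h_n(0) \ge -C - M.
\end{equation*}
Setting $w_n := (C + M) - h_n$, the function $w_n$ is nonnegative and harmonic in $B_R(0)$, with $w_n(0) \le 2(C+M)$. By the classical Harnack inequality on the ball $B_{R/2}(0) \subset B_R(0)$, there is a dimensional constant $C_H = C_H(N)$ such that
\begin{equation*}
\sup_{B_{R/2}(0)} w_n \le C_H \inf_{B_{R/2}(0)} w_n \le C_H\, w_n(0) \le 2 C_H (C+M).
\end{equation*}
This yields $h_n \ge (C+M) - 2C_H(C+M)$ on $B_{R/2}(0)$, which combined with the upper bound $h_n \le C+M$ gives a uniform two-sided bound on $h_n$ on $B_{R/2}(0)$. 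Adding back the uniform bound on $v_n$ produces the desired uniform bound on $u_n$ in $L^\infty(B_{R/2}(0))$.

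There is no real obstacle here: once the splitting is performed, the only non-trivial ingredient is Harnack's inequality, which is classical for nonnegative harmonic functions. The main point to be careful about is that the a priori one-sided bound on $u_n$ combined with a single pointwise lower bound is \emph{exactly} the minimal information needed to invoke Harnack on $(C+M) - h_n$; no further control of $u_n$ on the boundary or at additional points is required.
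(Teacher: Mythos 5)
Your proof is correct and takes essentially the same approach as the paper: decompose $u_n = v_n + h_n$ with $v_n$ the zero-boundary-data solution bounded in $L^\infty$ by elliptic regularity, then use the resulting two-sided pointwise control on the harmonic remainder $h_n$ to get a uniform bound on the half ball. The only cosmetic difference is that you invoke the classical Harnack inequality as a black box, whereas the paper proves the needed Harnack-type bound directly from the mean value property applied to the nonpositive function $h_n - \tilde C$.
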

\begin{proof}
We write $u_n = v_n + h_n$, with $h_n$ harmonic in $B_R(0)$, and $v_n$ solving 
$$
\begin{Si}{cc}
\Delta v_n = f_n  &  \mbox{ in }B_R(0),\\
v_n =  0 & \mbox{ on }\partial B_R(0).  
\end{Si}
$$
By Proposition \ref{ell zero}, $v_n$ is bounded in $W^{2,p}(B_R(0))$,  for any $p>1$. In particular, it is bounded in $L^\infty(B_R(0))$. Then, we have $$h_n = u_n -v_n \le C +\|v_n\|_{L^\infty(B_R(0))} \le \tilde{C},$$  and 
$$h_n(0)= u_n(0) -v_n(0)\ge -C -\|v_n\|_{L^\infty(B_R(0))}\ge - \tilde{C}.$$ By the mean value property, for any  $x\in B_{\frac{R}{2}}(0)$, we get 
\[
\begin{split}
h_n(x) - \tilde{C} &  = \frac{2^N}{\omega_N R^N} \int_{B_{\frac{R}{2}}(x)} (h_n - \tilde{C}) dy \\ 
& \ge \frac{2^N}{\omega_N R^N} \int_{B_{R}(0)} (h_n - \tilde{C}) dy \\ 
&
= 2^N (h_n(0)-\tilde{C}) \\
& \ge -2^{N+1} \tilde{C}.
\end{split}
\]
Hence, $h_n$ is bounded in $L^\infty(B_\frac{R}{2}(0))$. 
\end{proof}

Finally, we recall some Lorentz-Zygmund type elliptic estimates. For any $\alpha\ge 0$, let $L(\log L)^\alpha$ be defined as the space 
\begin{equation}\label{zygm}
L (\log L)^\alpha=\cur{ f: \Omega \ra \R \text{ s.t. } f \text{ is measurable  and } \int_\Omega |f|\log^\alpha (2+|f|)  dx <+\infty},
\end{equation}
and endowed with the norm 
\begin{equation}\label{zygm norm}
\|f\|_{L(Log L)^\alpha}:= \int_\Omega |f|\log^\alpha (2+|f|)  dx. 
\end{equation}
Given $1<p< +\infty$ , and $1\le q\le +\infty$, let $L^{(p,q)}(\Omega)$ be  the Lorentz space 
\begin{equation}\label{Lor}
L^{(p,q)}(\Omega):= \{ u : \Omega \ra \R\, :\, u \text{ is measurable and } \|u\|_{(p,q)}<+\infty \},
\end{equation}
where 
\begin{equation}\label{Lor norm}
\|u\|_{(p,q)}:= \bra{\int_{0}^{|\Omega|} t^{\frac{q}{p}-1} u^{**}(t)^q dt}^\frac{1}{q},  \quad \text{ for } 1 \le q<+\infty,
\end{equation}
and
\begin{equation}
\|u\|_{{(p,\infty)}} = \sup_{t\in (0,|\Omega|)} t^\frac{1}{p} u^{**}(t),
\end{equation}
with 
\begin{equation}
u^{**}(t) := t^{-1} \int^t_0 u^*(s) ds,
\end{equation}
and 
\begin{equation}\label{Lor fin}
u^*(t):=\inf\{\lambda >0 \; :\;  |\{|u|>\lambda\} | \le t \}.
\end{equation}

Among the many properties of Lorentz spaces we recall the following H\"older-type inequality (see \cite{ONe}). 

\begin{prop}\label{HolLor}
Let $1<p,p'< +\infty$, $1\le q,q'\le +\infty$, be such that $\frac{1}{p}+\frac{1}{p'}= \frac{1}{q}+\frac{1}{q'}=1$. Then, for any $u\in L^{(p,q)}(\Omega)$, $v\in L^{(p',q')}(\Omega)$, we have
$$
\|u v\|_{L^1(\Omega)} \le \|u\|_{(p,q)} \|v\|_{(p',q')}.
$$
\end{prop}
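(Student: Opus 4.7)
The plan is to reduce the multi-dimensional inequality to a one-dimensional estimate on the interval $(0,|\Omega|)$ via the non-increasing rearrangement, and then close with a weighted H\"older inequality in that variable. As a first step, I would invoke the Hardy--Littlewood rearrangement inequality
$$
\int_{\Omega} |u(x) v(x)|\,dx \le \int_0^{|\Omega|} u^*(t)\,v^*(t)\,dt,
$$
which can be deduced from the layer-cake identity $|u(x)| = \int_0^\infty \chi_{\{|u|>\lambda\}}(x)\,d\lambda$ together with the elementary bound $|\{|u|>\lambda\}\cap\{|v|>\mu\}| \le \min(|\{|u|>\lambda\}|,|\{|v|>\mu\}|)$, and the observation that the latter quantity is exactly the Lebesgue measure of $\{u^*>\lambda\}\cap\{v^*>\mu\}$, since the superlevel sets of $u^*$ and $v^*$ are nested intervals starting at $0$.

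Once the problem is reduced to the half-line, the monotonicity $u^*(t)\le u^{**}(t)$ (which holds because $u^*$ is non-increasing, so its running average dominates it pointwise) gives
$$
\int_0^{|\Omega|} u^*(t)v^*(t)\,dt \le \int_0^{|\Omega|} u^{**}(t)v^{**}(t)\,dt.
$$
At this point I would exploit the algebraic identities $\frac{1}{p}+\frac{1}{p'} = \frac{1}{q}+\frac{1}{q'} = 1$: the exponents $\frac{1}{p}-\frac{1}{q}$ and $\frac{1}{p'}-\frac{1}{q'}$ sum to zero, so one can factor
$$
u^{**}(t)v^{**}(t) = \bigl(t^{\frac{1}{p}-\frac{1}{q}}u^{**}(t)\bigr)\bigl(t^{\frac{1}{p'}-\frac{1}{q'}}v^{**}(t)\bigr).
$$
A classical H\"older inequality on $(0,|\Omega|)$ with exponents $q$ and $q'$ then yields
$$
\int_0^{|\Omega|} u^{**}(t)v^{**}(t)\,dt \le \Bigl(\int_0^{|\Omega|} t^{\frac{q}{p}-1}u^{**}(t)^q dt\Bigr)^{\frac{1}{q}}\Bigl(\int_0^{|\Omega|} t^{\frac{q'}{p'}-1}v^{**}(t)^{q'} dt\Bigr)^{\frac{1}{q'}},
$$
and the right-hand side is precisely $\|u\|_{(p,q)}\|v\|_{(p',q')}$ in the normalization adopted in \eqref{Lor norm}.

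The endpoint cases $q=\infty$ (so $q'=1$) and $q'=\infty$ are handled by the same splitting, pulling the $L^\infty$ factor out and integrating the remainder against the weight $t^{\frac{1}{p'}-1}$. I do not anticipate any substantive obstacle here: once the exponent identity $\bigl(\frac{1}{p}-\frac{1}{q}\bigr)+\bigl(\frac{1}{p'}-\frac{1}{q'}\bigr)=0$ is observed, every further step is automatic, and the constant in the resulting inequality is exactly $1$ thanks to the use of $u^{**}$ rather than $u^*$ in the definition of the Lorentz norm. The only mild subtlety is the verification of the Hardy--Littlewood inequality in full generality, which is standard but formally requires an approximation argument via simple functions.
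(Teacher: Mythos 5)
Your proof is correct and, unlike the paper, actually self-contained: the paper does not prove this proposition, it just refers to O'Neil's 1963 article. What you have written is the standard direct argument. The three ingredients are exactly right: the Hardy--Littlewood rearrangement inequality $\int_\Omega |uv|\,dx \le \int_0^{|\Omega|}u^*v^*\,dt$, the pointwise bound $u^*\le u^{**}$ (so that $u^*v^*\le u^{**}v^{**}$), and a weighted H\"older inequality on $(0,|\Omega|)$ whose exponents fit precisely because $\bigl(\tfrac1p-\tfrac1q\bigr)+\bigl(\tfrac1{p'}-\tfrac1{q'}\bigr)=0$. The constant is exactly $1$ because the Lorentz norm in \eqref{Lor norm} uses $u^{**}$ rather than $u^*$; it is your intermediate passage from $u^*v^*$ to $u^{**}v^{**}$ that absorbs the loss one would otherwise incur. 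Two small remarks. First, your sketch of Hardy--Littlewood via the layer-cake identity, Tonelli, and the observation that $|\{u^*>\lambda\}\cap\{v^*>\mu\}|=\min\bigl(|\{|u|>\lambda\}|,|\{|v|>\mu\}|\bigr)\ge|\{|u|>\lambda\}\cap\{|v|>\mu\}|$ is already complete as stated; the closing worry about a further approximation by simple functions is unnecessary, since Tonelli applies directly to the nonnegative measurable integrands. Second, the endpoint $q=\infty$, $q'=1$ is indeed handled by the same factoring: one pulls $\sup_t t^{1/p}u^{**}(t)$ out and is left with $\int_0^{|\Omega|}t^{\frac1{p'}-1}v^{**}(t)\,dt=\|v\|_{(p',1)}$, exactly as you indicate.
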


As proved in Corollary 6.16 of \cite{BS} (see also Theorem 10 in \cite{mar}) one has the following: 

\begin{prop}\label{ell Lor}
Let $\Omega\subseteq \R^N$, $N\ge 2m$, be a bounded smooth domain and take $0\le \alpha \le 1$. If $f\in L(\log L)^\alpha$, and $u$ is a weak solution of \eqref{general prob}, then $\nabla^{2m-l}u \in L^{(\frac{N}{N-l}, \frac{1}{\alpha})}(\Omega)$, for any $1\le l\le 2m-1$. Moreover, there exists a constant $C=C(\Omega,l)>0$ such that 
$$
\|\nabla^{2m-l} u \|_{(\frac{N}{N-l},\frac{1}{\alpha})} \le C  \|f\|_{L(Log L)^\alpha}.  
$$ 
\end{prop}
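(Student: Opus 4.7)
The plan is to reduce the estimate to an endpoint Riesz-potential bound and then cite the abstract rearrangement machinery. First, I would invoke the polyharmonic Green's function representation: on a smooth bounded domain, the Green's function $G(x,y)$ for problem \eqref{general prob} with zero Dirichlet data satisfies Krasovskii-type pointwise estimates
$$|\nabla_x^k G(x,y)| \le C |x-y|^{k-(N-2m)}\qquad \text{for } k > 2m-N,$$
so, under the assumption $N \ge 2m$, in particular $|\nabla_x^{2m-l} G(x,y)| \le C |x-y|^{-(N-l)}$ for each $1 \le l \le 2m-1$. Writing $u(x) = \int_\Omega G(x,y) f(y)\, dy$ and extending $f$ by zero to $\R^N$ gives
$$|\nabla^{2m-l} u(x)| \le C \int_{\R^N} \frac{|f(y)|}{|x-y|^{N-l}}\, dy =: C\, I_l(|f|)(x),$$
the Riesz potential of order $l$. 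Hence it suffices to establish the endpoint mapping property
$I_l : L(\log L)^\alpha(\Omega) \to L^{(N/(N-l),\, 1/\alpha)}(\Omega)$
with the claimed constant, for $0 \le \alpha \le 1$.

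For the Riesz-potential step I would use O'Neil's rearrangement inequality for convolutions: since the kernel $k_l(z) = |z|^{-(N-l)}$ lies in the weak Lorentz space $L^{(N/(N-l),\infty)}(\R^N)$ with decreasing rearrangement $k_l^*(t) \sim c\, t^{-(N-l)/N}$, one has
$$(k_l * |f|)^{**}(t) \le C\, t^{-(N-l)/N} \int_0^t f^*(s)\, ds + C \int_t^{|\Omega|} s^{-(N-l)/N}\, f^*(s)\, ds.$$
Substituting this rearrangement bound into the definition of $\|\cdot\|_{(N/(N-l),\, 1/\alpha)}$ and applying Hardy-type inequalities in the $t$-variable reduces the problem to an integral in $f^*$ weighted by a logarithmic factor that, after a standard change of variables, is equivalent to $\int_\Omega |f| \log^\alpha(2+|f|)\, dx = \|f\|_{L(\log L)^\alpha}$. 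Rather than redo this rearrangement calculation in full, I would appeal directly to Corollary 6.16 of \cite{BS}, which packages exactly this endpoint mapping property.

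The main obstacle is the delicate endpoint nature of the estimate: at the Sobolev-critical exponent $p = N/l$ the classical Riesz potential $I_l$ fails to map $L^{N/l}$ into $L^\infty$, and the logarithmic Zygmund space $L(\log L)^\alpha$ provides exactly the microscopic improvement needed to recover a quasi-endpoint bound into the Lorentz space $L^{(N/(N-l), 1/\alpha)}$. Tracking how the exponent $\alpha$ of the logarithm propagates precisely into the second Lorentz index $1/\alpha$ requires careful bookkeeping through the rearrangement chain, and it is at this step that one sees why $\alpha=0$ corresponds to the weak-$L^{N/(N-l)}$ target and $\alpha=1$ to the genuine (strong) Lorentz norm. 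A secondary, more routine issue is the passage from the whole-space Riesz kernel to the Green's function on a bounded smooth domain: $G$ satisfies the same pointwise decay as the fundamental solution away from the diagonal, with only lower-order boundary corrections that are smoother and can be absorbed via Proposition \ref{ell zero}, so the Riesz-potential argument above need only be applied to the principal singular part.
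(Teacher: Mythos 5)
Your proposal is correct and follows essentially the same route as the paper, which does not give a self-contained proof of this proposition but simply cites Corollary~6.16 of \cite{BS} together with Theorem~10 of \cite{mar}; the reduction you outline (Green's representation with $h_j\equiv 0$, Krasovskii-type pointwise bounds on $\nabla_x^{2m-l}G$, domination by the Riesz potential $I_l$, then O'Neil's rearrangement inequality feeding into the Bennett--Sharpley mapping property) is precisely the argument packaged in those references. One small typo: the general Krasovskii estimate should read $|\nabla_x^k G(x,y)|\le C\,|x-y|^{2m-N-k}$ rather than $|x-y|^{k-(N-2m)}$ (the singularity must worsen, not improve, as $k$ increases); the specific instance you actually use, $|\nabla_x^{2m-l}G(x,y)|\le C\,|x-y|^{-(N-l)}$, is consistent with the corrected form and the rest of the argument is unaffected.
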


Note that, if $\alpha=0$, we have $L(\log^\alpha L) = L^1(\Omega)$. Moreover,  $L^{(\frac{N}{N-l}, \frac{1}{\alpha})}(\Omega)= L^{(\frac{N}{N-l},\infty)}(\Omega)$ coincides with the weak $L^\frac{N}{N-l}$ space on $\Omega$. In particular,  $L^{(\frac{N}{N-l},\infty)}(\Omega)\subseteq L^p(\Omega)$ for any $1\le p< \frac{N}{N-l}$. Therefore, as a consequence of Proposition \ref{ell Lor}, we recover the following well known result, whose classical proof relies on Green's representation formula.  

\begin{prop}\label{ell L1}
Let $\Omega\subseteq \R^{N}$, $N\ge 2m$,  be a bounded smooth domain. Then, for any $1\le l\le 2m-1$ and $1\le p<\frac{N}{N-l}$, there exists a constant $C=C(p,l,\Omega)$ such that every weak solution of \eqref{general prob} with  $f\in L^1(\Omega)$ satisfies  
$$
\|\nabla^{2m-l} u \|_{L^p(\Omega)} \le C  \|f\|_{L^1(\Omega)}.  
$$ 
\end{prop}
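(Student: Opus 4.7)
The plan is to deduce this result directly from Proposition \ref{ell Lor}. Specialising that Proposition to $\alpha = 0$, we have $L(\log L)^0 = L^1(\Omega)$, and the target space $L^{(N/(N-l),1/\alpha)}(\Omega)$ becomes $L^{(N/(N-l),\infty)}(\Omega)$, which coincides with the weak Lebesgue space $L^{N/(N-l),w}(\Omega)$. Hence, every weak solution of \eqref{general prob} with $f\in L^1(\Omega)$ and $h_j \equiv 0$ satisfies
$$\|\nabla^{2m-l} u\|_{L^{(N/(N-l),\infty)}(\Omega)} \le C(\Omega,l)\,\|f\|_{L^1(\Omega)}$$
for every $1 \le l \le 2m-1$.

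To conclude, I would pass from this weak-$L^{N/(N-l)}$ bound to the desired strong $L^p$ bound for any subcritical exponent $p < N/(N-l)$. Setting $q := N/(N-l)$ and $M := C(\Omega,l)\|f\|_{L^1(\Omega)}$, the weak estimate gives $|\{x\in\Omega : |\nabla^{2m-l}u(x)| > \lambda\}| \le M^q \lambda^{-q}$ for every $\lambda > 0$. Using the layer-cake formula on the bounded domain $\Omega$,
$$\int_\Omega |\nabla^{2m-l} u|^p\, dx = p\int_0^\infty \lambda^{p-1} |\{|\nabla^{2m-l}u| > \lambda\}|\, d\lambda,$$
I would split the integral at $\lambda_0 := M\,|\Omega|^{-1/q}$, bounding the low-$\lambda$ part by $|\Omega|\lambda_0^p$ and the high-$\lambda$ part by $M^q\int_{\lambda_0}^\infty \lambda^{p-q-1}\,d\lambda$, which converges precisely because $p<q$. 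Combining the two pieces yields $\|\nabla^{2m-l}u\|_{L^p(\Omega)} \le C(p,l,\Omega)\|f\|_{L^1(\Omega)}$.

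There is no substantial obstacle here: the deep analytic content is entirely absorbed into Proposition \ref{ell Lor}, and what remains is a standard Kolmogorov-type interpolation between weak and strong Lebesgue spaces on a bounded domain. As an alternative route that bypasses the Lorentz--Zygmund machinery, one could instead invoke Green's representation $u(x) = \int_\Omega G(x,y) f(y)\,dy$ together with the pointwise Boggio-type bound $|\nabla^{2m-l}_x G(x,y)| \le C|x-y|^{l-N}$ (an $N$-dimensional analogue of Proposition \ref{prop green}), which dominates $|\nabla^{2m-l} u|$ by the Riesz potential of order $l$ of $|f|$; the classical weak-type $(1,N/(N-l))$ bound for Riesz potentials then produces the same weak estimate, and the interpolation step above finishes the proof.
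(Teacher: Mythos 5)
Your proof is correct and follows essentially the same route as the paper: the paper states that Proposition \ref{ell L1} follows from Proposition \ref{ell Lor} with $\alpha=0$ together with the embedding $L^{(\frac{N}{N-l},\infty)}(\Omega)\subseteq L^p(\Omega)$ for $p<\frac{N}{N-l}$ on a bounded domain, which is exactly the structure of your argument. You simply spell out the standard layer-cake computation behind that embedding, and your alternative via Green's representation and Riesz potentials is precisely the ``classical proof'' the paper alludes to in passing.
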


\end{document}